\documentclass[reqno, 11pt]{amsart} 
\usepackage{amsmath, amssymb, amsthm}
\usepackage{mathrsfs}
\usepackage[margin=2.71cm, heightrounded]{geometry}
\usepackage{graphicx, subcaption}
\usepackage[dvipsnames]{xcolor}
\usepackage{nccmath}
\usepackage{bm}
\usepackage[colorlinks=true, linkcolor=blue, citecolor=blue, urlcolor=blue]{hyperref}

\newcommand{\be}{\begin{equation}}
\newcommand{\ee}{\end{equation}}

\DeclareMathOperator{\Ric}{Ric}
\DeclareMathOperator{\Rm}{Rm}

\newcommand{\ga}{\gamma}
\newcommand{\ep}{\epsilon}

\newcommand{\vsi}{\varsigma}

\newcommand{\vrh}{\varrho}
\newcommand{\vth}{\vartheta}
\newcommand{\pp}{\partial}

\newcommand{\mone}{\mathbf{1}}

\newcommand{\N}{\mathbb{N}}
\newcommand{\R}{\mathbb{R}}
\newcommand{\Ss}{\mathbb{S}}

\newcommand{\mca}{\mathcal{A}}
\newcommand{\mcb}{\mathcal{B}}
\newcommand{\mcf}{\mathcal{F}}
\newcommand{\mch}{\mathcal{H}}
\newcommand{\mci}{\mathcal{I}}
\newcommand{\mcl}{\mathcal{L}}
\newcommand{\mcm}{\mathcal{M}}
\newcommand{\mcn}{\mathcal{N}}
\newcommand{\mcp}{\mathcal{P}}
\newcommand{\mcq}{\mathcal{Q}}
\newcommand{\mcr}{\mathcal{R}}
\newcommand{\mcs}{\mathcal{S}}
\newcommand{\mct}{\mathcal{T}}
\newcommand{\mcu}{\mathcal{U}}
\newcommand{\mcw}{\mathcal{W}}
\newcommand{\mcx}{\mathcal{X}}
\newcommand{\mcy}{\mathcal{Y}}
\newcommand{\mcz}{\mathcal{Z}}

\newcommand{\mfh}{\mathfrak{h}}

\newcommand{\msc}{\mathsf{c}}
\newcommand{\msf}{\mathsf{f}}
\newcommand{\mst}{\mathsf{t}}
\newcommand{\msT}{\mathsf{T}}
\newcommand{\msR}{\mathsf{R}}

\newcommand{\baf}{\bar{f}}

\newcommand{\bu}{\bar{u}}
\newcommand{\bx}{\bar{x}}
\newcommand{\bz}{\bar{z}}
\newcommand{\bze}{\bar{\zeta}}
\newcommand{\bmu}{\bar{\mu}}
\newcommand{\bph}{\bar{\phi}}
\newcommand{\bps}{\bar{\psi}}

\newcommand{\hph}{\hat{\phi}}
\newcommand{\whe}{\widehat{E}}
\newcommand{\whmch}{\widehat{\mathcal{H}}}
\newcommand{\whmcz}{\widehat{\mathcal{Z}}}

\newcommand{\tf}{\tilde{f}}

\newcommand{\tu}{\tilde{u}}
\newcommand{\tx}{\tilde{x}}
\newcommand{\ty}{\tilde{y}}
\newcommand{\tz}{\tilde{z}}
\newcommand{\tmu}{\tilde{\mu}}
\newcommand{\tnu}{\tilde{\nu}}
\newcommand{\txi}{\tilde{\xi}}
\newcommand{\tze}{\tilde{\zeta}}
\newcommand{\tph}{\tilde{\phi}}
\newcommand{\trh}{\tilde{\rho}}
\newcommand{\tps}{\tilde{\psi}}

\newcommand{\wte}{\widetilde{E}}
\newcommand{\wtq}{\widetilde{Q}}
\newcommand{\wtu}{\widetilde{U}}
\newcommand{\wtw}{\widetilde{W}}
\newcommand{\wtom}{\widetilde{\Omega}}
\newcommand{\wtsh}{\widetilde{\sharp}}
\newcommand{\wtshs}{\widetilde{\sharp\sharp}}
\newcommand{\wtmcp}{\widetilde{\mathcal{P}}}

\newcommand{\bmcl}{\boldsymbol{\mathcal{L}}}

\renewcommand{\(}{\left(}
\renewcommand{\)}{\right)}

\newtheorem{theorem}{Theorem}[section]
\newtheorem{lemma}[theorem]{Lemma}
\newtheorem{prop}[theorem]{Proposition}

\theoremstyle{definition}
\newtheorem{remark}[theorem]{Remark}
\newtheorem{definition}[theorem]{Definition}

\numberwithin{equation}{section}
\allowdisplaybreaks

\begin{document}
\title[Non-rotationally symmetric type II ancient Yamabe flows]{Uncountably many non-rotationally symmetric \\ type II ancient Yamabe flows on the sphere}

\author{Haixia Chen}
\address[Haixia Chen]{Department of Mathematics and Research Institute for Natural Sciences, College of Natural Sciences, Hanyang University, 222 Wangsimni-ro Seongdong-gu, Seoul 04763, Republic of Korea}
\email{hxchen0402@gmail.com}

\author{Seunghyeok Kim}
\address[Seunghyeok Kim]{Department of Mathematics and Research Institute for Natural Sciences, College of Natural Sciences, Hanyang University, 222 Wangsimni-ro Seongdong-gu, Seoul 04763, Republic of Korea}
\email{shkim0401@hanyang.ac.kr shkim0401@gmail.com}

\author{Monica Musso}
\address[Monica Musso]{Department of Mathematical Sciences, University of Bath, Bath BA2 7AY, United Kingdom}
\email{mm2683@bath.ac.uk}

\begin{abstract}
For every $n \ge 3$, we construct uncountably many families of type II ancient solutions to the Yamabe flow on the unit round $n$-sphere $\Ss^n$. 
These families are pairwise distinct up to conformal equivalence, and no member is conformally equivalent to a rotationally symmetric solution.
At every negative time, the Ricci curvature tensor of each solution is indefinite at some point.
Moreover, the associated backward limit space is a wedge sum of finitely many isometric copies of $\Ss^n$.

These examples show that the collection of ancient Yamabe flows on $\Ss^n$ has a much richer structure than suggested by two natural comparison problems:
the compact ancient Ricci flows on $\Ss^2$, all of which are known to be rotationally symmetric,
and the elliptic Yamabe equation on $\R^n$, whose positive entire solutions are only the standard bubbles.

The construction uses a non-radial inner--outer gluing scheme.
After stereographic projection, we reformulate the flow as a conformally invariant parabolic problem on $\R^n$.
By exploiting Kelvin invariance and switching between the Euclidean and spherical formulations as needed,
we control the non-radial modes directly without reducing the problem to one space dimension.
Weighted H\"older estimates provide the pointwise control needed to establish the Type II behavior, the Ricci-sign property, conformal inequivalence, and the description of the backward limits in a straightforward manner.
\end{abstract}

\date{\today}
\subjclass[2020]{Primary: 53E99, Secondary: 35K55, 53C18, 58J35}
\keywords{Yamabe flow, Existence, Ancient solutions, type II, Non-rotationally symmetric, Kelvin-invariant}
\maketitle

\section{Introduction}
\subsection{Background and motivation}
Let $(M^n,g_0)$ be a smooth closed Riemannian manifold of dimension $n \ge 3$ and $R_{g_0}$ its scalar curvature. In \cite{ha89}, Hamilton introduced the Yamabe flow
\be \label{YF-ori}
\pp_t g(t) = - R_{g(t)} g(t), \quad g(0) = g_0
\ee
as a parabolic deformation within a fixed conformal class toward metrics of constant scalar curvature.
From the geometric point of view, the flow \eqref{YF-ori} is precisely the negative $L^2$-gradient flow of the total scalar curvature restricted to the conformal class $[g_0]$.
The Yamabe flow may be viewed as one natural higher-dimensional analogue of the two-dimensional Ricci flow, alongside the higher-dimensional Ricci flow.

The global theory of the Yamabe flow on smooth closed manifolds is now well-established. Building on early work of Hamilton \cite{ha89}, Chow \cite{chow92}, and Ye \cite{ye94}, together with refined blow-up analyses of Schwetlick and Struwe \cite{SS03} and Brendle \cite{brendle05, brendle07},
it is known that a solution to the volume-normalized Yamabe flow (which is equivalent to the unnormalized flow \eqref{YF-ori} up to homothetic rescaling and time reparametrization)
\be \label{YF-orin}
\pp_t g(t) = - \big(R_{g(t)} - r_{g(t)}\big) g(t), \quad g(0) = g_0,
\ee
where $r_{g(t)}$ is the average scalar curvature on $(M,g(t))$, exists for all time and converges, as $t \to +\infty$, to a metric of constant scalar curvature,
assuming the positive mass theorem and a technical condition on the vanishing rate of the Weyl tensor at its zero set (when $n \ge 6$).
Very recently, Brendle and Wang \cite{BW26} gave a proof of the positive mass theorem in arbitrary dimensions, thereby extending the recent work of Bi, Hao, He, Shi, and Zhi \cite{BHHSZ26}, who proved the theorem up to dimension 19.

\medskip
Writing the evolving metric in conformal form $g(t)=v^{\frac{4}{n-2}}(\cdot,t) g_0$, the flow \eqref{YF-ori} reduces to a quasilinear fast-diffusion equation for the positive function $v$:
\be \label{YF-intro}
(v^p)_t = \frac{n+2}{4} \Big( \kappa_n \Delta_{g_0} v - R_{g_0} v \Big),
\qquad \text{where } p:=\frac{n+2}{n-2} \text{ and } \kappa_n:=\frac{4(n-1)}{n-2}.
\ee
In particular, after a time reparametrization, the flow \eqref{YF-intro} on the unit round $n$-sphere $(\Ss^n,g_{\Ss^n})$ reduces to the fast-diffusion equation
\be \label{FDE-sphere-intro}
(v^p)_t = \Delta_{\Ss^n} v - \frac{n(n-2)}{4}v, \quad v>0,
\ee
whose solutions become extinct in finite time. Under the standard normalization, they converge to the round metric $g_{\Ss^n}$ as $t\to+\infty$.

\medskip
In contrast, despite the fundamental importance of ancient solutions as blow-up limits near singularities, the classification theory for the Yamabe flow on smooth closed manifolds remains comparatively underdeveloped.
The existing literature is focused almost exclusively on the unit round sphere $\Ss^n$; we summarize the current state of the art as follows:

We recall that a solution $g(t)=v^{\frac{4}{n-2}}(\cdot,t) g_0$ to \eqref{YF-ori} on $(M,g)$ is called ancient if it exists for all $t \in (-\infty,T)$ for some $T \in \R \cup \{+\infty\}$.
An ancient solution to \eqref{YF-ori} is said to be of type I if
\be \label{eq:typeI}
\limsup_{t\to-\infty} \(|t|\, \max_{x \in M}|\Rm_{g(t)}(x)|\) <\infty,
\ee
and of type II otherwise, where $\Rm_{g(t)}$ denotes the Riemannian curvature tensor on $(M,g(t))$.

The simplest example of an ancient solution to the Yamabe flow on $\Ss^n$ is the contracting spheres, which are shrinking solitons and of type I.

A classical family of nontrivial ancient solutions to the Yamabe flow on $\Ss^n$ dates back to King \cite{King93}, who found explicit rotationally symmetric (that is, $O(n)$-symmetric) type I ancient solutions.
These solutions are not solitons and, as $t \to -\infty$, can be visualized as two Barenblatt self-similar profiles glued together.
This phenomenon is morphologically analogous to the King--Rosenau ancient solution for the two-dimensional Ricci flow \cite{King93, Ro} and the ancient oval constructed by Angenent, White, Haslhofer, and Hershkovits for the mean curvature flow \cite{Ang, Whi, HH}.
However, unlike those examples, which are of type II, King's solutions show that the Yamabe flow admits type I ancient solutions with complex non-soliton structures.

The next major advancement was achieved by Daskalopoulos, del Pino, King, and Sesum \cite{ddks}.
They constructed a five-parameter family of rotationally symmetric type I ancient solutions which, as $t \to -\infty$, are asymptotic to two (possibly distinct) traveling-wave profiles moving in opposite directions, with a cylindrical solution in the intermediate region.
Their construction relies on sharp sub-/super-solution barriers and the comparison principle.

Even more intricate are the rotationally symmetric type II ancient compact solutions constructed by Daskalopoulos, del Pino, and Sesum \cite{dds}.
As $t \to -\infty$, these solutions approach a bubble-tower profile, consisting of multiple spherical components at different scales connected by thin cylindrical neck regions.
Their construction is perturbative and can be viewed as a parabolic analogue of classical elliptic multi-bubble gluing techniques.
The resulting metrics exhibit sign-changing Ricci curvature and a highly non-self-similar asymptotic structure, highlighting the genuine complexity of the ancient regime for the Yamabe flow.

\medskip
Because all previously known ancient solutions to the Yamabe flow on $\Ss^n$ are rotationally symmetric (that is, $O(n)$-symmetric), it is natural to ask:
\[
\textit{Must every ancient solution to the Yamabe flow on $\Ss^n$ be rotationally symmetric?}
\]
The primary purpose of this paper is to construct ancient solutions to the Yamabe flow \eqref{YF-intro} that explicitly break this symmetry.
To the best of our knowledge, these constitute the first known non-rotationally symmetric examples.
We produce uncountably many distinct families of such solutions, modulo conformal equivalence, a result that underscores the severe challenges inherent in any general classification of the ancient regime.

\subsection{Statement of the main results}
The main result of this paper can be summarized as follows.
\begin{theorem}[Informal statement]\label{thm1}
For every $n \ge 3$, the Yamabe flow \eqref{YF-ori} on $\Ss^n$ admits uncountably many pairwise different families of geometrically distinct non-rotationally symmetric type II ancient solutions.
At every negative time, the Ricci curvature tensor of each solution is indefinite at some point.
\end{theorem}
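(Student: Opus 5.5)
We plan a non-radial inner--outer gluing construction after stereographic projection. Writing $g(t)=v^{\frac{4}{n-2}}g_{\Ss^n}$ and $u=(\frac{2}{1+|y|^2})^{\frac{n-2}{2}}v$ on $\R^n$, equation \eqref{FDE-sphere-intro} becomes the conformally invariant problem
\[
(u^p)_t=\Delta u\quad\text{on }\R^n\times(-\infty,t_0),
\]
with $u$ required to decay like $|y|^{2-n}$ times a function of $t$, which is Kelvin-invariance, so that the metric extends smoothly across the north pole.

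\textbf{Approximate solution.} We take the ansatz
\[
u_*=\sum_{j=1}^k U_{\mu_j(t),\xi_j(t)}+\text{(background)},
\]
where $U_{\mu,\xi}(y)=\mu^{-\frac{n-2}{2}}U(\frac{y-\xi}{\mu})$ are standard bubbles and the centers $\xi_j$ sit at the vertices of a configuration without $O(n)$-symmetry, for instance $k\ge2$ points on a sphere in generic position. The scales satisfy $\mu_j(t)\to0$ as $t\to-\infty$ at a rate fixed by the interaction balance: the sphere splits backward in time into $k$ touching round spheres, which is what produces the wedge-sum backward limit. The continuous freedom in the configuration (the positions $\xi_j^\infty$ modulo conformal maps, together with ratios of the scales) yields the uncountable family. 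Inequivalence will follow from conformal invariants of the limit configuration, for example cross-ratios of the $\xi_j^\infty$, once $k\ge4$ or $n$ is large enough to allow non-Möbius-equivalent point sets.

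\textbf{Gluing.} We write $u=u_*+\sum_j\eta_j\phi_j(\frac{y-\xi_j}{\mu_j},t)+\psi$. The inner problems are linearized around each bubble, $p\,U^{p-1}\phi_\tau=\Delta\phi+\dots$ in self-similar time. They are solved with orthogonality conditions against the kernel $\{\partial_{\mu}U,\nabla_\xi U\}$; these conditions give modulation ODEs for $\mu_j,\xi_j$. The negative direction (the unstable eigenvalue of $\Delta+pU^{p-1}$ in the weighted sense) is handled by choosing initial data at $t=t_1\to-\infty$ via a degree or Brouwer argument, and then letting $t_1\to-\infty$. The outer problem is treated on the sphere side: near the north pole we use the Kelvin transform, switching to spherical coordinates, so that the decay at infinity is automatic. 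We will use weighted Hölder norms of the form
\[
\|\phi\|=\sup\,\mu^{-a}(1+|z|)^{b}|\phi|+\text{Hölder part}
\]
and close the scheme by a fixed point argument.

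\textbf{Properties.} Type II follows because $|\Rm|\sim\mu_j^{-2}\cdot(\text{conformal factor})$ on the bubble cores, and $|t|\mu_j^{-2}$-type quantities are unbounded relative to the type I rate. Ricci indefiniteness is read off from the explicit neck region between two bubbles: there the metric is close to $U_1+U_2$, whose Ricci tensor has both signs, and the pointwise weighted bounds make the error smaller than this. Non-rotational symmetry is clear from the configuration of the $\xi_j$.

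\textbf{Main obstacle.} The hardest step is the linear theory for the non-radial inner problem uniformly as $t\to-\infty$, in particular controlling all spherical-harmonic modes without an ODE reduction, together with matching the slow algebraic decay of the scales. For this we first try to establish mode-by-mode a priori estimates via blow-up contradiction in the weighted norms.
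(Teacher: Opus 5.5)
Your outline is a reasonable gluing strategy, but two load-bearing steps do not hold up as written.

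\textbf{Type II is located in the wrong region.} Via \eqref{ubarvrel}, the metric of \eqref{pb1} at time $1-e^{-t}$ equals $e^{-t}g_{u(t)}$, where $u$ solves \eqref{pb} in the relabeled time $t$. (In $(u^p)_t=\Delta u$ itself the bubbles are not approximate solutions; you need this self-similar change.) Hence $|1-e^{-t}|\,|\Rm|\simeq|\Rm_{g_{u(t)}}|$, and \eqref{eq:typeI} says exactly that $\sup_x|\Rm_{g_u}|$ stays bounded as $t\to-\infty$; the contracting sphere is the steady state $u=U$.

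On a bubble core $g_u$ is $C^2$-close to a fixed round sphere, so the curvature there is bounded. The cores are therefore of type I size, and no positive lower bound on $\sup|\Rm|$ coming from them can detect type II. The growth sits in the necks. At a point at distance of order one from all centers (the origin in the polygonal case), $u\simeq\msf=\msc_n\mu^{\frac{n-2}{2}}\sum_l|x-\xi_l|^{2-n}$. The Euclidean components of $\Ric$ from \eqref{eq:Ric-conformal-u} are then of order one and nonzero (e.g.\ $2(n-2)d^{-2}$ on $\mathbf e_m$, $m\ge3$), while $g\approx\msf(0)^{\frac{4}{n-2}}g_{\R^n}\sim\mu^2g_{\R^n}$. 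So $|\Ric_g|_g\sim\mu^{-2}\sim|t|^{\frac{2}{n-2}}\to\infty$.

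The same neck analysis gives the Ricci sign, by a mechanism your sketch omits. Since $\msf$ is harmonic, $g_\msf$ is scalar-flat and $\Ric_{g_\msf}$ is trace-free, so any nonzero eigenvalue forces both signs; the weighted $C^2$ bounds on $\psi$ make the corrections $o(1)$. For a generic configuration, some neck point has $\Ric_{g_\msf}\neq0$: otherwise $g_\msf$ would be flat on an open set, forcing $\msf$ to be constant or a single multiple of $|x-x_0|^{2-n}$. Writing ``close to $U_1+U_2$, whose Ricci tensor has both signs'' is not yet an argument.

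\textbf{Your route to uncountability differs from the paper's, and its crux is unproved.} The paper never treats an asymmetric configuration. There, $k$-fold rotation reduces everything to one inner problem, evenness removes $Z_2,\dots,Z_n$, and Kelvin invariance ties $Z_1$ to $Z_{n+1}$ through the single mode \eqref{eq:mczn1}. This leaves one ODE for $\mu$ (two per layer in Theorem \ref{thm3}), and uncountability comes from the freely prescribed heights $\vth_l^*$.

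With $k\ge4$ generic points you lose all of this. You must run the full $k(n+1)$-dimensional modulation system and derive, rather than assume, that every limit configuration is admissible. It is true, for a reason you should state:
the projections onto $Z_1,\dots,Z_n$ give $|\dot\xi_j|\lesssim\mu^{n-1+\ga}$ (cf.\ the $\vth_l$-equation in \eqref{ode2}). This is integrable on $(-\infty,t_0]$ because $\mu\sim|t|^{-\frac{1}{n-2}}$, so the centers converge to prescribed limits.
The scales then need a positive solution of $\lambda_j^{\frac{n-2}{2}}\sum_{i\ne j}\lambda_i^{\frac{n-2}{2}}|\xi_i^\infty-\xi_j^\infty|^{2-n}=\mathrm{const}$, with $\mu_j\simeq\lambda_j|t|^{-\frac{1}{n-2}}$. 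Such a solution exists, but for $k\ge3$ it fixes the ratios $\lambda_i/\lambda_j$, so the scale ratios are not extra moduli.

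Given such a construction, your inequivalence criterion is sound. Geometric equivalence moves the $t\to-\infty$ concentration set by a M\"obius map, so cross-ratios of $k\ge4$ limit points separate solutions; the dimension plays no role. The same criterion would also separate the paper's multi-layer families.

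Two simplifications are available. In the ancient setting, the forward-growing mode $Z_0\propto U$ (rate $e^{t}$) is solved by integrating backward from $t_0$, so no degree argument is needed. And the paper avoids mode-by-mode estimates by lifting the inner problem to $\Ss^n$, where the linearization is $\pp_t-\frac1n(\Delta_{\Ss^n}+n)$ plus small terms, with neutral space spanned by $z_1,\dots,z_{n+1}$.
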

\noindent The precise definition of ``geometrically distinct ancient solutions" to \eqref{YF-ori} on $\Ss^n$ will be given after \eqref{eq:bubble2}.

\medskip
As a preliminary step of the proof, we push forward the metric $g(t) = v^{\frac{4}{n-2}}(\cdot,t) g_{\Ss^n}$ on $\Ss^n$ to $\R^n$ by stereographic projection $\pi^{-1}: \Ss^n \setminus \{N\} \to \R^n$,
where $N:=(0,\dots,0,1) \in \R^{n+1}$ is the north pole of $\Ss^n$; see \eqref{stereo-proj}--\eqref{stereo-proj-2} below for the explicit formulas for $\pi$ and $\pi^{-1}$.
The resulting metric is given by $\pi^*g(t)=\bar v^{\frac{4}{n-2}}(\cdot,t) g_{\R^n}$, where $g_{\R^n}$ denotes the canonical Euclidean metric and the conformal factor $\bar v$ satisfies a simpler evolution equation than \eqref{FDE-sphere-intro}:
\be \label{pb1}
(\bar v^{p})_{t}=\Delta \bar v, \quad \bar v > 0.
\ee
A necessary condition for $\pi^*g(t)$ to extend smoothly across the point at infinity (and hence to correspond to a smooth solution of \eqref{FDE-sphere-intro} on $\mathbb S^n$) is that
\[
\bar v(y,t)=O(|y|^{2-n}) \quad\text{as }|y|\to\infty, \quad \text{for each } t.
\]
We define the rescaled conformal factor $u(x,\tau)$ by the transformation
\be \label{ubarvrel}
u(x,\tau) = (1-t)^{-\frac{1}{p-1}}\bar v(x,t), \quad \text{where } t = 1-e^{-\tau}.
\ee
Relabeling the time variable $\tau$ as $t$ for notational convenience, we observe from \eqref{pb1} that $u(x,t)$ satisfies the rescaled fast-diffusion equation
\be \label{pb}
p u^{p-1} \pp_t u = \Delta u + \frac{n+2}{4}u^{p}, \quad u > 0.
\ee
This equation will serve as the main equation in our analysis, since it is better suited to tracking global bubble interactions than either the flow \eqref{FDE-sphere-intro} on $\Ss^n$ or the flow \eqref{pb1} in $\R^n$.
Nevertheless, it will be necessary to switch between this formulation and its conformal reformulation on $\Ss^n$; see Subsection \ref{sub1.3}(A).

Let $U$ be the standard bubble defined as
\be \label{eq:bubble}
U(y) := \frac{\msc_n}{(1+|y|^2)^{\frac{n-2}{2}}} \quad \text{for } y \in \R^n, \qquad \text{where } \msc_n := \left[\frac{4n(n-2)}{n+2}\right]^{\frac{n-2}{4}}.
\ee
Then all positive steady states of \eqref{pb} are given by bubbles
\be \label{eq:bubble2}
U_{\mu,\xi}(x) = \mu^{-\frac{n-2}{2}}U\big(\mu^{-1}(x-\xi)\big) \quad \text{for } (\mu,\xi) \in (0,\infty) \times \R^n,
\ee
which will constitute the basic building blocks in our construction.

\medskip
Let $u$ be a solution of \eqref{pb}. For any conformal transformation
$\msT$ of $\R^n\cup\{\infty\}$ generated by translations, rotations,
dilations and the Kelvin transform, and for any $\mst_0\in\R$, set
\be\label{trans4}
u_{\msT,\mst_0}(x,t) := |J_{\msT}(x)|^{\frac{n-2}{2n}} u(\msT(x),t-\mst_0),
\ee
where $J_{\msT}$ is the Jacobian matrix of $\msT$. Then $u_{\msT,\mst_0}$ is again a solution of \eqref{pb}. 
We say that two solutions of \eqref{pb} are geometrically equivalent if one can be obtained from the other in this way.
Equivalently, we identify solutions up to spatial translations, rotations, dilations, Kelvin transformations, and time translations.
Accordingly, two ancient solutions to the Yamabe flow \eqref{YF-ori} on $\Ss^n$ are said to be geometrically distinct if their corresponding solutions of \eqref{pb} are not geometrically equivalent.

\medskip
Let $k\ge2$ be any integer. Our first construction yields ancient solutions to \eqref{pb} consisting of $k$ bubbles that blow up at a common rate as $t \to -\infty$, with centers that asymptotically form the vertices of a regular $k$-gon centered at the origin in the $x_1x_2$-plane.
\begin{theorem}\label{thm:main}
Assume that $n \ge 3$ and choose any integer $k\ge2$. There exists an ancient solution $u$ to the rescaled Yamabe flow \eqref{pb} on $\R^n\times(-\infty,0)$ of the form
\be \label{so1.2}
u(x,t) = \sum_{j=1}^k U_{\mu(t),\xi_j(t)}(x) + \Xi(x,t) \quad \text{for } x = (x_1,x_2,\dots,x_n) \in \R^n \text{ and } t \in (-\infty,0),
\ee
where $\Xi$ is a correction term that is small in a suitable weighted H\"older norm. Moreover, the solution $u$ satisfies the following properties:
\begin{enumerate}
\item[(i)]
For each $j=1,\dots,k$, it holds that
\[
\mu(t) \sim |t|^{-\frac{1}{n-2}} \quad \text{and} \quad \xi_j(t) = d(t)\, {\bf q}_j
\]
with $d(t) := \sqrt{1-\mu^2(t)} \to 1$ as $t \to -\infty$, where ${\bf q}_j := (e^{{2\pi(j-1) \over k}i}, 0) \in \R^2 \times \R^{n-2}$. (We define the notation $\sim$ and related symbols in Subsection \ref{subsec:notations}.)
\item[(ii)]
The solution $u(x,t)$ is non-radial for all $t \in (-\infty,0)$ and this non-radiality persists in the limit $t\to -\infty$.
\item[(iii)]
For each $t < 0$, the function $u(\cdot,t)$ is Kelvin invariant, namely,
\[
u(x,t) = \frac{1}{|x|^{n-2}}\, u \(\frac{x}{|x|^2}, t\) \quad \text{for } (x,t) \in \R^n \times (-\infty,0).
\]
\item[(iv)]
When lifted to $\Ss^n$ via stereographic projection, the function $u$ induces a smooth non-rotationally symmetric ancient Yamabe flow on $\Ss^n$.
\item[(v)]
The ancient solution $u$ is of type II.
\item[(vi)]
For each $t \in (-\infty,0)$, the Ricci curvature of the metric $u^{\frac{4}{n-2}}(t) g_{\R^n}$ is indefinite somewhere.
\end{enumerate}
\end{theorem}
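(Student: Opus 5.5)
The construction is an inner--outer gluing for \eqref{pb}, imposing the symmetries from the start. We work in the class $\mathcal{S}$ of functions that are invariant under the dihedral group $D_k$ acting on the $x_1x_2$-plane, even in each of $x_3,\dots,x_n$, and Kelvin invariant as in (iii). Equation \eqref{pb} commutes with the Kelvin transform and with isometries, so $\mathcal{S}$ is preserved. The Kelvin map fixes the unit sphere, and the centers $\xi_j=d\,{\bf q}_j$ with $d^2+\mu^2=1$ are chosen so that each bubble $U_{\mu,\xi_j}$ is itself Kelvin invariant: the Kelvin image of $U_{\mu,\xi}$ is $U_{\mu',\xi'}$ with $\xi'=\xi/(|\xi|^2+\mu^2)$ and $\mu'=\mu/(|\xi|^2+\mu^2)$, so $|\xi|^2+\mu^2=1$ gives a fixed point. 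Hence the ansatz $\sum_j U_{\mu,\xi_j}$ lies in $\mathcal{S}$, and the only free modulation parameter left is $\mu(t)$.

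Next we compute the error $S[\bar u]$ of the ansatz $\bar u=\sum_j U_{\mu,\xi_j}$. Near $\xi_j$, in the variable $y=(x-\xi_j)/\mu$, it splits into two parts:
\begin{itemize}
\item the time-derivative part $p\bar u^{p-1}\dot\mu\,\pp_\mu U_{\mu,\xi_j}$;
\item the interaction part $pU^{p-1}\sum_{i\ne j}U_{\mu,\xi_i}\approx pU^{p-1}\mu^{\frac{n-2}{2}}\sum_{i\ne j}c_n|{\bf q}_i-{\bf q}_j|^{2-n}$.
\end{itemize}
Projecting onto the dilation kernel $Z_{n+1}=\frac{n-2}{2}U+y\cdot\nabla U$ gives the reduced ODE $\dot\mu\,\mu^{-1}\cdot c\approx -\,c'\mu^{n-2}\sum_{i\neq j}|{\bf q}_i-{\bf q}_j|^{2-n}$, up to a positive constant. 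Its solution is $\mu(t)\sim|t|^{-\frac{1}{n-2}}$, giving (i). The translation kernels $\pp_{y_l}U$ are killed by symmetry: the $D_k$ and evenness symmetries remove all translations except the radial direction ${\bf q}_j$, and that one is tied to $\mu$ by Kelvin invariance. The constraint $d=\sqrt{1-\mu^2}$ costs nothing because the Kelvin-odd part of the radial translation cancels.

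We then write $u=\bar u+\sum_j\eta_j\mu^{-\frac{n-2}{2}}\phi_j(y,t)+\psi$, with cutoffs $\eta_j$ supported in $|x-\xi_j|\le\delta$. The outer problem for $\psi$ is a linear heat-type equation with small potential. We solve it in a weighted H\"older space with decay $|x|^{2-n}$ at infinity; Kelvin invariance lets us treat the region outside the unit ball by reflection, or equivalently on $\Ss^n$, where the equation is uniformly parabolic and compactness is easy. The inner problems are $pU^{p-1}\pp_t\phi=\Delta_y\phi+pU^{p-1}\phi+h$ in slow time. For these we need a linear theory giving ancient solutions with decay $\langle y\rangle^{-a}$, provided $h$ satisfies the orthogonality condition against $Z_{n+1}$ only, since the other kernels vanish by symmetry. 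That condition is met by choosing $\mu=\mu_0+\mu_1$ through a nonlocal ODE, and a Schauder fixed point in the space $\mathcal{S}$ closes the argument. The weights must be handled carefully in low dimensions $n=3,4$, where $U\notin L^2$ and $Z_{n+1}$ decays slowly. I expect this inner linear theory, with ancient-in-time estimates and correct weights uniformly in $n\ge3$, together with closing the fixed point for $\mu$, to be the main obstacle. My first approach would be a blow-up/contradiction argument for the a priori estimate, following the heat-flow gluing literature, combined with explicit barriers.

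The remaining properties follow from pointwise bounds on $\Xi$.
\begin{itemize}
\item (ii): $u(\cdot,t)$ concentrates at $k\ge2$ non-antipodal configuration points, so it is not radial about any point, and after rescaling the $k$-gon shape persists as $t\to-\infty$.
\item (iv): Kelvin invariance together with smoothness gives a smooth extension across $N$. Pulling back the time change \eqref{ubarvrel} yields an ancient flow on $\Ss^n$ defined for $t\in(-\infty,1)$. It is not rotationally symmetric, since an $O(n)$-symmetric flow about some axis would have to concentrate at one or two points.
\item (v): the curvature of $u^{4/(n-2)}g$ at $\xi_j$ is of order $\mu^{-2}\sim|t|^{2/(n-2)}$ in rescaled time. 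Converting to the original time $t'$ and the volume-normalized metric, $|t'|\max|\Rm|\to\infty$, which is exactly the failure of \eqref{eq:typeI}.
\item (vi): near a bubble the metric is close to a round sphere, so the Ricci curvature is positive there. In the neck region between two adjacent bubbles, the metric is close to a conformal factor $\sim\mu^{\frac{n-2}{2}}(|x-\xi_i|^{2-n}+|x-\xi_j|^{2-n})$. Using $\Ric=-(n-2)w^{-1}\nabla^2w+\dots$ for $g=w^{4/(n-2)}\delta$, the Ricci tensor there has opposite signs in the normal and tangential directions, since harmonic-function Hessians are traceless and nonzero. The bounds on $\Xi$ preserve this sign pattern.
\end{itemize}
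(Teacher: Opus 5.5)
Your overall scheme matches the paper's. You use the same symmetry class and the same ansatz with $d=\sqrt{1-\mu^2}$, which makes each bubble Kelvin invariant. You keep a single modulation parameter because Kelvin invariance ties the $Z_1$- and $Z_{n+1}$-projections together; this is the paper's mode $\mathcal{Z}_{n+1}$. And you run an inner--outer scheme that uses $\Ss^n$ near infinity. One step is an improvement: reading Kelvin invariance as evenness of $\hat u$ in $z_{n+1}$ transfers smoothness and positivity from $x=0$ to $N$, which is a cleaner route to the regularity in (iv) than the paper's refined norms. One step is understated: the outer equation is not uniformly parabolic on $\Ss^n$ either. The coefficient of $\pp_t$ ranges from about $\mu^{-2}$ at the bubbles to about $\mu^{2}$ away from them, and the paper needs weighted energy estimates, barriers and a time change to handle this. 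Beyond that, three steps are wrong as stated.

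\emph{Sign of the modulation law.} The two pieces of the error enter with opposite signs: $E=\Delta u_*+\frac{n+2}{4}u_*^p-pu_*^{p-1}\pp_t u_*$. Since $\pp_\mu U_{\mu,\xi}=-\mu^{-n/2}Z_{n+1}(y)$ and $\int U^{p-1}Z_{n+1}<0$, the projection gives $pa_1\mu^{-1}\dot\mu=\frac{n+2}{4}pa_2A_k\mu^{n-2}$ with $a_1,a_2>0$, so $\dot\mu>0$. Your relation with $c,c'>0$ gives $\frac{d}{dt}\mu^{2-n}>0$ instead. Then $\mu^{2-n}$ hits $0$ at a finite negative time, and there is no ancient branch with $\mu\to0$. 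This sign is exactly what makes the construction ancient.

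\emph{Type II.} The curvature at $\xi_j$ is not of order $\mu^{-2}$. For every $(\mu,\xi)$, $U_{\mu,\xi}^{4/(n-2)}g_{\R^n}$ is isometric to one fixed round sphere, so $|\Rm_{g(t)}|=O(1)$ in the bubble regions. Moreover \eqref{ubarvrel} is exactly the type I rescaling. With $s=1-e^{-t}$ the original time, the unnormalized metric is $(1-s)\,u^{4/(n-2)}g_{\R^n}$, so $|s|\,|\Rm|$ along the original flow is asymptotic to $|\Rm_{g(t)}|$. Bounded rescaled curvature therefore means type I; indeed the steady bubble of \eqref{pb} is the shrinking sphere.

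The blow-up sits in the neck. At $x=0$ you have $u\approx\msf\sim\mu^{\frac{n-2}{2}}$. Up to $o(1)$, the coordinate components of $\Ric_{g(t)}(0)$ are $O(1)$ and nonzero, e.g.\ $2(n-2)d^{-2}$ on ${\bf e}_m$ for $m\ge3$. Meanwhile $u^{\frac{4}{n-2}}(0,t)\sim\mu^2$, hence $|\Ric_{g(t)}|_{g(t)}(0)\gtrsim\mu^{-2}\sim|t|^{\frac{2}{n-2}}\to\infty$. This is the same computation you already need for (vi), so the repair is at hand. The paper's Proposition \ref{proppro}(1) also works only in the bubble region and gets only an $O(1)$ lower bound. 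For the reason above, that bound does not contradict \eqref{eq:typeI}; the neck computation is what actually proves (v).

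\emph{Non-rotational symmetry for $k=2$.} Your argument excludes axial symmetry only when there are at least three concentration points, and for $k=2$ the failure is structural. Because $|\xi|^2+\mu^2=1$, one has $\hat U_{\mu,\xi}(z)=\msc_n(\mu/2)^{\frac{n-2}{2}}(1-\xi\cdot\bz)^{-\frac{n-2}{2}}$. Hence
\[
\hat u_*=\msc_n\Big(\frac{\mu}{2}\Big)^{\frac{n-2}{2}}\Big[(1-dz_1)^{-\frac{n-2}{2}}+(1+dz_1)^{-\frac{n-2}{2}}\Big],
\]
a function of $z_1$ alone, so it is $O(n)$-symmetric about the $z_1$-axis. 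After the rotation of $\Ss^n$ exchanging $z_1$ and $z_{n+1}$, it is exactly the radial two-bubble tower $U_{\lambda,0}+U_{1/\lambda,0}$ with $2\lambda/(1+\lambda^2)=\mu$. So nothing at leading order breaks the symmetry, and you would have to show that $\Xi$ does; there is no mechanism for that. The paper does not settle this either: it infers (iv) from non-radiality in $\R^n$, which says nothing about symmetry about other axes of $\Ss^n$. For $k\ge3$ your argument is fine. For $k=2$, this part of the statement is not established by either proof.
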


\medskip
Our second construction generalizes the previous result by producing ancient solutions whose blow-up points are arranged in a multiple-layer configuration, namely at the vertices of coaxial regular polygons lying in planes parallel to the $x_1x_2$-plane.
\begin{theorem}\label{thm3}
Assume that $n \ge 3$. Given any integer $h\ge 1$, we choose a $h$-tuple $(\vth_1^*,\dots,\vth_h^*)$ of numbers satisfying
\[
\begin{cases}
-1 < \vth_1^* < \dots < \vth_h^* < 1 &\text{if } n \ge 4,\\
-\frac{1}{\sqrt{3}} < \vth_1^* < \dots < \vth_h^* < \frac{1}{\sqrt{3}} &\text{if } n=3.
\end{cases}
\]
Let $k \ge 2$ be another arbitrary integer. There exists an ancient solution $u$ to the rescaled Yamabe flow \eqref{pb} on $\R^n\times(-\infty,0)$ of the form
\[
u(x,t) = \sum_{l=1}^{h}\sum_{j=1}^k U_{\mu_l(t),\xi_{jl}(t)}(x) + \Xi(x,t) \quad \text{for } (x,t) \in \R^n \times (-\infty,0),
\]
where $\Xi$ is a correction term that is small in a suitable weighted H\"older norm. Moreover, for each $j=1,\dots,k$ and $l=1,\dots,h$, it holds that
\be \label{parasym0}
\mu_l(t) \sim |t|^{-\frac{1}{n-2}} \quad \text{and} \quad {\xi}_{jl}(t) = d_l(t)\Big(\sqrt{1-\vth_l^2(t)}e^{\frac{2\pi(j-1)}{k} i}, \vth_l(t), 0\Big) \in \R^2 \times \R \times \R^{n-3}
\ee
with $d_l(t) := \sqrt{1-\mu_l^2(t)} \to 1$ and
\be \label{parasym}
\begin{cases}
\vth_l(t) = \vth_l^* &\text{if } h=1,\\
|\vth_l(t)-\vth_l^*| \lesssim |t|^{-\frac{1+\ga}{n-2}} \text{ with } \ga>0 \text{ small} &\text{if } h\ge 2
\end{cases}
\ee
as $t \to -\infty$. The solution $u$ satisfies properties \textup{(ii)--(vi)} stated in Theorem \ref{thm:main}.
\end{theorem}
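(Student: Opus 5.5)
The construction follows the inner--outer gluing scheme used for Theorem \ref{thm:main}, now with $hk$ bubbles placed on $h$ coaxial regular $k$-gons on the unit sphere $\{|x|=1\}$.

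\emph{Symmetry reduction.} We impose the symmetries that fix the configuration \eqref{parasym0}:
\begin{itemize}
\item invariance under the rotation by $2\pi/k$ in the $x_1x_2$-plane;
\item evenness in $x_2$;
\item evenness in each of $x_4,\dots,x_n$;
\item the Kelvin invariance (iii).
\end{itemize}
Because every center lies on $|x|=1$, with $|\xi_{jl}|=d_l\approx 1$ chosen so that $U_{\mu_l,\xi_{jl}}$ is essentially Kelvin invariant, each layer is mapped to itself. After this reduction only one bubble per layer is free. Its parameters are the scale $\mu_l(t)$ and the height $\vth_l(t)$. The remaining translation modes and the dilation/rotation modes are either killed by symmetry or slaved to these.

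\emph{Ansatz and error.} The approximate solution is $\sum_{l,j}U_{\mu_l,\xi_{jl}}$, plus a nonlocal correction in the outer region solving the linear heat-type problem. For that correction we switch to the spherical formulation near infinity, exactly as in the proof of Theorem \ref{thm:main}. We compute the error of \eqref{pb} near each bubble, and the interaction terms are the crucial part. Projecting the error onto the dilation kernel $Z_{n+1}$ gives an ODE of the form
\[
\dot\mu_l \approx c\,\mu_l^{n-1}\Big(a_0-\sum_{(i,m)\ne(l,1)}\frac{\mu_m^{(n-2)/2}\mu_l^{-(n-2)/2}\cdots}{|\xi_{1l}-\xi_{im}|^{n-2}}\Big),
\]
which yields $\mu_l\sim|t|^{-1/(n-2)}$. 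Projecting onto the translation kernel in the $x_3$-direction gives a gradient-flow law for $\vth_l$. The driving force there is the gradient in $\vth_l$ of the interaction energy between layers, together with the $\vth$-dependence of the intralayer interaction through the ring radius $\sqrt{1-\vth_l^2}$.

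\emph{The height equations.} For $h=1$, the reflection $x_3\mapsto 2\vth^*-x_3$ is not a symmetry. Instead we choose the ring radius and the scale jointly; more precisely, we use the freedom in $a_0$ and in the leading-order profile to cancel the $x_3$-projection, and then $\vth_1\equiv\vth_1^*$. I expect the leading projection to be of lower order than the scaling projection, so the fixed $\vth^*$ is admissible. For $h\ge2$, the interactions of order $\mu^{n-2}\sim|t|^{-1}$ drive $\dot\vth_l=O(|t|^{-1-\frac{1}{n-2}}\cdots)$, which is integrable. Solving backward from $-\infty$ with data $\vth_l^*$ then gives \eqref{parasym}. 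The restriction $|\vth^*|<1/\sqrt3$ for $n=3$ should come from requiring a definite sign in the $n=3$ slow-decay terms, where the bubble tail $|x|^{-1}$ is only borderline integrable. Concretely, the sign condition is positivity of a quantity like $1-3\vth^2$ appearing in the $x_3$-projected outer correction, and I would verify this by explicit computation.

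\emph{Fixed point and geometric properties.} With the modulation laws in hand, we set up the inner problems around each bubble with the orthogonality conditions above, and the outer problem in weighted H\"older spaces. The estimates are identical to those for Theorem \ref{thm:main}, since only finitely many more bubbles appear and they are well separated ($\vth_l^*$ distinct, all on the sphere). A Schauder fixed point in the symmetric class then closes the construction. Properties (ii)--(vi) follow as before:
\begin{itemize}
\item non-radiality and the lift to $\Ss^n$ follow from Kelvin invariance and the decay of $\Xi$;
\item the type II property follows since $|\Rm|\sim\mu^{-2}\sim|t|^{2/(n-2)}$, so that $|t|\max|\Rm|\to\infty$;
\item indefiniteness of the Ricci curvature comes from the neck region between bubbles, where the bubble tails dominate, as in Theorem \ref{thm:main}.
\end{itemize}

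The main obstacle is the $\vth$-modulation: computing the sign and order of the $x_3$-projection of the multi-layer interaction, and showing that it is either compatible with the fixed heights ($h=1$) or integrable ($h\ge2$), especially in dimension $3$.
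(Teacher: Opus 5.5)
Your overall architecture matches the paper's. You use the same symmetry class: rotation by $2\pi/k$, evenness in $x_2,x_4,\dots,x_n$, and Kelvin invariance. You have two modulation parameters $(\mu_l,\vth_l)$ per layer, with the radial translation slaved to dilation through the Kelvin-compatible mode $\mcz_{(n+1)l}$. And the height law is integrated from $-\infty$, so the limits $\vth_l^*$ are free. No sign information is needed in that height law: the $Z_3$-projection of the interaction is $O(\mu_l^{n-1})$, below the $O(\mu_l^{n-2+\ga})$ remainders, so $(d_l\vth_l)'=O(\mu_l^{n-1+\ga})$ is integrable whatever its sign.

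\textbf{The $h=1$ case fails.} There is no constant $a_0$ to tune. The scaling law is $pa_1\mu_l^{-1}\dot\mu_l=\frac{n+2}{4}pa_2f_{1l}(\bm{\vth}^*)\mu_l^{n-2}+O(\mu_l^{n-2+\ga})$, driven purely by the interaction sum $f_{1l}>0$. Its positivity is what makes $\mu_l\to0$ as $t\to-\infty$, and your ``$a_0-\sum$'' obscures exactly this sign. Kelvin invariance also forces $d_l=\sqrt{1-\mu_l^2}$, so the ring radius is not free either. Being lower order only makes $\dot\vth_1$ integrable. Exact constancy $\vth_1\equiv\vth_1^*$ would need the tangential projection to vanish identically, and no free parameter can arrange that.

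The paper handles $h=1$ by asserting that the rescaled error is even in $y_3$. Your observation that reflection in $x_3$ is not a symmetry bears on this too. With $\vth_1$ constant, $\dot\xi_{11}$ has third component $\dot d_1\vth_1^*\neq0$, which puts the odd term $p\mu_1^{-1}\dot d_1\vth_1^*U^{p-1}Z_3$ into the rescaled error. What the machinery does deliver for $h=1$ is the $h\ge2$ argument with $f_{31}=0$. That gives $|\vth_1(t)-\vth_1^*|\lesssim|t|^{-\frac{1+\ga}{n-2}}$ rather than exact constancy.

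\textbf{The $n=3$ restriction and property (vi).} You misplace the condition $|\vth_l^*|<1/\sqrt3$. It plays no role in existence or modulation; the paper notes that (ii)--(v) hold for all $-1<\vth_1^*<\dots<\vth_h^*<1$. It is used only for (vi), and (vi) does not follow ``as in Theorem \ref{thm:main}''. With several layers the neck profile $\msf\propto\sum_{l,j}\mu_l^{\frac{n-2}{2}}|x-\xi_{jl}|^{2-n}$ has $\nabla\msf(0)$ equal to a nonzero multiple of $J_2{\bf e}_3$ in general (paper's notation $J_1,\dots,J_4$). So the gradient terms of the conformal Ricci formula enter at the origin. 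Since $\Delta\msf(0)=0$ gives $R_{g_\msf}(0)=0$, one needs a nonzero eigenvalue of $\Ric_{g_\msf}(0)$.
\begin{itemize}
\item For $n\ge4$ the paper uses ${\bf e}_m$, $m\ge4$, with eigenvalue $2(n-2)\bigl(J_3/J_1-J_2^2/J_1^2\bigr)$. This is positive by Cauchy--Schwarz: $J_2^2\le J_1J_4<J_1J_3$, since $|\vth_l|<1$.
\item For $n=3$ no such direction exists. Nonvanishing reduces to $J_1(J_3-3J_4)+2J_2^2\neq0$, which holds because $J_3-3J_4=\sum_l\mu_l^{1/2}d_l^{-3}(1-3\vth_l^2)>0$. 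That is where $1/\sqrt3$ enters.
\end{itemize}

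\textbf{Property (v).} Your scale $|\Rm|\sim\mu^{-2}$ is the right mechanism, but it lives in the neck, not at the bubble cores. At the origin $u\approx\msf=O(\mu^{\frac{n-2}{2}})$ and $\Ric_{g_\msf}(0)\neq0$, which gives the $\mu^{-2}$ growth. At the cores, $g(t)$ is close to a round sphere of fixed size. In the variables of \eqref{pb}, type I for the original flow means $\sup_x|\Rm_{g(t)}|$ stays bounded; the static bubble is the shrinking sphere. So it is this neck growth, not the positive lower bound at the cores that the paper records, that actually rules out type I.
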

\begin{remark}\label{rmk:thm13}
\leavevmode

\begin{enumerate}
\item[(a)]
Theorem \ref{thm1} is an immediate consequence of Theorems \ref{thm:main} and \ref{thm3}.

More precisely, the explicit gluing data in Theorems \ref{thm:main} and \ref{thm3} give a natural, informal moduli description of the families constructed here:
Fix the number $h\ge 1$ of layers. When $h\ge 2$, fix in addition an unordered $(h-1)$-tuple of adjacent layer separations 
$(\vth_2^*-\vth_1^*,\vth_3^*-\vth_2^*,\dots,\vth_h^*-\vth_{h-1}^*)$, where tuples differing only by a permutation of their entries are identified.
With these choices fixed, our construction yields a family of geometrically distinct non-rotationally symmetric type II ancient solutions indexed by $k\ge 2$, the number of bubbles in each layer.
Changing $h$ and the layer separation $(h-1)$-tuple yields uncountably many different families.
\item[(b)]
Let $(M,g_0)$ be an $n$-dimensional smooth closed Riemannian manifold such that the conformal class $[g_0]$ has positive Yamabe constant.
Then any ancient solution of the volume-normalized Yamabe flow \eqref{YF-orin} gives rise, via the standard homothetic rescaling and time reparametrization, to an ancient solution of the unnormalized Yamabe flow \eqref{YF-ori}.
Conversely, if $g(t)$ is an ancient solution of the unnormalized Yamabe flow \eqref{YF-ori} whose Yamabe energy
\[
\mathcal{Y}(g(t)) := \frac{\int_M R_{g(t)}\,dv_{g(t)}}{\(\int_M dv_{g(t)}\)^{\frac{n-2}{n}}}, \quad \text{where $dv_{g(t)}$ is the volume form on $(M,g(t))$,} 
\]
remains uniformly bounded above as $t\to -\infty$, then the corresponding volume-normalized Yamabe flow \eqref{YF-orin} is also ancient.
In particular, our solutions satisfy the uniform energy bound condition, and hence they also produce ancient solutions of \eqref{YF-orin}.
\item[(c)]
In the context of the volume-normalized Yamabe flow \eqref{YF-orin}, the solutions found in Theorems \ref{thm:main} and \ref{thm3} asymptotically take the form of identical round spheres connected by a neck whose diameter shrinks to zero as $t \to -\infty$. 
Accordingly, the limiting topological space is naturally identified with the wedge sum of $hk$ identical round spheres $\bigvee_{j=1}^{hk} \Ss^n$.

In particular, when $h=1$ and $k=2$, the solution has the geometry of a pinched dumbbell, consisting of two nearly spherical regions joined by a neck that collapses to a point in the limit.

In contrast, the limiting topological space of the $k$-bubble tower solution constructed in \cite{dds} is a chain of $k$ round spheres.
Hence, for $h=1$, it is not homeomorphic to our limiting space unless $k=2$, although the two spaces have the same homotopy type.

See Figure \ref{pic} for schematic illustrations of our solutions.
\begin{figure}[htbp]
    \centering
    
    \begin{subfigure}{\textwidth}
    \centering
    \includegraphics[width=12cm, height=6cm]{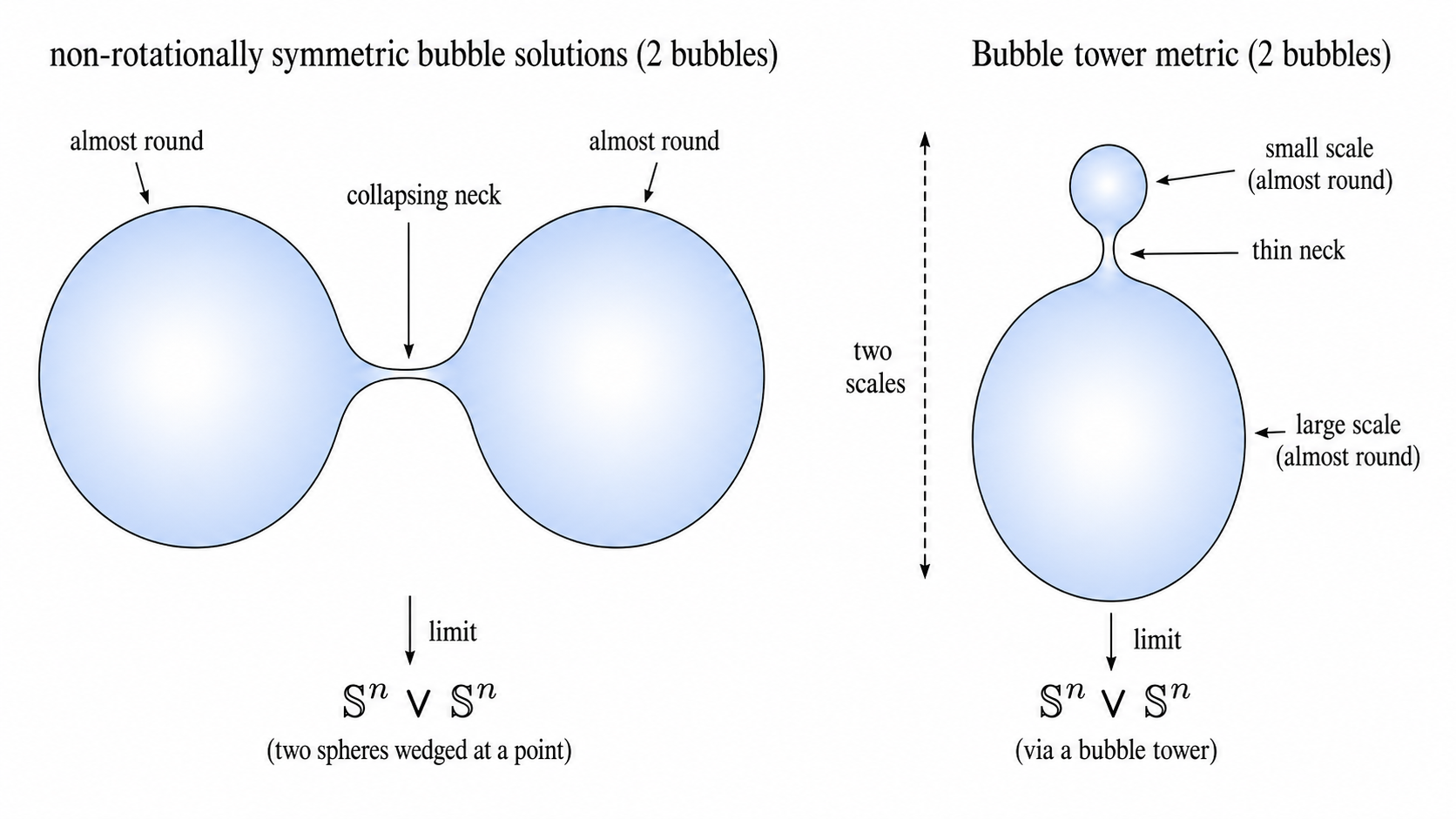}
    \caption{A comparison between our solution, with $h=1$ and $k=2$, and the two-bubble tower}
    \end{subfigure}

    \vspace{0.3cm}

    \begin{subfigure}{\textwidth}
        \centering
        \includegraphics[width=8.4cm, height=6cm]{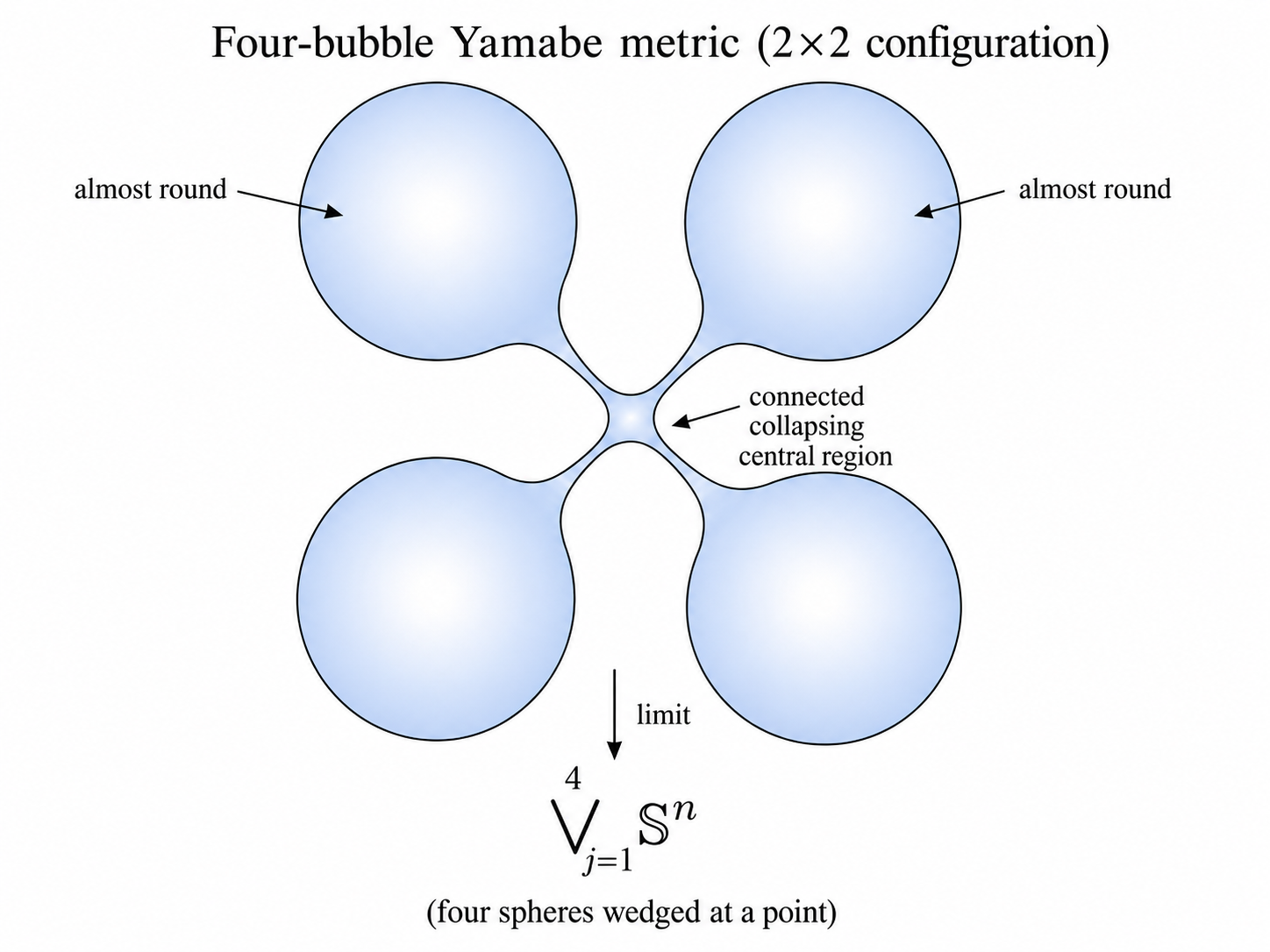}
        \caption{Our solution with $h=2$ and $k=2$}
    \end{subfigure}

    \caption{Schematic pictures of the ancient solutions to \eqref{pb} after lifting them to $\Ss^n$ via stereographic projection.}
    \label{pic}
\end{figure}
\item[(d)]
In Theorem \ref{thm3}, we placed planes parallel to the $x_1x_2$-plane along the $x_3$-direction.
The same strategy can also be used to place planes in multiple directions, including the $x_4,\dots,x_n$-directions.
We do not attempt to exhaust all such possible generalizations in this paper.
\item[(e)]
In Theorem \ref{thm3}, the assumption $\max_l|\vth_l^*|< \frac{1}{\sqrt{3}}$ for $n=3$ is used only to ensure property \textup{(vi)}.
In fact, for each $h \ge 1$ and $k \ge 2$, the existence of an ancient solution satisfying \textup{(ii)--(v)} continues to hold under the full ordering condition $-1 < \vth_1^* < \dots < \vth_h^* < 1$.
\end{enumerate}
\end{remark}

\subsection{Main novelties}\label{sub1.3}
Our proof relies on the parabolic inner--outer gluing method, which has become a fundamental tool in the analysis of singularity formation for geometric flows and nonlinear parabolic equations.
Originating from elliptic gluing schemes, this parabolic framework was systematized, for instance, in \cite{dds, CDM20, DDW20}.
The method decomposes the correction into two components: an inner component, which captures the local geometry near a forming bubble through a rescaled profile,
and an outer component, which solves a nonlinear perturbation of a weighted heat equation matched with the inner problem.
Both components are constructed by solving the corresponding equations via the Banach fixed point theorem.

\medskip
In the remainder of this subsection, we outline the main novelties of this paper.

\medskip\noindent
\textbf{(A) Weighted H\"older norms and Schauder theory.}
A key feature of our analysis is the use of weighted H\"older spaces (see Subsection \ref{subsec:norms}) to control the error term, motivated by \cite{km}.
These norms are tailored to the non-uniformly parabolic operator $p u_*^{p-1}\pp_t-\Delta$, where the coefficient $u_*^{p-1}$ of $\pp_t$, defined in \eqref{approx*}, reflects the conformal structure of the flow \eqref{pb}.
The Schauder estimates derived below (see Proposition \ref{outer-thm}) allow us to control the error term with H\"older level. This leads to two important analytic and geometric consequences:
\begin{enumerate}
\item[(a)]
The solution $u(x,t)$ to \eqref{pb} is strictly positive and satisfies $|\nabla^{\ell}_x u(x,t)| \le C|t|^{-\frac{1}{2}} |x|^{-(n-2+\ell)}$ for all $|x| \ge 1$, $t \in (-\infty,t_0)$, and $\ell = 0,1,2$.
Moreover, the conformal flow $g(t)$ on $\Ss^n$, defined through \eqref{ubarvrel} and the paragraph after Theorem \ref{thm1}, is a genuine smooth Riemannian metric flow on $\Ss^n$.

The derivation of the above bound relies on the conformal equivalence between $\R^n$ and $\Ss^n \setminus \{N\}$.
This necessitates defining the refined H\"older norms in terms of both the function on $\R^n \times (-\infty,t_0]$ and its conformal lift \eqref{stereo-proj-1} to $\Ss^n \times (-\infty,t_0]$; see Definition \ref{def:norm-refined}.
For further discussion, refer to Remarks \ref{re2.11}, \ref{rmk:psitdecay}, and \ref{rmk:EHolder}.
\item[(b)]
One can compute curvature quantities explicitly to leading order.
The proofs of the type II behavior of our solutions to \eqref{pb} and of the indefiniteness of their Ricci curvature are more straightforward than the relevant arguments in \cite{dds}; refer to Subsection \ref{subsec:comp}.
\end{enumerate}
\noindent It would be interesting to know whether Theorems \ref{thm:main} and \ref{thm3} can be established within a weighted $L^2$ or $W^{2,2}$ setting, including the positivity of $u$ and the indefiniteness of the associated Ricci curvature (cf. \cite{KME, KME2}).

\medskip\noindent
\textbf{(B) Relation between elliptic bubble-crown-type solutions.}
Our construction in Theorem \ref{thm:main} is reminiscent, at least at first glance, of the \emph{bubble-crown} solutions to the elliptic Yamabe equation
\be \label{pb:elliptic}
-\Delta u = \frac{n+2}{4}|u|^{p-1}u \quad \text{in } \R^n,
\ee
found by del Pino, Musso, Pacard, and Pistoia \cite{dmpp}, in which a positive bubble at the origin is balanced by sharper $k$ negative bubbles placed at the vertices of a regular polygon in the $x_1x_2$-plane. However, our solutions differ in several decisive ways:
\begin{enumerate}
\item[(a)]
The elliptic bubble-crown solutions are \emph{sign-changing}. In contrast, our ancient solutions are \emph{strictly positive}, and hence carries the geometric and topological meaning described in (A) above and Remark \ref{rmk:thm13}(iii).
\item[(b)]
We place \emph{no} bubble at the origin. Instead, our configuration consists solely of $k$ positive bubbles, whose interaction is mediated by the nonlinear parabolic operator $p u^{p-1}\pp_t-\Delta-\frac{n+2}{4}u^p$.
Accordingly, the balancing mechanism is purely \emph{parabolic}.
We recall that the positive entire solutions of the elliptic Yamabe equation in $\R^n$ are precisely the bubbles \eqref{eq:bubble2}.
In particular, it admits neither positive bubble-tower solutions nor positive non-radial solutions.
Therefore, our results strengthen the assertion that the collection of ancient Yamabe flows on $\Ss^n$ is substantially richer than that of the corresponding elliptic problem.
\item[(c)]
For the elliptic case \cite{dmpp}, the number $k$ of negative bubbles is required to exceed a threshold $k_{0} \in \N$, whose value is not explicitly quantified. By contrast, our construction works for \emph{every} integer $k\ge 2$.
Indeed, in \cite{dmpp}, the number $k$ is treated as a free parameter, and the analysis proceeds in a perturbative regime as $k\to\infty$. In our setting, the time variable $t$ serves as a free parameter, while $k$ is fixed.
\end{enumerate}

Similarly, although each solution constructed in Theorem \ref{thm3} is related to a solution of \eqref{pb:elliptic} obtained by Medina and Musso \cite{MM21}, which they refer to as a \emph{doubling of the equator}, our solutions differ from theirs in several essential respects.
\begin{enumerate}
\item[(a)]
All of the differences listed above for Theorem \ref{thm:main} remain in effect.
\item[(b)]
In the elliptic setting, the analogue $(\vth_1^k,\dots,\vth_h^k) \in (0,\infty)^h$ of the parameter $(\vth_1(t),\dots,\vth_h(t))$, appearing in the third component of the bubble centers,
satisfies $(\vth_1^k,\dots,\vth_h^k) \to (0,\dots,0)$ as $k\to\infty$. Consequently, the associated polygons eventually collapse.
By contrast, in our parabolic setting, when $h\ge2$, each $\vth_l(t)$ converges as $t\to -\infty$ to an arbitrarily prescribed limit $-1 < \vth_1^* < \dots < \vth_h^* < 1$, up to reordering.
When $h=1$, we may take $\vth_1(t) = \vth_1^* \in (-1,1)$. In particular, the associated polygons persist rather than collapsing.
\end{enumerate}
In fact, when $h \ge 2$, one must impose $\vth_{l_1}^* \ne \vth_{l_2}^*$ for all $1 \le l_1 \ne l_2 \le h$, because this condition is necessary and sufficient for the ODE system governing $\{(\mu_l(t),\vth_l(t))\}_{l=1}^h$ to admit a solution with the prescribed asymptotic behavior \eqref{parasym0}--\eqref{parasym}.

On the other hand, for any integers $k_1,k_2\ge2$, one can also construct ancient solutions of \eqref{pb} such that, as $t\to -\infty$, blow-up occurs at the vertices of a regular $k_1$-gon centered at the origin in the $x_1x_2$-plane and at the vertices of a regular $k_2$-gon centered at the origin in the $x_3x_4$-plane.
Each such solution corresponds to a solution of \eqref{pb:elliptic} constructed by Medina, Musso, and Wei \cite{MMW19}, which they describe as a \emph{desingularization of equators}. This assertion can be proved by adapting our argument, as briefly outlined in Remark \ref{re8.7}.

Finally, we note that two new families of sign-changing solutions were obtained very recently by Li and Sun \cite{LS26}, who interpreted them as \emph{planar doublings of the equator}. Currently, we are not aware of any corresponding ancient solutions to \eqref{pb}.

\medskip\noindent
\textbf{(C) Comparison with parabolic gluing for other models.}
Our approach aligns with the $n$-dimensional parabolic inner--outer gluing method pioneered in \cite{CDM20, DDW20, km},
which deals with related parabolic models (a semilinear energy-critical heat equation, a harmonic map heat flow into the unit sphere $\Ss^2$, and a perturbed Yamabe flow, respectively) without reduction using the rotational symmetry.
However, our setting is distinguished by the following features:
\begin{enumerate}
\item[(a)]
\textbf{Polygonal symmetry:} Our solutions exhibit bubbling at the vertices of appropriately placed regular polygons in $\R^n$.
    In contrast to \cite{CDM20, DDW20, km}, where the bubbling locations are strongly influenced by the domain geometry or external perturbations, our dynamics are dictated solely by the parabolic structure of \eqref{pb} and by interactions among bubbles at the polygon vertices.
\item[(b)]
\textbf{Kelvin invariance:} A key structural feature of \eqref{pb} is its invariance under the Kelvin transform. This naturally motivates the construction of solutions that inherit this symmetry, as stated in Property (iii) of Theorem \ref{thm:main}.
    Our analysis from this viewpoint brings out two structural observations that may be of independent interest.

    First, in the analysis of the inner problem \eqref{inner}--\eqref{mh1} (or \eqref{eq:inhom-i}), we explicitly identify the Kelvin-compatible projected mode \eqref{eq:mczn1} and give a complete justification of the associated orthogonality condition \eqref{eq:inhom-i2} and symmetry reduction \eqref{mchin}--\eqref{mchin2}.
    Although the Kelvin invariance of bubble-crown-type solutions has played a crucial role, for instance, in \cite{MW15, SWY}, these issues were treated only implicitly in constructions such as \cite{dmpp, MMW19, MM21}.
    The main difficulty is that, after translating and rescaling around a bubble centered at a nonzero point, the Kelvin transform is no longer represented by the standard inversion \eqref{fke} in the new coordinates.
    Consequently, the Kelvin-compatible projected mode \eqref{eq:mczn1} must involve both the scaling mode and the radial translation mode.
    In the present work, we make this mechanism explicit and incorporate it directly into the formulation of the inner problem.
        
    Second, our approach leads to a streamlined inner--outer gluing scheme for parabolic problems, compared with \cite{CDM20, DDW20, km}.
    A standard way to refine the first approximate solution, given by the superposition of bubbles, is to add a correction function obtained by solving an inhomogeneous linear equation. However, such a correction typically destroys the Kelvin symmetry.
    This observation led us to realize that, at least in our setting and in \cite{km}, the first approximation need not be refined.
    We hope that these observations may also be useful in other related constructions.
\item[(c)]
\textbf{Function spaces:} Previous works \cite{CDM20, DDW20} considered parabolic flows on bounded domains in $\R^n$, while \cite{km} focused on closed Riemannian manifolds.
	In contrast, we study the flow \eqref{pb} on the whole space $\R^n$, where the non-compactness of the domain introduces additional technical difficulties in setting up suitable function spaces.
    In particular, to ensure smooth lifting of our solutions to the unit round sphere $\Ss^n$, we separately control the behavior of the lifted functions near the north pole, as briefly noted in (A) above.   
    Refer to Subsection \ref{subsec:norms} for the precise functional framework.
\item[(d)]
\textbf{Pointwise control for quasilinear flows:} In many previous works on parabolic flows, such as the heat flow, pointwise estimates for the outer problem are obtained through explicit heat-kernel representations.
In the present setting, however, the Yamabe flow is quasilinear, and the relevant outer linearized operator contains a time-derivative term whose coefficient depends on both time and space and originates from the approximate solution.
As a result, no explicit kernel with sufficiently sharp pointwise control is available, making such representation-based arguments difficult to apply directly.
To address this issue, we develop a weak maximum principle and construct barrier functions on the whole Euclidean space that are suitable for obtaining pointwise estimates for the solution to the outer problem.
These tools may also be useful in the analysis of related settings; see Lemmas \ref{maximum-principle}--\ref{outer-linear-point}.
\item[(e)]
\textbf{Low dimensions:} For energy-critical parabolic equations, the low-dimensional cases $n=3,4$ often require separate treatment.
    For example, both infinite-time and finite-time blowing-up solutions to the energy-critical heat equation on $\R^3$ or $\R^4$ have been constructed, and their scaling parameters are governed by \textit{nonlocal} modulation laws \cite{DMW20, DMWZ20, dmwzz, wzz}.
    For the slightly perturbed Yamabe flow in the setting of \cite{km}, it is expected that in dimension $n=3$ infinite-time blow-up does not occur,
    because the ADM mass term contributes at leading order in the energy expansion and obstructs the corresponding bubbling scenario.
    In our case, we can cover the low-dimensional cases $n=3,4$. These dimensions require further discussion to address the accompanying technical difficulties.
\end{enumerate}

\subsection{Related works}
In this subsection, we relate our results to previous work on the Yamabe flow, the Ricci and mean curvature flows, and semilinear parabolic equations.

\medskip\noindent
\textbf{(A) Rotationally symmetric ancient Yamabe flows.}
The approach in \cite{ddks, dds} uses a \emph{cylindrical change of variables} to reduce \eqref{FDE-sphere-intro} (equivalently, \eqref{pb}) to a one-dimensional equation in space, which simplifies the analysis but excludes non-rotationally symmetric configurations.
Instead, we work \emph{directly} with the $n$-dimensional equation \eqref{pb}, imposing the polygonal symmetry only, which retains the angular bubble interaction and admits non-radial ancient solutions.

\medskip\noindent
\textbf{(B) Ricci and mean curvature flows.}
Unlike the Yamabe flow, the set of ancient solutions of the Ricci flow and the mean curvature flow in the compact setting is often quite rigid within the standard noncollapsing and curvature (or convexity) regimes, and several
sharp classification results are available.

For the Ricci flow, Daskalopoulos, Hamilton, and Sesum \cite{DHS12} proved that any compact ancient solution on $\Ss^2$, normalized to vanish at some finite time, must be either the shrinking round sphere (type I) or the King--Rosenau solution (type II) found in \cite{King93, Ro}.
Thus every such solution is rotationally symmetric: in the former case the symmetry group is the full orthogonal group $O(3)$, while in the latter it is an $O(2)$-subgroup of $O(3)$ fixing an axis.
In particular, when the King--Rosenau solution is expressed in stereographic coordinates centered at one of the poles of the symmetry axis, its conformal factor depends only on the radial variable.

In dimension three, Brendle, Daskalopoulos, and Sesum \cite{BDS21} proved that any compact ancient $\kappa$-solution on $\Ss^3$ is either a shrinking round sphere (type I) or the Perelman ancient oval (type II) built in \cite{Pe02}, and hence is rotationally symmetric (that is, $O(3)$-symmetric).
Even for $n\ge 4$, an analogous classification holds for compact ancient $\kappa$-solutions on $\Ss^n$ \cite{BDNS23}.
The noncollapsing and higher-dimensional curvature hypotheses are essential here. 
As a matter of fact, dropping them allows non-rotationally symmetric ancient solutions, as illustrated by the examples of Bakas, Kong, and Ni \cite{BKN12} or Brendle and Kapouleas \cite{BK17}.

For the curve shortening flow, Daskalopoulos, Hamilton, and Sesum \cite{DHS10} showed that any embedded convex compact ancient curve in $\R^{2}$ is either a shrinking circle (type I) or the Angenent oval (type II) found in \cite{Ang}.
In higher dimensions, Angenent, Daskalopoulos, and Sesum \cite{ADS20} proved that any closed, uniformly two-convex, and $\alpha$-noncollapsed ancient solution to the mean curvature flow is either a shrinking sphere (type I) or an ancient oval (type II) built in \cite{Whi,HH}.
Furthermore, in this class, every ancient oval is rotationally symmetric, that is, invariant under the standard $O(n)$-action on $\R^{n+1}$ fixing an axis.

The existence results for ancient solutions to the Yamabe flow on $\Ss^n$ in \cite{King93,ddks,dds} and in the present paper suggest that classifying ancient solutions of the Yamabe flow on in the compact setting is a much more delicate problem, and may well be out of reach in full generality.

\medskip\noindent
\textbf{(C) Non-rotationally symmetric solutions for semilinear parabolic equations.}
Symmet-ry-breaking constructions for parabolic problems have attracted considerable attention and have developed rapidly over the past decade.

For the energy-critical semilinear heat equation, non-radially symmetric bubbling solutions such as infinite-time bubbling at multiple points \cite{CDM20} and sign-changing blow-up whose leading profile is a \textit{bubble-crown} solution \cite{DMWZhe20} have been constructed.
Both works study solutions on smooth bounded domains in $\R^n$.
A recent paper \cite{KME2} investigates the classification of asymptotic behaviors in multi-bubble dynamics in $\R^n$ without imposing symmetry.
Furthermore, in the energy-supercritical regime, non-radial type II blow-up solutions on smooth bounded domains in $\R^n$ were constructed in \cite{Co17, DMW21}.

For harmonic map heat flow into $\Ss^2$, finite-time singularity constructions include blow-up at an arbitrary prescribed finite set of points in a two-dimensional domain \cite{DDW20}, as well as blow-up occurring precisely along a prescribed circle in a three-dimensional axially symmetric setting \cite{DDPW19}.

In the context of geometric evolution, the Ricci flow admits ancient solutions beyond rotational symmetry, most notably cohomogeneity-one ancient solutions on spheres and generalized Hopf fibrations \cite{BKN12} and a compact ancient four-dimensional solution obtained from an Eguchi--Hanson gluing configuration \cite{BK17}.

\subsection{Organization of the paper}
In Section \ref{se2}, we construct countably many geometrically distinct non-rotationally symmetric ancient solutions to \eqref{pb} whose blow-up points are located at the vertices of a regular $k$-gon, forming a single-layer configuration. This proves Theorem \ref{thm:main}.
To this end, we introduce an approximate solution, set up the functional framework, and formulate the inner--outer gluing scheme.
By applying the Banach fixed point theorem successively, we solve the outer and inner problems and determine suitable modulation parameters to obtain an exact ancient solution.
We also analyze the Ricci curvature of the resulting flow and establish its Type II behavior.

The detailed analysis of the inner problem is carried out in Section \ref{se2}, while the analysis of the outer problem is postponed to Section \ref{se4}, since it is technically more demanding.

In Section \ref{se8}, by suitably modifying the preceding arguments, we construct uncountably many families of geometrically distinct non-rotationally symmetric ancient solutions
whose blow-up points are arranged in a multiple-layer configuration, thereby completing the proof of Theorem \ref{thm3}.

Finally, Appendix \ref{app} contains some technical arguments.

\subsection{Notations}\label{subsec:notations}
For convenience, we list some notations used in the sequel:

\medskip \noindent{1.} Unless otherwise stated, $C > 0$ is a universal constant that may vary from line to line and even in the same line.
We write $a \lesssim b$ if $a \le Cb$, $a \gtrsim b$ if $a \ge Cb$, $a \sim b$ if $a \lesssim b$ and $a \gtrsim b$, and $a\simeq b$ if $a=b(1+o_t(1))$, where $o_t(1) \to 0$ as $t \to -\infty$.

\medskip \noindent{2.} For a set $\Omega$, let $\mone_{\Omega}$ be its characteristic function. For a condition $(C)$, we set $\mone_{(C)}=1$ if $(C)$ holds, and $\mone_{(C)}=0$ otherwise.

\medskip \noindent{3.} Let $x \in \R^n$ and $t \in \R$ denote the spatial and temporal variables, respectively. For functions $f=f(t)$ and $g=g(x,t)$, a dot denotes differentiation in time $t$, so that $\dot f=f'$ and $\dot g=\pp_t g$.

\medskip \noindent{4.} For $x \in \R^n$ and $r>0$, let $B(x,r)$ be the Euclidean ball centered at $x \in \R^n$ with radius $r$. Also,
\begin{itemize}
\item[-] Let $\nabla_{\Ss^n}$ and $\Delta_{\Ss^n}$ be the gradient and the Laplace-Beltrami operator with respect to the round metric $g_{\Ss^n}$ on the unit sphere $\Ss^n$, respectively,
\item[-] Let $dv_{\Ss^n}$ be the volume form on $(\Ss^n,g_{\Ss^n})$,
\item[-] Let $d_{\Ss^n}(\tz_1,\tz_2)$ be the geodesic distance between $\tz_1$ and $\tz_2$ on $\Ss^n$,
\item[-] Let $d_{\Ss^n}(z) := \arccos(z_{n+1})$ be the geodesic distance on $\Ss^n$ from the north pole $N=(0,\dots,0,1)$ to $z=(z_1,\dots,z_{n+1}) \in \Ss^n$,
\item[-] Let $B_{\Ss^n}(z,\ep)$ be the geodesic ball centered at $z \in \Ss^n$ with radius $\ep \in (0,\pi)$.
\end{itemize}

\medskip \noindent{5.} We regard vectors in $\R^n$ as column vectors. For $u=(u_1,\dots,u_n)$ and $v=(v_1,\dots,v_n) \in \R^n$, we define $u\otimes v := u v^T$ so that $(u\otimes v)_{ij}=u_i v_j$.

\section{Proof of Theorem \ref{thm:main}: Single-layer case}\label{se2}	
\subsection{Setup}
We begin this section by constructing a formal approximation to a solution of \eqref{pb}, as predicted in Theorem \ref{thm:main}, and by introducing the auxiliary notation and concepts needed later.

\medskip
We construct the approximate solution $u_*$ based on the following configuration:
For a fixed integer $k \ge 2$, consider a set of $k$ concentration points $\xi_j$ ($j=1, \dots, k$), arranged as the vertices of a regular polygon in a plane.
To describe these points, we decompose $x \in \R^n$ as $x = (\bx,x')$ with $\bx = (x_1,x_2) \in \R^2$. Writing the $\bx$-plane in complex notation, we set
\be \label{defpoints}
\xi_j(t) := d(t)\, {\bf q}_j \quad \text{and} \quad {\bf q}_j := \Big( e^{\frac{2\pi(j-1)}{k} i}, 0' \Big) \quad \text{for } j=1, \dots, k,
\ee
where $0' \in \R^{n-2}$ represents the origin of the remaining coordinates. The scaling parameter $\mu(t)$ and the circumradius $d(t)$ of the regular polygon are smooth and satisfy
\begin{align}
\mu(t) \sim |t|^{-\frac{1}{n-2}}, \quad \dot{\mu}(t) \sim |t|^{-\frac{n-1}{n-2}}, \quad \mu(t)>0, \label{mu} \\
\mu^2(t) + d^2(t) = 1, \quad d(t) > 0.\label{dt}
\end{align}
Under assumptions \eqref{mu}--\eqref{dt}, $d(t) \to 1$ as $t \to -\infty$. Hence, if we set 
\be \label{eq:delta0}
\delta_0 := \frac{1}{4}|{\bf q}_1-{\bf q}_2| = \frac{1}{2} \sin\frac{\pi}{k},
\ee
then we can find $t_0 < 0$ with $|t_0|$ large enough such that
\be \label{eq:delta01}
|\xi_{j_1}(t)-\xi_{j_2}(t)| \ge 2\delta_0 \quad \text{for all } 1 \le j_1 \ne j_2 \le k \text{ and } t \le t_0.
\ee

Recalling \eqref{eq:bubble}--\eqref{eq:bubble2} for the definition of $U_{\mu(t),\xi_j(t)}$, let $U_j(x,t)$ be the bubble
\[U_j (x,t) := U_{\mu(t),\xi_j(t)}(x).\]
The formal approximation $u_*$ for a solution to \eqref{pb} is then defined as the superposition of these bubbles:
\be \label{approx*}
u_*(x,t) = u_* [\mu] (x,t) := \sum_{j=1}^k U_j (x,t) \quad \text{for } (x,t) \in \R^n \times (-\infty,t_0].
\ee
Here, $t_0<0$ is a large negative number to be chosen later. Rather than solving \eqref{pb} on $(-\infty,0]$, we will consider the equivalent problem on $(-\infty,t_0]$.
The two formulations are interchangeable, as one can be obtained from the other by a simple time translation. 

The symmetries of \eqref{pb} will play a crucial role. We consider a function $f$ even in the variables $x_3,\dots,x_n$ and invariant under a rotation of angle $\frac{2\pi}{k}$ in the $x_1x_2$-plane, that is,
\begin{align}
f(x_1, \dots, x_{\ell}, \dots, x_n, t) &= f(x_1, \dots, -x_{\ell}, \dots, x_n, t) \quad \text{for } \ell=3, \dots, n, \label{pp1} \\
f(e^{\frac{2\pi}{k} i} \bx, x', t) &= f(\bx, x', t). \label{pp2}
\end{align}
Also, we treat the Kelvin invariance of $f$ in the $x$-variable, expressed as
\be \label{fke}
f(x,t) = f^*(x,t) := \frac{1}{|x|^{n-2}} f \(\frac{x}{|x|^2}, t\).
\ee
In view of \eqref{defpoints} and the geometric constraint \eqref{dt}, the approximation $u_*$ satisfies the symmetry properties \eqref{pp1}--\eqref{fke}.

The error associated with this approximation is defined by the parabolic operator
\be \label{err}
E(x,t) = E[\mu,\dot{\mu}](x,t) := -\pp_t (u_*(x,t))^p + \Delta u_*(x,t) + \frac{n+2}{4} (u_*(x,t))^p
\ee
for $(x,t) \in \R^n \times (-\infty,t_0]$. Direct computation shows that $E(x,t)$ inherits the symmetry properties \eqref{pp1}--\eqref{pp2} and satisfies 
\[E(x,t) = \frac{1}{|x|^{n+2}}\, E\(\frac{x}{|x|^2}, t\).\]
Quantitative estimates for $E$ will be carried out in Subsection \ref{subsec:error}.

\medskip
For later use, we introduce functions
\be \label{eq:Zi}
\begin{aligned}
Z_i(y) &:= \pp_{y_i} U(y) = -(n-2)\msc_n \frac{y_i}{(1+|y|^2)^{\frac{n}{2}}}
\quad \text{for } i=1, \dots, n, \\
Z_{n+1} (y) &:= \frac{n-2}{2} U(y) + y \cdot \nabla U(y) = -\frac{(n-2)\msc_n}{2} \frac{|y|^2-1}{(1+|y|^2)^{\frac{n}{2}}}
\end{aligned}
\ee
for $y = (y_1,\dots,y_n) \in \R^n$, which form the entire solution set of the linear problem
\be \label{eq:Zieq}
-\Delta z = \frac{n+2}{4} pU^{p-1}z \quad \text{in } \R^n, \quad z \in \dot{W}^{1,2}(\R^n),
\ee
where $\dot{W}^{1,2}(\R^n)$ stands for the standard homogeneous Sobolev space.

To facilitate the subsequent analysis, we also record the following elementary inequalities.
\begin{lemma}\label{yan}
Given $\vsi\ge 0$, there exists a constant $C(\vsi)>0$, depending only on $\vsi$, such that for all $a>0$ and $b\in\R$, the following estimates hold:
\[
\big| |a+b|^\vsi - a^\vsi \big| \le 
\begin{cases}
C(\vsi)\min\{a^{\vsi-1}|b|, |b|^\vsi\} & \text{if } 0 \le \vsi < 1, \\
C(\vsi)(a^{\vsi-1}|b| + |b|^\vsi) & \text{if } \vsi \ge 1
\end{cases}
\]
and
\[
\big| |a+b|^\vsi(a+b) - a^{\vsi+1} - (\vsi+1)a^\vsi b \big| \le 
\begin{cases}
C(\vsi)\min\{|b|^{\vsi+1}, a^{\vsi-1}b^2\} & \text{if } 0 \le \vsi < 1, \\
C(\vsi)\max\{|b|^{\vsi+1}, a^{\vsi-1}b^2\} & \text{if } \vsi \ge 1.
\end{cases}
\]
\end{lemma}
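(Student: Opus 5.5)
The plan is to reduce both inequalities, by homogeneity, to one-variable statements and then split into the regimes $|b|$ small and $|b|$ large compared with $a$. Since $a>0$, set $s := b/a \in \R$. The first inequality is then equivalent to
\[
\big| |1+s|^\vsi - 1 \big| \le
\begin{cases} C\min\{|s|, |s|^\vsi\}, & 0\le \vsi<1,\\ C(|s| + |s|^\vsi), & \vsi\ge 1,\end{cases}
\]
after multiplying through by $a^\vsi$. Similarly, the second inequality is equivalent to the bound on
\[
g(s) := |1+s|^\vsi(1+s) - 1 - (\vsi+1)s,
\]
after multiplying by $a^{\vsi+1}$, with $\min\{|s|^{\vsi+1}, s^2\}$ or $\max\{|s|^{\vsi+1}, s^2\}$ on the right-hand side. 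The case $\vsi=0$ is trivial in both inequalities: the left-hand sides vanish identically, since $|1+s|^0 = 1$ (with the convention $0^0=1$) and then $g(s) = (1+s)-1-s = 0$. So I assume $\vsi>0$.

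\textbf{First inequality.} I split into $|s|\le \frac12$ and $|s|>\frac12$.

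For $|s|\le\frac12$, the map $s\mapsto |1+s|^\vsi$ is smooth on $[-\frac12,\frac12]$. The mean value theorem then gives a bound $C|s|$. This is at most $C\min\{|s|,|s|^\vsi\}$ when $\vsi<1$, because $|s|\le|s|^\vsi$ for $|s|\le 1$. It is trivially within $C(|s|+|s|^\vsi)$ when $\vsi\ge1$.

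For $|s|>\frac12$, I use $|1+s|\le 3|s|$ and $1\le 2|s|$. These give $\big||1+s|^\vsi-1\big|\le C|s|^\vsi$, which suffices when $\vsi\ge 1$. When $\vsi<1$ I additionally need the bound $C|s|$, and it follows since $|s|^\vsi\le 2^{1-\vsi}|s|$ for $|s|\ge\frac12$. Hence $\min\{|s|,|s|^\vsi\}$ and $|s|^\vsi$ are comparable in this regime.

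\textbf{Second inequality.} Again I split into $|s|\le\frac12$ and $|s|>\frac12$.

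For $|s|\le\frac12$, the function $h(s)=|1+s|^\vsi(1+s)$ is smooth there, with $h(0)=1$ and $h'(0)=\vsi+1$. Taylor's theorem with remainder gives $|g(s)|\le C s^2$. When $\vsi<1$ this equals $C\min\{s^2, |s|^{\vsi+1}\}$, because $s^2\le |s|^{\vsi+1}$ for $|s|\le1$. When $\vsi\ge1$ it is dominated by $C\max\{\cdot\}$.

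For $|s|>\frac12$, each of the three terms $|1+s|^{\vsi+1}$, $1$ and $(\vsi+1)|s|$ is at most $C|s|^{\max\{\vsi+1,\,1\}}$.
\begin{itemize}
\item If $\vsi\ge1$, then $|s|\lesssim s^2\le \max\{s^2,|s|^{\vsi+1}\}$ and $1\lesssim s^2$, so the terms are controlled by $C\max\{s^2,|s|^{\vsi+1}\}$.
\item If $0<\vsi<1$, the terms are at most $C|s|^{\vsi+1}$, using $1\lesssim |s|^{\vsi+1}$ and $|s|\lesssim|s|^{\vsi+1}$ for $|s|\ge\frac12$. Moreover $|s|^{\vsi+1}\le 2^{1-\vsi}s^2$ for $|s|\ge\frac12$, so $|s|^{\vsi+1}$ and $\min\{s^2,|s|^{\vsi+1}\}$ are comparable there.
\end{itemize}

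There is no real obstacle here. The only point needing care is that in the sublinear case $\vsi<1$ the \emph{minimum} on the right-hand side forces us to check both regimes against both powers. The choice of splitting point $\frac12$ is made precisely so that the two powers are comparable in each regime, with constants depending only on $\vsi$.
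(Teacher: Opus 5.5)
Your proof is correct: rescaling by $s=b/a$ and splitting at $|s|=\frac12$ (mean value theorem or Taylor for $|s|\le\frac12$, termwise bounds for $|s|>\frac12$, and comparability of the two competing powers in each regime when $\vsi<1$) proves both inequalities with constants depending only on $\vsi$. The paper states this lemma as elementary and gives no proof, so there is nothing to compare against; your argument is the standard one it implicitly relies on.
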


\subsection{Functional framework}\label{subsec:norms}
Given the approximation $u_*$ defined in \eqref{approx*}, we seek a correction term $\Xi$ so that $u_* + \Xi$ solves \eqref{pb} and exhibits the qualitative features stated in Theorem \ref{thm:main}.
The construction of $\Xi$ is based on the inner--outer gluing scheme presented in Subsection \ref{scheme}.
In this subsection, we establish the functional-analytic framework needed to conduct the gluing procedure.

\subsubsection{Norms involving the time variable}
Let $t_0 < 0$ be a number whose magnitude is large enough.
\begin{definition}
Let $\lambda: (-\infty,t_0] \to \R$ be a function. For any $\tnu > 0$ and $\sigma \in (0,1)$, we set
\[
\|\lambda\|_{\tnu;\sigma} = \|\lambda\|_{\tnu;\sigma;t_0} := \sup_{t \in (-\infty,t_0]} |t|^{\tnu} \left[ |\lambda(t)| + [\lambda]_{C^{\sigma/2}_t}(t) \right],
\]
where the local H\"older semi-norm is given by
\[
[\lambda]_{C^{\sigma/2}_t}(t) := \sup \left\{ \frac{|\lambda(t_1) - \lambda(t_2)|}{|t_1-t_2|^{\sigma/2}} : \, t_1, t_2 \in (t, \min\{t_0, t+1\}), \, t_1 \ne t_2 \right\}.
\]
\end{definition}
\begin{definition}
Let $\Omega$ be a domain in $\R^n$ or $\Ss^n$, and let $\psi:\Omega\times(-\infty,t_0]\to\R$ be a function. For any $\sigma \in (0,1)$, we set a local time H\"older semi-norm:
\be \label{eq:Holdert}
[\psi]_{C^{\sigma/2}_t}(x,t) := \sup\left\{\frac{|\psi(x,t_1) - \psi(x,t_2)|}{|t_1-t_2|^{\sigma/2}}:\, t_1,t_2 \in (t, \min\{t_0, t+1\}),\, t_1 \ne t_2\right\}.
\ee
\end{definition}

\subsubsection{Norms for the outer problem}
We introduce several norms that will be used in the analysis of the outer problem \eqref{outer}.
Recall the parameters $\mu>0$ and $\xi_1,\dots,\xi_k \in \R^n$ defined in \eqref{defpoints}--\eqref{dt}.
\begin{definition}[Auxiliary quantities]
Let $\alpha\in \R$, $\beta,\, \vrh,\, \iota>0$, and $\ell \in \N$. Let $\msR_0>10$ be a large constant. We set the weight functions by
\[
\omega^1_{\alpha, \beta, \ell} (x,t) := \sum_{j=1}^k \frac{|t|^{-\beta}\mu^{-\ell}}{1+ |y_j|^{\alpha +\ell}} \mone_{\{|x|\le \msR_0\}} \quad \text{and} \quad
\omega^2_{\vrh,\iota} (x,t) := \frac{|t|^{-\iota}}{|x|^{\vrh}} \mone_{\{|x|\ge \msR_0\}},
\]
where $y_j := \mu^{-1}(x-\xi_j)$.
We also set the space-time cylinders $\Lambda_\tau:=\R^n\times[\tau,\tau+1]$ for $\tau<t_0-1$ and $\Omega_{t_0}:=\Omega\times(-\infty,t_0]$, where $\Omega$ is a domain in $\R^n$.

Let $\psi:\Omega_{t_0}\to\R$ be a function. For $\sigma \in (0,1)$, we set a local spatial H\"older semi-norm: 
\[
[\psi]_{C^{\sigma}_{\Omega}}(x,t) := \sup\left\{\frac{|\psi(\tx_1,t) - \psi(\tx_2,t)|}{|\tx_1-\tx_2|^{\sigma}}:\, \tx_1, \tx_2 \in B(x,r_0(x,t))\cap \Omega,\, \tx_1 \ne \tx_2\right\}.
\]
Here, the function $r_0(x,t)$ is defined by
\[
r_0(x,t) := \begin{cases}
\displaystyle \frac{\mu(t)}{2} &\text{for } x \in B(\xi_j,\mu(t))\cap\Omega,\quad j=1,\dots,k,\\
\displaystyle \frac{|x-\xi_j|}{2} &\text{for } x \in \bigl(B(\xi_j,\delta_0)\setminus B(\xi_j,\mu(t))\bigr)\cap\Omega,\quad j=1,\dots,k,\\
\displaystyle \frac{\delta_0}{2} &\text{for } x \in \cap_{j=1}^k B(\xi_j,\delta_0)^c\cap\Omega,
\end{cases}
\]
when $\Omega$ is bounded, while in the case $\Omega=\R^n$ the last line is replaced by
\[
r_0(x,t):=
\begin{cases}
\displaystyle \frac{\delta_0}{2} &\text{if } x \in \cap_{j=1}^k B(\xi_j,\delta_0)^c\cap B(0,\msR_0),\\
\displaystyle \frac{|x|}{2} &\text{if } x \in B(0,\msR_0)^c.
\end{cases}
\]
\end{definition}

\begin{definition}[Integral norms]
For a function $\psi: \R^n \times (-\infty,t_0] \to \R$, we define the local-in-time weighted $L^2$ and Sobolev-type norms:
\begin{align*}
\|\psi\|_{L^2(\Lambda_\tau)} &:= \(\int_\tau^{\tau +1}\!\!\int_{\R^n} u_*^{p-1} \psi^2 dx dt\)^{\frac{1}{2}}, \\
\|\psi\|_{H^1(\Lambda_\tau)} &:= \|\psi\|_{L^2(\Lambda_\tau)} + \Big\| u_*^{-\frac{p-1}{2}} \nabla \psi \Big\|_{L^2(\Lambda_\tau)}, \\
\|\psi\|_{H^2(\Lambda_\tau)} &:= \|\pp_t \psi\|_{L^2(\Lambda_\tau)} + \|\psi\|_{L^2(\Lambda_\tau)} + \Big\| u_*^{-\frac{p-1}{2}} \nabla \psi \Big\|_{L^2(\Lambda_\tau)} + \| u_*^{1-p} \Delta \psi \|_{L^2(\Lambda_\tau)}.
\end{align*}

Given a fixed time $s<t_0-1$ and $\nu \in [0,1)$, we also define the global-in-time weighted $L^2$ and Sobolev-type norms:\[
\|\psi\|_{L^2_{\nu; s,t_0}} := \sup_{\tau \in [s,t_0-1]}|\tau|^\nu \|\psi\|_{L^2(\Lambda_\tau)},
\quad
\|\psi\|_{H^i_{\nu; s,t_0}} := \sup_{\tau \in [s,t_0-1]} |\tau|^\nu \|\psi\|_{H^i(\Lambda_\tau)} \quad \text{for } i=1,2.
\]
If the supremum is taken over $\tau\in(-\infty,t_0-1]$ instead, we simply write these norms as $\|\psi\|_{L^2_{\nu; t_0}}$ and $\|\psi\|_{H^i_{\nu; t_0}}$, respectively.
\end{definition}

\begin{definition}[Pointwise and H\"older norms]\label{defpr}
For functions $f,\, \psi:\Omega_{t_0}\to\R$, we define the global-in-time weighted $L^{\infty}$-norms:
\begin{align*}
\|f\|_{*,\alpha,\beta(\Omega_{t_0})} &:= \left\| \Big(\omega^1_{\alpha, \beta, 2} + \omega^2_{n+2, \beta+\frac{\alpha}{n-2}}\Big)^{-1} u_*^{p-1} f \right\|_{L^{\infty} (\Omega_{t_0})}, \\
\|\psi\|_{**,\alpha,\beta(\Omega_{t_0})} &:= \left\| \Big(\omega^1_{\alpha, \beta, 0} + \omega^2_{n-2, \beta+\frac{\alpha}{n-2}}\Big)^{-1} \psi \right\|_{L^{\infty} (\Omega_{t_0})}.
\end{align*}

Given $\sigma \in (0,1)$, we also define the global-in-time weighted H\"older norms:
\begin{align*}
\|f\|_{*, \alpha, \beta;\sigma(\Omega_{t_0})} &:= \|f\|_{*,\alpha,\beta(\Omega_{t_0})} + \left\| \Big(\omega^1_{\alpha, \beta, 2+\sigma} + \omega^2_{n+2+\sigma, \beta+\frac{\alpha}{n-2}}\Big)^{-1} [u_*^{p-1} f]_{C^{\sigma}_{\Omega}} \right\|_{L^{\infty} (\Omega_{t_0})} \\
&\ + \left\| \Big(\omega^1_{\alpha-\sigma, \beta, 2} + \omega^2_{n+2, \beta+\frac{\alpha-\sigma}{n-2}}\Big)^{-1} [u_*^{p-1} f]_{C^{\sigma/2}_t} \right\|_{L^{\infty} (\Omega_{t_0})},
\end{align*}
\begin{align*}
\|\psi\|_{**,\alpha,\beta;\sigma(\Omega_{t_0})} &:= 
\sum_{\ell = 0}^2 \left\| \Big(\omega^1_{\alpha, \beta, \ell} + \omega^2_{n-2+\ell, \beta+\frac{\alpha}{n-2}}\Big)^{-1} |\nabla_x^\ell \psi| \right\|_{L^{\infty} (\Omega_{t_0})} \\
&\ + \sum_{\ell = 0}^2 \left\| \Big(\omega^1_{\alpha, \beta, \ell+\sigma} + \omega^2_{n-2+\ell+\sigma, \beta+\frac{\alpha}{n-2}}\Big)^{-1} [\nabla_x^\ell \psi ]_{C^{\sigma}_{\Omega}} \right\|_{L^{\infty} (\Omega_{t_0})} \\
&\ + \sum_{\ell = 0}^2 \left\| \Big(\omega^1_{\alpha-\sigma, \beta, \ell} + \omega^2_{n-2+\ell, \beta+\frac{\alpha-\sigma}{n-2}}\Big)^{-1} [\nabla_x^\ell \psi ]_{C^{\sigma/2}_t} \right\|_{L^{\infty} (\Omega_{t_0})} \\
&\ + \left\| \Big(\omega^1_{\alpha-2, \beta, 0} + \omega^2_{n-2, \beta+\frac{\alpha-2}{n-2}}\Big)^{-1} \pp_t \psi \right\|_{L^{\infty} (\Omega_{t_0})} \\
&\ + \left\| \Big(\omega^1_{\alpha-2, \beta, \sigma} + \omega^2_{n-2+\sigma, \beta+\frac{\alpha-2}{n-2}}\Big)^{-1} [\pp_t \psi]_{C^{\sigma}_{\Omega}} \right\|_{L^{\infty} (\Omega_{t_0})} \\
&\ + \left\| \Big(\omega^1_{\alpha-2-\sigma, \beta, 0} + \omega^2_{n-2, \beta+\frac{\alpha-2-\sigma}{n-2}}\Big)^{-1} [\pp_t \psi]_{C^{\sigma/2}_t} \right\|_{L^{\infty} (\Omega_{t_0})}.
\end{align*}
We rewrite $\|f\|_{*,\alpha,\beta}:=\|f\|_{*,\alpha,\beta(\R^n_{t_0})}$, $\|\psi\|_{**,\alpha,\beta} :=\|\psi\|_{**,\alpha,\beta(\R^n_{t_0})},$ $\|f\|_{*,\alpha,\beta;\sigma}:=\|f\|_{*,\alpha,\beta;\sigma(\R^n_{t_0})}$, and $\|\psi\|_{**,\alpha,\beta;\sigma} :=\|\psi\|_{**,\alpha,\beta;\sigma(\R^n_{t_0})}$.
\end{definition}

\begin{definition}[Equivalence between $\R^n$ and $\Ss^n \setminus\{N\}$]
We denote the north pole of $\Ss^n$ by $N$, and write $z=(\bz,z_{n+1})=(z_1,\dots,z_n,z_{n+1}) \in \Ss^n$. Let
\be \label{stereo-proj}
z=\pi(y) := \(\frac{2y}{1+|y|^2}, \frac{|y|^2-1}{1+|y|^2}\) \in \Ss^n\setminus\{N\} \quad \text{for } y\in\R^n.
\ee
Then the inverse map $\pi^{-1}: \Ss^n\setminus\{N\} \to \R^n$ is the stereographic projection given by
\be \label{stereo-proj-2}
y=\pi^{-1}(z) := \frac{\bz}{1-z_{n+1}} \in \R^n \quad \text{for } z\in\Ss^n\setminus\{N\}.
\ee

Let $\mcb^{\ep} := B_{\Ss^n}(N,\ep)$. We fix large $\msR_0 > 0$ and small $\ep_0 \in (0,\frac{\pi}{4})$ such that
\[\pi(B(0,\tfrac{\msR_0}{2})) \cup \mcb^{\ep_0/2} = \Ss^n \quad \text{and} \quad B(0,\tfrac{\msR_0}{2}) \cup \pi^{-1}(\mcb^{\ep_0/2}\setminus\{N\}) = \R^n.\] 

For a function $f: \R^n \times (-\infty,t_0] \to \R$, we set its conformal lift $\hat{f}: \Ss^n \times (-\infty,t_0] \setminus \{N\} \to \R$ by
\be \label{stereo-proj-1}
\hat{f}(z,t) = \(\frac{1+|y|^2}{2}\)^{\frac{n-2}{2}} f(y,t) \quad \text{for } z=\pi(y) \text{ and } t \in (-\infty,t_0].
\ee
Given $\sigma \in (0,1)$, we also set
\be \label{eq:Holdermcb}
[\hat{f}]_{C^{\sigma}_{\mcb^{\ep}}}(z,t) := \sup\left\{\frac{|\hat{f}(\tz_1,t) - \hat{f}(\tz_2,t)|}{d_{\Ss^n}(\tz_1,\tz_2)^{\sigma}}:\, \tz_1, \tz_2 \in B_{\Ss^n}(z,\ep_0)\cap{\mcb^{\ep}},\, \tz_1 \ne \tz_2\right\}
\ee
for any $(z,t) \in \mcb^{\ep}_{t_0} := \mcb^{\ep} \times (-\infty,t_0]$.
\end{definition}

\begin{definition}[Refined pointwise and H\"older norms]\label{def:norm-refined}
Given $\sigma \in (0,1)$, let
\begin{align*}
\|\hat{f}\|_{*',\alpha,\beta(\mcb^{\ep}_{t_0})} &:= \left\| |t|^{\beta+\frac{\alpha-2}{n-2}} \hat{f} \right\|_{L^{\infty}(\mcb^{\ep}_{t_0})}, \qquad
\|\hat{\psi}\|_{**',\alpha,\beta(\mcb^\ep_{t_0})} := \left\| |t|^{\beta+\frac{\alpha}{n-2}} \hat{\psi} \right\|_{L^{\infty}(\mcb^{\ep}_{t_0})},\\
\|\hat{f}\|_{*',\alpha,\beta;\sigma(\mcb^{\ep}_{t_0})} &:= \left\||t|^{\beta+\frac{\alpha-2}{n-2}} \(|\hat{f}|+ [\hat{f}]_{C^{\sigma}_{\mcb^\ep}} 
+ |t|^{-\frac{\sigma}{n-2}} [\hat{f}]_{C^{\sigma/2}_t} \)\right\|_{L^{\infty}(\mcb^\ep_{t_0})},\\
\|\hat{\psi}\|_{**',\alpha,\beta;\sigma(\mcb^\ep_{t_0})} &:= \left\||t|^{\beta+\frac{\alpha}{n-2}} \sum_{\ell=0}^2 \(\left|\nabla_{\Ss^n}^{\ell}\hat{\psi}\right| 
+ \left[\nabla_{\Ss^n}^\ell\hat{\psi}\right]_{C^{\sigma}_{\mcb^\ep}} + |t|^{-\frac{\sigma}{n-2}} \left[\nabla_{\Ss^n}^\ell\hat{\psi}\right]_{C^{\sigma/2}_t} \)\right\|_{L^{\infty}(\mcb^\ep_{t_0})} \\
&\ +\bigg\| |t|^{\beta+\frac{\alpha-2}{n-2}}\(|\pp_t \hat{\psi}| + \left[\pp_t \hat{\psi}\right]_{C^{\sigma}_{\mcb^{\ep}}} 
+ |t|^{-\frac{\sigma}{n-2}} \left[\pp_t \hat{\psi}\right]_{C^{\sigma/2}_t}\)\bigg\|_{L^{\infty}(\mcb^\ep_{t_0})}.
\end{align*}

Then, we define
\begin{align*}
\|f\|_{\tilde{*},\alpha,\beta} = \|f\|_{\tilde{*},\alpha,\beta;t_0} &:= \|f\|_{*,\alpha,\beta(B(0,\msR_0)_{t_0})} + \|\hat{f}\|_{*',\alpha,\beta(\mcb^{\ep_0}_{t_0})},\\
\|\psi\|_{\widetilde{**},\alpha,\beta} = \|\psi\|_{\widetilde{**},\alpha,\beta;t_0} &:= \|\psi\|_{**,\alpha,\beta(B(0,\msR_0)_{t_0})} + \|\hat{\psi}\|_{**',\alpha,\beta(\mcb^{\ep_0}_{t_0})},\\
\|f\|_{\tilde{*},\alpha,\beta;\sigma} = \|f\|_{\tilde{*},\alpha,\beta;\sigma;t_0} &:= \|f\|_{*,\alpha,\beta;\sigma(B(0,\msR_0)_{t_0})} + \|\hat{f}\|_{*',\alpha,\beta;\sigma(\mcb^{\ep_0}_{t_0})},\\
\|\psi\|_{\widetilde{**},\alpha,\beta;\sigma} = \|\psi\|_{\widetilde{**},\alpha,\beta;\sigma;t_0} &:= \|\psi\|_{**,\alpha,\beta;\sigma(B(0,\msR_0)_{t_0})} + \|\hat{\psi}\|_{**',\alpha,\beta;\sigma(\mcb^{\ep_0}_{t_0})}.
\end{align*}
\end{definition}
\begin{remark}\label{re2.11}
\leavevmode

\medskip \noindent{1.} The refined pointwise norms, namely, the $\tilde{*}$- and $\widetilde{**}$-norms are designed to capture the rapid decay, for each $t$,
of the derivatives of the solution $\psi(x,t)$ to the outer problem \eqref{outer} as $|x| \to \infty$; see Remark \ref{rmk:psitdecay}.

\medskip \noindent{2.} In the definition of the $\tilde{*}$- and $\widetilde{**}$-norms, the $*$- and $**$-norms are applied only in the spatially bounded region $B(0,\msR_0)_{t_0}$ for each fixed $t_0$,
while the decay of the relevant functions as $|x|\to\infty$ is instead tracked by the $*'$- and $**'$-norms of their conformal lifts.
Therefore, the weight $\omega^2_{\varrho,\iota}$ in Definition \ref{defpr} is not strictly necessary.
Nevertheless, we retain $\omega^2_{\varrho,\iota}$ in the $*$- and $**$-norms to explicitly encode spatial decay in $\R^n$, which is not directly visible from the $*'$- and $**'$-norms. It holds that
\[\|\psi\|_{**,\alpha,\beta;\sigma}\le C\|\psi\|_{\widetilde{**},\alpha,\beta;\sigma} \quad \text{and} \quad \|f\|_{*,\alpha,\beta;\sigma}\le C\|f\|_{\widetilde{**}, \alpha,\beta;\sigma}.\]
\end{remark}

\subsubsection{Norms for the inner problem}
We introduce the norms needed for the analysis of the inner problem \eqref{inner0}.
\begin{definition}[Auxiliary quantities]
For a domain $\Omega\subset\R^n$ and numbers $s < t_0$, we write $\Omega_{s,t_0}:=\Omega\times[s,t_0]$.

Let $\phi:\Omega_{t_0}\to\R$ be a function. For $\sigma \in (0,1)$, we set a local spatial H\"older semi-norm:
\[
[\phi]_{C^{\sigma}_{\Omega}}(y,t) = \sup\left\{\frac{|\phi(\ty_1,t) - \phi(\ty_2,t)|}{|\ty_1-\ty_2|^{\sigma}}:\, \ty_1, \ty_2 \in B(y,r_1(y)) \cap \Omega,\, \ty_1 \ne \ty_2\right\},
\]
where
\[r_1(y) := \begin{cases}
\displaystyle \frac{1}{2} &\text{in } \Omega \cap \{|y| < 1\},\\
\displaystyle \frac{|y|}{2} &\text{in } \Omega \cap \{|y| \ge 1\}.
\end{cases}\]
\end{definition}

\begin{definition}[Pointwise and and H\"older norms]
Fix $a \in (0, n-2)$, $b > 0$, and $s < \frac{3t_0}{2}$.

For functions $\mch:\R^n_{s,t_0}\to\R$ and $\phi:\Omega_{s,t_0}\to\R$, we define the global-in-time weighted $L^{\infty}$-norms:
\begin{align*}
\|\mch\|_{\sharp,a+2,b; s,t_0} &:= \left\| |t|^b (1+|y|^{a+2}) U^{p-1}\mch \right\|_{L^{\infty}(\R^n_{s,t_0})}, \\
\|\phi\|_{\sharp\sharp,a,b(\Omega_{s,t_0})} &:= \left\| |t|^b (1+|y|^a)\phi \right\|_{L^{\infty}(\Omega_{s,t_0})}.
\end{align*}
If $\R^n_{s,t_0}$ and $\Omega_{s,t_0}$ are replaced by $\R^n_{t_0}$ and $\Omega_{t_0}$, respectively, we simply write these norms as $\|\mch\|_{\sharp,a+2,b} = \|\mch\|_{\sharp,a+2,b;t_0}$ and $\|\phi\|_{\sharp\sharp,a,b(\Omega_{t_0})}$, respectively.

Given $\sigma \in (0,1)$, we also define the global-in-time weighted H\"older norms:
\begin{align*}
\|\mch\|_{\sharp,a+2,b;\sigma} &:= \|\mch\|_{\sharp,a+2,b;t_0} + \left\| |t|^b (1+|y|^{a+2+\sigma}) [U^{p-1} \mch]_{C^{\sigma}_{\R^n}} \right\|_{L^{\infty} (\R^n_{t_0})}\\
&\ + \left\| |t|^b (1+|y|^{a+2-\sigma}) [U^{p-1} \mch]_{C^{\sigma/2}_t} \right\|_{L^{\infty} (\R^n_{t_0})},
\end{align*}
\begin{align*}
\|\phi\|_{\sharp\sharp,a,b;\sigma(\Omega_{t_0})} &:= \begin{medsize}
\displaystyle \sum_{\ell=0}^2 \left\| |t|^b \left\{ (1+|y|^{a+\ell}) |\nabla_y^\ell \phi| + (1+|y|^{a+\ell+\sigma}) [\nabla_y^\ell \phi]_{C^{\sigma}_{\Omega}} + (1+|y|^{a+\ell-\sigma}) [\nabla_y^\ell \phi]_{C^{\sigma/2}_t} \right\} \right\|_{L^{\infty} (\Omega_{t_0})}
\end{medsize}\\
&\ \begin{medsize}
\displaystyle +\left\| |t|^b \left\{ (1+|y|^{a-2}) |\pp_t \phi| + (1+|y|^{a-2+\sigma}) [\pp_t \phi]_{C^{\sigma}_{\Omega}} + (1+|y|^{a-2-\sigma}) [\pp_t \phi]_{C^{\sigma/2}_t} \right\} \right\|_{L^{\infty} (\Omega_{t_0})}.
\end{medsize}
\end{align*}
We write $\|\phi\|_{\sharp\sharp,a,b;\sigma} = \|\phi\|_{\sharp\sharp,a,b;\sigma;t_0} = \|\phi\|_{\sharp\sharp,a,b;\sigma(\R^n_{t_0})}$.
\end{definition}

\subsection{Estimates of the errors}\label{subsec:error}
We are now in position to estimate the error $E$ in \eqref{err}. By using the asymptotic decay of the bubble, we readily achieve the exterior estimate:
If $x \in \R^n$ satisfies $|x - \xi_j| > \delta_0$ for all $j = 1, \dots, k$, then
\be \label{est-error-out-0}
|E(x,t)| \le C u_*^{p-1} \frac{\mu^{\frac{n-2}{2}}}{1+|x|^{n-2}} \quad \text{for all } t \le t_0.
\ee
Thanks to \eqref{pp2}, we then only need to pay attention to the region $|x - \xi_1| < \delta_0$, where the error $E$ is estimated as follows:
\begin{lemma}\label{size error}
Suppose that the parameters $\mu = \mu(t)$, $\dot{\mu} = \dot{\mu}(t)$, and $d = d(t)$ satisfy the asymptotic relations \eqref{mu}--\eqref{dt} for all $t \in (-\infty,t_0]$, and the points $\xi_j = \xi_j(t)$ for $j=1, \dots, k$ are defined in \eqref{defpoints}.
Let $y=\mu^{-1}(x-\xi_1)$. Then, for $|y|\le \frac{\delta_0}{\mu}$ and negative $t$ whose magnitude is large enough, the following estimate holds:
\be \label{est-error-in-0}
\begin{aligned}
\mu^{\frac{n+2}{2}} E(\mu y+\xi_1,t) &= p \mu^{-1} \dot{\mu} (U^{p-1}Z_{n+1})(y) + \frac{n+2}{4} p A_k \mu^{n-2} U^{p-1}(y) \\
&\ + \mu^{\frac{n+2}{2}} \bigl( \wte^1[\mu,\dot{\mu}](y,t) + \wte^2[\mu](y,t) \bigr).
\end{aligned}
\ee
Here,
\be \label{eq:Ak}
A_k :=\sum_{j=2}^k {\msc_n \over |(1,0,0') - {\bf q}_j|^{n-2}} = \sum_{j=2}^k {\msc_n \over 2^{n-2 \over 2} \big(1-\cos {2\pi \over k} (j-1)\big)^{n-2 \over 2}} > 0
\ee
and the remainders $\wte^1 = \wte^1[\mu,\dot{\mu}]$ and $\wte^2 = \wte^2[\mu]$ (whose precise definition is given in \eqref{tildeE1} and \eqref{tildeE2_def}, respectively) satisfy the bounds
\begin{align}
&\mu^{\frac{n+2}{2}} |\wte^1|(y,t) \le C \frac{\mu^{n-1}}{(1+|y|^2)^2}, \nonumber\\
&\mu^{\frac{n+2}{2}} |\wte^2|(y,t) \le C\begin{medsize}
\displaystyle \Big( \mu^{2(n-2)} U^{p-2}(y) \mone_{\{3 \le n \le 5\}} + \mu^{n+2} + U^{p-1}(y) \left[\mu^n (1+|y|^2) + \mu^{n-1}|y|\right] \Big).
\end{medsize}\label{boe2}
\end{align}
\end{lemma}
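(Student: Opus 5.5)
We split the error \eqref{err} into a time-derivative part and a nonlinear interaction part, working in the rescaled variable $y=\mu^{-1}(x-\xi_1)$ with $|y|\le\delta_0/\mu$. Write $u_*=U_1+V$ with $V:=\sum_{j\ge2}U_j$. Since each $U_j$ solves $\Delta U_j+\frac{n+2}{4}U_j^p=0$, we have
\[
E=-p u_*^{p-1}\pp_t u_* + \tfrac{n+2}{4}\Big(u_*^p-\sum_{j}U_j^p\Big).
\]
In the region $|x-\xi_1|<\delta_0$ the function $V$ is smooth and small. Indeed,
\[
U_j(x)=\mu^{\frac{n-2}{2}}\msc_n\,|x-\xi_j|^{2-n}\bigl(1+O(\mu^2)\bigr),
\]
so $\mu^{\frac{n-2}{2}}V(\mu y+\xi_1)=\mu^{n-2}A_k+O(\mu^{n-1}|y|)+O(\mu^n(1+|y|^2))$. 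The leading term comes from Taylor expanding $|\xi_1-\xi_j+\mu y|^{2-n}$. The $O(\mu^n)$ corrections come from $d=\sqrt{1-\mu^2}$ replacing $1$ in $|\xi_1-\xi_j|=d|\mathbf q_1-\mathbf q_j|$, and from the second-order Taylor term.

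\textbf{Time-derivative term.} We have
\[
\pp_t U_1=-\mu^{-\frac{n}{2}}\dot\mu\, Z_{n+1}(y)-\mu^{-\frac n2}\dot\xi_1\cdot\nabla U(y),
\]
with $|\dot\xi_1|=|\dot d|=\mu|\dot\mu|/d$. Multiplying $-pu_*^{p-1}\pp_tU_1$ by $\mu^{\frac{n+2}{2}}$ and replacing $u_*^{p-1}$ by $U_1^{p-1}$ gives the main term $p\mu^{-1}\dot\mu\,U^{p-1}Z_{n+1}$. The remaining pieces all go into $\wte^1$:
\begin{itemize}
\item The translation part, bounded by $\mu|\dot\mu|\,U^{p-1}|\nabla U|\lesssim \mu^{n-1}(1+|y|)^{-n-3}$, using \eqref{mu}.
\item The difference $(u_*^{p-1}-U_1^{p-1})\pp_tU_1$. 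By Lemma \ref{yan} with $\vsi=p-1$, it is at most $\mu^{-1}|\dot\mu|\min\{U^{p-2}\mu^{n-2},\mu^{(n-2)(p-1)}\}|Z_{n+1}|$. With $\mu^{-1}|\dot\mu|\sim\mu^{n-2}$, this is far below $\mu^{n-1}(1+|y|^2)^{-2}$.
\item The terms $\pp_tU_j$ for $j\ge2$, of size $\mu^{-1}|\dot\mu|\mu^{n-2}$ times $u_*^{p-1}$. After rescaling, these are also bounded by $\mu^{n-1}(1+|y|^2)^{-2}$ for $n\ge3$.
\end{itemize}
Together these yield the bound on $\wte^1$. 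The estimate uses $(1+|y|^2)^{-2}$ rather than the bubble decay, which comfortably covers the $|y|$ range up to $\delta_0/\mu$.

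\textbf{Interaction term.} Expand with Lemma \ref{yan}, taking $a=U_1$ and $b=V$:
\[
u_*^p-U_1^p=pU_1^{p-1}V+R,
\]
where $|R|\lesssim U_1^{p-2}V^2$ if $p\ge2$ (that is, $n\le6$), and $|R|\lesssim V^p$ otherwise. Inserting the expansion of $V$ into $pU_1^{p-1}V$ produces $\frac{n+2}{4}pA_k\mu^{n-2}U^{p-1}$, plus the error $U^{p-1}[\mu^n(1+|y|^2)+\mu^{n-1}|y|]$. For the remainder:
\begin{itemize}
\item The term $R$ rescales to at most $\mu^{2(n-2)}U^{p-2}$ where $U\gtrsim\mu^{n-2}$, and to at most $\mu^{n+2}$ otherwise.
\item The terms $\sum_{j\ge2}U_j^p$ rescale to $O(\mu^{n+2})$.
\end{itemize}

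\textbf{Main obstacle.} The delicate step is the case split in the remainder $R$ across dimensions. Where $U_1\ll V$, that is $|y|\gtrsim\mu^{-1}$ near the edge, the natural bound is $V^p\sim\mu^{n+2}$ after rescaling. In the core, the $U^{p-2}V^2$ bound is the relevant one. For $n\ge6$ this core contribution is already dominated by $\mu^nU^{p-1}$, which is why the $U^{p-2}$ term appears only for $3\le n\le5$. Checking this pointwise for every $|y|\le\delta_0/\mu$ requires comparing $U^{p-2}\mu^{2(n-2)}$ with $\mu^n U^{p-1}(1+|y|^2)$ via $U\sim(1+|y|)^{2-n}$. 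This comparison is where care is needed, and I would do it by splitting into $|y|\le\mu^{-1/2}$ and its complement.
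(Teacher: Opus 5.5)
Your proposal is correct and follows essentially the same route as the paper. Both arguments split $E$ into the time-derivative part $E^1$ and the interaction part $E^2$, Taylor-expand the far bubbles as in \eqref{uijep} (with $d=\sqrt{1-\mu^2}$ contributing only $O(\mu^n)$) to extract $\frac{n+2}{4}pA_k\mu^{n-2}U^{p-1}$, and bound the remainders with Lemma~\ref{yan}.

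There are a few harmless slips, none of which affects your conclusions:
\begin{itemize}
\item The rescaled translation term is $\lesssim |\dot\mu|\,U^{p-1}|\nabla U|$, not $\mu|\dot\mu|\,U^{p-1}|\nabla U|$; your final bound $\mu^{n-1}(1+|y|)^{-n-3}$ is nonetheless right.
\item For $n\le 6$, Lemma~\ref{yan} gives a sum rather than a minimum. Your bounds still hold because $\sum_{j\ge2}U_j\lesssim U_1$ on $\{|x-\xi_1|\le\delta_0\}$.
\item For $n=3$, the term $(u_*^{p-1}-U_1^{p-1})\partial_t U_1$ is of the same order as $\mu^{n-1}(1+|y|^2)^{-2}$, not far below it.
\item The case split you flag as the main obstacle is not needed. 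For $n\ge7$, Lemma~\ref{yan} directly gives the $V^p\sim\mu^{n+2}$ bound. For $n=6$, $U^{p-2}\equiv 1$.
\end{itemize}
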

\begin{proof}
We decompose the error $E$ into two parts: $E = E^1 + E^2$, where
\[E^1 := -p \bigg( \sum_{j=1}^k U_j \bigg)^{p-1} \sum_{j=1}^k \pp_t U_j \quad \text{and} \quad
E^2 := \frac{n+2}{4} \bigg[ \bigg(\sum_{j=1}^k U_j\bigg)^p - \sum_{j=1}^k U_j^p \bigg].\]

\medskip
We note that the time derivative of a bubble $U_j$ is given by
\be \label{partial-t-ustar}
\pp_t U_j = -\mu^{-\frac{n}{2}} \left[ \dot{\mu} Z_{n+1} \(\frac{x-\xi_j}{\mu}\) + (\nabla U)\(\frac{x-\xi_j}{\mu}\) \cdot \dot{\xi}_j \right] \quad \text{and} \quad \dot{\xi}_j = -\frac{\mu \dot{\mu}}{d} {\bf q}_j,
\ee
where the second relation is a consequence of \eqref{defpoints} and \eqref{dt}. Hence,
\begin{align*}
\mu^{\frac{n+2}{2}} E^1(\mu y+\xi_1,t) &= p \mu^{-1} \bigg(U(y) + \sum_{j=2}^k U\(y + \frac{\xi_1-\xi_j}{\mu}\)\bigg)^{p-1} \left[ \dot{\mu} Z_{n+1}(y) - \frac{\mu \dot{\mu}}{d} \nabla U(y) \cdot \mathbf{e}_1 \right] \\
&\ + p \mu^{-1} \bigg(U(y) + \sum_{j=2}^k U\(y + \frac{\xi_1-\xi_j}{\mu}\) \bigg)^{p-1} \\
&\quad \times \sum_{j=2}^k \left[\dot{\mu} Z_{n+1}\(y + \frac{\xi_1-\xi_j}{\mu}\) - \frac{\mu \dot{\mu}}{d} (\nabla U)\(y + \frac{\xi_1-\xi_j}{\mu}\) \cdot {\bf q}_j\right] \\
&= p \mu^{-1} \dot{\mu} (U^{p-1}Z_{n+1})(y) + \mu^{\frac{n+2}{2}} \wte^1(y,t).
\end{align*}
The remainder term $\wte^1(y,t)$ accounts for the interactions between bubbles and the effect from $\dot{\xi}_j$, i.e.,
\be \label{tildeE1}
\begin{aligned}
\wte^1(y,t) &:= p \mu^{-{n+4 \over 2}} \dot \mu \,\bigg[\bigg(U(y) + \sum_{j=2}^k U\(y + {\xi_1-\xi_j \over \mu}\)\bigg)^{p-1} - U^{p-1}(y) \bigg] \, Z_{n+1}(y) \\
&\ -p {\mu^{-{n+2 \over 2}} \dot \mu \over d} \bigg(U(y) + \sum_{j=2}^k U\(y + {\xi_1-\xi_j \over \mu}\)\bigg)^{p-1} \, \nabla U(y) \cdot {\bf q}_1 \\
&\ + p \mu^{-{n+4 \over 2}} \dot \mu \bigg(U(y) + \sum_{j=2}^k U\(y + {\xi_1-\xi_j \over \mu}\)\bigg)^{p-1} \, \sum_{j=2}^k Z_{n+1}\(y+ {\xi_1-\xi_j \over \mu}\) \\
&\ - p {\mu^{-{n+2 \over 2} } \dot \mu \over d} \bigg(U(y) + \sum_{j=2}^k U\(y + {\xi_1-\xi_j \over \mu}\)\bigg)^{p-1} \, \sum_{j=2}^k (\nabla U)\(y+ {\xi_1-\xi_j \over \mu}\) \cdot {\bf q}_j.
\end{aligned}
\ee
Routine estimates relying on the decay of the bubbles $U_j$ for $j \ne 1$ lead to
\[
\mu^{\frac{n+2}{2}} |\wte^1(y,t)| \le C \frac{\mu^{n-1}}{1+|y|^4}.
\]

\medskip
To analyze $E^2$, we consider
\begin{align}
U\bigg( y + {\xi_1-\xi_j \over \mu} \bigg) &= \msc_n \, {\mu^{n-2} \over |\xi_1-\xi_j|^{n-2} }\left[1 - (n-2) \, \mu{(\xi_1-\xi_j)\cdot y \over |\xi_1-\xi_j|^2} \right. \nonumber \\
&\qquad +\mu^2\bigg\{-\frac{n-2}{2}\frac{|y|^2}{|\xi_1-\xi_j|^2} + \frac{(n-2)n}{2} \frac{(y \cdot (\xi_1-\xi_j))^2}{|\xi_1-\xi_j|^4} - \frac{n-2}{2}\frac{1}{|\xi_1-\xi_j|^2}\bigg\} \nonumber \\
&\qquad \left. + O\Big( \mu^3 (1+ |y|^3) \Big)\right] \label{uijep}
\end{align}
for $|y| \le \frac{\delta_0}{\mu}$. Summing this expansion over $j = 2,\dots,k$ yields 
\begin{align*}
&\begin{medsize}
\displaystyle \ {4 \over n+2} E^2(\mu y+\xi_1,t)
\end{medsize} \\
&\begin{medsize}
\displaystyle = p \mu^{-{n+2 \over 2}} U^{p-1}(y) \sum_{j=2}^k U\(y + {\xi_1-\xi_j \over \mu}\) - \mu^{-{n+2 \over 2}} \sum_{j=2}^k U^p\(y + {\xi_1-\xi_j \over \mu}\)
\end{medsize} \\
&\begin{medsize}
\displaystyle \ + \mu^{-{n+2 \over 2}} \bigg(U(y) + \sum_{j=2}^k U\(y + {\xi_1-\xi_j \over \mu}\) \bigg)^p -\mu^{-{n+2 \over 2}} U^p (y) - p \mu^{-{n+2 \over 2}} U^{p-1}(y) \sum_{j=2}^k U\(y + {\xi_1-\xi_j \over \mu}\)
\end{medsize} \\
&\begin{medsize}
\displaystyle = pA_k\mu^{{n\over 2} -3} U^{p-1}(y)+ \wte^2(y,t). 
\end{medsize}
\end{align*}
Here, the remainder term $\wte^2(y,t)$ is defined by
\be \label{tildeE2_def}
\begin{aligned}
\wte^2(y,t) &:= \begin{medsize}
\displaystyle p \mu^{-\frac{n+2}{2}} U^{p-1}(y) \sum_{j=2}^k U\(y + \frac{\xi_1-\xi_j}{\mu}\) - p A_k \mu^{\frac{n}{2}-3} U^{p-1}(y) - \mu^{-\frac{n+2}{2}} \sum_{j=2}^k U^p \(y + \frac{\xi_1-\xi_j}{\mu}\)
\end{medsize} \\
&\ \begin{medsize}
\displaystyle + \mu^{-\frac{n+2}{2}} \left[ \( U(y) + \sum_{j=2}^k U\(y + \frac{\xi_1-\xi_j}{\mu}\) \)^p - U^p(y) - p U^{p-1}(y) \sum_{j=2}^k U\(y + \frac{\xi_1-\xi_j}{\mu}\) \right].
\end{medsize}
\end{aligned}
\ee
Direct application of Lemma \ref{yan}, \eqref{uijep}, and \eqref{eq:delta01} leads to
\begin{align*}
\mu^{\frac{n+2}{2}} |\wte^2(y,t)| &\le C\(\frac{\mu^n}{1 + |y|^4} + \frac{\mu^{n-1}|y|}{1 + |y|^4} + \left| \frac{1}{d^{n-2}} - 1 \right| \mu^{n-2} U^{p-1} + \mu^{n+2}\) \\
&\ +\begin{cases}
C\max\{\mu^{n+2}, U^{p-2} \mu^{2(n-2)}\} &\text{if } n=3,4,5, \\
C\min\{\mu^{n+2}, U^{p-2} \mu^{2(n-2)}\} &\text{if } n \ge 6.
\end{cases}
\end{align*}
From this, the bound in \eqref{boe2} immediately follows. 
\end{proof}

We estimate the $L^2$-projection of $\mu^{\frac{n+2}{2}}E(\mu y+\xi_1,t)$ onto the kernel element $Z_{n+1}$, defined in \eqref{eq:Zi}, corresponding to scaling invariance.
\begin{lemma}\label{le2.3}
Let $R > 0$ be a large fixed number satisfying $4\mu R < \delta_0$. Then
\begin{align}
\mu^{\frac{n+2}{2}} \int_{B(0,R)} E(\mu y+\xi_1,t) Z_{n+1}(y) \, dy &= pa_1 \mu^{-1}\dot{\mu} \big(1 + O(R^{-n})\big) - \frac{n+2}{4} pa_2 A_k \mu^{n-2} \big(1 + O(R^{-2})\big) \nonumber \\
&\ + \mu^{n-1} G_{n+1}[\mu,\dot{\mu}](t) \label{projection erro}
\end{align}
uniformly in the interval $(-\infty,t_0]$. Here,
\be \label{eq:a1a2}
a_1 := \int_{\R^n} U^{p-1} Z_{n+1}^2 \, dy > 0, \qquad a_2 := -\int_{\R^n} U^{p-1} Z_{n+1} \, dy > 0,
\ee
and $G_{n+1} = G_{n+1}[\mu,\dot{\mu}]$ denotes a generic function of $t$ satisfying the following properties:
\begin{enumerate}
\item[(i)]
It holds that $\|G_{n+1}\|_{0;\sigma} \le C$.
\item[(ii)]
Let $\tnu := \frac{1}{n-2}$. There exists a constant $C > 0$ depending only on $n$, $k$, and $\sigma$ such that
\begin{align}
\left\|G_{n+1}[\bmu^{(1)}] - G_{n+1}[\bmu^{(2)}]\right\|_{0;\sigma} &\le C \big\|\bmu^{(1)} - \bmu^{(2)}\big\|_{\tnu;\sigma}, \label{gm}\\
\left\|G_{n+1}[\dot{\bmu}^{(1)}] - G_{n+1}[\dot{\bmu}^{(2)}]\right\|_{0;\sigma} &\le C \big\|\dot{\bmu}^{(1)} - \dot{\bmu}^{(2)}\big\|_{\tnu+1;\sigma}.\label{gmd}
\end{align}
\end{enumerate}
\end{lemma}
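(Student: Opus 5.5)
We start from the expansion \eqref{est-error-in-0} of Lemma \ref{size error}, multiply it by $Z_{n+1}$, and integrate over $B(0,R)$. The condition $4\mu R<\delta_0$ guarantees $B(0,R)\subset\{|y|\le \delta_0/\mu\}$, so the lemma applies throughout. The argument splits into three pieces: the two explicit leading terms and the remainder.

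\emph{First leading term.} It gives
\[
p\mu^{-1}\dot\mu\int_{B(0,R)}U^{p-1}Z_{n+1}^2\,dy=p a_1\mu^{-1}\dot\mu\Big(1-a_1^{-1}\int_{B(0,R)^c}U^{p-1}Z_{n+1}^2\Big).
\]
Since $U^{p-1}Z_{n+1}^2\sim |y|^{-4-2(n-2)}=|y|^{-2n}$, the tail is $O(R^{-n})$.

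\emph{Second leading term.} It gives
\[
\tfrac{n+2}{4}pA_k\mu^{n-2}\int_{B(0,R)}U^{p-1}Z_{n+1}\,dy=-\tfrac{n+2}{4}pa_2A_k\mu^{n-2}\big(1+O(R^{-2})\big),
\]
because $U^{p-1}Z_{n+1}\sim|y|^{-n-2}$ has tail $O(R^{-2})$. The positivity of $a_1$ is obvious. For $a_2$, I would use $\int U^{p-1}Z_{n+1}=\frac{d}{d\lambda}\big|_{\lambda=1}\frac1p\int U_{\lambda,0}^p$, computed explicitly. Equivalently, the factor $-\frac{(n-2)\msc_n}{2}\frac{|y|^2-1}{(1+|y|^2)^{n/2}}$ weighted by $U^{p-1}\propto(1+|y|^2)^{-2}$ yields a negative integral via a Beta-function computation.

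\emph{Remainder.} We define
\[
\mu^{n-1}G_{n+1}:=\mu^{\frac{n+2}{2}}\int_{B(0,R)}(\wte^1+\wte^2)Z_{n+1}\,dy,
\]
and use the bounds in \eqref{boe2} together with $|Z_{n+1}|\lesssim(1+|y|)^{2-n}$.

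For $\wte^1$ we get $\mu^{n-1}\int(1+|y|)^{-4-(n-2)}$, which is finite uniformly in $R$ for $n\ge3$.

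For $\wte^2$ the contributions are as follows.
\begin{itemize}
\item The term $U^{p-1}\mu^{n-1}|y|$ contributes $\mu^{n-1}$, since $\int U^{p-1}|y||Z_{n+1}|$ converges.
\item The term $U^{p-1}\mu^n(1+|y|^2)$ contributes $\mu^n\int_{B_R}|y|^{-n}\lesssim\mu^n\log R$.
\item The term $\mu^{n+2}$ contributes $\mu^{n+2}R^{2}\lesssim\mu^n$, using $\mu R\lesssim1$.
\item For $3\le n\le5$, the term $\mu^{2(n-2)}U^{p-2}$ contributes $\mu^{2(n-2)}\int_{B_R}(1+|y|)^{(n-2)-4+... }$. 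Here one has to check $U^{p-2}|Z_{n+1}|\sim|y|^{-(6-n)}\cdot|y|^{2-n}=|y|^{-4}$, which gives $\mu^{2n-4}R^{n-4}$ (with a $\log$ when $n=4$). Since $R\le\mu^{-1}$, this is $\lesssim\mu^{n}$ up to logarithms, and $\mu^{2n-4}$ itself is $\lesssim\mu^{n-1}$ because $2n-4\ge n-1$ for $n\ge3$.
\end{itemize}
All of these are $O(\mu^{n-1})$. For fixed $R$ the logarithms are harmless; more generally one uses $\mu^n\log(1/\mu)=o(\mu^{n-1})$. This proves the sup part of (i).

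\emph{Hölder part of (i) and the Lipschitz bounds (ii).} This is the main obstacle: the integrands depend on $t$ through $\mu(t)$, $\dot\mu(t)$, and $d(t)=\sqrt{1-\mu^2}$, and on $\mu$ inside the shifted bubbles $U(y+(\xi_1-\xi_j)/\mu)$. I would write $\wte^1$ in \eqref{tildeE1} as $\dot\mu$ times a smooth function $F(\mu,y)$. Then the $C^{\sigma/2}_t$ seminorm and the differences in $\dot\mu$ are controlled linearly, and \eqref{gmd} follows because $\dot\mu\sim|t|^{-\tnu-1}$ while $\mu^{-n-1}\cdot\mu^{\frac{n+2}{2}}$-type prefactors are $\sim|t|^{\tnu+1}$ after normalizing by $\mu^{n-1}$.

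For $\wte^2$, everything is a smooth function of $\mu$ alone. Here I would differentiate in $\mu$ under the integral, using $\partial_\mu$ of \eqref{uijep}, and bound $\big|\partial_\mu(\mu^{1-n}\cdot\text{integrand})\big|\lesssim\mu^{-1}\times(\text{same majorants})$. This gives
\[
|G_{n+1}[\mu^{(1)}]-G_{n+1}[\mu^{(2)}]|\lesssim\mu^{-1}|\mu^{(1)}-\mu^{(2)}|\lesssim|t|^{\tnu}|\mu^{(1)}-\mu^{(2)}|,
\]
which is \eqref{gm}. The Hölder seminorms in time follow the same way, since $t\mapsto\mu(t)$ is $C^{1}$ with $\|\mu\|_{\tnu;\sigma}$-control. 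The delicate point in dimensions $3\le n\le5$ is that differentiating $|a+b|^p$ with $p>2$ fails only mildly; we use the second estimate of Lemma \ref{yan} with $\vsi=p-1$ for differences as well.
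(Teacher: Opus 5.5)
Your proposal is correct and follows the paper's route: you integrate \eqref{est-error-in-0} against $Z_{n+1}$ over $B(0,R)$, read off the $O(R^{-n})$ and $O(R^{-2})$ corrections from $U^{p-1}Z_{n+1}^2\sim|y|^{-2n}$ and $U^{p-1}Z_{n+1}\sim|y|^{-n-2}$, and bound the remainders via \eqref{boe2}, absorbing the $\mu^n\log$ contribution into $\mu^{n-1}G_{n+1}$. You then obtain (ii) from the smooth dependence on $(\mu,\dot\mu)$, which you spell out in more detail than the paper: the coefficient of $\dot\mu$ in $G_{n+1}$ is $O(\mu^{1-n})=O(|t|^{\tnu+1})$, and each $\partial_\mu$ costs a factor $\mu^{-1}\sim|t|^{\tnu}$.

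There are two small slips. First, with the paper's convention $U_{\lambda,0}=\lambda^{-\frac{n-2}{2}}U(\cdot/\lambda)$ one has $\partial_\lambda U_{\lambda,0}|_{\lambda=1}=-Z_{n+1}$, so your derivative identity for $\int U^{p-1}Z_{n+1}$ needs a minus sign; the Beta-function sign is the correct one, and indeed $\int U^{p-1}Z_{n+1}=-\frac{(n-2)^2}{2(n+2)}\int U^p<0$. Second, there is no smoothness issue for any $p$, because $U+\sum_{j\ge 2}U(\cdot+(\xi_1-\xi_j)/\mu)$ is strictly positive.
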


\begin{proof}
The proof relies on the estimate \eqref{est-error-in-0}. Integrating the principal components over the ball $B(0,R)$ yields
\begin{multline*}
\int_{B(0,R)} \left[ p \mu^{-1} \dot{\mu} (U^{p-1}Z_{n+1})(y) + \frac{n+2}{4} p A_k \mu^{n-2} U^{p-1}(y) \right] Z_{n+1}(y) \, dy \\
= pa_1 \mu^{-1} \dot{\mu} \big(1 + O(R^{-n})\big) - \frac{n+2}{4} pa_2 A_k \mu^{n-2} \big(1 + O(R^{-2})\big)
\end{multline*}
as $t \to -\infty$. 

To evaluate the contribution of the remainder terms, we recall the definitions for $\wte^1$ and $\wte^2$ in \eqref{tildeE1} and \eqref{tildeE2_def}. A direct integration against the kernel $Z_{n+1}$ shows that
\[
\mu^{\frac{n+2}{2}} \int_{B(0,R)} \wte^1[\mu,\dot{\mu}](y,t) Z_{n+1}(y) \, dy = \mu^{n-1} G_{n+1}(t)
\]
and
\[
\mu^{\frac{n+2}{2}} \int_{B(0,R)} \wte^2[\mu] (y,t) Z_{n+1}(y) \, dy = \mu^{n-1} G_{n+1}(t) + \mu^n |\log \mu| G_{n+1}(t).
\]
Since the logarithmic term $\mu^n |\log \mu|$ is of higher order compared to $\mu^{n-1}$, it is absorbed into the generic function $G_{n+1}(t)$. Combining these estimates, we deduce the expansion \eqref{projection erro}.
\end{proof}

\subsection{The inner--outer gluing scheme}\label{scheme}
To prove Theorem \ref{thm:main}, we seek a solution to \eqref{pb} in the form
\be \label{geso}
u(x,t) = u_*[\mu](x,t) + \Xi[\mu, \dot{\mu}](x,t),
\ee
where $\Xi$ denotes the correction term. Substituting this expression into \eqref{pb}, we find the equation for $\Xi$:
\be \label{nl-p}
-p u_*^{p-1} \pp_t \Xi + \Delta \Xi + p \, \frac{n+2}{4} \, u_*^{p-1} \Xi + E + N(\Xi) = 0 \quad \text{in } \R^n \times (-\infty,t_0),
\ee
where $E$ is the error term introduced in \eqref{err} and $N(\Xi)$ is the nonlinear remainder term given as
\be \label{N}
\begin{aligned}
N(\Xi) &:= -p \left[ (u_*+\Xi)^{p-1} - u_*^{p-1} \right] \pp_t u_* - p \left[ (u_*+\Xi)^{p-1} - u_*^{p-1} \right] \pp_t \Xi \\
&\ + \frac{n+2}{4} \left[ (u_*+\Xi)^p - u_*^p - p u_*^{p-1} \Xi \right].
\end{aligned}
\ee

\medskip
Let $\eta:[0,\infty) \to [0,1]$ be a smooth cut-off function such that $\eta(s) = 1$ for $s < 1$ and $\eta(s) = 0$ for $s > 2$.
Then, for $j = 1,\dots,k$, we define the bubble-centric cut-off functions $\eta_j$ by
\be \label{def-cutoff-j}
\eta_j (x,t) := \begin{cases}
\eta \big( (R\mu)^{-1}(t) \, |x|^{-2} \big|x-\xi_j(t)|x|^2\big| \big) & \text{if } |x|>1, \\
\eta \big( (R\mu)^{-1}(t) \, |x-\xi_j(t)| \big) & \text{if } |x|\le 1,
\end{cases}
\ee
where $R(t)$ satisfies
\be \label{eq:Rt}
R(t) \sim |t|^{c_0} \quad \text{with some } c_0 \in \(\frac{1}{2(n-2)},\frac{1}{n-2}\)
\ee
so that $(R\mu)(t) \to 0$ and $(R^2\mu)(t) \to \infty$ as $t \to -\infty$. Particularly, we may assume that $(4R\mu)(t) < \delta_0$ for all $t \in (-\infty,t_0]$. We also set enlarged cut-off functions $\eta_{j,2}$ by
\[\eta_{j,2} (x,t) := \begin{cases}
\eta \big( (2R\mu)^{-1}(t) \, |x|^{-2} \big|x-\xi_j(t)|x|^2\big| \big) & \text{if } |x|>1,\\
\eta \big( (2R\mu)^{-1}(t) \, |x-\xi_j(t)| \big) & \text{if } |x|\le 1.
\end{cases}\]
With this definition, we have that $\eta_j = \eta_j \eta_{j,2}$. Moreover, both $\eta_j$ and $\eta_{j,2}$ inherit the symmetries \eqref{pp1}--\eqref{pp2} as well as the Kelvin invariance
\[\eta_j(x,t)=\eta_j\(\frac{x}{|x|^2},t\) \quad \text{and} \quad \eta_{j,2}(x,t)=\eta_{j,2}\(\frac{x}{|x|^2},t\).\]
Regarding the support of $\eta_j$, we have the following result, and its proof is postponed to Appendix \ref{app1}.
\begin{lemma}\label{lemma:eta_j}
Take $t_0<0$ sufficiently large in magnitude, if necessary. Then,
\[
\eta_j(\mu(t)y+\xi_j(t),t) = \begin{cases}
1 &\text{if } |y| \le \frac{1}{4}R(t) \text{ and } t \in (-\infty,t_0), \\
0 &\text{if } |y| \ge 5R(t) \text{ and } t \in (-\infty,t_0)
\end{cases} \quad
\]
for all $j \in \{1,\dots,k\}$.
\end{lemma}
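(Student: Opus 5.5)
We substitute $x=\mu y+\xi_j$ with $\mu=\mu(t)$, $\xi_j=\xi_j(t)$, $R=R(t)$, and treat the two branches in \eqref{def-cutoff-j} separately. The claim reduces to: for $|y|\le R/4$ the argument of $\eta$ is $<1$, and for $|y|\ge 5R$ it is $>2$. The constants $1/4$ and $5$ leave room for the errors made when switching between branches.

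\emph{Inner branch $|x|\le1$.} Here the argument is $(R\mu)^{-1}|x-\xi_j|=|y|/R$. If $|y|\le R/4$ this equals at most $1/4<1$, so $\eta_j=1$. If $|y|\ge5R$ it is at least $5>2$, so $\eta_j=0$. Both cases hold regardless of whether such $x$ actually satisfies $|x|\le1$.

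\emph{Outer branch $|x|>1$.} Here the argument is $(R\mu)^{-1}\big|x|x|^{-2}-\xi_j\big|$. We will use $|\xi_j|=d$ and $\mu^2+d^2=1$ from \eqref{dt}, so $\xi_j/|\xi_j|^2=\xi_j/d^2$. The key identity is
\[
\Big|\frac{x}{|x|^2}-\frac{\xi_j}{|\xi_j|^2}\Big|=\frac{|x-\xi_j|}{|x|\,|\xi_j|}.
\]
From it we obtain
\[
\Big|\frac{x}{|x|^2}-\xi_j\Big| = \frac{|x-\xi_j|}{|x|\,d} + O\Big(\frac{1-d^2}{d}\Big)
= \frac{|x-\xi_j|}{|x|\,d}+O(\mu^2).
\]
Dividing by $R\mu$, the error is $O(\mu/R)=o(1)$. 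In the relevant regime (argument $\lesssim 3$) we also have $|x-\xi_j|\lesssim R\mu\to0$, so $|x|=1+O(R\mu)$. Therefore
\[
\text{argument}=\frac{|y|}{R}\big(1+O(R\mu)\big)+o(1).
\]
This gives $\eta_j=1$ when $|y|\le R/4$ (the argument is at most $1/4+o(1)<1$). If $|y|\ge 5R$, we split into two cases. If $|x-\xi_j|\le\delta_0$, then $|x|\le 2$, so the argument is at least $5/(2d)-o(1)>2$. Otherwise $|x-\xi_j|>\delta_0$, and $|x^*-\xi_j|\ge \frac{\delta_0}{|x|d}-O(\mu^2)$ is bounded below (for large $|x|$ one uses $|x^*|\le 1/|x|$ directly, giving $|x^*-\xi_j|\ge d-1/|x|$). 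In that case the argument is $\gtrsim 1/(R\mu)\to\infty$.

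The main obstacle is the outer branch: the Kelvin-reflected point $x^*$ must be compared with $\xi_j$ rather than $\xi_j/d^2$. The mismatch is $O(\mu^2)$, and it is negligible only because $\mu^2\ll R\mu$, which follows from \eqref{eq:Rt} since $R\to\infty$. Uniformity in all large $|x|$ needs the separate crude bound above. Finally, $t_0$ is chosen so that $R\mu$, $\mu/R$ and $1-d$ are all small enough for the $o(1)$ terms to be below $1/4$.
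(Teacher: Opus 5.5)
Your proof is correct and follows essentially the same route as the paper: the trivial branch $|x|\le 1$, and for $|x|>1$ the inversion identity $\bigl|x/|x|^2-\xi_j/|\xi_j|^2\bigr| = |x-\xi_j|/(|x||\xi_j|)$ with the $O(\mu^2)$ mismatch (absorbed because $\mu\ll R$), together with the crude bound $|x^*-\xi_j|\ge d-1/|x|$ for $|x|\ge 2$. The differences are cosmetic. For $|y|\le R/4$ the paper expands $(R\mu)^{-1}\bigl|x-\xi_j|x|^2\bigr|$ directly instead of using the identity. Your extra split at $|x-\xi_j|=\delta_0$ is unnecessary, since $|x-\xi_j|=\mu|y|\ge 5R\mu$ already handles every $1<|x|\le 2$, exactly as in the paper.
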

The construction of $\Xi$ relies on an inner--outer gluing procedure. In other words, we search for the correction term $\Xi$ in the form
\[\Xi (x,t) = \sum_{j=1}^k \eta_j (x,t) \, \tph_j (x,t) + \psi (x,t).\]
The system for the $k$-tuple of functions $\tph := (\tph_1,\dots,\tph_k)$ and the function $\psi$ is formulated as
\be \label{inner0}
\begin{aligned}
p U_j^{p-1} \pp_t \tph_j &= \Delta \tph_j + p \frac{n+2}{4} U_j^{p-1} \tph_j + \eta_{j,2} \(E + N(\Xi) + p \frac{n+2}{4} u_*^{p-1} \psi\) \\
&\ - \eta_{j,2} p (u_*^{p-1} - U_j^{p-1}) \(\pp_t \tph_j -\frac{n+2}{4}\tph_j\)
\quad \text{in } \R^n \times (-\infty,t_0)
\end{aligned}
\ee
for each $j=1, \dots, k$, and 
\be \label{outer}
p u_*^{p-1} \pp_t \psi = \Delta \psi + W \psi + u_*^{p-1}\mch_{\mathrm{outer}} \quad \text{in } \R^n \times (-\infty,t_0),
\ee
where
\be \label{defW}
W(x,t) = W[\mu] (x,t) := p \frac{n+2}{4} \bigg(1-\sum_{j=1}^k\eta_j(x,t)\bigg) u_*^{p-1}(x,t)
\ee
and
\be \label{eq:Houter}
\begin{aligned}
\mch_{\mathrm{outer}} &= \mch_{\mathrm{outer}}[\psi,(\mu,\dot{\mu},\phi)] \\
&:= u_*^{1-p} \bigg[\bigg(1-\sum_{j=1}^k\eta_j\bigg) (E+N(\Xi)) + \sum_{j=1}^k \(\Delta\eta_j \tph_j + 2 \nabla\tph_j \nabla\eta_j - pu_*^{p-1} \tph_j \pp_t\eta_j\)\bigg].
\end{aligned}
\ee
If $(\tph_j,\psi)$ solves \eqref{inner0}--\eqref{outer}, then $\Xi$ is a solution to \eqref{nl-p}.

\subsection{The outer problem}
We solve the outer problem \eqref{outer} under the assumptions that $\mu$, $\dot{\mu}$, and $d$ satisfy \eqref{mu}--\eqref{dt} for all $t \in (-\infty,t_0]$, that the points $\xi_j$ for $j=1, \dots, k$ are defined by \eqref{defpoints}, and that the $k$-tuple of functions $\tph$ satisfies the following prescribed profile. More precisely, we assume that
\be \label{tildephi-ass-1}
\tph_j (\bx, x', t) = \tph_1 \big(e^{\frac{2\pi(j-1)}{k}i} \bx, x', t\big) \quad \text{for } j=2, \dots, k,
\ee
and that $\tph_1$ satisfies the Kelvin invariance \eqref{fke} and is even with respect to $x_\ell$ for $\ell=2, \dots, n$, namely,
\[ 
\tph_1 (x_1, \dots, x_\ell, \dots, x_n, t) = \tph_1 (x_1, \dots, -x_\ell, \dots, x_n, t) \quad \text{for } \ell=2, \dots, n.
\]
Moreover, we assume that $\tph_1$ is obtained by rescaling a function $\phi$ as follows:
\be \label{tildephi-ass-3}
\tph_1(x,t) = \mu^{-\frac{n-2}{2}}(t) \phi\(\frac{x-\xi_1(t)}{\mu(t)},t\),
\ee
where $\phi$ satisfies $\|\phi\|_{\sharp\sharp,a,b;\sigma}<\infty$.
In the following definition, we specify the assumptions on the auxiliary parameters, including $a$, $b$, and $\sigma$.
\begin{definition}\label{def:num}
Let $\tnu = \frac{1}{n-2}$ as in Lemma \ref{le2.3} and $c_0 \in (\frac{1}{2(n-2)},\frac{1}{n-2})$ be the number in \eqref{eq:Rt}. Let also
\be \label{eq:num0}
a \in \left.\begin{cases}
\{2\} &\text{if } n \ge 5 \\
(1,2) &\text{if } n=4 \\ 
(\frac{1}{2},1) &\text{if } n=3
\end{cases}\right\},
\quad b=1, \quad \sigma \in (0, \min\{1, a\}),
\quad \alpha_0 := \left.\begin{cases}
\frac12 &\text{if } n=3 \\
0 &\text{if } n \ge 4
\end{cases}\right\}, \quad \alpha\in (\alpha_0,a).
\ee
Finally, we set
\[
\beta = \frac{1}{2} + (c_0-c')(a-\alpha) \quad \text{and } c' \in (0,c_0) \text{ be small enough}.
\]
\end{definition}

The existence of $\psi$ and its key properties are summarized in the following proposition.
\begin{prop}\label{prop:outer}
Suppose that the assumptions on the parameters $(\mu,\xi)$ and the $k$-tuple of functions $\tph$ stated in the preceding paragraph hold, and that the auxiliary parameters are chosen according to Definition \ref{def:num}.
Take $t_0<0$ sufficiently large in magnitude, if necessary.
Then there exists an ancient solution $\psi = \psi[\mu, \dot{\mu}, \phi]$ to \eqref{outer} that satisfies the symmetries \eqref{pp1}--\eqref{fke} and the estimates:
\begin{align}
\|\psi\|_{\widetilde{**}, \alpha, \beta;\sigma} &\le C |t_0|^{c'(\alpha-a)}, \label{psi-norm-bound} \\
\| \pp_{\mu}\psi[\mu,\dot{\mu},\phi][\bmu] \|_{\widetilde{**}, \alpha, \beta;\sigma} &\le C |t_0|^{c'(\alpha-a)} \|\bmu\|_{\tnu;\sigma}, \label{psi-lip-1} \\
\| \pp_{\dot{\mu}}\psi[\mu,\dot{\mu},\phi][\dot{\bmu}] \|_{\widetilde{**}, \alpha, \beta;\sigma} &\le C |t_0|^{c'(\alpha-a)-1} \|\dot{\bmu}\|_{\tnu+1;\sigma}, \label{psi-lip-2} \\
\| \pp_{\phi}\psi[\mu,\dot{\mu},\phi][\bph] \|_{\widetilde{**}, \alpha, \beta;\sigma} &\le C |t_0|^{c'(\alpha-a)} \|\bph\|_{\sharp\sharp,a,b;\sigma}. \label{psi-lip-3}
\end{align}
Here, $C > 0$ is a constant depending only on $n$, $k$, and $\sigma$, and the operators $\pp_{\mu}$, $\pp_{\dot{\mu}}$, and $\pp_{\phi}$ denote the respective Fr\'echet derivatives so that $\pp_{\mu}\psi[\mu,\dot{\mu},\phi][\bmu] = \pp_{s}\psi[\mu+s\bmu,\dot{\mu},\phi]|_{s=0}$, 
$\pp_{\dot{\mu}}\psi[\mu,\dot{\mu},\phi][\dot{\bmu}] = \pp_{s}\psi[\mu,\dot{\mu}+s\dot{\bmu},\phi]|_{s=0}$, and $\pp_{\phi}\psi[\mu,\dot{\mu},\phi][\bph] = \pp_{s}\psi[\mu,\dot{\mu},\phi+s\bph]|_{s=0}$.
\end{prop}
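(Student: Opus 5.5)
The plan is to solve \eqref{outer} by a contraction argument in the Banach space $X:=\{\psi:\ \|\psi\|_{\widetilde{**},\alpha,\beta;\sigma}<\infty,\ \psi \text{ satisfies } \eqref{pp1}\text{--}\eqref{fke}\}$. The argument rests on a linear theory for the operator $\mcl\psi := -pu_*^{p-1}\pp_t\psi + \Delta\psi + W\psi$. The key linear statement, which I would take from the linear outer estimate in Section \ref{se4} (Proposition \ref{outer-thm}, together with Lemmas \ref{maximum-principle}--\ref{outer-linear-point}), reads as follows: for every $f$ with $\|f\|_{\tilde{*},\alpha,\beta;\sigma}<\infty$ and respecting the symmetries, there is an ancient solution $\psi=\mct[f]$ of $pu_*^{p-1}\pp_t\psi=\Delta\psi+W\psi+u_*^{p-1}f$ with
\[\|\mct[f]\|_{\widetilde{**},\alpha,\beta;\sigma}\le C\|f\|_{\tilde{*},\alpha,\beta;\sigma}.\]
The solution $\mct[f]$ inherits \eqref{pp1}--\eqref{fke} by uniqueness, since $u_*$, $W$ and the cut-offs $\eta_j$ are invariant and the equation commutes with the Kelvin transform.

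To construct $\mct[f]$ I would solve on $[s,t_0]$ with zero data at $t=s$ and pass to the limit $s\to-\infty$. The pointwise bound comes from the weak maximum principle with barriers built from $\omega^1_{\alpha,\beta,0}+\omega^2_{n-2,\beta+\frac{\alpha}{n-2}}$. Here $W$ is harmless because it vanishes where $\eta_j=1$, and elsewhere $u_*^{p-1}\lesssim \mu^{2}|x-\xi_j|^{-4}$, which is small relative to $\Delta$ of the barrier once $R\to\infty$. The H\"older and derivative bounds follow by local parabolic Schauder estimates on the cylinders of radius $r_0(x,t)$, rescaled so that $u_*^{p-1}\pp_t-\Delta$ becomes uniformly parabolic. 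Near $\infty$ I would switch to the conformal lift \eqref{stereo-proj-1} on $\mcb^{\ep_0}$, where the equation becomes a uniformly parabolic problem near $N$ (by Kelvin invariance, $\hat u_*$ is comparable to its values near the south pole). This is what yields the $**'$ part of the norm.

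With the linear theory in hand, I would define the fixed-point map $\mca[\psi]:=\mct[\mch_{\mathrm{outer}}[\psi,(\mu,\dot\mu,\phi)]]$ and estimate each piece of \eqref{eq:Houter} in the $\tilde *$-norm.

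\emph{The error term $(1-\sum\eta_j)E$.} Lemma \ref{size error} gives the bound $|E|\lesssim \mu^{n-2}U_j^{p-1}\mu^{-\frac{n+2}{2}}\cdots$ on $|y|\ge R/4$. Combined with \eqref{est-error-out-0}, this yields
\[\|u_*^{1-p}(1-\textstyle\sum\eta_j)E\|_{\tilde*,\alpha,\beta;\sigma}\lesssim |t_0|^{-\epsilon}\]
for some $\epsilon>0$. This uses $\beta<\frac12+c_0(a-\alpha)$ and the choice of $c_0$.

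\emph{The cut-off terms $\Delta\eta_j\tph_j$, $2\nabla\tph_j\nabla\eta_j$ and $pu_*^{p-1}\tph_j\pp_t\eta_j$.} These are supported where $|y_j|\sim R$. There $|\tph_j|\lesssim \mu^{-\frac{n-2}{2}}|t|^{-b}R^{-a}\|\phi\|_{\sharp\sharp}$, and translating to the weight $\omega^1_{\alpha,\beta,2}$ gains a factor $R^{\alpha-a}|t|^{\beta-b}\cdots\lesssim |t|^{c'(\alpha-a)}$. This gain is exactly what the definition of $\beta$ is engineered to produce. Since $|t|^{c'(\alpha-a)}\le|t_0|^{c'(\alpha-a)}$ on $(-\infty,t_0]$, we obtain $\|\mcn\|\lesssim |t_0|^{c'(\alpha-a)}$, where $\mcn$ denotes the sum of the cut-off terms.

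\emph{The nonlinear term $N(\Xi)$.} I would use Lemma \ref{yan} together with the bounds $|\Xi|\ll u_*$ and $|\pp_t u_*|\lesssim |\dot\mu/\mu|u_*$. These give a quadratic bound in $\|\psi\|+\|\phi\|$ times small factors. The $\pp_t\Xi$ piece is handled by the $\pp_t$ components of the $**$ and $\sharp\sharp$ norms.

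Combining these estimates, $\mca$ maps the ball of radius $2C|t_0|^{c'(\alpha-a)}$ into itself and is a contraction. Its fixed point gives \eqref{psi-norm-bound}.

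For \eqref{psi-lip-1}--\eqref{psi-lip-3}, I would differentiate the fixed-point identity. This gives
\[\pp_\mu\psi[\bmu]=\mct\big[\pp_\mu\mch_{\mathrm{outer}}[\bmu]+D_\psi\mch_{\mathrm{outer}}\,\pp_\mu\psi[\bmu]\big]+(\text{terms from } \pp_\mu W,\ \pp_\mu u_*^{p-1}),\]
and since $D_\psi\mch_{\mathrm{outer}}$ is small, this is invertible. The derivative of each piece in $\mu$ costs exactly $|t|^{\tnu}\sim\mu^{-1}$, which is compensated by $\|\bmu\|_{\tnu;\sigma}$. The dependence on $\dot\mu$ enters only through $E$ and $\pp_t\eta_j$, with relative size $|\dot\mu|/\mu\sim|t|^{-1}$, which accounts for the extra $|t_0|^{-1}$ in \eqref{psi-lip-2}. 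The dependence on $\phi$ is linear in the cut-off terms, which gives \eqref{psi-lip-3}.

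The main obstacle is the linear theory, above all the pointwise barrier estimate uniform in $s$ on all of $\R^n$, with the degenerate coefficient $u_*^{p-1}$ and the required decay at infinity. My first attempt would be a supersolution of the form $|t|^{-\beta}\sum_j(1+|y_j|^{\alpha})^{-1}$ patched with $|t|^{-\beta-\frac{\alpha}{n-2}}|x|^{2-n}$, checked separately in the inner, intermediate and far regions. I would use Kelvin invariance to reduce the far region to a neighbourhood of the origin.
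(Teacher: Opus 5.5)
Your architecture is the same as the paper's. It has four parts: the linear estimate of Proposition \ref{outer-thm}; $\tilde{*}$-bounds for the error, cut-off and nonlinear parts of $\mch_{\mathrm{outer}}$ (Lemma \ref{lemma:Houterest2}, Appendix \ref{app4}); a contraction in a ball of radius $\sim|t_0|^{c'(\alpha-a)}$; and differentiation of the fixed-point identity, inverted by a Neumann series (Appendix \ref{app5}).

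The step that fails is your justification of the extra factor $|t_0|^{-1}$ in \eqref{psi-lip-2}. Near $\xi_j$, the $\dot\mu$-part of $E$ is $p\mu^{-\frac{n+2}{2}}\mu^{-1}\dot\mu\,(U^{p-1}Z_{n+1})(y_j)$, plus a smaller $\dot\xi_j$-term. Its coefficient $\mu^{-1}\dot\mu\sim|t|^{-1}$ has the same order as the coefficient $A_k\mu^{n-2}$ of the main part of $E$. That $|t|^{-1}$ is already absorbed by the weight $\omega^1_{\alpha,\beta,2}$; this is why $E$ itself costs $|t_0|^{c'(\alpha-a)}$ when $a=2$. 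The only gain over $E$ is the profile factor $|Z_{n+1}(y_j)|\sim|y_j|^{2-n}$. On the support of $1-\sum_j\eta_j$ this is at best $R^{2-n}\sim|t|^{-c_0(n-2)}$, not $|t|^{-1}$. Concretely, take $|x|\le 1$, $|y_1|=3R$ and $\dot{\bmu}=|t|^{-\tnu-1}$. Then
\[
\frac{|\pp_{\dot\mu}E[\dot{\bmu}]|}{\omega^1_{\alpha,\beta,2}}\sim |t|^{\beta}\mu^{-\frac n2}|\dot{\bmu}|\,R^{\alpha-n}\sim |t|^{\beta-\frac12-c_0(n-\alpha)}=|t|^{-c_0(n-a)-c'(a-\alpha)}.
\]
Whenever $c_0(n-a)<1$, this is much larger than $|t_0|^{c'(\alpha-a)-1}$. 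That always happens for $n\ge5$, since there $a=2$ and $c_0<\frac{1}{n-2}$. The source has a fixed sign there, and the equation is essentially elliptic at that scale, so this is not an artifact of the linear estimate.

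The paper's bound $|t|^{\beta+\frac{\alpha}{n-2}-\frac p2-1}$ in Appendix \ref{app5} has the same defect. It is the value of this ratio at $|y_j|\sim\mu^{-1}$, whereas the supremum sits at $|y_j|\sim R$. What the argument actually gives is
\[
\|\pp_{\dot\mu}\psi[\mu,\dot\mu,\phi][\dot{\bmu}]\|_{\widetilde{**},\alpha,\beta;\sigma}\le C|t_0|^{c'(\alpha-a)-\min\{1,\,c_0(n-a)\}}\|\dot{\bmu}\|_{\tnu+1;\sigma}.
\]
This suffices for every later use of \eqref{psi-lip-2}, namely \eqref{plimd} and \eqref{gmd}, which only need a Lipschitz constant of order one.

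There are also three minor repairs.
\begin{itemize}
\item Proposition \ref{outer-thm} also assumes $\|f\|_{L^2_{\nu;t_0}}<\infty$, which you dropped. This is what provides the energy solution and the $H^1_{0;t_0}$ control required by Lemma \ref{maximum-principle}. The paper checks it in Lemma \ref{lemma:Houterest1}. It also follows from your $\tilde{*}$-bound, since $\int_{\R^n}u_*^{p-1}f^2\,dx\lesssim|t|^{-2\beta+\frac{2-2\alpha}{n-2}}\|f\|_{\tilde{*},\alpha,\beta}^2$ and $\frac{1-\alpha}{n-2}<\frac12<\beta$.
\item Because $\mct$ depends on $\mu$, the terms $\pp_\mu W[\bmu]\,\psi-\pp_\mu(pu_*^{p-1})[\bmu]\,\pp_t\psi$ must be fed into $\mct$ as sources, as in $\mch_1^1$. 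They cannot be added outside it.
\item $\dot\mu$ also enters through $N(\Xi)$, via $\pp_tu_*$ and $\pp_t(\eta_j\tph_j)$. These contributions are harmless.
\end{itemize}
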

\noindent The proof of Proposition \ref{prop:outer} is the most technical part of the proof of Theorem \ref{thm:main}. We defer it to Section \ref{se4}.

\subsection{The inner problem}
We keep assuming that $\mu$, $\dot{\mu}$, and $d$ satisfy \eqref{mu}--\eqref{dt} for all $t \in (-\infty,t_0]$, and that the points $\xi_j$ for $j=1,\dots,k$ are defined by \eqref{defpoints}.
We substitute into \eqref{inner0} the solution $\psi$ to \eqref{outer} constructed in Proposition \ref{prop:outer}.
Then the symmetry condition \eqref{tildephi-ass-1} allows us to reduce the inner analysis to the case $j=1$.
A straightforward computation shows that the evolution equation for $\phi$, defined in \eqref{tildephi-ass-3}, takes the following form in the stretched variable $y=\mu^{-1}(x-\xi_1)$:
\begin{multline} \label{inner}
{\bf L} \phi = {\bf L}[\mu,\dot{\mu}] \phi := p U^{p-1} \pp_t \phi - \Delta \phi - p U^{p-1} \left[\frac{n+2}{4} \phi + \frac{n-2}{2}\mu^{-1}\dot{\mu} \phi + \mu^{-1} (\nabla \phi) \cdot (\dot{\xi}_1 + \dot{\mu}y)\right] \\
= U^{p-1}\mch_{\mathrm{inner}} \quad \text{in } \R^n \times (-\infty,t_0), 
\end{multline}
where
\be \label{mh1}
\begin{aligned}
\mch_{\mathrm{inner}}(y,t) &= \mch_{\mathrm{inner}}[\phi, (\mu,\dot{\mu})](y,t) := U^{1-p}(y) \mu^{\frac{n+2}{2}} \mch_1(\mu y+\xi_1,t),\\
\mch_1(x,t) &:= \eta_{1,2} \bigg[E + N(\Xi) + p \frac{n+2}{4} u_*^{p-1} \psi - p (u_*^{p-1} - U_1^{p-1}) \(\pp_t \tph_1 - \frac{n+2}{4} \tph_1\)\bigg](x,t).
\end{aligned}
\ee
The rest of this subsection is devoted to the analysis of the inner problem \eqref{inner}.

\subsubsection{Linear theory}
Let $Z_{n+1}$ be the function defined in \eqref{eq:Zi}, and let $\mcz_{n+1}$ be its modification given as
\be \label{eq:mczn1}
\mcz_{n+1}(y,t) = (1-\mu^2(t))Z_{n+1}(y) - \mu(t)\sqrt{1-\mu^2(t)}Z_1(y) \quad \text{for } (y,t) \in \R^n \times (-\infty,t_0],
\ee
chosen so that $\mu^{-\frac{n-2}{2}} \mcz_{n+1}(\mu^{-1}(x-\xi_1),t)$ satisfies the Kelvin invariance \eqref{fke}.
As a preliminary step in the study of \eqref{inner}, we develop a solvability theory and derive the corresponding a priori estimates for the following inhomogeneous linear parabolic equation
\be \label{eq:inhom-i}
{\bf L} \phi = U^{p-1}\mch + {\bm c}_{n+1}(t)U^{p-1}\mcz_{n+1} \quad \text{in } \R^n \times (-\infty,t_0)
\ee
under the assumptions that
\be \label{eq:inhom-i2}
\int_{\R^n} (\phi U^{p-1}\mcz_{n+1})(y,t) \, dy = 0 \quad \text{for each } t \in (-\infty,t_0)
\ee
and
\begin{align}
&\mch \text{ is even with respect to } y_j = \mu^{-1}(x-\xi_j) \text{ for } j=2, \dots, n, \label{mchin} \\
&\text{the map } x \mapsto \mu^{-\frac{n-2}{2}} \mch\big(\mu^{-1}(x-\xi_1),t\big) \text{ satisfies the Kelvin invariance } \eqref{fke}. \label{mchin2} 
\end{align}
Here, ${\bm c}_{n+1}: (-\infty,t_0) \to \R$ is a time-dependent coefficient.

\begin{remark}\label{rmk:ortho}
We first claim that any function $\phi$ satisfying \eqref{eq:inhom-i2}--\eqref{mchin2} is orthogonal to the entire kernel of the linearized operator $-\Delta-p\frac{n+2}{4}U^{p-1}$, that is,
\be \label{eq:ortho}
\int_{\R^n} (\phi U^{p-1}Z_j)(y,t) \, dy = 0 \quad \text{for all } j=1,\dots,n+1 \text{ and } t\in(-\infty,t_0).
\ee

By the parity condition \eqref{mchin}, the above identity holds automatically for $j=2,\dots,n$. It remains to verify the orthogonality for $Z_1$ and $Z_{n+1}$.

Using the explicit expression of $U^{p-1}Z_{n+1}$ and the relation $\xi_1 = \sqrt{1-\mu^2}{\bf q}_1$, we compute
\begin{align*}
\ \int_{\R^n} (\phi U^{p-1}Z_{n+1})(y,t) \, dy
&= \frac{(n-2)\msc_n^p}{2} \mu^2 \int_{\R^n} \phi\Big(\frac{x-\xi_1}{\mu},t\Big) \frac{\mu^2-|x-\xi_1|^2}{(\mu^2+|x-\xi_1|^2)^{\frac{n+4}{2}}} dx \\
&= \frac{(n-2)\msc_n^p}{2} \mu^2 \int_{\R^n} \phi\Big(\frac{x-\xi_1}{\mu},t\Big) \frac{1-|x|^2}{(\mu^2+|x-\xi_1|^2)^{\frac{n+4}{2}}} dx \\
&\ - \frac{\sqrt{1-\mu^2}}{\mu} \int_{\R^n} (\phi U^{p-1}Z_1)(y,t) \, dy.
\end{align*}
From the Kelvin invariance \eqref{mchin2} applied to $\phi$, we see that the first integral vanishes:
\begin{align*}
\int_{\R^n} \phi\(\frac{x-\xi_1}{\mu}, t\) \frac{1-|x|^2}{(\mu^2+|x-\xi_1|^2)^{\frac{n+4}{2}}} dx &= \int_{\R^n} \phi\(\frac{\frac{x}{|x|^2}-\xi_1}{\mu}, t\) \frac{1-|\frac{x}{|x|^2}|^2}{(\mu^2+|\frac{x}{|x|^2}-\xi_1|^2)^{\frac{n+4}{2}}} d\(\frac{x}{|x|^2}\)\\
&=-\int_{\R^n} \phi\(\frac{x-\xi_1}{\mu}, t\) \frac{1-|x|^2}{(\mu^2+|x-\xi_1|^2)^{\frac{n+4}{2}}} dx=0.
\end{align*}
Combined with \eqref{eq:inhom-i2}, this yields
\[
\int_{\R^n} (\phi U^{p-1}Z_1)(y,t) \, dy = \int_{\R^n} (\phi U^{p-1}Z_{n+1})(y,t) \, dy = 0,
\]
which proves the validity of \eqref{eq:ortho}.
\end{remark}

\begin{prop}\label{prop:lin}
Assume that $a \in (0, n-2)$, $b > 0$, $\sigma \in (0,1)$, and $\mch$ satisfies \eqref{mchin}--\eqref{mchin2}. Take $t_0<0$ sufficiently large in magnitude, if necessary.
Given $\|\mch\|_{\sharp, a+2, b;\sigma} < \infty$, there exist a function $\phi = \phi[\mch]$ and a time-dependent coefficient ${\bm c}_{n+1} = {\bm c}_{n+1}[\mch]$ satisfying \eqref{eq:inhom-i}--\eqref{eq:inhom-i2} and the estimate
\be \label{phmh}
\|\phi\|_{\sharp\sharp,a,b;\sigma} \le C \|\mch\|_{\sharp, a+2, b;\sigma},
\ee
where $C > 0$ is a constant depending only on $n,\, a,\, b$, and $\sigma$. 
Furthermore, \eqref{mchin}--\eqref{mchin2} continue to hold after replacing $\mch$ with $\phi$, and the map $\mch \mapsto (\phi[\mch],{\bm c}_{n+1}[\mch])$ is linear.
\end{prop}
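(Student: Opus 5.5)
We plan to follow the standard parabolic linear theory for inner problems (as in \cite{CDM20, km}): first solve on finite time intervals $[s,t_0]$ with zero initial data at $t=s$, prove estimates uniform in $s$, and then let $s\to-\infty$ by a compactness (Arzel\`a--Ascoli) argument. The finite-interval problem ${\bf L}\phi = U^{p-1}\mch + {\bm c}_{n+1}U^{p-1}\mcz_{n+1}$, $\phi(\cdot,s)=0$, will be solved by energy/Galerkin methods in the weighted space $L^2(U^{p-1}dy)$. The coefficient ${\bm c}_{n+1}(t)$ will be defined explicitly by testing the equation against $\mcz_{n+1}$ and imposing that $\frac{d}{dt}\int \phi U^{p-1}\mcz_{n+1}\,dy=0$. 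This yields a scalar linear relation ${\bm c}_{n+1}(t)\int U^{p-1}\mcz_{n+1}^2 = -\int U^{p-1}\mch\,\mcz_{n+1} + (\text{lower order terms in }\phi)$, where the lower-order terms come from $\pp_t\mcz_{n+1}=O(\mu|\dot\mu|/\mu)$ and the drift terms $\mu^{-1}\dot\mu$, $\mu^{-1}\dot\xi_1$, which are $O(|t|^{-1})$. The coupled system is then a contraction for $|t_0|$ large.

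Symmetry preservation is needed so that Remark \ref{rmk:ortho} applies. Evenness in $y_2,\dots,y_n$ is preserved by uniqueness, since ${\bf L}$ commutes with these reflections ($\dot\xi_1$ is parallel to $e_1$). For the Kelvin invariance \eqref{mchin2}, we go back to $x$-variables. The operator $p U_1^{p-1}\pp_t - \Delta - p\frac{n+2}{4}U_1^{p-1}$ commutes with the Kelvin transform, because $U_1$ is Kelvin invariant when $\mu^2+d^2=1$, and $\mcz_{n+1}$ was chosen to be Kelvin invariant. Hence the Kelvin transform of a solution is again a solution with the same data, and uniqueness gives invariance. With \eqref{eq:inhom-i2} we then obtain the full orthogonality \eqref{eq:ortho}.

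The core step is the uniform a priori bound $\|\phi\|_{\sharp\sharp,a,b}\lesssim\|\mch\|_{\sharp,a+2,b}$ on $[s,t_0]$. We will argue by contradiction and blow-up. Suppose there are sequences with $\|\mch_m\|\to0$ and $\|\phi_m\|_{\sharp\sharp,a,b;s_m,t_0}=1$, and choose points $(y_m,t_m)$ where $|t_m|^b(1+|y_m|^a)|\phi_m|\ge 1/2$.

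\textbf{Case $|y_m|$ bounded.} Translate time to $t_m$ and normalize by $|t_m|^{b}$. Since $(|t|/|t_m|)^{b}$ is locally bounded and the drift coefficients vanish, local parabolic estimates give convergence to an ancient solution of $pU^{p-1}\pp_t\phi=\Delta\phi+p\frac{n+2}{4}U^{p-1}\phi$ with $|\phi|\lesssim (1+|y|)^{-a}$ and orthogonality to all $Z_j$. A Liouville argument then forces $\phi\equiv0$, which is a contradiction. For this we use the energy decay from the spectral gap of $-\Delta-p\frac{n+2}{4}U^{p-1}$ in $L^2(U^{p-1})$ on the orthogonal complement of the kernel (the negative direction $U$ is excluded by the orthogonality/symmetry up to $\mcz_{n+1}$ adjustments), run backward in time.

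\textbf{Case $|y_m|\to\infty$.} Rescale $z=y/|y_m|$. The time-derivative term carries $U^{p-1}\sim|y|^{-4}$, so in scaled time we obtain a limiting harmonic, or weighted heat, problem on $\R^n\setminus\{0\}$ with $|\phi|\lesssim|z|^{-a}$ and $0<a<n-2$. This forces the limit to be $0$, again a contradiction.

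The Hölder parts of \eqref{phmh} then follow from local parabolic Schauder estimates on the cylinders $B(y,r_1(y))\times(t,t+1)$ after rescaling $y\mapsto y/|y|$. Letting $s\to-\infty$, we pass to the limit in the estimates and keep linearity.

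The main obstacle is the Liouville step in the bounded case: proving that ancient solutions of the linearized flow, bounded by $|t|^{-b}(1+|y|)^{-a}$ and orthogonal to the kernel, vanish. The difficulty is that the orthogonality is imposed only to $\mcz_{n+1}$, a time-dependent mix of $Z_{n+1}$ and $Z_1$, and it is promoted to full orthogonality only via Kelvin symmetry, which must pass to the limit. I would handle this by using \eqref{eq:ortho}, which passes to the limit as $\mu\to0$, combined with the decay $a>\alpha_0$ to justify the integrations in low dimensions $n=3,4$.
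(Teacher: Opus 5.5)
Your handling of the kernel is a legitimate alternative to the paper's. You keep a single multiplier along $\mcz_{n+1}$ and recover \eqref{eq:ortho} from Kelvin-equivariance: the operator, the pairing, $\mcz_{n+1}$ and the data are all Kelvin-compatible, so uniqueness gives invariance and Remark \ref{rmk:ortho} applies. The paper instead lifts to $\Ss^n$, imposes orthogonality to all of $z_1,\dots,z_{n+1}$ with multipliers ${\bm d}_j$, and collapses $\sum_j{\bm d}_jz_j$ to a multiple of $\whmcz_{n+1}$ by symmetry in Step 4.

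The genuine gap is in the bounded-$|y_m|$ blow-up, where you assert that the direction $U$ is excluded by orthogonality and symmetry. It is not. We have $\int U^pZ_j\,dy=0$ for $j\le n$ by oddness, and for $j=n+1$ by scale invariance of $\int U^{p+1}$. Moreover $U$ is even, in $x$-variables it is the Kelvin-invariant $U_1$, and on $\Ss^n$ it is the constant function. Since $\frac{n+2}{4}(p-1)=p$, the function $e^{\tau}U(y)$ solves $pU^{p-1}\pp_\tau\phi=\Delta\phi+p\frac{n+2}{4}U^{p-1}\phi$ exactly and is bounded by $C(1+|y|)^{-a}$ for $\tau\le 0$. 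So the Liouville statement you need is false.

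This is not only a flaw in the argument. Test the equation against $U$; the multiplier term drops out because $\int U^p\mcz_{n+1}\,dy=0$. For $c(t):=\int U^p\phi\,dy$ this gives $\dot c=c+F(t)$, where $F=\frac1p\int U^p\mch\,dy$ plus drift terms of relative size $O(|t|^{-1})$. With zero data at $s$, $c(t)=\int_s^t e^{t-\tau}F(\tau)\,d\tau$, which for $F\approx|t|^{-b}$ is of size $e^{t-s}|s|^{-b}$. No estimate uniform in $s$ exists, and your contradiction sequence would simply converge to a multiple of $e^{\tau}U$. The paper's Step 2 does not settle this for you either: it passes to an elliptic limit, where constants are excluded automatically, but the zero-data problem \eqref{hp2} has the same forward-unstable constant mode.

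The missing idea is that this mode is stable \emph{backward} in time, which is exactly why no ${\bm c}_0$ is needed for ancient solutions. It must be fixed by a terminal condition, not an initial one. Write $\phi=c(t)\bigl(\int U^{p+1}\bigr)^{-1}U+\phi^\perp$ and impose $c(t_0)=0$, i.e. $c(t)=-\int_t^{t_0}e^{t-\tau}F(\tau)\,d\tau$. This gives $|t|^b|c(t)|\lesssim\sup_\tau|\tau|^b|F(\tau)|$. The drift coupling is absorbed by a contraction for $|t_0|$ large, and on $[s,t_0]$ this amounts to prescribing the $U$-component of the datum at $s$ rather than zero. Pairing with $U$ is Kelvin- and reflection-invariant, so your symmetry argument survives. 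The blow-up then only has to treat $\phi^\perp$, whose limit is orthogonal to $U,Z_1,\dots,Z_{n+1}$. On $\Ss^n$ every remaining spherical-harmonic mode of $\pp_\tau\hph=\frac1n(\Delta_{\Ss^n}+n)\hph$ grows as $\tau\to-\infty$, so the limit vanishes.

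One further correction in that step. A limit bounded by $(1+|y|)^{-a}$ lies in $L^2(U^{p-1}dy)$ only if $a>\frac{n-4}{2}$; this fails, e.g., for $a=2$, $n\ge 8$. So the $L^2$ spectral-gap argument is problematic in high rather than low dimensions, and $\alpha_0$ constrains $\alpha$, not $a$. Argue on $\Ss^n$ instead: the singularity $d_{\Ss^n}(z)^{-(n-2-a)}$ at $N$ is removable because $a>0$, and an $L^1$ bound suffices to run the mode-by-mode argument.
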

\begin{proof}
The proof is divided into four steps.

\medskip \noindent \textbf{Step 1: Reformulation of \eqref{eq:inhom-i}.}
We transfer equation \eqref{eq:inhom-i}--\eqref{eq:inhom-i2} to $\Ss^n \times (-\infty,t_0)$ via the map $\pi: \R^n \to \Ss^n \setminus \{N\}$ given in \eqref{stereo-proj}.
Let $\hph$ and $\whmcz_{n+1}$ denote the conformal lifts, defined through \eqref{stereo-proj-1}, of $\phi$ and $\mcz_{n+1}$, respectively.
We also write $z=(\bz,z_{n+1})=(z_1,\dots,z_n,z_{n+1}) \in \Ss^n$. From \eqref{eq:mczn1}, we infer that
\be \label{eq:whmcz}
\whmcz_{n+1}(z,t) = -\frac{(n-2)\msc_n}{2^{n/2}} \left[(1-\mu^2(t))z_{n+1} - \mu(t)\sqrt{1-\mu^2(t)}z_1\right] \quad \text{for } (z,t) \in \Ss^n \times (-\infty,t_0].
\ee
Then $\hph$ satisfies a strictly parabolic equation of the form
\be \label{trph}
\begin{cases}
\displaystyle \bmcl \hph = \bmcl[\mu,\dot{\mu}] \hph := \pp_t \hph - \frac{1}{n} (\Delta_{\Ss^n} \hph + n \hph) + \frac{n-2}{2} \mu^{-1} \big(\dot{\xi}_1 \cdot \bz + \dot{\mu}z_{n+1}\big) \hph - V \cdot \nabla_{\Ss^n} \hph \\
\displaystyle \hspace{195pt} = \frac{1}{p} \whmch + \frac{1}{p} {\bm c}_{n+1}(t)\whmcz_{n+1} \quad \text{in } \Ss^n \times (-\infty,t_0), \\
\displaystyle \int_{\Ss^n} (\hph\whmcz_{n+1})(z,t)\, dv_{\Ss^n} = 0 \quad \text{for each } t \in (-\infty,t_0),
\end{cases}
\ee
where $V(\cdot,t)$ is a tangent vector field on $\Ss^n$ defined as
\begin{align*}
V(z,t) &:= \mu^{-1}(t)\, d\pi_y\big(\dot{\xi}_1(t) + \dot{\mu}(t)y\big) \\
&= \begin{medsize}
\displaystyle \mu^{-1}(t) \Big((1-z_{n+1})\dot{\xi}_1(t) - \big[\dot{\xi}_1(t) \cdot \bz + \dot{\mu}(t)z_{n+1}\big]\bz,\,
\big[\dot{\xi}_1(t) \cdot \bz\big] (1-z_{n+1}) + \dot{\mu}(t)(1-(z_{n+1})^2)\Big).
\end{medsize}
\end{align*}

\medskip \noindent \textbf{Step 2: A priori estimate.}
Motivated by the observations in Remark \ref{rmk:ortho} and Step 1, we introduce the following auxiliary problem with zero initial data. For any fixed $s_0<t_0$,
\begin{equation}\label{hp2}
\begin{cases}
\displaystyle \bmcl \hph = \frac{1}{p} \whmch + \frac{1}{p} \sum_{j=1}^{n+1} {\bm d}_j(t)z_j &\text{in } \Ss^n \times (s_0, t_0), \\
\displaystyle \hph(\cdot,s_0)=0 &\text{on } \Ss^n, \\
\displaystyle \int_{\Ss^n} \hph(z,t)\, z_j \, dv_{\Ss^n} = 0 &\text{for each } t \in (s_0,t_0) \text{ and } j=1,\dots,n+1,
\end{cases}
\end{equation}
where $\whmch$ and $\hph$ satisfy
\[\begin{cases}
\displaystyle \|\whmch\|_{\wtsh,a,b;s_0,t_0} := \left\| |t|^b \,[d_{\Ss^n}(z)]^{n-a}\, \whmch(z,t) \right\|_{L^{\infty}(\Ss^n\times[s_0,t_0])} <\infty,\\
\displaystyle \|\hph\|_{\wtshs,a+2,b;s_0,t_0} := \sum_{\ell=0}^1 \left\| |t|^b \,[d_{\Ss^n}(z)]^{n-2-a+\ell}\, \left|\nabla_{\Ss^n}^{\ell} \hph(z,t)\right| \right\|_{L^{\infty}(\Ss^n\times[s_0,t_0])} <\infty.
\end{cases}\]

Arguing as in \cite[Lemma 5.2]{km}, we can prove that there exists a constant $C>0$, depending only on $n$, $a$, and $b$, such that, for any $s_0<t_0$ and any solution $\hph$ of \eqref{hp2} with corresponding functions $({\bm d}_1(t),\dots,{\bm d}_{n+1}(t))$, the following estimates hold:
\[
|{\bm d}_j(t)| \le C\(\mu^{n-2}\|\hph\|_{\wtshs,a+2,b;s_0,t_0} + |t|^{-b}\|\whmch\|_{\wtsh,a,b;s_0,t_0}\) \quad \text{for } t\in[s_0,t_0] \text{ and } j = 1,\dots,n+1
\]
and
\be \label{hpmh}
\|\hph\|_{\wtshs,a+2,b;s_0,t_0} \le C\|\whmch\|_{\wtsh,a,b;s_0,t_0}.
\ee

Indeed, multiplying the equation in \eqref{hp2} by $z_j$ and integrating over $\Ss^n$, we estimate ${\bm d}_j(t)$ as follows:
\begin{align*}
|{\bm d}_j(t)| &\le C \bigg|\int_{\Ss^n} \left[-\pp_t\hph - \frac{n-2}{2} \mu^{-1} \big(\dot{\xi}_1 \cdot \bz + \dot{\mu} z_{n+1}\big)\hph + V\cdot\nabla_{\Ss^n}\hph + \frac{1}{p}\whmch\right](\cdot,t)\, z_j\, dv_{\Ss^n} \bigg| \\
&\le C\(\mu^{n-2}\|\hph\|_{\wtshs,a+2,b;s_0,t_0} + |t|^{-b}\|\whmch\|_{\wtsh,a,b;s_0,t_0}\).
\end{align*}
To deduce \eqref{hpmh}, we argue by contradiction. In particular, we rely on the fact that the limiting problem
\[
-\Delta_{\Ss^n}\hph_{\infty} - n \hph_{\infty} = 0 \quad \text{in } \Ss^n \times (-\infty,t_0)
\]
admits only the trivial solution if and only if the following orthogonality conditions hold:
\[
\int_{\Ss^n} \hph_{\infty}(z,t)\, z_j \, dv_{\Ss^n} = 0 \quad \text{for } j=1,\dots,n+1.
\]
It is worth noting that the condition $\int_{\Ss^n}\hph_{\infty}(z,t)\, dv_{\Ss^n}=0$ is not required.

\medskip \noindent \textbf{Step 3: Existence via approximation.}
Let $\{s_\ell\}_{\ell\in\N}$ be a sequence of numbers with $s_\ell\to-\infty$ as $\ell\to\infty$. We consider the following time-truncated version of \eqref{hp2}:
\be \label{hp22}\begin{cases}
\displaystyle \bmcl \hph = \frac{1}{p} \min\{\whmch,\ell\} + \frac{1}{p} \sum_{j=1}^{n+1} {\bm d}_{\ell j}(t)z_j &\text{in } \Ss^n \times (s_{\ell}, t_0), \\
\displaystyle \hph(\cdot,s_{\ell})=0 &\text{on } \Ss^n, \\
\displaystyle \int_{\Ss^n} \hph(z,t)\, z_j \, dv_{\Ss^n} = 0 &\text{for each } t \in (s_{\ell},t_0) \text{ and } j=1,\dots,n+1.
\end{cases}
\ee
By employing the Galerkin method, we build a solution $\hph_\ell$ defined on $\Ss^n \times (s_\ell,t_0)$ solving \eqref{hp22}. The uniform estimate obtained in Step 2 implies
\[
\|\hph_\ell\|_{\wtshs,a+2,b;s_\ell,t_0} \le C \|\whmch\|_{\wtsh,a,b;s_\ell,t_0},
\]
with $C > 0$ independent of $\ell$.

By standard compactness arguments, after passing to a subsequence, $\hph_\ell$ converges locally uniformly to a function $\hph$ on $\Ss^n \times (-\infty,t_0)$ that satisfies \eqref{hp2} with $s_0=-\infty$, except for the initial condition. Moreover,
\be \label{eq:hphest}
\|\hph\|_{\wtshs,a+2,b;-\infty,t_0} \le C \|\whmch\|_{\wtsh,a,b;-\infty,t_0}.
\ee

\medskip \noindent \textbf{Step 4: Conclusion.} We will show that
\be \label{eq:bmd}
\begin{cases}
\displaystyle {\bm d}_j(t) = 0 &\text{for all } t \in (-\infty,t_0) \text{ and } j=2,\dots,n, \\
\displaystyle \sqrt{1-\mu^2} {\bm d}_1(t) + \mu {\bm d}_{n+1}(t) = 0 &\text{for all } t \in (-\infty,t_0).
\end{cases}
\ee
in the equation for $\hph$ obtained in Step 3. Once this is established, \eqref{eq:whmcz} implies that
\[\sum_{j=1}^{n+1} {\bm d}_j(t)z_j = {\bm d}_1(t)z_1 + {\bm d}_{n+1}(t)z_{n+1} = {\bm d}_{n+1}(t)\left[z_{n+1} - \frac{\mu(t)}{\sqrt{1-\mu^2(t)}} z_1\right] = {\bm c}_{n+1}(t)\whmcz_{n+1}\]
for some explicit function ${\bm c}_{n+1}(t)$ and that $\int_{\Ss^n} \hph(z,t)\whmcz_{n+1}(z)\, dv_{\Ss^n} = 0$ for each $t \in (-\infty,t_0)$. It follows that $\hph$ satisfies \eqref{trph}, or equivalently, $\phi$ satisfies \eqref{eq:inhom-i}--\eqref{eq:inhom-i2}.

We now return to $\R^n \times (-\infty,t_0)$. We obtain from \eqref{hp2} with $s_0=-\infty$ that
\be \label{eq:inhom-i3}
\begin{cases}
\displaystyle {\bf L} \phi = U^{p-1}\mch + \sum_{j=1}^{n+1} \tilde{\bm c}_j(t)U^{p-1}Z_j &\text{in } \R^n \times (-\infty,t_0), \\
\displaystyle \int_{\R^n} (\phi U^{p-1}Z_j)(y,t) \, dy = 0 &\text{for each } t \in (-\infty,t_0) \text{ and } j = 1,\dots,n+1,
\end{cases}
\ee
where $\tilde{\bm c}_j(t) := -[(n-2)\msc_n]^{-1}2^{n/2} {\bm d}_j(t)$. 
A priori estimate \eqref{eq:hphest} tells us that, if $\mch=0$, then the function $\phi$ constructed via the approximation process in Step 3 is identically zero in $\R^n \times (-\infty,t_0)$, and $\tilde{\bm c}_1(t)=\cdots=\tilde{\bm c}_{n+1}(t)=0$ in $(-\infty,t_0)$.
Let $\phi_\ell$ be the inverse conformal lift of $\hph_\ell$ defined by \eqref{stereo-proj-1}.
The function $\phi(y_1,-y_2,y_3,\dots,y_n)$ is the limit of $\phi_\ell(y_1,-y_2,y_3,\dots,y_n)$ and, by \eqref{mchin}, also solves \eqref{eq:inhom-i3}.
Thus, $\phi$ is even in $y_2$ and $\tilde{\bm c}_2(t)=0$ in $(-\infty,t_0)$.
Repeating the same argument, we see that $\phi$ is even in $y_j$ for $j=3,\dots,n$ and $\tilde{\bm c}_j(t)=0$ in $(-\infty,t_0)$.
Next, given $y=\mu^{-1}(x-\xi_1)$, $\tph(x,t) = \mu^{-\frac{n-2}{2}}(t)\phi(y,t)$ (cf. \eqref{tildephi-ass-3}), and the Kelvin transform $\tph^*(\cdot,t)$ of $\tph(\cdot,t)$ defined by \eqref{fke}, we set $\phi^{\sharp}(y,t) := \mu^{\frac{n-2}{2}}(t)\tph^*(x,t)$.
Then, $\phi^{\sharp}$ is obtained through the approximation process. Also, straightforward computations give
\begin{align*}
({\bf L} \phi^{\sharp})(y,t) &= \mu^{\frac{n+2}{2}}(t) \left[pU_1^{p-1}\pp_t\tph^* - \(\Delta \tph^* + p\frac{n+2}{4} U_1^{p-1}\tph^*\)\right](x,t) \\
&= \frac{\mu^{\frac{n+2}{2}}(t)}{|x|^{n+2}} \left[pU_1^{p-1}\pp_t\tph - \(\Delta \tph + p\frac{n+2}{4} U_1^{p-1}\tph\)\right]\(\frac{x}{|x|^2},t\) \\
&= (U^{p-1}\mch)(y,t) + \tilde{\bm c}_1(t) (U^{p-1}Z_1^{\sharp})(y,t) + \tilde{\bm c}_{n+1}(t) (U^{p-1}Z_{n+1}^{\sharp})(y,t)
\end{align*}
and
\[\begin{cases}
\displaystyle Z_1-Z_1^{\sharp} = 2\Big((1-\mu^2)Z_1 + \mu\sqrt{1-\mu^2}Z_{n+1}\Big), \\
\displaystyle Z_{n+1}-Z_{n+1}^{\sharp} = 2\Big(\mu\sqrt{1-\mu^2}Z_1 + \mu^2Z_{n+1}\Big).
\end{cases}\]
Therefore,
\begin{align*}
{\bf L}(\phi-\phi^{\sharp}) &= \tilde{\bm c}_1 U^{p-1}\big(Z_1-Z_1^{\sharp}\big) + \tilde{\bm c}_{n+1} U^{p-1}\big(Z_{n+1}-Z_{n+1}^{\sharp}\big) \\
&= 2\Big((1-\mu^2) \tilde{\bm c}_1 + \mu\sqrt{1-\mu^2} \tilde{\bm c}_{n+1}\Big) U^{p-1}Z_1 + 2\Big(\mu\sqrt{1-\mu^2} \tilde{\bm c}_1 + \mu^2 \tilde{\bm c}_{n+1}\Big) U^{p-1}Z_{n+1},
\end{align*}
from which we conclude that the map $x \mapsto \mu^{-\frac{n-2}{2}}\phi(\mu^{-1}(x-\xi_1),t)$ satisfies the Kelvin invariance \eqref{fke} and $\sqrt{1-\mu^2} \tilde{\bm c}_1 + \mu \tilde{\bm c}_{n+1} = 0$. As a consequence, \eqref{eq:bmd} is confirmed.

\medskip
Finally, Schauder estimates applied to \eqref{eq:inhom-i3} yield \eqref{phmh}.
Moreover, the linearity of the map $\mch \mapsto (\phi[\mch],{\bm c}_{n+1}[\mch])$ is a consequence of the uniqueness of $\phi$ within the class constructed by the approximation process.
\end{proof}
\begin{remark}
In contrast to \cite[Section 5]{km}, our ancient-solution setting, where $t\in(-\infty,t_0)$, does not require us to consider the time mode $Z_0$, which equals a constant multiple of $U$. Consequently, no modulation term of the form ${\bm c}_0(t)$ associated with $Z_0$ is needed.
\end{remark}

\subsubsection{Solvability of the inner problem}
By applying Proposition \ref{prop:lin}, we now solve a nonlinear problem
\be \label{eq:nonlin}
{\bf L}[\mu,\dot{\mu}] \phi = U^{p-1}\mch_{\mathrm{inner}} + {\bm c}_{n+1}(t)U^{p-1}\mcz_{n+1} \quad \text{in } \R^n \times (-\infty,t_0).
\ee
We recall that the function $\psi$ appearing in the definition \eqref{mh1} of $\mch_{\mathrm{inner}}$ denotes the solution constructed in Proposition \ref{prop:outer}. In particular, we have $\|\psi\|_{\widetilde{**},\alpha, \beta;\sigma}\le C |t_0|^{c'(\alpha-a)}$.

\begin{lemma}
Let $a$, $b$, and $\sigma$ be the numbers specified in \eqref{eq:num0}. Assume that $\|\phi\|_{\sharp\sharp,a,b;\sigma} \le C$. Take $t_0<0$ sufficiently large in magnitude, if necessary.
Then there exists a constant $C>0$ depending only on $n$, $k$, and $\sigma$ such that
\be \label{le2.16}
\|\mch_{\mathrm{inner}}\|_{\sharp,a+2,b;\sigma} \le C.
\ee
\end{lemma}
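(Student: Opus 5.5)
The plan is to bound, one term at a time, the weighted quantity
\[
|t|^{b}(1+|y|^{a+2})\,U^{p-1}\mch_{\mathrm{inner}} = |t|\,(1+|y|^{a+2})\,\mu^{\frac{n+2}{2}}\mch_1(\mu y+\xi_1,t),
\]
and then its spatial and temporal H\"older seminorms with the shifted weights of the $\sharp$-norm. By Lemma \ref{lemma:eta_j}, $\eta_{1,2}$ restricts everything to $|y|\lesssim R(t)\sim|t|^{c_0}$. In this region $u_*=\mu^{-\frac{n-2}{2}}\big(U(y)+O(\mu^{n-2})\big)$ by \eqref{uijep}, so $u_*^{p-1}=\mu^{-2}U^{p-1}(y)(1+o(1))$. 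I split $\mch_1$ into four pieces:
\begin{enumerate}
\item[(1)] the error $E$;
\item[(2)] the coupling $p\frac{n+2}{4}u_*^{p-1}\psi$;
\item[(3)] the nonlocal correction $p(u_*^{p-1}-U_1^{p-1})\big(\pp_t\tph_1-\frac{n+2}{4}\tph_1\big)$;
\item[(4)] the nonlinearity $N(\Xi)$.
\end{enumerate}

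For (1) I use Lemma \ref{size error}. The two leading terms are $p\mu^{-1}\dot\mu\,U^{p-1}Z_{n+1}$ and $\frac{n+2}{4}pA_k\mu^{n-2}U^{p-1}$. Both are $O(|t|^{-1})U^{p-1}$ by \eqref{mu}, and $(1+|y|^{a+2})U^{p-1}\lesssim 1$ because $a\le2$, so they contribute $O(1)$. The remainder $\wte^1$ is better by a factor of $\mu$. For $\wte^2$ I check each term of \eqref{boe2} against $R\sim|t|^{c_0}$:
\begin{itemize}
\item $\mu^{n+2}R^{a+2}|t|$ is small;
\item $U^{p-1}\mu^n(1+|y|^2)$ gives $|t|\mu^nR^{a}\lesssim|t|^{-\frac{2}{n-2}+ac_0}$, which is bounded since $ac_0<\frac{2}{n-2}$;
\item the $\mu^{n-1}|y|$ term is similar;
\item for $3\le n\le5$, $\mu^{2(n-2)}U^{p-2}$ gives $|t|^{-1}R^{(a+2-(6-n))_+}$, which is controlled by $c_0<\frac1{n-2}$ and the restrictions on $a$ in \eqref{eq:num0} (e.g.\ $a<1$ for $n=3$).
\end{itemize}

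For (2) I insert the outer bound \eqref{psi-norm-bound}. Near $\xi_1$ it gives $|\psi|\lesssim|t|^{-\beta}(1+|y|^\alpha)^{-1}|t_0|^{c'(\alpha-a)}$, while $\mu^{\frac{n+2}{2}}u_*^{p-1}=\mu^{\frac{n-2}{2}}U^{p-1}$ and $\mu^{\frac{n-2}{2}}\sim|t|^{-1/2}$. The weighted quantity is therefore at most $|t|^{\frac12-\beta}(1+|y|)^{a-\alpha-2}\lesssim1$, using $\beta>\frac12$ and $\alpha<a$. In fact it is small, which is where the choice $\beta=\frac12+(c_0-c')(a-\alpha)$ enters. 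For (3), in the inner region $u_*^{p-1}-U_1^{p-1}=O\big(\mu^{n-4}U^{p-2}\big)$ in $y$-units, rescaled appropriately. Together with $\tph_1=\mu^{-\frac{n-2}{2}}\phi$ and $\|\phi\|_{\sharp\sharp,a,b;\sigma}\le C$, this gives a contribution of size $\mu^{n-2}|y|^{2}$ relative to the weight, i.e.\ $\mu^{n-2}R^{2}\ll1$ since $c_0<\frac1{n-2}$.

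For (4) I write $\Xi=\tph_1+\psi$ near $\xi_1$ and apply Lemma \ref{yan} with $a=u_*$, $b=\Xi$. On the support, $|\Xi|\lesssim|t|^{-1/2}u_*$, so the quadratic terms are $u_*^{p-2}\Xi^2$ and the mixed time term is $u_*^{p-2}\Xi\,\pp_tu_*$, $u_*^{p-2}\Xi\pp_t\Xi$. Each is a product of two already-estimated small factors, hence bounded (indeed $o(1)$). The H\"older seminorms are handled the same way: each factor is H\"older with the matching weight, namely $\|\phi\|_{\sharp\sharp}$, the $\widetilde{**}$-norm of $\psi$, explicit $\mu$ and $U$, and the time-H\"older bounds from \eqref{mu}. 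The cutoffs $\eta_{1,2}$ only cost $R^{-1}$ per derivative, which the weights absorb. I expect the main obstacle to be the H\"older seminorm of $N(\Xi)$ and of $u_*^{p-1}$ in low dimensions. When $p-1<1$ (i.e.\ $n\ge6$), $s\mapsto s^{p-1}$ is only H\"older, but positivity $u_*+\Xi\ge\frac12u_*$ on the support keeps it smooth there. For $n=3,4,5$ the $U^{p-2}$ terms need the precise ranges of $a$ in \eqref{eq:num0}. I would first verify these borderline exponent inequalities case by case before treating the generic terms.
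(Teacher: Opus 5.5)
Your route is the paper's: the same four-way split of $\mathcal{H}_1$. The error is handled by \eqref{est-error-in-0}, and the $\psi$-coupling by \eqref{psi-norm-bound}; your bound $|t|^{\frac12-\beta}$ there is exactly the paper's. The nonlocal term is handled by the formula for $\partial_t\tilde\phi_1$ together with $\|\phi\|_{\sharp\sharp,a,b;\sigma}\le C$, and $N(\Xi)$ by pointwise bounds on $\Xi$ as in \eqref{nxi1}--\eqref{nxi2}. Steps (1) and (2) are fine.

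Step (3), however, contains a miscount that breaks the argument for $n=3$. Your bound $|u_*^{p-1}-U_1^{p-1}|\lesssim\mu^{n-4}U^{p-2}(y)$ on $\operatorname{supp}\eta_{1,2}$ is correct. The dominant piece of $\partial_t\tilde\phi_1$ is $\mu^{-\frac{n-2}{2}}\partial_t\phi$, with $|\partial_t\phi|\lesssim|t|^{-1}(1+|y|)^{2-a}$. Using $\mu^{\frac{n+2}{2}}\mu^{n-4}\mu^{-\frac{n-2}{2}}=\mu^{n-2}$ and $U^{p-2}\sim(1+|y|)^{n-6}$, the weighted quantity is
\[
|t|\,(1+|y|^{a+2})\,\mu^{n-2}U^{p-2}(y)\,|t|^{-1}(1+|y|)^{2-a}\sim\mu^{n-2}(1+|y|)^{n-2}\lesssim(R\mu)^{n-2}
\quad\text{on } |y|\lesssim R .
\]
This is small only because $R\mu\to0$. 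Your size $\mu^{n-2}|y|^2$ is correct only for $n=4$. For $n=3$ your final bound $\mu R^2\sim|t|^{2c_0-1}$ diverges, because $c_0>\frac12$ there. So the justification ``$\mu^{n-2}R^2\ll1$ since $c_0<\frac1{n-2}$'' is false in that case, and for $n\ge5$ it understates the true power $R^{n-2}$. With the correct count the step closes. For $n\ge6$ one can instead use $|u_*^{p-1}-U_1^{p-1}|\lesssim\mu^2$, which is what the paper's factor $\mu^4$ encodes, giving $(R\mu)^4$. For $n\le5$ your $\mu^{n-4}U^{p-2}$ is the true size, and $\mu^2$ is not an upper bound.

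In (4), two claims are overstated, though the conclusion survives.

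First, the assumed norms do not give $|\Xi|\lesssim|t|^{-1/2}u_*$ on the support. At $|y|\sim R$ they only give $|\Xi|/u_*\lesssim|t|^{-1+c_0(n-2-a)+c'(a-\alpha)}$, which is weaker than $|t|^{-1/2}$ when $n\ge7$ and $c_0$ is close to $\frac1{n-2}$. Only $|\Xi|=o(u_*)$ holds, and that is all you need to use the $u_*^{p-2}$-type bounds in every dimension.

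Second, not every factor in your products is bounded on its own. With $|\partial_t\psi|\lesssim|t|^{-\beta}(1+|y|)^{2-\alpha}$, the quantity $u_*^{p-1}\partial_t\psi$ in the $\sharp$-weight grows like $|t|^{c'(a-\alpha)}$ at $|y|\sim R$. The terms $u_*^{p-2}\Xi\,\partial_t\psi$ therefore close only because the decay of $|\Xi|/u_*$ beats this growth once $c'$ is small. This produces exactly the exponent $-1+c_0(n-2-a)+2c'(a-\alpha)$ in \eqref{nxi2}.

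Also, $p-1<1$ corresponds to $n\ge7$, not $n\ge6$.
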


\begin{proof}
We estimate each term of $\mch_1$ in \eqref{mh1}. For the error term $E$, using the estimate in Lemma \ref{size error}, we obtain
\[|t|^b (1+|y|^{a+2})\left|\mu^{\frac{n+2}{2}}(\eta_{1,2}E)(\mu y+\xi_1,t)\right|\le C |t|^b (1+|y|^{a+2})\mu^{n-2}U^{p-1}(y)\le C.\]

For the term involving $\psi$, the choice of $c_0$ in \eqref{eq:Rt} and the bound \eqref{psi-norm-bound} yield
\begin{align*}
&\ |t|^b (1+|y|^{a+2})\Big|\mu^{\frac{n+2}{2}}\big[\eta_{1,2}\cdot pu_*^{p-1}\psi\big](\mu y+\xi_1,t)\Big| \\
&\le C |t|^b(1+|y|^{a+2})\mu^{\frac{n-2}{2}}U^{p-1}\frac{|t|^{-\beta}}{1+|y|^\alpha} \mone_{\{|y|\le 4R\}} \|\psi\|_{**,\alpha,\beta;\sigma(B(0,\msR_0)_{t_0})} \\
&\le C|t|^{(c_0-c')(\alpha-a)} \|\psi\|_{**,\alpha,\beta;\sigma(B(0,\msR_0)_{t_0})}.
\end{align*}

Next, using
\begin{align*}
\pp_t\tph_1(x,t)
&=-\frac{n-2}{2}\frac{\dot{\mu}}{\mu^{\frac n2}}\phi\bigg(\frac{x-\xi_1}{\mu},t\bigg) - \frac1{\mu^{\frac{n-2}{2}}} \nabla\phi \bigg(\frac{x-\xi_1}{\mu},t\bigg) \cdot \frac{\dot\xi_1\mu+(x-\xi_1)\dot{\mu}}{\mu^2} \\
&\ +\frac1{\mu^{\frac{n-2}{2}}} \pp_t\phi\bigg(\frac{x-\xi_1}{\mu},t\bigg),
\end{align*}
we deduce
\begin{align*}
&\ |t|^b(1+|y|^{a+2})\bigg|\mu^{\frac{n+2}{2}}\bigg[\eta_{1,2} p(u_*^{p-1}-U_1^{p-1}) \bigg(\pp_t\tph_1-\frac{n+2}{4}\tph_1\bigg)\bigg](\mu y+\xi_1,t)\bigg| \\
&\le C|t|^b(1+|y|^{a+2})\mu^4\bigg(|\phi(y,t)|+|\pp_t\phi(y,t)|+\bigg|\nabla\phi(y,t)\bigg(\frac{\dot{\mu} y}{\mu}+\frac{\dot\xi_1}{\mu}\bigg)\bigg|\bigg)\mone_{\{|y|\le 4R\}} \\
&\le C|t|^b(1+|y|^{a+2})\mu^4\frac{|t|^{-b}}{1+|y|^{a-2}}\mone_{\{|y|\le 4R\}} \|\phi\|_{\sharp\sharp,a,b;\sigma} \\
&\le C(R\mu)^4\|\phi\|_{\sharp\sharp,a,b;\sigma}.
\end{align*}

We also have
\be \label{ineno}
|t|^b(1+|y|^{a+2})\left|\mu^{\frac{n+2}{2}}(\eta_{1,2}N(\Xi))(\mu y+\xi_1,t)\right| \le C
\ee
provided that $|t_0|$ is sufficiently large. We defer the detailed derivation to Appendix \ref{app2}.

\medskip
The corresponding H\"older estimates are obtained by analogous arguments, and hence \eqref{le2.16} follows.
\end{proof}

\begin{prop}\label{prop:nonlin}
Let $a$, $b$, and $\sigma$ be the numbers specified in \eqref{eq:num0}. Take $t_0<0$ sufficiently large in magnitude, if necessary.
Then there exist a function $\phi = \phi[\mu,\dot{\mu}]$ satisfying the orthogonality condition \eqref{eq:inhom-i2} and a time-dependent coefficient ${\bm c}_{n+1}(t)$ that solve \eqref{eq:nonlin}.
Moreover, there exists a constant $C$ depending only on $n$, $k$, and $\sigma$ such that 
\be \label{est-phi-inner}
\|\phi\|_{\sharp\sharp,a,b;\sigma} \le C
\ee
and
\begin{align}
&\left\| \phi[\bmu^{(1)}]-\phi[\bmu^{(2)}] \right\|_{\sharp\sharp,a,b;\sigma} \le C \big\|\bmu^{(1)}-\bmu^{(2)}\big\|_{\tnu;\sigma}, \label{plim}\\
&\left\| \phi[\dot{\bmu}^{(1)}]-\phi[\dot{\bmu}^{(2)}] \right\|_{\sharp\sharp,a,b;\sigma} \le C\big\|\dot{\bmu}^{(1)}-\dot{\bmu}^{(2)}\big\|_{\tnu+1;\sigma}. \label{plimd}
\end{align}
Besides, \eqref{mchin}--\eqref{mchin2} hold after replacing $\mch$ with $\phi$.

\end{prop}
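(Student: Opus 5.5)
The plan is a contraction mapping argument built on the linear theory of Proposition \ref{prop:lin}. Fix $(\mu,\dot\mu)$ satisfying \eqref{mu}--\eqref{dt}. Consider the closed set
\[
\mcb := \big\{\phi : \|\phi\|_{\sharp\sharp,a,b;\sigma}\le M,\ \phi \text{ satisfies } \eqref{mchin}\text{--}\eqref{mchin2}\big\},
\]
where $M$ is a large constant. For $\phi\in\mcb$, form $\tph_j$ via \eqref{tildephi-ass-1}--\eqref{tildephi-ass-3}, and let $\psi=\psi[\mu,\dot\mu,\phi]$ be the outer solution from Proposition \ref{prop:outer}. Then set $\mcT(\phi):=\phi[\mch_{\mathrm{inner}}[\phi,(\mu,\dot\mu)]]$, the solution operator of Proposition \ref{prop:lin}. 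A fixed point of $\mcT$, together with the associated ${\bm c}_{n+1}$, solves \eqref{eq:nonlin} and satisfies \eqref{eq:inhom-i2}.

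\textbf{Step 1: symmetries.} I would check that $\mch_{\mathrm{inner}}$ obeys \eqref{mchin}--\eqref{mchin2} whenever $\phi$ does. The ingredients are as follows. The error $E$ has the symmetries \eqref{pp1}--\eqref{pp2} and the Kelvin covariance $E=|x|^{-n-2}E(x/|x|^2)$. The cut-offs $\eta_{1,2}$ are Kelvin invariant. The outer solution $\psi$ satisfies \eqref{pp1}--\eqref{fke}. Finally, $u_*$ and $U_1$ transform like solutions under Kelvin inversion, so each term of $\mch_1$ transforms with weight $|x|^{-(n+2)}$. This weight is exactly what is needed for $x\mapsto \mu^{-\frac{n-2}{2}}\mch(\mu^{-1}(x-\xi_1),t)$ to be Kelvin invariant in the sense required, since $U^{p-1}\mch$ scales like a right-hand side. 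Evenness in $y_2,\dots,y_n$ follows in the same way from \eqref{pp1} and the reflection symmetry of the configuration.

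\textbf{Step 2: self-map and contraction.} By \eqref{le2.16}, $\|\mch_{\mathrm{inner}}\|_{\sharp,a+2,b;\sigma}\le C_0$. The key observation is that the only $O(1)$ contribution comes from $E$, and this contribution is independent of $\phi$. The $\psi$-term carries the factor $|t_0|^{(c_0-c')(\alpha-a)}$, the coupling term carries $(R\mu)^4$, and the $N(\Xi)$ term is quadratic. Hence $C_0$ does not depend on $M$ once $|t_0|$ is large, and choosing $M=2C\,C_0$ with $C$ from \eqref{phmh} shows that $\mcT$ maps $\mcb$ into itself. For the contraction, I would estimate $\mch_{\mathrm{inner}}[\phi^{(1)}]-\mch_{\mathrm{inner}}[\phi^{(2)}]$:
\begin{itemize}
\item the $\psi$-difference is controlled through \eqref{psi-lip-3} with the small factor $|t_0|^{c'(\alpha-a)}$;
\item the coupling term gains $(R\mu)^4$;
\item the difference in $N(\Xi)$ is handled by the Taylor bounds of Lemma \ref{yan}, which give a small constant times $\|\phi^{(1)}-\phi^{(2)}\|_{\sharp\sharp,a,b;\sigma}$, as in Appendix \ref{app2}.
\end{itemize}
Linearity of $\mch\mapsto\phi[\mch]$ then gives a Lipschitz constant below $1/2$.

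\textbf{Step 3: Lipschitz dependence on the parameters.} To obtain \eqref{plim}--\eqref{plimd}, I would write $\phi^{(i)}=\mcT_{\mu^{(i)}}(\phi^{(i)})$ and split
\[
\phi^{(1)}-\phi^{(2)} = \big[\mcT_{\mu^{(1)}}\phi^{(1)}-\mcT_{\mu^{(1)}}\phi^{(2)}\big]+\big[\mcT_{\mu^{(1)}}\phi^{(2)}-\mcT_{\mu^{(2)}}\phi^{(2)}\big].
\]
The first bracket is absorbed by the contraction property. For the second bracket there are two sources of $\mu$-dependence:
\begin{itemize}
\item the operator ${\bf L}[\mu,\dot\mu]$ and the mode $\mcz_{n+1}$, handled by writing ${\bf L}[\mu^{(1)}]\phi-{\bf L}[\mu^{(2)}]\phi$ as a right-hand side of size $\|\bmu\|_{\tnu;\sigma}$ or $\|\dot\bmu\|_{\tnu+1;\sigma}$ (the coefficients $\mu^{-1}\dot\mu$ and $\mu^{-1}\dot\xi_1$ are Lipschitz in these norms) and reapplying Proposition \ref{prop:lin};
\item the source $\mch_{\mathrm{inner}}$, handled by differentiating $E$ in $\mu$ and $\dot\mu$ using the explicit formulas of Lemma \ref{size error}, and by using \eqref{psi-lip-1}--\eqref{psi-lip-2} for $\psi$.
\end{itemize}

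\textbf{Main obstacle.} I expect the hardest step to be this last parameter-Lipschitz estimate. Because $\mch_{\mathrm{inner}}$ and the orthogonality condition are expressed in the stretched variable $y=\mu^{-1}(x-\xi_1)$, changing $\mu$ moves the whole coordinate frame, and $\mcz_{n+1}$ also depends on $\mu$. Differences of H\"older seminorms under this change of variables lose derivatives: the norms contain a factor $y\cdot\nabla_y\phi$. This is why the $\sharp\sharp$-norm controls two derivatives and $\pp_t\phi$, so that the loss can be absorbed. I would first try to write the difference via the Fr\'echet derivative $\pp_s\mcT_{\mu+s\bmu}$, estimate each piece using the decay built into the weights, and absorb the derivative loss using the $(1+|y|^{a+\ell})$ weights in $\|\cdot\|_{\sharp\sharp,a,b;\sigma}$.
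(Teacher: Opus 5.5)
Your Steps 1--2 are the paper's argument: a Banach fixed point for $\phi\mapsto\phi[\mch_{\mathrm{inner}}[\phi,(\mu,\dot\mu)]]$ resting on Proposition \ref{prop:lin}, \eqref{le2.16} and \eqref{psi-norm-bound}--\eqref{psi-lip-3}. Building the symmetries into the ball, rather than recovering them from uniqueness, is for existence even the cleaner choice, since Propositions \ref{prop:lin} and \ref{prop:outer} are formulated for symmetric data. However, this choice makes Step 3 fail as written.

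The Kelvin condition \eqref{mchin2} is not a fixed symmetry in $y=\mu^{-1}(x-\xi_1)$. Because $\xi_1=\sqrt{1-\mu^2}\,{\bf q}_1$, the inversion $x\mapsto x/|x|^2$ becomes the inversion in the sphere $|y+\mu^{-1}\xi_1|=\mu^{-1}$, which moves with $\mu(t)$; for instance, it sends $y=0$ to $\frac{\mu}{\sqrt{1-\mu^2}}{\bf e}_1$. So your ball is really $\mcb_\mu$, and $\phi^{(2)}\in\mcb_{\mu^{(2)}}$ is in general not in $\mcb_{\mu^{(1)}}$. Consequently $\mathcal{T}_{\mu^{(1)}}\phi^{(2)}$, which occurs in both brackets of your splitting, is not supplied by Proposition \ref{prop:lin}. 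Step 4 of its proof uses \eqref{mchin}--\eqref{mchin2}, relative to the current $\mu$, precisely to reduce the $n+1$ multipliers of Steps 1--3 to the single one along $\mcz_{n+1}$. For the same reason, the contraction bound you invoke for the first bracket is only available on $\mcb_{\mu^{(1)}}$. The Fr\'echet-derivative variant $\pp_s\mathcal{T}_{\mu+s\bmu}$ meets the same obstruction, since $\mcz_{n+1}$ and the symmetry class both move with $s$.

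The missing ingredient is the unsymmetrized linear theory, which Steps 1--3 of that proof already provide: problem \eqref{eq:inhom-i3}, with multipliers along every $U^{p-1}Z_j$ and orthogonality to every $Z_j$, together with \eqref{eq:hphest} and the Schauder estimates. None of these uses symmetry. By Remark \ref{rmk:ortho}, $\phi^{(1)}$ and $\phi^{(2)}$ are both orthogonal to all of $Z_1,\dots,Z_{n+1}$, a condition independent of $\mu$. Moreover, with superscripts referring to $\mu^{(1)},\mu^{(2)}$, the combination ${\bm c}^{(1)}_{n+1}U^{p-1}\mcz^{(1)}_{n+1}-{\bm c}^{(2)}_{n+1}U^{p-1}\mcz^{(2)}_{n+1}$ lies in the span of $U^{p-1}Z_1$ and $U^{p-1}Z_{n+1}$ by \eqref{eq:mczn1}. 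Hence the equation for $\phi^{(1)}-\phi^{(2)}$, written with ${\bf L}[\mu^{(1)},\dot\mu^{(1)}]$, is of the form \eqref{eq:inhom-i3}. Estimating it directly, as the paper indicates, yields \eqref{plim}--\eqref{plimd} once the remaining right-hand side is bounded as you propose: the part depending on $\phi^{(1)}-\phi^{(2)}$ with a small constant, and the rest by $\|\mu^{(1)}-\mu^{(2)}\|_{\tnu;\sigma}$ or $\|\dot\mu^{(1)}-\dot\mu^{(2)}\|_{\tnu+1;\sigma}$. Equivalently, you may extend $\mathcal{T}_\mu$ through this full-multiplier problem to a ball that does not depend on $\mu$. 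The outer problem and \eqref{le2.16} do not use symmetry, so the extension agrees with $\mathcal{T}_\mu$ on $\mcb_\mu$ and is still a contraction, and then your splitting goes through. This is also the setting in which the paper's remark that the symmetries follow from uniqueness is meaningful.
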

\begin{proof}
By combining Proposition \ref{prop:lin}, estimates \eqref{psi-norm-bound}--\eqref{psi-lip-3}, and \eqref{le2.16}, the Banach fixed-point theorem yields the existence of a solution $\phi$ to \eqref{eq:nonlin} satisfying \eqref{est-phi-inner}. 
The estimates \eqref{plim}--\eqref{plimd} follow from the equations for $\phi[\bmu^{(1)}]-\phi[\bmu^{(2)}]$ and $\phi[\dot{\bmu}^{(1)}]-\phi[\dot{\bmu}^{(2)}]$,
by an argument analogous to the derivation of \eqref{psi-lip-1}--\eqref{psi-lip-3} in Appendix \ref{app5}. 
Finally, the symmetry property follows from the uniqueness of $\phi$ for fixed parameters $(\mu,\dot{\mu})$.
\end{proof}

\subsubsection{Choice of the parameter $\mu$}
We now solve \eqref{inner} by determining the scaling parameter $\mu$ and its derivative $\dot{\mu}$ so that the coefficient ${\bm c}_{n+1}$ vanishes identically.
\begin{lemma}
Let $(\phi,{\bm c}_{n+1})$ denote the pair obtained in Proposition \ref{prop:nonlin}.
Take $t_0<0$ sufficiently large in magnitude, if necessary.
The condition ${\bm c}_{n+1}(t)=0$ for all $t \in (-\infty,t_0)$, which ensures that $\phi$ solves \eqref{inner}, reduces to the following modulation equation:
\be\label{chomu}
pa_1 (\mu^{-1} \dot{\mu})(t) - \frac{n+2}{4} pa_2 A_k \mu^{n-2}(t) + \mu^{n-2+\ga}(t) G_{n+1}[\mu,\dot{\mu}](t) = 0
\ee
for all $t \in (-\infty,t_0)$. Here, $A_k > 0$ is the constant in \eqref{eq:Ak}, $a_1,\, a_2 > 0$ are the constants in \eqref{eq:a1a2}, and $\ga\in(0,1)$ is fixed sufficiently small.
The function $G_{n+1}=G_{n+1}[\mu,\dot{\mu}]$ denotes a generic function of $t$ satisfying the same properties as in Lemma \ref{le2.3}.
\end{lemma}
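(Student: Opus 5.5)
To derive \eqref{chomu}, I test the equation \eqref{eq:nonlin} against $\mcz_{n+1}$ and turn the vanishing of ${\bm c}_{n+1}$ into a scalar identity. Multiply \eqref{eq:nonlin} by $\mcz_{n+1}(y,t)$ and integrate over $\R^n$. Since $\int_{\R^n} U^{p-1}\mcz_{n+1}^2\,dy = a_1 + O(\mu)$ is bounded away from zero, this gives
\[
{\bm c}_{n+1}(t)\,(a_1+O(\mu)) = \int_{\R^n} ({\bf L}\phi)\,\mcz_{n+1}\,dy - \int_{\R^n} U^{p-1}\mch_{\mathrm{inner}}\mcz_{n+1}\,dy .
\]
Hence ${\bm c}_{n+1}\equiv 0$ is equivalent to the vanishing of the right-hand side.

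I first show the term with ${\bf L}\phi$ is negligible. Integrating $-\Delta\phi - p\frac{n+2}{4}U^{p-1}\phi$ by parts against $\mcz_{n+1}$ gives zero, by \eqref{eq:Zieq} and linearity. For the time-derivative term I use the orthogonality \eqref{eq:inhom-i2}:
\[
\int pU^{p-1}(\pp_t\phi)\,\mcz_{n+1} = -\int pU^{p-1}\phi\,\pp_t\mcz_{n+1},
\]
and $\pp_t\mcz_{n+1}=O(|\dot\mu|)$. The drift terms carry the factors $\mu^{-1}\dot\mu$ and $\mu^{-1}\dot\xi_1=O(\dot\mu)$. Combined with $\|\phi\|_{\sharp\sharp,a,b;\sigma}\le C$ and $b=1$, all of these are bounded by $C|t|^{-1}|\mu^{-1}\dot\mu| \lesssim \mu^{n-2}|t|^{-1}$. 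This is $\mu^{n-2+\ga}$ times a bounded function, since $|t|^{-1}\sim\mu^{n-2}$.

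Next I expand the projection of $\mch_{\mathrm{inner}}$ term by term.

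\textbf{Error term.} The $E$-part, cut off by $\eta_{1,2}$ (equal to $1$ on $|y|\le R/4$ by Lemma \ref{lemma:eta_j}), is handled by Lemma \ref{le2.3} with $\mcz_{n+1}=Z_{n+1}-\mu Z_1+O(\mu^2)$. The $Z_1$-part of the leading terms vanishes by oddness in $y_1$. The cutoff tail contributes $O(\mu^{-1}\dot\mu R^{-n}+\mu^{n-2}R^{-2})$, and $R^{-2}\sim|t|^{-2c_0}$ is a positive power of $\mu$. This gives the two leading terms of \eqref{chomu}, up to the overall sign convention, plus $\mu^{n-2+\ga}G$ for small $\ga$.

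\textbf{Term $pu_*^{p-1}\psi$.} By \eqref{psi-norm-bound}, its projection is at most $C\mu^{\frac{n-2}{2}}|t|^{-\beta}\int U^{p-1}|Z_{n+1}|(1+|y|)^{-\alpha}$. This is $\mu^{n-2}$ times $|t|^{1-\frac12-\beta}\mu^{-\frac{n-2}{2}}\sim|t|^{-(c_0-c')(a-\alpha)}$, a small power.

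\textbf{Interaction term.} The term $(u_*^{p-1}-U_1^{p-1})(\pp_t\tph_1-\dots)$ is bounded by $\mu^4|t|^{-1}$ up to logarithmic factors, so it is of higher order.

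\textbf{Nonlinear term.} The $N(\Xi)$ term is quadratic in $\phi,\psi$, and I estimate it using the bounds from Appendix \ref{app2}.

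The step I expect to be the main obstacle is checking that every remainder factors as $\mu^{n-2+\ga}G_{n+1}$ with $G_{n+1}$ satisfying (i)--(ii) of Lemma \ref{le2.3} in the H\"older norm $\|\cdot\|_{0;\sigma}$. For the Lipschitz bounds \eqref{gm}--\eqref{gmd}, I will differentiate in $\mu,\dot\mu$. Explicit terms are handled as in Lemma \ref{le2.3}. For the terms involving $\phi$ and $\psi$, I will invoke \eqref{plim}--\eqref{plimd} and \eqref{psi-lip-1}--\eqref{psi-lip-3}. The time-H\"older seminorms follow from the H\"older parts of the weighted norms. Finally, I divide by $p a_1+O(\mu)$ and absorb the $O(\mu)$ corrections into $G_{n+1}$, which yields \eqref{chomu}.
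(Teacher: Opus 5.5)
Your proposal is correct and follows essentially the paper's argument: project onto the scaling direction, remove the elliptic part via \eqref{eq:Zieq}, use orthogonality to neutralize $\pp_t\phi$, and bound the $E$, $\psi$, interaction and $N(\Xi)$ contributions by Lemma \ref{le2.3}, \eqref{psi-norm-bound} and \eqref{nxi1}--\eqref{nxi2}, with the Lipschitz properties coming from \eqref{psi-lip-1}--\eqref{psi-lip-3}. The only difference is the test function: the paper tests against the time-independent $Z_{n+1}$ and uses the full orthogonality \eqref{eq:ortho} of Remark \ref{rmk:ortho}, so the $\pp_t\phi$ and $\mu^{-1}\dot\mu\,\phi$ terms vanish exactly. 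Testing against $\mcz_{n+1}$ instead costs you a harmless extra $\mu Z_1$-projection of size $O(\mu^{n-1})$, together with the term $\int U^{p-1}\phi\,\pp_t\mcz_{n+1}$, which in fact also vanishes because $\pp_t\mcz_{n+1}\in\operatorname{span}\{Z_1,Z_{n+1}\}$.
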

\begin{proof}
We see from \eqref{eq:nonlin} that the condition ${\bm c}_{n+1}(t)=0$ for all $t \in (-\infty,t_0)$ is equivalent to
\be \label{eq:cn10}
\int_{\R^n} \left[-{\bf L}[\mu,\dot{\mu}] \phi + U^{p-1}\mch_{\mathrm{inner}} (\cdot,t)\right] Z_{n+1} dy = 0 \quad \text{for all } t \in (-\infty,t_0).
\ee
Owing to \eqref{eq:Zi}--\eqref{eq:Zieq}, the left-hand side of \eqref{eq:cn10} is equal to
\[\int_{\R^n} \left[-p \pp_t \phi+ \frac{n-2}{2}\mu^{-1}\dot{\mu} \phi + \mu^{-1} (\nabla \phi) \cdot (\dot{\xi}_1 + \dot{\mu}y) + \mch_{\mathrm{inner}}(\cdot,t)\right] U^{p-1}Z_{n+1} dy.\]
Using \eqref{inner}--\eqref{mh1}, we will analyze this integral
term by term.

Invoking Lemma \ref{le2.3} and the asymptotic relation $R(t) \sim |t|^{c_0}$ with $c_0>\frac{1}{2(n-2)}$ from \eqref{eq:Rt}, we obtain
\[
\mu^{\frac{n+2}{2}} \int_{\R^n} (\eta_{1,2} E)(\mu y+\xi_1,t) Z_{n+1}(y) \, dy = pa_1 \mu^{-1} \dot{\mu} - \frac{n+2}{4} pa_2 A_k \mu^{n-2} + \mu^{n-1} G_{n+1}[\mu,\dot{\mu}](t).
\]
For the linear term involving $\psi$, it follows from \eqref{psi-norm-bound}--\eqref{psi-lip-3} that
\be \label{eq:Gn+11}
p \frac{n+2}{4} \mu^{\frac{n+2}{2}} \int_{\R^n} \big(\eta_{1,2} u_*^{p-1} \psi \big)(\mu y+\xi_1,t) Z_{n+1}(y) \, dy = |t_0|^{c'(\alpha-a)} \mu^{(n-2)(\beta+\frac{1}{2})} G_{n+1}[\mu,\dot{\mu}](t).
\ee
Remaining linear terms involving $\phi$ contribute
\begin{multline}\label{eq:Gn+12}
p\, \mu^{\frac{n+2}{2}} \int_{\R^n} \left[\eta_{1,2} (u_*^{p-1} - U_1^{p-1}) \(\pp_t \tph_1 - \frac{n+2}{4} \tph_1\)\right](\mu y+\xi_1,t)Z_{n+1}(y) \, dy\\
= \mu^{n+2}R^{4-a}G_{n+1}[\mu,\dot{\mu},\phi](t) =\mu^{n+2-c_0(4-a)(n-2)}G_{n+1}[\mu,\dot{\mu}](t)
\end{multline}
and
\begin{multline}\label{eq:Gn+13}
\int_{\R^n} \bigg[-p \pp_t \phi+ \frac{n-2}{2}\mu^{-1}\dot{\mu} \phi + \mu^{-1} (\nabla \phi) \cdot (\dot{\xi}_1 + \dot{\mu}y)\bigg](U^{p-1}Z_{n+1})(y) \, dy \\
= \int_{\R^n} \mu^{-1} (\nabla \phi) \cdot (\dot{\xi}_1 + \dot{\mu}y)(U^{p-1}Z_{n+1})(y) \, dy= \mu^{2(n-2)}G_{n+1}[\mu,\dot{\mu}](t).
\end{multline}
Inspired by \eqref{nxi1} and \eqref{nxi2}, we also compute
\begin{multline}\label{eq:Gn+14}
\mu^{\frac{n+2}{2}} \int_{\R^n} (\eta_{1,2} N(\Xi))(\mu y+\xi_1,t) Z_{n+1}(y) \, dy \\
= \left[|t|^{-1-\frac{4}{n-2}+c_0(4-\frac{4a}{n-2})+c'p(a-\alpha)} + |t|^{-2+c_0(n-2-a)+2c'(a-\alpha)}\mone_{\{3\le n\le 5\}}\right]
G_{n+1}[{\mu}, \dot{\mu}](t)
\end{multline}
provided that $|t_0|$ is sufficiently large. It follows that, after choosing $c'>0$ sufficiently small, the four estimates \eqref{eq:Gn+11}--\eqref{eq:Gn+14} can be absorbed into $\mu^{n-2+\ga}G[\mu,\dot{\mu}](t)$.
This immediately yields \eqref{chomu}.

Finally, the Lipschitz estimates \eqref{gm}--\eqref{gmd} follow directly from
\eqref{psi-lip-1}--\eqref{psi-lip-3} and Taylor's formula with integral remainder.
\end{proof}

\begin{prop}\label{prop:inner1}
Take $t_0<0$ sufficiently large in magnitude, if necessary. Then there exists a solution $\mu=\mu(t)$ to the nonlinear ODE \eqref{chomu} in $(-\infty,t_0)$ having the form
\be \label{eq:muest}
\mu(t) = \left[\frac{(n^2-4)a_2A_k}{4a_1}|t|\right]^{-\frac1{n-2}} + \rho_1(t),
\ee
where $\rho_1$ is a remainder term. Moreover, for some small $\ga>0$, there exists a constant $C>0$ depending only on
$n$, $k$, $\sigma$, and $\ga$ such that
\[
\|\rho_1\|_{\tnu+\frac{\ga}{n-2};\sigma} + \|\dot{\rho}_1\|_{\tnu+1+\frac{\ga}{n-2};\sigma} \le C.
\]
\end{prop}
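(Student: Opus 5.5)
We will treat \eqref{chomu} as a perturbation of the explicit ODE obtained by dropping the $G_{n+1}$-term and solve for the remainder $\rho_1$ by a fixed-point argument in the weighted spaces $\|\cdot\|_{\tnu+\frac{\ga}{n-2};\sigma}$.

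\textbf{Leading-order solution.} Set $\kappa:=\frac{(n+2)a_2A_k}{4a_1}>0$. Dividing \eqref{chomu} by $pa_1$ gives
\[
\dot\mu=\kappa\mu^{n-1}-\frac{1}{pa_1}\mu^{n-1+\ga}G_{n+1}[\mu,\dot\mu].
\]
The unperturbed equation $\dot\mu_0=\kappa\mu_0^{n-1}$ gives $\frac{d}{dt}\mu_0^{2-n}=-(n-2)\kappa$. Its solution vanishing as $t\to-\infty$ is therefore
\[
\mu_0(t)=[(n-2)\kappa|t|]^{-\frac1{n-2}}=\Big[\tfrac{(n^2-4)a_2A_k}{4a_1}|t|\Big]^{-\frac1{n-2}},
\]
which is exactly the leading term in \eqref{eq:muest}. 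It satisfies \eqref{mu}, and $\mu_0^{n-2}=\frac{1}{(n-2)\kappa|t|}$.

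\textbf{Linearization.} Write $\mu=\mu_0+\rho_1$ and expand $\kappa\big[(\mu_0+\rho_1)^{n-1}-\mu_0^{n-1}\big]$. Using $(n-1)\kappa\mu_0^{n-2}=\frac{n-1}{n-2}|t|^{-1}$, we get
\[
\dot\rho_1-\frac{n-1}{n-2}\frac{\rho_1}{|t|}=\mcn[\rho_1](t).
\]
Here $\mcn[\rho_1]$ collects two pieces. The first is the quadratic Taylor remainder, bounded by $C\mu_0^{n-3}\rho_1^2$. The second is the forcing $-\frac{1}{pa_1}\mu^{n-1+\ga}G_{n+1}[\mu,\dot\mu]$, which by property (i) of Lemma \ref{le2.3} has $\|\cdot\|_{\tnu+1+\frac{\ga}{n-2};\sigma}$-norm bounded by a constant.

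The homogeneous solutions are multiples of $|t|^{-\frac{n-1}{n-2}}$. Integrating from $-\infty$ would diverge, since $|s|^{\frac{n-1}{n-2}}\cdot|s|^{-1-\tnu-\frac{\ga}{n-2}}=|s|^{-\frac{\ga}{n-2}}$ is not integrable at $-\infty$. I would instead use the right inverse
\[
\mct[f](t):=-|t|^{-\frac{n-1}{n-2}}\int_t^{t_0}|s|^{\frac{n-1}{n-2}}f(s)\,ds,
\]
possibly adding a free multiple of the homogeneous solution, which decays faster and is harmless. If $\|f\|_{\tnu+1+\frac{\ga}{n-2};\sigma}<\infty$, then
\[
|\mct[f](t)|\lesssim|t|^{-\frac{n-1}{n-2}}|t|^{1-\frac{\ga}{n-2}}=|t|^{-\tnu-\frac{\ga}{n-2}}.
\]
Also $\frac{d}{dt}\mct[f]=f+\frac{n-1}{n-2}\frac{\mct[f]}{|t|}$, which gives the bound on $\dot\rho_1$ with one extra power of $|t|^{-1}$. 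The local $C^{\sigma/2}$ seminorms pass through $\mct$ directly, since $\mct[f]$ is even $C^1$ with the stated bounds.

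\textbf{Fixed point.} Define $\mca[\rho_1]:=\mct\big[\mcn[\rho_1]\big]$ on the ball
\[
\mathcal{B}:=\Big\{\rho_1:\ \|\rho_1\|_{\tnu+\frac{\ga}{n-2};\sigma}+\|\dot\rho_1\|_{\tnu+1+\frac{\ga}{n-2};\sigma}\le M\Big\}.
\]
For $\rho_1\in\mathcal{B}$ and $|t_0|$ large, $\mu=\mu_0+\rho_1$ satisfies \eqref{mu}--\eqref{dt}, so all previous propositions apply. The forcing contributes $O(1)$ to the norm, while the quadratic term contributes $O(M^2|t_0|^{-\frac{\ga}{n-2}})$. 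Hence $\mca$ maps $\mathcal{B}$ into itself for a suitable fixed $M$.

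\textbf{Main obstacle: contraction.} The expected main difficulty is the contraction estimate, because $G_{n+1}$ depends on both $\mu$ and $\dot\mu$ through $\phi[\mu,\dot\mu]$ and $\psi[\mu,\dot\mu,\phi]$. Here I would invoke \eqref{gm}--\eqref{gmd}, which are fed by \eqref{psi-lip-1}--\eqref{psi-lip-3} and \eqref{plim}--\eqref{plimd}. These give
\[
\big\|\mu^{n-1+\ga}\big(G_{n+1}[\mu^{(1)}]-G_{n+1}[\mu^{(2)}]\big)\big\|_{\tnu+1+\frac{\ga}{n-2};\sigma}\lesssim |t_0|^{-\frac{\ga}{n-2}}\Big(\|\rho^{(1)}-\rho^{(2)}\|_{\tnu+\frac{\ga}{n-2};\sigma}+\|\dot\rho^{(1)}-\dot\rho^{(2)}\|_{\tnu+1+\frac{\ga}{n-2};\sigma}\Big).
\]
This uses the fact that the norm weights $\tnu$ and $\tnu+1$ in \eqref{gm}--\eqref{gmd} are weaker than ours by exactly $|t|^{\frac{\ga}{n-2}}$, while $\mu^{n-1+\ga}\sim|t|^{-1-\tnu-\frac{\ga}{n-2}}$. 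The leftover smallness factor comes from the spare margin in $\ga$, and one may shrink $\ga$ if needed.

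The difference of the quadratic remainders gives a factor $M|t_0|^{-\frac{\ga}{n-2}}$. Therefore, for $|t_0|$ large, $\mca$ is a contraction on $\mathcal{B}$, and the Banach fixed-point theorem yields $\rho_1$ with the stated bounds.
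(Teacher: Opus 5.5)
Your proposal is correct and is essentially the paper's argument; the proof of Proposition \ref{prop:inner1} defers to Proposition \ref{prop:inner1-mult}. Both use the same splitting $\mu=\mu_0+\rho_1$, the same linearized operator $\dot\rho_1-\frac{n-1}{n-2}|t|^{-1}\rho_1$ inverted by integrating from $t$ to $t_0$ (which removes the homogeneous mode $|t|^{-\frac{n-1}{n-2}}$), and a Banach fixed point on a ball of radius $M$ in the norms with weights $\tnu+\frac{\ga}{n-2}$ and $\tnu+1+\frac{\ga}{n-2}$. The paper calls the contraction step a standard verification; your version makes it explicit, using the weight gap between \eqref{gm}--\eqref{gmd} and the working norms to get the factor $|t_0|^{-\frac{\ga}{n-2}}$.
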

\begin{proof}
Equation \eqref{chomu} can be viewed as a perturbation of a first-order linear ODE. Existence then follows from a fixed-point argument; the details are given in Proposition \ref{prop:inner1-mult} in a more general setting.
\end{proof}

\subsection{Completion of the proof of Theorem \ref{thm:main}}\label{subsec:comp}
By Propositions \ref{prop:nonlin} and \ref{prop:inner1}, the inner problem \eqref{inner} is solvable.
Combining this with the outer construction in Proposition \ref{prop:outer}, we obtain a solution $u$ to \eqref{nl-p}, equivalently to \eqref{pb}, in the form \eqref{geso}.
In particular, the choice of norms in Subsection \ref{subsec:norms} and numbers in Definition \ref{def:num} ensures that $u$ is positive in $\R^n\times(-\infty,t_0)$.

As already noted after \eqref{approx*}, the time-shifted function $u(x,t+t_0)$ also solves \eqref{pb} in $\R^n\times(-\infty,0)$; this is the solution $u$ appearing in the statement of Theorem \ref{thm:main}.

\medskip
We now verify Properties {\rm (i)}--{\rm (vi)}.
Property {\rm (i)} is a consequence of \eqref{eq:muest}, \eqref{defpoints}, and \eqref{dt}.
Property {\rm (ii)} follows from \eqref{geso}, together with the pointwise estimates on $\psi$ and $\phi$ in \eqref{psi-norm-bound} and \eqref{est-phi-inner}, respectively.
Property {\rm (iii)} follows from the Kelvin invariance of $\psi$ and $\phi$ established in Propositions \ref{prop:outer} and \ref{prop:nonlin}, respectively.

On the other hand, the $\widetilde{**}$-norm of $\psi$ introduced in Definition \ref{def:norm-refined} controls the regularity of the conformal lift $\hat{\psi}$ of $\psi$ near the north pole $N$.
Together with the explicit formula \eqref{approx*} for $u_*$, this shows that the conformal lift $\hat{u}$ of the solution $u$ to \eqref{pb} is regular on $\Ss^n\times(-\infty,t_0)$; see the proof of Lemma \ref{lemma:Houterest2}.
Consequently, $\hat{u}$ induces a smooth non-rotationally symmetric ancient Yamabe flow on $\Ss^n$, as asserted in Property {\rm (iv)}.

To complete the proof of Theorem \ref{thm:main}, it remains to verify Properties {\rm (v)}--{\rm (vi)}. This is the content of the next proposition.
\begin{prop}[Type II behavior and indefinite Ricci curvature]\label{proppro}
Let $g(t)=u^{\frac{4}{n-2}}(\cdot,t) g_{\R^n}$ and take $t_0<0$ sufficiently large in magnitude, if necessary. Then, 
\begin{enumerate}
\item[(i)]
The ancient solution is of type II, in the sense that the type I bound \eqref{eq:typeI} fails. More precisely,
\[
\limsup_{t \to -\infty} \(\sup_{x \in \R^n}|\Rm_{g(t)}(x)|\) \ge C > 0.
\]
\item[(ii)]
The Ricci curvature $\Ric_{g(t)}$ is indefinite at the origin $0 \in \R^n$ for all $t \in (-\infty,t_0)$.
\end{enumerate}
\end{prop}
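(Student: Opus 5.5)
The plan is to read off both assertions from the explicit leading-order form $u=u_*+\sum_j\eta_j\tph_j+\psi$. The corrections are controlled pointwise, up to two spatial derivatives, by \eqref{psi-norm-bound} and \eqref{est-phi-inner}. Throughout we write $g(t)=e^{2f}g_{\R^n}$ with $f=\frac{2}{n-2}\log u$ and use the standard conformal formula
\[
\Ric_{g}=-(n-2)\big(\nabla^2 f-df\otimes df\big)-\big(\Delta f+(n-2)|\nabla f|^2\big)g_{\R^n}.
\]
The full curvature tensor $\Rm_g$ is handled similarly through the Schouten tensor.

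\textbf{Step 1: part (i).} We evaluate the curvature at the bubble center $x=\xi_1(t)$. The map $y\mapsto \mu y+\xi_1$ pulls $g(t)$ back to $\tilde u^{\frac{4}{n-2}}(y,t)|dy|^2$, where
\[
\tilde u(y,t)=\mu^{\frac{n-2}{2}}u(\mu y+\xi_1,t)=U(y)+\sum_{j\ge2}U\Big(y+\tfrac{\xi_1-\xi_j}{\mu}\Big)+\phi(y,t)+\mu^{\frac{n-2}{2}}\psi(\mu y+\xi_1,t),
\]
using $\eta_1=1$ near $y=0$ by Lemma \ref{lemma:eta_j}. Since this is an isometry, $|\Rm_{g(t)}|(\xi_1)$ equals the curvature norm of the pulled-back metric at $y=0$. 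The three correction terms are small in $C^2(B(0,1))$:
\begin{itemize}
\item the sum over $j\ge 2$ is $O(\mu^{n-2})$ by \eqref{uijep};
\item $\phi$ is $O(|t|^{-b})$ in $C^{2}$ by \eqref{est-phi-inner};
\item the $\psi$ term is $O\big(\mu^{\frac{n-2}{2}}|t|^{-\beta}\big)$, from the weight $\omega^1_{\alpha,\beta,\ell}$ at $|y_1|\lesssim1$ after rescaling by $\mu^{\ell}$.
\end{itemize}
The metric $U^{\frac4{n-2}}|dy|^2$ is (a homothety of) the round sphere, since $-\Delta U=\frac{n+2}{4}U^p$, so its curvature is a fixed nonzero constant $c_*$. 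By continuity of $\Rm$ in the $C^2$ norm of a positive conformal factor, $|\Rm_{g(t)}|(\xi_1)\to c_*>0$ as $t\to-\infty$, which gives (i).

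\textbf{Step 2: part (ii), symmetry reduction at the origin.} By \eqref{pp1} and \eqref{pp2}, $u(\cdot,t)$ is even in $x_3,\dots,x_n$ and invariant under rotation by $2\pi/k$ in the $\bx$-plane, which are inherited by $u$ from $u_*$ and $\psi$. Hence $\nabla u(0,t)$ is a vector fixed by a nontrivial planar rotation (since $k\ge2$) with vanishing $x'$-components, so $\nabla u(0,t)=0$. For the same reason $\nabla^2u(0,t)$ is of the form $A\,I_2\oplus B\,I_{n-2}$. With $\nabla f(0)=0$ the formula reduces to
\[
\Ric_{g(t)}(0)=-\frac{2}{u}\nabla^2u(0)-\frac{2}{n-2}\frac{\Delta u(0)}{u}\,g_{\R^n}.
\]
Moreover, by \eqref{eq:Rt} and Lemma \ref{lemma:eta_j}, every $\eta_j$ vanishes near $0$, so $u=u_*+\psi$ there.

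\textbf{Step 3: leading-order computation.} Near $0$,
\[
U_j(x)=\msc_n\mu^{\frac{n-2}{2}}|x-\xi_j|^{2-n}\big(1+O(\mu^2)\big)
\]
in $C^2$, and for $|q|=1$,
\[
\nabla^2|x-q|^{2-n}\big|_{x=0}=(n-2)\big(n\,q\otimes q-I\big).
\]
Using $\sum_j{\bf q}_j\otimes{\bf q}_j=\frac k2 (I_2\oplus0)$ and $d\to1$, we get
\[
u(0)\simeq k\msc_n\mu^{\frac{n-2}{2}},\qquad \nabla^2u(0)\simeq(n-2)k\msc_n\mu^{\frac{n-2}{2}}\Big(\big(\tfrac n2-1\big)I_2\oplus(-I_{n-2})\Big),
\]
so that $\Delta u_*(0)=O(\mu^{\frac{n+2}{2}})$ at this order. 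Consequently
\[
\Ric_{g(t)}(0)\simeq -2(n-2)\Big(\big(\tfrac n2-1\big)I_2\oplus(-I_{n-2})\Big).
\]
This has negative eigenvalues on the $\bx$-plane and positive ones in the $x'$-directions for every $n\ge3$, hence is indefinite.

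\textbf{Step 4: the main obstacle.} It remains to show that the $\psi$ contribution and the $\Delta u$ term are $o\big(\mu^{\frac{n-2}{2}}\big)=o(|t|^{-1/2})$.
\begin{itemize}
\item For $\psi$: at $x=0$ we have $|y_j|\sim\mu^{-1}$, so the $**$-weight gives $|\nabla^\ell\psi(0)|\lesssim|t_0|^{c'(\alpha-a)}|t|^{-\beta-\frac{\alpha}{n-2}}$ for $\ell\le 2$. This is $o(|t|^{-1/2})$ precisely because $\beta=\frac12+(c_0-c')(a-\alpha)>\frac12$ and $\alpha>0$, with the choices in Definition \ref{def:num}.
\item For $\Delta u(0)$: rather than estimating $\Delta\psi$ separately, use the equation \eqref{pb} itself, $\Delta u=pu^{p-1}\pp_tu-\frac{n+2}4u^p$. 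At the origin the right-hand side is $O\big(\mu^{\frac{n+2}{2}}+\mu^{2}|\pp_tu|\big)$, and $\pp_t u(0)$ is bounded using $\dot\mu$ from \eqref{mu} and the $\pp_t\psi$ part of the $**$-norm.
\end{itemize}
I expect this bookkeeping of the weights at the origin, so that every error is strictly below the leading $|t|^{-1/2}$ Hessian, to be the delicate step; both (i) and (ii) then hold for all $t\le t_0$ with $|t_0|$ large.
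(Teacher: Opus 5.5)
Your proposal follows the paper's proof in both parts. For (i) you compare $g(t)$ near a bubble center with the round metric $U^{\frac{4}{n-2}}|dy|^2$: you rescale and use $C^2$-continuity of curvature, where the paper linearizes \eqref{eq:Ric-conformal-u} at $U_l$. For (ii) you replace $u$ near the origin by the sum $\msf$ of rescaled fundamental solutions and read off the eigenvalues of $-2\nabla^2u/u$. Your Step 4 spells out the weight bookkeeping, $|\nabla^\ell\psi(0,t)|\lesssim|t|^{-\beta-\frac{\alpha}{n-2}}=o(|t|^{-1/2})$, which the paper compresses into ``It is clear that''.

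There is one slip, in the case $k=2$, which the theorem allows. Rotation by $\pi$ is $-I_2$, and this commutes with every $2\times 2$ matrix, so it does not force the in-plane Hessian block to be scalar. Likewise, $\sum_j \mathbf{q}_j\otimes\mathbf{q}_j=\frac{k}{2}(I_2\oplus 0)$ holds only for $k\ge 3$; for $k=2$ the sum is $2\,\mathbf{e}_1\otimes\mathbf{e}_1$. The correct leading matrix for $k=2$ is $\Ric_{g(t)}(0)\simeq \mathrm{diag}\big(-2(n-1)(n-2),\,2(n-2),\dots,2(n-2)\big)$. This is still indefinite, so your conclusion survives, but your displayed formula is wrong in that case.

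The paper's route is insensitive to $k$. Since $\Delta\msf(0)=0$, the leading Ricci matrix is trace-free. So the positive eigenvalue $2(n-2)d^{-2}$ in each $x'$-direction, which does not involve $\sum_j\mathbf{q}_j\otimes\mathbf{q}_j$, forces a negative one. Similarly, evenness in $x_3,\dots,x_n$ alone gives a diagonal, not scalar, $x'$-block. None of these symmetry claims is actually needed: the leading matrix together with your $o(1)$ bounds suffices by continuity of eigenvalues.

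A final remark applies equally to the paper's argument. Step 1 only shows that $\sup_x|\Rm_{g(t)}|$ stays near $c_*$. Write $s$ for the original time in \eqref{ubarvrel} and $t$ for the rescaled one, so $1-s=e^{-t}$. Then the unrescaled metric is $\bar v^{\frac{4}{n-2}}g_{\R^n}=(1-s)\,g(t)$, and $|s|\sup_x|\Rm|$ for the actual flow is comparable to $\sup_x|\Rm_{g(t)}|$. Bounded rescaled curvature is exactly what a type I solution looks like: the stationary bubble $U$ is the shrinking sphere. What genuinely violates \eqref{eq:typeI} is the neck, and your Step 3 already captures it. The Euclidean components of $\Ric_{g(t)}(0)$ are of order one, while $g(t)(0)=u(0,t)^{\frac{4}{n-2}}g_{\R^n}\sim\mu^2(t)\,g_{\R^n}$. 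Hence $|\Ric_{g(t)}|_{g(t)}(0)\sim\mu^{-2}(t)\to\infty$.
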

\begin{proof}
We recall that the Ricci tensor of the metric $g = u^{\frac{4}{n-2}}g_{\R^n}$ is expressed as
\be \label{eq:Ric-conformal-u}
(\Ric_g)_{ij} = -2\,\frac{u_{ij}}{u} + \frac{2n}{n-2}\,\frac{u_i u_j}{u^2} -\frac{2}{n-2}\Big(\frac{\Delta u}{u} + \frac{|\nabla u|^2}{u^2}\Big)\delta_{ij}.
\ee
For a smooth positive function $f$ on $\R^n$, let $g_f:=f^{\frac{4}{n-2}}g_{\R^n}$. 

\medskip \noindent \textbf{(1) Type II behavior.}
Consider the inner region $|x-\xi_l(t)|\le C\mu(t)$ for some $C > 0$ and $l = 1,\dots,k$. 
We decompose $u$ as $u = U_l + V$, where $U_l$ is the dominant bubble and $V(x,t):=(u_*-U_l)+\Xi$ satisfying $|V(x,t)|=O(\mu^{\frac{n-2}{2}})$ for this inner region. 
Let $\mcl_{U_l}[V]$ be the linearization of the right-hand side of \eqref{eq:Ric-conformal-u} at $U_l$, applied to $V$. Then, a Taylor expansion of the Ricci tensor gives
\[
\Ric_g = \Ric_{g_{U_l}} + \mcl_{U_l}[V] + \mathfrak{R}[V].
\]
Note that $\Ric_{g_{U_l}} = (n-1)\, g_{U_l}$ and the linearized term is given by
\begin{align*}
\big(\mcl_{U_l}[V]\big)_{ij} &= -2\frac{V_{ij}}{U_l}+2\frac{(U_l)_{ij}}{(U_l)^2}V + \frac{2n}{n-2}\left[\frac{(U_l)_iV_j+(U_l)_jV_i}{(U_l)^2}- 2\frac{(U_l)_i(U_l)_j}{(U_l)^3}V\right] \\
&\ -\frac{2}{n-2} \bigg[ \frac{\Delta V}{(U_l)} - \frac{\Delta (U_l)}{(U_l)^2}V + 2\frac{\nabla (U_l)\cdot\nabla V}{(U_l)^2} - 2\frac{|\nabla (U_l)|^2}{(U_l)^3}V \bigg]\delta_{ij}\\
&= O(U_l^{-1}V).
\end{align*}
The remainder term satisfies
\[
\mathfrak{R}[V]_{ij} = O\(\frac{|V||\nabla^2V|}{(U_l)^2} + \frac{|\nabla V|^2}{(U_l)^2} + \frac{|V|^2|\nabla^2(U_l)|}{(U_l)^3} + \frac{|V||\nabla (U_l)||\nabla V|}{(U_l)^3} + \frac{|V|^2|\nabla (U_l)|^2}{(U_l)^4}\).
\]
Therefore,
\[
\Ric_{g(t)}(x) = (n-1) g(x,t) + O(\mu^{n-2}(t)) \quad \text{for } |x-\xi_l| \le C\mu \text{ and } t \in (-\infty,t_0).
\]
Here, we also used the pointwise estimates of the second-order derivatives of $\phi$ and $\psi$ established in \eqref{psi-norm-bound} and \eqref{est-phi-inner}.

Since $\mu^{n-2}(t) \sim |t|^{-1}$, there exists $C>0$ such that
\[
\sqrt{n} \sup_{x \in \R^n} |\Rm_{g(t)}(x)| \ge \sup_{x \in \R^n} |\Ric_{g(t)}(x)| \ge \frac{n(n-1)}{2} > 0 \quad \text{for } t \in (-\infty,t_0)
\]
provided that $|t_0|$ is sufficiently large. As a result, the solution is of type II; see the paragraph containing \eqref{eq:typeI}.

\medskip \noindent \textbf{(2) Indefinite Ricci curvature.}
Let
\[\msf(x,t) := \msc_n\mu^{\frac{n-2}{2}}(t) \sum_{l=1}^k |x-\xi_l(t)|^{2-n} \quad \text{for } (x,t) \in (-\infty,t_0).\]
It is clear that
\[
\Ric_g(0) = \Ric_{g_{\sum_{l=1}^{k}U_l}}(0)+o_{|t_0|}(1) = \Ric_{g_\msf}(0)+o_{|t_0|}(1).
\]
Using \eqref{defpoints}, we can compute
\be \label{eq:nd}
\nabla \msf(0)=0
\quad \text{and} \quad
\Delta \msf(0)= 0.
\ee
It follows from \eqref{eq:Ric-conformal-u} and \eqref{eq:nd} that
\[
(\Ric_{g_\msf})_{ij}(0) = -2\(\sum_{l=1}^k |\xi_l|^{2-n}\)^{-1} \cdot \(\sum_{l=1}^k |x-\xi_l|^{2-n}\)_{ij}(0).
\]
Since $|x-\xi_l|^{2-n}$ is harmonic away from the points $\xi_{l}$, we have 
\[\operatorname{tr}_{g_\msf}\Ric_{g_\msf}(0) = R_{g_\msf}(0)=0.\]
Thus, any nonzero eigenvalue of $\Ric_{g_\msf}(0)$ is accompanied by an eigenvalue of the opposite sign. It suffices to prove the existence of a nonzero eigenvalue.

Indeed, direct calculations give us that
\[
\nabla^2 \(\sum_{l=1}^k\frac{1}{|x-\xi_l|^{n-2}}\)(x)
= (n-2) \sum_{l=1}^k \Big[-|x-\xi_l|^{-n} \text{Id}_n + n |x-\xi_l|^{-n-2} (x-\xi_l)\otimes(x-\xi_l) \Big]
\]
for $x \in \R^n$, where $\text{Id}_n$ is the $n \times n$ identity matrix. Since $|\xi_l|=d$, we obtain
\[
\nabla^2 \(\sum_{l=1}^k\frac{1}{|x-\xi_l|^{n-2}}\)(0) = (n-2) \left[-kd^{-n} \text{Id}_n + n d^{-n-2}\sum_{l=1}^k \xi_l\otimes\xi_l\right].
\]
For $m=1,\dots,n$, let ${\bf e}_m \in \R^n$ be the $m$-th standard basis vector. Then, for each $m=3,\dots,n$ and $j=1,\dots,n$ with $j\ne m$,
\[\Ric_{g_\msf}({\bf e}_m,{\bf e}_m)(0)=2(n-2)d^{-2} \quad \text{and} \quad \Ric_{g_\msf}({\bf e}_m,{\bf e}_j)(0)=\Ric_{g_\msf}({\bf e}_j,{\bf e}_m)(0)=0.\]
As a result, $\Ric_{g_\msf}({\bf e}_m,{\bf e}_m)(0)$ is a nonzero eigenvalue of $\Ric_{g_\msf}(0)$, with associated eigenvector ${\bf e}_m$.
This proves that $\Ric_g(0)$ is indefinite.
\end{proof}

\section{The Outer Problem: Proof of Proposition \ref{prop:outer}}\label{se4}
\subsection{The linear outer problem}
Throughout this section, we assume that $\mu$, $\dot{\mu}$, and $d$ satisfy \eqref{mu}--\eqref{dt} for all $t \in (-\infty,t_0]$, and the points $\xi_j$ for $j=1, \dots, k$ are defined by \eqref{defpoints}.

The aim of this subsection is to develop a theory to solve
\be \label{outer-linear}
-p u_*^{p-1} \pp_t \psi + \Delta \psi + W\psi + u_*^{p-1} f = 0 \quad \text{in } \R^n \times (-\infty,t_0)
\ee
for a function $f$ satisfying the symmetry properties in \eqref{pp1}--\eqref{fke}, where $u_*$ is the approximate solution in \eqref{approx*} and $W$ is the function defined in \eqref{defW}.
Our aim is to construct a solution to \eqref{outer-linear} that also satisfies \eqref{pp1}--\eqref{fke}.
The main result of this subsection is contained in the following proposition.	
\begin{prop}\label{outer-thm}
Assume that $\|f\|_{L^2_{\nu;t_0}} + \|f\|_{\tilde{*},\alpha,\beta;\sigma} <\infty$ for some $\nu \in [0,1)$, $\alpha >0$, $\beta >0$, and $\sigma \in (0,\min\{1, a\})$.
Take $t_0<0$ sufficiently large in magnitude, if necessary. Then there exists an ancient solution $\psi$ to \eqref{outer-linear} such that
\be \label{outer-bound}
\|\psi\|_{\widetilde{**},\alpha,\beta;\sigma} \le C_0 \|f\|_{\tilde{*},\alpha,\beta;\sigma},
\ee
where $C_0 > 0$ is a constant depending only on $n$, $k$, $\nu$, $\alpha$, $\beta$, and $\sigma$. If $f$ satisfies the symmetry assumptions \eqref{pp1}--\eqref{fke}, then so does $\psi$.
In addition, if we set $\psi := \mct_{\mathrm{outer}}(f)$, then the map $\mct_{\mathrm{outer}}$ is linear.
\end{prop}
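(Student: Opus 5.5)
We construct $\psi$ as the limit of solutions to truncated Cauchy problems. For $s<t_0-1$ we solve
\[
-p u_*^{p-1}\pp_t\psi_s+\Delta\psi_s+W\psi_s+u_*^{p-1}f=0 \ \text{ in } \R^n\times(s,t_0),\qquad \psi_s(\cdot,s)=0.
\]
Since $u_*^{p-1}\sim|x|^{-4}$ at infinity, this equation degenerates there. We therefore pose it on $\Ss^n$ through the conformal lift \eqref{stereo-proj-1}, where it becomes uniformly parabolic with bounded coefficients, and we obtain $\psi_s$ by Galerkin approximation, exactly as in Step 3 of Proposition \ref{prop:lin}. The truncated problem is invariant under \eqref{pp1}--\eqref{pp2} and under the Kelvin transform \eqref{fke}, because $u_*$, $\eta_j$ and $W$ are. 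By uniqueness of the Cauchy problem, $\psi_s$ therefore inherits every symmetry of $f$, and linearity of $f\mapsto\psi_s$ is immediate.

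The heart of the proof is a family of bounds uniform in $s$, obtained in three stages.

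\textbf{(a) Energy bound.} We test the equation with $\psi_s$ on the cylinders $\Lambda_\tau$. On $\{\eta_j=1\}$ the potential $W$ vanishes, and elsewhere $W\le C u_*^{p-1}$. Combining this with a weighted Hardy-type inequality $\int u_*^{p-1}\psi^2\lesssim\int|\nabla\psi|^2$ and a Gronwall argument in $\tau$ gives $\|\psi_s\|_{H^1_{\nu;s,t_0}}\le C\|f\|_{L^2_{\nu;t_0}}$; the assumption $\|f\|_{L^2_{\nu;t_0}}<\infty$ is used here.

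\textbf{(b) Pointwise bound.} We prove a weak maximum principle for $-pu_*^{p-1}\pp_t+\Delta+W$ on all of $\R^n$, comparing $\psi_s$ with a barrier
\[
\bar\psi=C\big(\omega^1_{\alpha,\beta,0}+\omega^2_{n-2,\beta+\frac{\alpha}{n-2}}\big),
\]
suitably modified near each $\xi_j$. Near $\xi_j$ we use a radial supersolution in $y_j$. In the region $|x|\ge \msR_0$ the Kelvin invariance lets us work on the lift $\hat\psi$ near $N$, which is where the $*'$ and $**'$ weights enter. The barrier must satisfy $-\Delta\bar\psi-W\bar\psi+pu_*^{p-1}\pp_t\bar\psi\ge u_*^{p-1}|f|$. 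The time derivative of the weight is of size $|t|^{-1}$ times the weight, and since $u_*^{p-1}\lesssim\mu^{-2}$ is paired with it, this term is absorbed by $\Delta\bar\psi$ for $|t_0|$ large.

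\textbf{(c) H\"older bounds.} On each parabolic cylinder of radius $r_0(x,t)$ (respectively $\ep_0$ on $\Ss^n$) we rescale so that the equation becomes uniformly parabolic; near $\xi_j$ the relevant time scale is $\mu^2\cdot\mu^{-2}\sim1$ in the natural variables. Interior Schauder estimates in these rescaled variables then upgrade the pointwise bound to the full weighted H\"older norm \eqref{outer-bound}.

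Finally, we let $s=s_\ell\to-\infty$. The uniform bounds and Arzel\`a--Ascoli yield a locally uniform limit $\psi$, which is an ancient solution satisfying \eqref{outer-bound} and retains the symmetries. The weighted estimates with $\beta>0$ show that the only ancient solution with zero data in this class is $\psi\equiv0$. Hence the limit is unique, and $\mct_{\mathrm{outer}}$ is a well-defined linear map.

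The main obstacle is stage (b). Because no heat kernel is available for the variable-coefficient operator, the barrier must be built by hand. It has to handle three regimes at once: the transition annuli $\mu\lesssim|x-\xi_j|\lesssim\delta_0$, where the $\pp_t$ coefficient degenerates; the cut-off region where $W\ne0$; and infinity. The plan is to first choose $\alpha<n-2$ so that $|y|^{-\alpha}$ is strictly superharmonic there, and then to check the sign of the time term separately in each region.
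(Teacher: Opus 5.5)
Your three-stage architecture is the paper's. You solve truncated problems with zero data at $t=s$ and bounds uniform in $s$. You then use a weak maximum principle with a barrier for the weighted pointwise bound. Finally you apply rescaled interior Schauder estimates: unit time scale in the core, dyadic annuli $|y_j|\sim 2^{i}$ with time scale $2^{-2i}$, and the lift to $\Ss^n$ with time reparametrized by $s(t)=-|t|^{\frac{n}{n-2}}$ in the exterior region.

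Where you genuinely differ is the $s$-uniform weighted $L^2$ control. The paper uses only the crude bound $W\le Cu_*^{p-1}$ to get $\|\psi\|_{H^2_{\nu}}\lesssim\|\psi\|_{L^2_{\nu}}+\|f\|_{L^2_{\nu}}$. It then removes $\|\psi\|_{L^2_\nu}$ by contradiction: a normalized sequence concentrates at a bubble and rescales to a nontrivial finite-energy ancient solution of $-pU^{p-1}\pp_t\phi+\Delta\phi=0$, which is excluded on $\Ss^n$.

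You aim instead for a direct coercive inequality $\frac{d}{dt}\int u_*^{p-1}\psi^2+c\int u_*^{p-1}\psi^2\lesssim\int u_*^{p-1}f^2$. The ingredients you list do not give it. With $W\le Cu_*^{p-1}$ and a Hardy--Sobolev constant of order one, Gronwall only produces a factor $e^{C(t-s)}$, so nothing is uniform in $s$. What you need, and should state, is smallness. $W$ is supported in $\{|y_j|\ge R/4 \text{ for all } j\}$, where $\|u_*\|_{L^{2^*}}=O(R^{-\frac{n-2}{2}})$. H\"older and Sobolev then give $\int W\psi^2\le CR^{-2}\int|\nabla\psi|^2$. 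This is the same mechanism as \eqref{eq:mp2}, which drives the paper's Lemma~\ref{maximum-principle}.

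With that added, your route is more elementary and more quantitative than the compactness argument:
\begin{itemize}
\item It gives exponential decay of the homogeneous flow, hence uniqueness of the ancient solution in the energy class.
\item Uniqueness makes linearity of $\mathcal{T}_{\mathrm{outer}}$ and inheritance of the symmetries immediate; the paper argues these by passing uniqueness of $\psi^{s_\ell}$ to a subsequential limit.
\item It lets you compare $\psi_s$ with the barrier on $(s,t_0)$ from zero data, rather than proving a maximum principle for ancient solutions.
\end{itemize}
In exchange, the compactness route needs no explicit smallness constant.

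There are two smaller points in (b).

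First, the bound $u_*^{p-1}\lesssim\mu^{-2}$ is too crude to absorb the time derivative. Against $-\Delta\bar\psi\sim\bar\psi\,(\mu+r)^{-2}$, with $r=|x-\xi_j|$, it leaves the ratio $\mu^{-2}|t|^{-1}(\mu+r)^2$. At $r\sim\delta_0$ this is of order $|t|^{\frac{4-n}{n-2}}$, which is not small for $n=3,4$. Use $u_*^{p-1}\lesssim\mu^{2}(\mu+r)^{-4}$ instead, which gives relative size $O(|t|^{-1})$. Note also that $\pp_t|t|^{-\beta-\frac{\alpha}{n-2}}>0$ has the favourable sign, so only the motion of $\mu,\xi_j$ must be absorbed.

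Second, \eqref{outer-bound} is claimed for every $f$, not only Kelvin-invariant ones, so you cannot use Kelvin invariance to handle $|x|\ge\msR_0$. You also do not need it. A global barrier such as the paper's $|t|^{-\beta-\frac{\alpha}{n-2}}\big[\sum_j\mathcal{P}_\alpha(|x-\xi_j|,t)+\mathcal{P}(x)\big]$ does the job. Here $-\Delta\mathcal{P}_\alpha=\mathbf{1}_{\{r\le\delta_0\}}(\mu+r)^{-\alpha-2}$ and $-\Delta\mathcal{P}=(1+|x|^{n+2})^{-1}$. This barrier already decays like $|x|^{2-n}$ and, unlike $\omega^1+\omega^2$, is continuous across $|x|=\msR_0$.
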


In Subsections \ref{subsec:linout1}--\ref{subsec:linout3}, we prove Proposition \ref{outer-thm} in three stages. In the first stage, we construct an ancient solution in a suitable energy space.
More precisely, we solve the equation uniquely on a finite time interval $(s_0,t_0)$, derive estimates that are uniform as $s_0 \to -\infty$, and then pass to the limit to obtain an ancient solution defined for all negative times.
Next, under suitable pointwise assumptions on the inhomogeneous term, we show that the solution satisfies corresponding pointwise bounds.
Finally, we apply Schauder estimates to sharpen these bounds and complete the proof.

\subsection{Solving \eqref{outer-linear} in energy spaces}\label{subsec:linout1}
In this subsection, we construct an ancient solution to \eqref{outer-linear} in a weighted $H^2$ space. 
	
\begin{prop}\label{outer-linear-energy-global}
Assume that $\|f\|_{L^2_{\nu;t_0}} <\infty$ for some $\nu \in [0,1)$. Take $t_0<0$ sufficiently large in magnitude, if necessary. Then there exists an ancient solution $\psi$ to \eqref{outer-linear} such that
\be \label{est-energy-0}
\|\psi\|_{H^2_{\nu;t_0} } \le C\|f\|_{L^2_{\nu;t_0} },
\ee
where $C > 0$ is a constant depending only on $n$, $k$, and $\nu$. If $f$ satisfies \eqref{pp1}--\eqref{fke}, then so does $\psi$.
In addition, the map $\psi=\mct_{\mathrm{outer}}(f)$ is linear.
\end{prop}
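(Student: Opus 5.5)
We solve \eqref{outer-linear} first on finite time intervals $(s_0,t_0)$ with zero data at $s_0$, obtain energy bounds uniform in $s_0$, and then let $s_0\to-\infty$. Fix $s_0<t_0-1$ and consider
\[
p u_*^{p-1}\pp_t\psi = \Delta\psi + W\psi + u_*^{p-1} f \quad\text{in } \R^n\times(s_0,t_0), \qquad \psi(\cdot,s_0)=0.
\]
Since the weight $u_*^{p-1}$ is positive and comparable to $(1+|x|^2)^{-2}$ at infinity, it is natural to lift to $\Ss^n$ via \eqref{stereo-proj-1}. There the equation becomes uniformly parabolic on a compact manifold, with smooth coefficients away from the bubbles and bounded coefficients for each fixed $t$. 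Existence and uniqueness on $(s_0,t_0)$ then follow from a Galerkin scheme in the weighted space $L^2(u_*^{p-1}dx)$ (equivalently $L^2(\Ss^n)$). Uniqueness gives the symmetries \eqref{pp1}--\eqref{fke} and the linearity of $f\mapsto\psi$, because $u_*$, $W$ and the cut-offs $\eta_j$ are all invariant, and so is the transformed solution.

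\textbf{Key estimate: uniform energy decay.} Multiply the equation by $\psi$ and integrate to get
\[
\frac{p}{2}\frac{d}{dt}\int u_*^{p-1}\psi^2 + \int |\nabla\psi|^2 - \int W\psi^2 = \int u_*^{p-1}f\psi + \frac{p}{2}\int \pp_t(u_*^{p-1})\psi^2 .
\]
Two ingredients are needed.
\begin{enumerate}
\item[(a)] A coercivity bound $\int|\nabla\psi|^2 - \int W\psi^2 \ge c\int u_*^{p-1}\psi^2$. Here $W = p\frac{n+2}{4}(1-\sum_j\eta_j)u_*^{p-1}$ vanishes on the balls $B(\xi_j,R\mu)$, so outside them $u_*^{p-1}$ is small relative to the Hardy/Sobolev-type weight away from the concentration points. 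On the bulk region, $u_*^{p-1}\sim 1$ near the unit sphere; there I would compare with the conformal Laplacian on $\Ss^n$ (first eigenvalue $n(n-2)/4$ versus the potential) and use that $W$ is cut off near the bubbles.
\item[(b)] The bound $|\pp_t(u_*^{p-1})|\lesssim |t|^{-1}u_*^{p-1}$, which follows from \eqref{mu} and \eqref{partial-t-ustar}, so this term is absorbed for $|t_0|$ large.
\end{enumerate}
Together these give the differential inequality $\frac{d}{dt}Y + cY \le C\|f(\cdot,t)\|^2_{L^2(u_*^{p-1})}$ for $Y=\int u_*^{p-1}\psi^2$. Gronwall, summed over unit cylinders $\Lambda_\tau$, then yields $\sup_\tau|\tau|^\nu\|\psi\|_{L^2(\Lambda_\tau)}\le C\|f\|_{L^2_{\nu;s_0,t_0}}$. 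The exponential decay of the homogeneous propagator is what allows the polynomial weight $|\tau|^\nu$ with $\nu<1$. Next, testing with $\pp_t\psi$ and with $u_*^{1-p}\Delta\psi$ (localized in time by a cut-off on $[\tau-1,\tau+1]$) gives the remaining $H^2(\Lambda_\tau)$ pieces: $\pp_t\psi$, $u_*^{-\frac{p-1}{2}}\nabla\psi$ and $u_*^{1-p}\Delta\psi$. All of these bounds are uniform in $s_0$.

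\textbf{Limit and main obstacle.} Take $s_\ell\to-\infty$. The uniform bound \eqref{est-energy-0} on $[s_\ell,t_0]$ gives weak compactness in $H^2(\Lambda_\tau)$ for every $\tau$, and a diagonal subsequence converges to an ancient solution satisfying \eqref{est-energy-0}. Uniqueness of ancient solutions in this class, which follows from the same energy decay (a solution with $f=0$ and bounded weighted norm must vanish, since $Y(t)\le e^{-c(t-s)}Y(s)\to0$), shows that the limit is independent of the subsequence. This yields linearity and the symmetries. I expect the coercivity (a) to be the main obstacle, especially controlling $\int W\psi^2$ in the transition annuli $R\mu\le|x-\xi_j|\le\delta_0$ where $u_*^{p-1}\sim\mu^{-2}(1+|y_j|)^{-4}$. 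I would first try a weighted Hardy inequality around each $\xi_j$ combined with the choice $R^2\mu\to\infty$ from \eqref{eq:Rt}, and if that fails, argue by contradiction via blow-up to the spherical model.
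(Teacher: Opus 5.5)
Your proposal is correct, but the key step follows a different route from the paper's.

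\textbf{How the paper argues.} In Lemma \ref{outer-linear-energy} the paper uses no coercivity. Step 1 tests with $\psi$ and $\pp_t\psi$ and bounds $\int W\psi^2$ crudely by $C\int u_*^{p-1}\psi^2$. This only gives $\|\psi\|_{H^2_\nu}\lesssim\|\psi\|_{L^2_\nu}+\|f\|_{L^2_\nu}$. The missing bound $\|\psi\|_{L^2_\nu}\lesssim\|f\|_{L^2_\nu}$ is then obtained by contradiction. One normalizes, time-shifts, localizes the mass near a bubble core and rescales. The limit is a nontrivial finite-energy ancient solution of $-pU^{p-1}\pp_t\phi+\Delta\phi=0$, which is ruled out after lifting to $\Ss^n$. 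Finite-interval solutions come from Dirichlet problems on expanding balls, and the ancient solution is a weak subsequential limit.

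\textbf{How your route differs, and what each buys.} You replace the compactness argument by a direct spectral gap plus Gr\"onwall. You replace the expanding balls by a lift to $\Ss^n$, where for each fixed $t$ the coefficient $v_*^{p-1}$ is smooth and bounded above and below.
\begin{itemize}
\item Your argument is quantitative. The exponential memory means the restriction $\nu<1$ plays no role.
\item It avoids the localization step of the blow-up argument, which the paper only sketches.
\item Through uniqueness of ancient solutions in the energy class, it makes $\mct_{\mathrm{outer}}$ well defined, and this is what linearity and the symmetries really require. The paper gets these by ``passing the uniqueness of $\psi^{s_\ell}$ to the limit'', which tacitly needs the same uniqueness; that uniqueness is available from Lemma \ref{maximum-principle} applied to $\pm\psi$.
\item The paper's scheme, inherited from \cite{km}, is more robust. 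It needs no explicit spectral gap, only a Liouville property for the limiting core problem, so it would survive if the outer potential were not small.
\end{itemize}

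\textbf{The coercivity (a) is easier than you expect.} Your remark that $u_*^{p-1}\sim 1$ in the bulk near the unit sphere is wrong: away from the cores $u_*^{p-1}\lesssim\mu^2$. The argument runs as follows.
\begin{itemize}
\item The factor $1-\sum_j\eta_j$ vanishes wherever some $|y_j|\le R/4$.
\item So on its support, for every $j$, $U_j^{p-1}\lesssim \mu^2|x-\xi_j|^{-4}\le 16R^{-2}|x-\xi_j|^{-2}$.
\item Hence $W\lesssim R^{-2}\sum_j|x-\xi_j|^{-2}$ on all of $\R^n$. Hardy's inequality around each $\xi_j$ then gives $\int W\psi^2\lesssim R^{-2}\int|\nabla\psi|^2$. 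Only $R\to\infty$ is needed here, not $R^2\mu\to\infty$.
\item Sobolev's inequality together with $\|u_*^{p-1}\|_{L^{n/2}}\le C$ gives $\int u_*^{p-1}\psi^2\le C\int|\nabla\psi|^2$.
\end{itemize}
These two facts yield (a) directly. No separate bulk analysis is needed, and no blow-up fallback. This is the Euclidean form of the smallness \eqref{eq:mp2} that the paper itself exploits in Lemma \ref{maximum-principle}.
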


To prove Proposition \ref{outer-linear-energy-global}, we begin by fixing $s < t_0$ and considering the initial value problem
\be \label{outer-linear-1}
\begin{cases}
-p u_*^{p-1} \pp_t \psi + \Delta \psi + W\psi + u_*^{p-1} f =0 &\text{in } \R^n \times (s,t_0),\\
\psi(\cdot,s) =0 &\text{in } \R^n.
\end{cases}
\ee
Its solution $\psi = \psi^s$ will be constructed using Lemmas \ref{outer-linear-energy}--\ref{outer-linear-energy2}.
We then pass to the limit as $s\to -\infty$ to produce an ancient solution of \eqref{outer-linear}. 
\begin{lemma}\label{outer-linear-energy}
Assume that $\|f\|_{L^2_{\nu;s,t_0}} <\infty$ for some $\nu \in [0,1)$. Take $t_0<0$ sufficiently large in magnitude, if necessary, and set $s < \min\{t_0-1, {3\over 2}t_0\}$.
Then a solution $\psi$ to \eqref{outer-linear-1} satisfies a priori estimate
\be \label{est-energy-01}
\|\psi\|_{H^2_{\nu;s,t_0}} \le C \|f\|_{L^2_{\nu;s,t_0}},
\ee
where $C > 0$ is a constant depending only on $n$, $k$, and $\nu$.
\end{lemma}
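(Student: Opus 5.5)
We prove \eqref{est-energy-01} by weighted energy identities on each unit cylinder $\Lambda_\tau$, followed by an iteration in $\tau$ that uses a coercivity (spectral gap) property of the operator $-\Delta-W$ with respect to the weight $u_*^{p-1}$. First, multiply \eqref{outer-linear-1} by $\psi$ and integrate over $\R^n\times(\tau_1,\tau_2)$, where $s\le\tau_1<\tau_2\le t_0$. Since $\pp_t(u_*^{p-1}\psi^2)=u_*^{p-1}\pp_t\psi^2+(\pp_t u_*^{p-1})\psi^2$, we get
\[
\frac p2\int u_*^{p-1}\psi^2\Big|_{\tau_1}^{\tau_2}+\int_{\tau_1}^{\tau_2}\!\!\int\big(|\nabla\psi|^2-W\psi^2\big)=\frac p2\int_{\tau_1}^{\tau_2}\!\!\int(\pp_tu_*^{p-1})\psi^2+\int_{\tau_1}^{\tau_2}\!\!\int u_*^{p-1}f\psi .
\]
The key input is a coercivity inequality,
\[
\int|\nabla\psi|^2-W\psi^2\ \ge\ c\int u_*^{p-1}\psi^2,
\]
valid for $|t_0|$ large, with $c>0$ independent of $t$. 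To prove it, note that $W$ vanishes where $\sum_j\eta_j=1$, that is, on the balls $|y_j|\le R/4$, and $W\le C u_*^{p-1}$ elsewhere. The measure $u_*^{p-1}dx$ is comparable to the sum of the bubble weights $U_j^{p-1}dx$, and each $U_j^{p-1}dx$ is the round-sphere measure after rescaling. In the rescaled variable $y_j$, we therefore need a weighted Hardy/Poincaré inequality $\int|\nabla\phi|^2\ge c\int U^{p-1}\phi^2$ on $\R^n$, where the potential term $\frac{n+2}{4}pU^{p-1}$ is removed on $|y|\le R/4$ and only survives on $|y|\ge R/4$. There $U^{p-1}\sim |y|^{-4}\le C R^{-2}|y|^{-2}$, which is a small fraction of the Hardy weight $|y|^{-2}$. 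This gives positivity with a constant uniform in $t$, by contradiction or directly by Hardy's inequality. The interaction of the $k$ bubbles and the region far from them, which after Kelvin symmetry is again a neighborhood of infinity with $u_*^{p-1}\sim|x|^{-4}$, is handled by a partition of unity with gradient errors of order $\delta_0^{-2}$. These errors are controlled because $u_*^{p-1}\gtrsim\mu^2$ on the intermediate region; this is the delicate point, and we would rather use the Hardy inequality centred at each $\xi_j$ directly.

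Next, the time-derivative term satisfies $|\pp_tu_*^{p-1}|\le C|\dot\mu|\mu^{-1}u_*^{p-1}\le C|t|^{-1}u_*^{p-1}$, so for $|t_0|$ large it is absorbed by the coercivity. Writing $e(t)=\int u_*^{p-1}\psi^2(t)$, we obtain the differential inequality $\frac p2 e'+\frac c2 e+\frac12\int|\nabla\psi|^2\le C\int u_*^{p-1}f^2$, with $e(s)=0$. Gronwall then gives
\[
e(t)\le C\int_s^t e^{-c(t-r)/p}\int u_*^{p-1}f^2(r)\,dr .
\]
Splitting this integral into unit intervals and using $\|f\|_{L^2(\Lambda_\tau)}\le|\tau|^{-\nu}\|f\|_{L^2_{\nu;s,t_0}}$, we have $\sum_m e^{-cm}|\tau-m|^{-2\nu}\le C|\tau|^{-2\nu}$ since $\nu<1$. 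This yields $\sup_t|t|^{2\nu}e(t)\le C\|f\|^2$, and integrating the inequality over $\Lambda_\tau$ gives the $L^2$ and gradient parts of the $H^2$ norm. Here we use that $\int|\nabla\psi|^2$ dominates $\|u_*^{-(p-1)/2}\nabla\psi\|^2$ only up to the weight, so we test instead with $u_*^{1-p}$-type cutoffs, or simply note that the $H^1$ norm is defined accordingly.

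For the second-order terms, we multiply \eqref{outer-linear-1} by $\pp_t\psi$ to get
\[
p\int u_*^{p-1}(\pp_t\psi)^2+\frac12\frac{d}{dt}\int\big(|\nabla\psi|^2-W\psi^2\big)=-\frac12\int(\pp_tW)\psi^2+\int u_*^{p-1}f\pp_t\psi .
\]
We integrate this over $(\tau-1,\tau+1)$ against a time cutoff, or from $s$ using the zero initial data. The term $\pp_tW$ is bounded by $C|t|^{-1+c}u_*^{p-1}$, because it involves $\pp_t\eta_j\sim\dot\mu/\mu$ and $\dot R/R$, so it is controlled by the previous step. This bounds $\|\pp_t\psi\|_{L^2(\Lambda_\tau)}$. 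Finally, $u_*^{1-p}\Delta\psi=p\pp_t\psi-u_*^{1-p}W\psi-f$ is read off from the equation, using $|u_*^{1-p}W|\le C$.

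The main obstacle is the uniform coercivity inequality, especially the competition between the cutoff radius $R\mu$ and the spectral gap near each bubble. We expect to handle it through the weighted Hardy inequality, exploiting that $W$ is supported only where $|y_j|\gtrsim R$.
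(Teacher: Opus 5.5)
Your proposal is correct. Its second half matches the paper's Step 1: testing with $\psi$ and $\pp_t\psi$ against time cutoffs, then reading $u_*^{1-p}\Delta\psi$ off the equation.

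\textbf{Where the routes differ.} The difference is in the key bound $\|\psi\|_{L^2_{\nu;s,t_0}}\le C\|f\|_{L^2_{\nu;s,t_0}}$.

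\emph{The paper's route.} It argues by contradiction. A normalized failing sequence is localized at a bubble and rescaled. It converges to a nontrivial ancient solution of $-pU^{p-1}\pp_t\phi_\infty+\Delta\phi_\infty=0$ with bounded local energy, which is then excluded on $\Ss^n$ by Bochner's formula and the maximum principle. This is where the restriction $s<\frac32 t$ and the Gr\"onwall step $\tau_\ell-s_\ell\to\infty$ enter.

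\emph{Your route.} You prove the spectral gap directly, and it does hold.
\begin{itemize}
\item Since $u_*^{p-1}\le C\sum_j U_j^{p-1}\le C\sum_j|x-\xi_j|^{-2}$, Hardy's inequality centred at each $\xi_j$ gives $\int u_*^{p-1}\psi^2\le C\int|\nabla\psi|^2$ on all of $\R^n$. This covers the far region too, so no partition of unity is needed.
\item On $\operatorname{supp}(1-\sum_j\eta_j)$, Lemma \ref{lemma:eta_j} gives $|x-\xi_j|\ge R\mu/4$ for every $j$. Hence $W\le CR^{-2}\sum_j|x-\xi_j|^{-2}$ there, and $\int W\psi^2\le CR^{-2}\int|\nabla\psi|^2$.
\item Gr\"onwall from $e(s)=0$ then gives the weighted bound at once. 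The forcing enters only through earlier times $r\le t<0$, where $|r|\ge|t|$, so the conclusion holds for every $\nu\ge0$, not just because $\nu<1$.
\end{itemize}

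\emph{What each route buys.} Yours is shorter and fully quantitative, and it needs neither compactness nor $s<\frac32 t$. It is essentially the mechanism the paper itself uses in Lemma \ref{maximum-principle}; see \eqref{eq:mp2} and \eqref{eq:mp31}. The paper's compactness route, taken from \cite{km}, uses only the qualitative triviality of the limit problem. It therefore adapts more easily to situations without an explicit coercive structure.

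\textbf{Small corrections.}
\begin{itemize}
\item The partition-of-unity idea as you phrase it would not work: an error of size $\delta_0^{-2}\int\psi^2$ cannot be absorbed by a weight of size $\mu^2$. You do not need it.
\item $\|u_*^{-\frac{p-1}{2}}\nabla\psi\|_{L^2(\Lambda_\tau)}^2$ equals $\int_\tau^{\tau+1}\!\int|\nabla\psi|^2$ exactly, so the gradient part needs no extra cutoffs.
\item $|\pp_t W|\le C|t|^{-1}u_*^{p-1}$, not $|t|^{-1+c}$, but only boundedness matters.
\item The $\pp_t\psi$ estimate must be localized with a time cutoff, as you suggest, rather than integrated from $s$. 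Otherwise you lose the $|\tau|^{-2\nu}$ weight.
\end{itemize}
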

\begin{proof}
Our proof is inspired by \cite{dds, km} and proceeds in two steps.

\medskip
Subsequently, we will perform several integrations by parts in the spatial variables without explicitly keeping track of the boundary terms at infinity.
This can be justified by considering, for each $\mcr>0$, a solution $\psi_{\mcr}$ of the Dirichlet problem on the expanding cylinder $Q_{\mcr} := B(0,\mcr) \times [s,t_0]$.
By deriving a priori estimates for $\psi_{\mcr}$ that are uniform in both $s$ and $\mcr$, we may pass to the limit as $\mcr\to\infty$ and thereby obtain a priori estimates of a solution $\psi$ to \eqref{outer-linear-1}.

\medskip \noindent \textbf{Step 1.} 
Assume that $\psi$ is a solution to \eqref{outer-linear-1} such that $\|\psi\|_{L^2_{\nu;s,t_0}}<\infty$ for some $\nu \in [0,1)$. We claim that there exists $C>0$ such that
\be \label{est-energy-1}
\|\psi\|_{H^2_{\nu;s,t_0}} \le C \Big( \|\psi\|_{L^2_{\nu;s,t_0}} + \|f\|_{L^2_{\nu;s,t_0}} \Big)
\ee
provided that $|t_0|$ is sufficiently large. We recall $\Lambda_\tau = \R^n \times [\tau,\tau+1]$.

\medskip
After multiplying \eqref{outer-linear-1} by $\psi$ and integrating over $\R^n$, we see that
\[
{p\over 2} {d \over dt} \int_{\R^n} u_*^{p-1} \psi^2 = {p (p-1)\over 2} \int_{\R^n} u_*^{p-2} \pp_t u_* \, \psi^2 + \int_{\R^n} \Delta \psi \psi + \int_{\R^n} W\psi^2 + \int_{\R^n}u_*^{p-1} f \psi.
\]
From \eqref{partial-t-ustar} and \eqref{mu}--\eqref{dt}, we easily get
\be \label{eq:pptu*}
|\pp_t u_*| \le C \mu^{n-2} u_*.
\ee
An integration by parts, H\"older's inequality, \eqref{eq:pptu*}, and \eqref{mu} give
\be \label{uuno}
\begin{aligned}
\int_{\R^n} |\nabla \psi|^2 + {p\over 2} {d \over dt} \int_{\R^n} u_*^{p-1} \psi^2 &\le C \bigg( \int_{\R^n} u_*^{p-1} \psi^2 + \int_{\R^n}u_*^{p-1} f^2 + \mu^{n-2} \int_{\R^n} u_*^{p-1} \psi^2 \bigg) \\
&\le C \bigg( \int_{\R^n} u_*^{p-1} \psi^2 + \int_{\R^n}u_*^{p-1} f^2 \bigg)
\end{aligned}
\ee
provided that $|t_0|$ is sufficiently large. For any fixed $\tau \in (s,t_0-1]$, define $\zeta (t) = t-\tau \in [0,1]$ for $t \in [\tau,\tau +1]$.
Multiplying \eqref{uuno} by $\zeta(t)$ and using the identity $\zeta (t) {d \over dt} \int_{\R^n} u_*^{p-1} \psi^2 = {d \over dt} (\zeta (t) \int_{\R^n} u_*^{p-1} \psi^2) - \int_{\R^n} u_*^{p-1} \psi^2$, we find
\[
\zeta (t)\int_{\R^n} |\nabla \psi|^2 + {d \over dt} \bigg( \zeta (t) {p\over 2} \int_{\R^n} u_*^{p-1} \psi^2 \bigg) \le C \bigg( \int_{\R^n} u_*^{p-1} \psi^2 + \int_{\R^n}u_*^{p-1} f^2 \bigg)
\]
for all $t \in [\tau,\tau+1]$. Integrating this inequality in time over $[\tau,\tau+1]$, we deduce
\be \label{uno}
\int_{\tau}^{\tau+1}\!\!\int_{\R^n} \zeta (t) \, |\nabla \psi|^2 dx dt + {p\over 2} \int_{\R^n} (u_*^{p-1} \psi^2)(\cdot,\tau+1) dx \le C \bigg( \|\psi\|_{L^2(\Lambda_\tau)}^2 + \|f\|_{L^2(\Lambda_\tau)}^2\bigg).
\ee
Employing the initial condition in \eqref{outer-linear-1}, we also have
\be \label{uno-ini}
\int_s^{s+1}\!\!\int_{\R^n} |\nabla \psi|^2 dx dt + {p\over 2} \int_{\R^n} (u_*^{p-1} \psi^2)(\cdot,s+1) dx \le C \bigg( \|\psi\|_{L^2(\Lambda_s)}^2 + \|f\|_{L^2(\Lambda_s)}^2\bigg).
\ee

Next, after multiplying \eqref{outer-linear-1} by $\pp_t \psi$ and integrating over $\R^n$, we see that
\[
p \int_{\R^n} u_*^{p-1} (\pp_t \psi)^2 +{1\over 2} {d \over dt} \int_{\R^n} \big(|\nabla \psi|^2 - W \psi^2\big) = -{1\over 2} \int_{\R^n} (\pp_t W) \psi^2 + \int_{\R^n} u_*^{p-1} f \pp_t \psi.
\]
H\"older's inequality yields
\[
{p\over 2} \int_{\R^n} u_*^{p-1} (\pp_t \psi)^2 + {1\over 2} {d \over dt} \int_{\R^n} \big( |\nabla \psi|^2 - W \psi^2 \big) dx \le C \bigg( \int_{\R^n} u_*^{p-1} f^2 + \mu^{n-2} \int_{\R^n} u_*^{p-1} \psi^2 \bigg) .
\]
Multiplying this inequality by $\zeta^2(t)$, with $\zeta(t)$ introduced before, and using \eqref{uno}, we observe
\begin{multline*}
{p\over 2} \int_\tau^{\tau+1}\!\!\int_{\R^n} \zeta^2 u_*^{p-1} (\pp_t \psi)^2 dx dt +{1\over 2} \int_{\R^n} \big( |\nabla \psi|^2 - W \psi^2 \big) (\cdot,\tau+1)dx \\
\le C \bigg( \|\psi\|_{L^2(\Lambda_\tau)}^2 + \|f\|_{L^2(\Lambda_\tau)}^2 \bigg).
\end{multline*}
Applying again \eqref{uno}, we deduce that for all $\tau \in (s,t_0-1]$,
\be \label{due}
{p\over 2} \int_\tau^{\tau+1}\!\!\int_{\R^n} \zeta^2 u_*^{p-1} (\pp_t \psi)^2 dx dt +{1\over 2} \int_{\R^n} |\nabla \psi|^2(\cdot,\tau +1)dx \le C \bigg( \|\psi\|_{L^2(\Lambda_\tau)}^2 + \|f\|_{L^2(\Lambda_\tau)}^2 \bigg).
\ee
Employing the initial condition in \eqref{outer-linear-1} and \eqref{uno-ini}, we also have
\be \label{due-ini}
{p\over 2} \int_s^{s+1}\!\!\int_{\R^n} u_*^{p-1} (\pp_t \psi)^2 dx dt +{1\over 2} \int_{\R^n} |\nabla \psi|^2 (\cdot, s +1)dx \le C \bigg( \|\psi\|_{L^2(\Lambda_s)}^2 + \|f\|_{L^2(\Lambda_s)}^2 \bigg).
\ee

We next multiply \eqref{uno} and \eqref{due} by $|\tau|^{2\nu}$, and \eqref{uno-ini} and \eqref{due-ini} by $|s|^{2\nu}$, for $\nu \in [0,1)$. Taking $|t_0|$ sufficiently large, it then follows that
\[
{p\over 2}|\tau|^\nu \|\pp_t \psi\|_{L^2(\Lambda_\tau)} + |\tau|^\nu \, \| u_*^{-{p-1 \over 2} } |\nabla \psi| \|_{L^2(\Lambda_\tau)} \le C \bigg( \|\psi\|_{L^2_{\nu;s,t_0}} + \|f\|_{L^2_{\nu;s,t_0}} \bigg)
\]
for arbitrary $\tau \in [s, t_0-1]$. Therefore,
\[
\|\pp_t \psi\|_{L^2_{\nu;s,t_0}} + \|\psi\|_{H^1_{\nu;s,t_0}} \le C \bigg( \|\psi\|_{L^2_{\nu;s,t_0}} + \|f\|_{L^2_{\nu;s,t_0}} \bigg).
\]
Using the above inequality and equation \eqref{outer-linear-1}, we conclude that
\[
\| u_*^{1-p} \Delta \psi \|_{L^2(\Lambda_\tau)}^2 \le C \bigg( \|\pp_t \psi\|_{L^2(\Lambda_\tau)}^2 + \|f\|_{L^2(\Lambda_\tau)}^2 + \|\psi\|_{L^2(\Lambda_\tau)}^2 \bigg) .
\]
This inequality, in combination with the previous ones, gives \eqref{est-energy-1}.

\medskip \noindent \textbf{Step 2.}
Assume that $\psi$ is a solution to \eqref{outer-linear-1}. We claim that for any $\nu \in (0,1)$, there exists $C>0$ such that
\be \label{est-energy-2}
\|\psi\|_{L^2_{\nu;s,t_0} } \le C \|f\|_{L^2_{\nu;s,t_0} }
\ee
provided that $|t_0|$ is sufficiently large.

\medskip
It is enough to establish that
\be \label{est-energy-21}
\sup_{\tau \in [s, t-1]} \|\psi\|_{L^2(\Lambda_\tau)} \le C \sup_{\tau \in [s, t-1]} \, \|f\|_{L^2(\Lambda_\tau)}
\ee
provided $s < {3 t \over 2} \le {3 t_0 \over 2}$. Indeed, if \eqref{est-energy-21} is valid, then for all $t \in [s,t_0-1]$,
\begin{align*}
|t|^\nu \|\psi\|_{L^2(\Lambda_t)} & \le C \, |t|^\nu \, \sup_{\tau \in [s,t]} \, \|f\|_{L^2(\Lambda_\tau)} = C \sup_{\tau \in [s,t]} \, {|t|^\nu \over |\tau|^\nu} \, |\tau|^\nu \|f\|_{L^2(\Lambda_\tau)} \\
&\le C \sup_{\tau \in [s,t]} \,|\tau|^\nu \|f\|_{L^2(\Lambda_\tau)} \le C \sup_{\tau \in [s, t_0-1]} \,|\tau|^\nu \|f\|_{L^2(\Lambda_\tau)} = C \|f\|_{L^2_{\nu;s,t_0}},
\end{align*}
from which \eqref{est-energy-2} follows.

\medskip
To prove \eqref{est-energy-21}, we argue by contradiction. Since the argument follows closely that of \cite[Lemma 4.3]{km}, we only briefly indicate the main steps. Suppose that \eqref{est-energy-21} fails.
Then there exist sequences of numbers $\{s_\ell\}_{\ell \in \N}$ and $\{t_\ell\}_{\ell \in \N}$ such that $s_\ell<\frac{3}{2}t_\ell<0$ for all $\ell \in \N$ and $s_\ell,\, t_\ell\to-\infty$ as $\ell \to \infty$, 
together with parameters $\{\mu_\ell\}_{\ell}$, functions $\{u_{*,\ell} := u_*[\mu_\ell]\}_{\ell}$ and $\{f_\ell\}_{\ell \in \N}$, and solutions $\{\psi_\ell\}_{\ell}$ to
\be \label{contra-eq} 
\begin{cases}
-pu_{*,\ell}^{p-1}\pp_t\psi_\ell + \Delta\psi_\ell + W_\ell\psi_\ell + u_{*,\ell}^{p-1}f_\ell = 0 &\text{in } \R^n\times(s_\ell,t_\ell),\\
\psi_\ell(\cdot,s_\ell) = 0 &\text{on } \R^n
\end{cases}
\ee
such that
\[
\sup_{\tau\in[s_\ell,t_\ell-1]}\|\psi_\ell\|_{L^2(\Lambda_\tau)}=1 \quad \text{for all } \ell \in \N \quad \text{and} \quad \sup_{\tau\in[s_\ell,t_\ell-1]}\| f_\ell \|_{L^2(\Lambda_\tau)} \to 0 \quad \text{as } \ell \to \infty.
\]
In particular, for each $\ell \in \N$, there exists $\tau_\ell\in[s_\ell,t_\ell-1]$ such that
\be \label{mass}
\frac{1}{2} \le \int_{\tau_\ell}^{\tau_\ell+1} \int_{\R^n} u_{*,\ell}^{p-1} \psi_\ell^2 dx dt \le 1.
\ee
Also, by the energy estimate \eqref{est-energy-1}, the sequence $\{\psi_\ell\}_{\ell}$ is uniformly bounded in the space $H^1_{0;s_\ell,t_\ell}$. A Gr\"onwall-type argument shows that
\[\tau_\ell-s_\ell\to+\infty \quad \text{as } \ell \to \infty.\]

Introduce the time-shifted functions
\[
\tps_\ell(x,t) := \psi_\ell(x,t+\tau_\ell), \quad \tu_{*,\ell}(x,t) := u_{*,\ell}(x,t+\tau_\ell),
\]
and define $\wtw_\ell$ and $\tf_\ell$ analogously. By virtue of \eqref{contra-eq} and \eqref{mass}, we have
\be \label{eq:tpsell}
\begin{cases}
-p\tu_{*,\ell}^{p-1}\pp_t\tps_\ell + \Delta\tps_\ell + \wtw_\ell\tps_\ell + \tu_{*,\ell}^{p-1}\tf_\ell=0 &\text{in }\R^n\times(s_\ell-\tau_\ell,0),\\
\tps_\ell(\cdot,s_\ell-\tau_\ell)=0 &\text{on } \R^n
\end{cases}
\ee
as well as
\[
\frac{1}{2} \le \int_0^1\!\!\int_{\R^n} \tu_{*,\ell}^{p-1} \tps_\ell^2 dx dt \le 1 \quad \text{and} \quad \|\tps_{\ell}\|_{H^1_{0;s_\ell-\tau_\ell,0}} \le C.
\]
Let $\tmu_{\ell} := \mu_{\ell}(\,\cdot+\tau_\ell)$, and define the parameters $(\txi_{\ell})_1,\dots,(\txi_{\ell})_k \in \R^n$ in an analogous way. A localization argument then shows that
\[\int_0^1\!\!\int_{B((\txi_\ell)_j, R_1 \tmu_\ell)} \tu_{*,\ell}^{p-1} \tps_\ell^2\, dxdt \ge \frac{1}{4k} > 0\]
for some $j \in \{1,\dots,k\}$, where $R_1>0$ is a sufficiently large constant, fixed in time and independent of $t_0$. Then, up to the extraction of a subsequence, the sequence $\{\phi_\ell\}_{\ell}$ defined by
\[\phi_\ell (y,t) := \tmu_\ell^{n-2 \over 2} \tps_\ell \big(\tmu_\ell y+(\txi_\ell)_j,t\big) \quad \text{for } (y,t) \in B(0, \tmu_\ell^{-1}\delta_1) \times (s_\ell-\tau_\ell, 0),\]
where $\delta_1 > 0$ is a small constant, tends to a nontrivial ancient solution $\phi_\infty:\R^n\times(-\infty,0)\to\R$ of
\be \label{eq:limit}
-pU^{p-1}\pp_t\phi_\infty + \Delta\phi_\infty = 0 \quad \text{in } \R^n\times(-\infty,0)
\ee
satisfying
\be \label{eq:limit2}
\int_0^1\!\!\int_{\R^n} U^{p-1}\phi_\infty^2 dy dt > 0 \quad \text{and} \quad \sup_{\tau\in(-\infty, -1]}\int_\tau^{\tau+1}\!\!\int_{\R^n}|\nabla\phi_\infty|^2 dy dt <\infty.
\ee
Here, in order to ensure that the contribution of the term $\wtw_\ell\tps_\ell$ in \eqref{eq:tpsell} disappears in the limit equation \eqref{eq:limit}, we use \eqref{defW} and Lemma \ref{lemma:eta_j}.

Passing to stereographic coordinates on $\Ss^n$, we associate to $\phi_\infty$ a function $\bph_\infty$, which satisfies a linear heat equation on $\Ss^n \times (-\infty,0)$ corresponding to \eqref{eq:limit}. 
Then, by combining \eqref{eq:limit2} with Bochner's formula and the parabolic maximum principle, we conclude that $\bph_\infty = 0$ on $\Ss^n \times (-\infty,0)$.
It follows that $\phi_\infty = 0$ in $\R^n\times(-\infty,0)$, contradicting \eqref{eq:limit2}.

As a consequence, \eqref{est-energy-21} and hence \eqref{est-energy-2} must hold.

\medskip
Now, the desired inequality \eqref{est-energy-01} is an immediate consequence of \eqref{est-energy-1} and \eqref{est-energy-2}.
\end{proof}

\begin{lemma}\label{outer-linear-energy2}
Under the assumptions of Lemma \ref{outer-linear-energy}, there exists a unique solution $\psi$ to equation \eqref{outer-linear-1} satisfying the estimate \eqref{est-energy-01}.
If $f$ satisfies \eqref{pp1}--\eqref{fke}, then so does $\psi$.
\end{lemma}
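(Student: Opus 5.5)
We obtain existence from the a priori estimate of Lemma \ref{outer-linear-energy} together with an approximation argument, and uniqueness by applying the same estimate to a difference of solutions. On the finite interval $(s,t_0)$ the weight $u_*^{p-1}$ is smooth, positive, and bounded above and below on compact sets, and it decays like $|x|^{-4}$ at infinity. We therefore first solve the Dirichlet problem on the expanding cylinders $Q_{\mcr}=B(0,\mcr)\times[s,t_0]$:
\[
-pu_*^{p-1}\pp_t\psi_{\mcr}+\Delta\psi_{\mcr}+W\psi_{\mcr}+u_*^{p-1}f=0 \ \text{in } Q_{\mcr},\qquad \psi_{\mcr}=0 \text{ on } \pp B(0,\mcr)\times[s,t_0],\qquad \psi_{\mcr}(\cdot,s)=0.
\]
Here the equation is uniformly parabolic once we divide by $pu_*^{p-1}$, with smooth bounded coefficients and a bounded zeroth-order term $W/(pu_*^{p-1})$. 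Standard linear theory then gives a unique solution $\psi_{\mcr}$, for instance by a Galerkin scheme in $H^1_0(B(0,\mcr))$ with respect to the weighted inner product $\int u_*^{p-1}\phi\chi$, or by classical $L^2$/Schauder theory after first mollifying $f$.

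As noted at the start of the proof of Lemma \ref{outer-linear-energy}, its estimates hold uniformly in $\mcr$ and $s$ for these Dirichlet solutions, since all integrations by parts produce no boundary terms when $\psi_{\mcr}=0$ on the lateral boundary. This yields $\|\psi_{\mcr}\|_{H^2_{\nu;s,t_0}}\le C\|f\|_{L^2_{\nu;s,t_0}}$. In particular $\psi_{\mcr}$, $\nabla\psi_{\mcr}$, $\pp_t\psi_{\mcr}$ and $\Delta\psi_{\mcr}$ are bounded in $L^2_{\mathrm{loc}}$, uniformly in $\mcr$. We extend $\psi_{\mcr}$ by zero outside $B(0,\mcr)$ and extract a subsequence converging weakly in the weighted $H^2$ spaces and strongly in $L^2_{\mathrm{loc}}$ (by Aubin--Lions). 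The limit $\psi$ solves \eqref{outer-linear-1} in the distributional sense, and by lower semicontinuity it satisfies \eqref{est-energy-01}. Interior parabolic regularity then makes $\psi$ a strong (in fact classical, if $f$ is H\"older) solution. The initial condition passes to the limit because $\psi_{\mcr}\in C([s,t_0];L^2(u_*^{p-1}dx))$ uniformly, by the energy identity.

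For uniqueness, let $\psi_1,\psi_2$ be two solutions satisfying \eqref{est-energy-01}. Their difference solves \eqref{outer-linear-1} with $f=0$ and has finite $H^2_{\nu;s,t_0}$ norm. Lemma \ref{outer-linear-energy} applies to it and gives $\|\psi_1-\psi_2\|_{H^2_{\nu;s,t_0}}\le 0$, so $\psi_1=\psi_2$. This step is the main obstacle: Lemma \ref{outer-linear-energy} was justified through the cylinder approximation, so we must confirm that the energy identities remain valid for an arbitrary finite-energy solution on $\R^n$. We would handle this by multiplying by $\chi_{\mcr}^2\psi$ and $\chi_{\mcr}^2\pp_t\psi$, where $\chi_{\mcr}$ is a cutoff with $|\nabla\chi_{\mcr}|\lesssim \mcr^{-1}$. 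The resulting error terms are bounded by
\[
\mcr^{-1}\int_{\mcr<|x|<2\mcr}|\nabla\psi||\psi|\lesssim \|\nabla\psi\|_{L^2}\Big(\int u_*^{p-1}\psi^2\Big)^{1/2},
\]
using $\mcr^{-2}\lesssim u_*^{p-1}$ on the annulus, so they can be absorbed. A cleaner alternative is to run a Gronwall argument on $\int u_*^{p-1}\psi^2$ starting from zero initial data, which directly gives $\psi\equiv0$.

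Finally, the symmetries follow from uniqueness. The operator in \eqref{outer-linear-1} is invariant under the reflections \eqref{pp1}, the rotation \eqref{pp2}, and the Kelvin transform \eqref{fke}: under $\psi\mapsto\psi^*$, the Laplacian transforms with the factor $|x|^{-n-2}$, $u_*$ and $W$ (and the weights $u_*^{p-1}$, because $u_*$ is Kelvin invariant) transform compatibly, and the cutoffs $\eta_j$ are Kelvin invariant. Hence if $f$ has these symmetries, the transformed function $\psi^*$ (respectively the reflected or rotated $\psi$) is again a solution with zero initial data. We must also check that it still satisfies \eqref{est-energy-01}. This holds because the Kelvin transform preserves $\int u_*^{p-1}\psi^2\,dx$ and $\int|\nabla\psi|^2\,dx$ by conformal invariance, and a similar change of variables handles the remaining terms of the norm. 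Uniqueness then forces $\psi=\psi^*$, and likewise for the reflections and the rotation.
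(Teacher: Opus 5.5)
Your existence argument is sound, and it takes a more direct route than the paper. The paper solves the Dirichlet problems only on a short interval $[s,s+\tau_0]$, where \eqref{uuno2} closes by itself, lets $\mcr\to\infty$, and then continues up to $t_0$ using \eqref{est-energy-2}. You instead solve on all of $B(0,\mcr)\times[s,t_0]$ at once, which is legitimate because the linear problem on a bounded cylinder is globally solvable. You then take the $\mcr$-uniform bound from Lemma \ref{outer-linear-energy}, which the paper justifies through exactly these cylinders. One small correction: claim only local $H^2$ bounds for the zero extensions, since $\Delta$ of the extension has a singular part on $\pp B(0,\mcr)$.

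The gap is in uniqueness. The paper settles it in one line, and you are right to single it out, but your justification fails.

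On the annulus $A_\mcr=\{\mcr<|x|<2\mcr\}$ one has $u_*^{p-1}\sim\mu^2\mcr^{-4}$, not $u_*^{p-1}\gtrsim\mcr^{-2}$; you noted the $|x|^{-4}$ decay yourself. Hence $\mcr^{-1}\int_{A_\mcr}|\nabla\psi||\psi|$ is not controlled by the weighted $L^2$ norm; you lose a factor $\mcr/\mu$. Your Gronwall alternative rests on the same identity $\int_{\R^n}\psi\Delta\psi=-\int_{\R^n}|\nabla\psi|^2$, so it does not avoid the problem.

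What is actually needed is decay at infinity. If $\psi(\cdot,t)\in L^{2^*}(\R^n)$, then H\"older (or Hardy) gives $\mcr^{-1}\|\psi\|_{L^2(A_\mcr)}\lesssim\|\psi\|_{L^{2^*}(A_\mcr)}\to0$. The cutoff errors then vanish, and the zero-data Gronwall argument on $[s,t_0]$ gives $\psi\equiv0$. For $n\ge4$ this decay is automatic in the energy class. Indeed, $\nabla\psi\in L^2$ gives $\psi-c\in L^{2^*}$ for some constant $c$, and $\int u_*^{p-1}c^2=\infty$ unless $c=0$.

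For $n=3$ the decay is not automatic, because $|x|^{-4}$ is integrable at infinity in $\R^3$. In that case finite-energy solutions of the homogeneous problem with zero data need not vanish. Here is a construction. In the spherical variables of Lemma \ref{maximum-principle}, solve for $\chi$ with a point source $g(t)\delta_N$ at the north pole, where $g$ is smooth and vanishes near $s$. Then $\chi=g\,G_N+\chi_1$, where $G_N$ is the Green's function of $-\Delta_{\Ss^n}+\frac{n(n-2)}{4}$ with pole $N$ and $\chi_1$ is regular. The function $\psi=\widetilde U\chi$ then has the following properties:
\begin{itemize}
\item it solves \eqref{outer-linear-1} with $f=0$ on $\R^3$;
\item it tends to $c\,g(t)$, with $c\ne0$, at infinity;
\item it has finite $H^2_{\nu;s,t_0}$ norm, because $\widetilde U G_N$ is constant and $u_*^{p-1}$ is integrable on $\R^3$.
\end{itemize}
So your step ``Lemma \ref{outer-linear-energy} applies to the difference'' cannot be justified in that class when $n=3$.

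Uniqueness, and with it your symmetry step, must therefore be posed in a class with decay, for example $\psi\in L^2(s,t_0;L^{2^*}(\R^n))$. This also matters for the symmetry step directly: for such non-decaying $\psi$, the Kelvin transform behaves like $c|x|^{-1}$ near $0$ and leaves the energy class. Your Dirichlet limit does lie in the decaying class, by Sobolev applied to $\psi_\mcr\in H^1_0(B(0,\mcr))$ with bounded Dirichlet energy. That class is invariant under the reflections, the rotation and the Kelvin transform. With uniqueness restricted to it, the rest of your argument goes through.
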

\begin{proof}
We follow the proof of \cite[Lemma 3.1]{dds} closely.

\medskip
Fix $s < \min\{t_0-1, {3\over 2}t_0\}$. Given a function $f$ with $\|f\|_{L^2_{\nu;s,t_0}}<\infty$, the strategy for establishing the existence of a solution $\psi$ to \eqref{outer-linear-1} is as follows:
We first construct $\psi$ in $\R^n\times [s,s+\tau_0]$ for some small $\tau_0 > 0$. We then extend $\psi$ in time up to $t_0$, thereby obtaining the desired solution in $\R^n\times (s,t_0)$.

\medskip
For any $\mcr>0$, we write $B_{\mcr} := B(0,\mcr)$. Consider the Dirichlet problem
\be \label{outer-ball}
\begin{cases}
-p u_*^{p-1} \pp_t \psi + \Delta \psi + W\psi + u_*^{p-1} f = 0 &\text{in } B_{\mcr} \times (s,s+\tau_0), \\
\psi = 0 &\text{on } (\R^n \times \{s\}) \cup (\pp B_{\mcr} \times [s,s+\tau_0)).
\end{cases}
\ee
Since $u_*$ is bounded above and bounded away from zero on $B_{\mcr} \times [s,s+\tau_0]$, standard parabolic theory yields a solution $\psi_{\mcr}$ to the same Dirichlet problem on $B(0,{\mcr}) \times [s,s+\tau_{\mcr}]$ for some $\tau_{\mcr}>0$. We can assume that $\tau_{\mcr} \le 1$.
Multiplying \eqref{outer-ball} by $\psi_{\mcr}$, integrating over $B_{\mcr}$, performing integration by parts, and using the boundary condition in \eqref{outer-ball}, H\"older's inequality, \eqref{eq:pptu*}, and \eqref{mu}, we obtain an estimate analogous to \eqref{uuno}:
\be \label{uuno2}
\begin{aligned}
\int_{B_{\mcr}} |\nabla \psi|^2 + {p\over 2} {d \over dt} \int_{B_{\mcr}} u_*^{p-1} \psi^2
&\le \frac{C_1}{2} \left[ \big(1+\mu^{n-2}\big) \int_{B_{\mcr}} u_*^{p-1} \psi^2 + \int_{B_{\mcr}}u_*^{p-1} f^2 \right] \\
&\le C_1 \left[ \int_{B_{\mcr}} u_*^{p-1} \psi^2 + \int_{B_{\mcr}}u_*^{p-1} f^2 \right]
\end{aligned}
\ee
for some universal constant $C_1 > 0$, provided that $|t_0|$ is sufficiently large. Integrating \eqref{uuno2} in time, we have
\begin{multline*}
\int_s^{s+\tau_{\mcr}}\!\!\int_{B_{\mcr}} |\nabla \psi|^2 + {p\over 2} \sup_{t \in [s,s+\tau_{\mcr}]} \int_{B_{\mcr}} (u_*^{p-1} \psi^2)(\cdot,t) dx \\
\le 2C_1 \left[\tau_{\mcr} \sup_{t \in [s,s+\tau_{\mcr}]} \int_{B_{\mcr}} (u_*^{p-1} \psi^2)(\cdot,t) dx + \int_s^{s+\tau_{\mcr}}\!\!\int_{B_{\mcr}} u_*^{p-1} f^2\right].
\end{multline*}
Hence, by taking $\tau_{\mcr} \le \tau_0 := \min\{\frac{p}{8C_1},1\} > 0$, we find
\be \label{est-out-1}
\begin{aligned}
\int_s^{s+\tau_{\mcr}}\!\!\int_{B_{\mcr}} |\nabla \psi|^2 + {p \over 4} \sup_{t \in [s,s+\tau_{\mcr}]} \int_{B_{\mcr}} (u_*^{p-1} \psi^2)(\cdot,t) dx &\le 2C_1 \int_s^{s+\tau_{\mcr}}\!\!\int_{B_{\mcr}}u_*^{p-1} f^2 \\
&\le C\|f\|_{L^2_{\nu;s,t_0}}^2.
\end{aligned}
\ee
Arguing as in Step 1 and decreasing $\tau_0$ if necessary (while keeping it independent of $\mcr$ and $s$), we also observe
\be \label{est-out-2}
\int_s^{s+\tau_{\mcr}}\!\!\int_{B_{\mcr}} u_*^{p-1} (\pp_t \psi)^2 + \sup_{t \in [s,s+\tau_{\mcr}]} \int_{B_{\mcr}} |\nabla \psi|^2(\cdot,t) dx \le C \int_s^{s+\tau_{\mcr}}\!\!\int_{B_{\mcr}}u_*^{p-1} f^2 \le C \|f\|_{L^2_{\nu;s,t_0}}^2
\ee
for some $C > 0$ independent of $\mcr$ and $s$. Therefore, by standard linear parabolic theory, the solution $\psi_{\mcr}$ to \eqref{outer-ball} exists at least for $t \in [s,s+\tau_0]$.

Let $\Lambda_{s,s+\tau_0}:=\R^n\times[s,s+\tau_0]$. Take an increasing sequence $\{\mcr_\ell\}_{\ell} \subset (0,\infty)$ such that $\mcr_\ell \to \infty$ as $\ell \to \infty$.
By the uniform estimates \eqref{est-out-1} and \eqref{est-out-2}, after passing to a subsequence, $\psi_{\mcr_\ell}$ converges to $\psi$ weakly in $\dot{W}^{1,2}(\Lambda_{s,s+\tau_0})$,
strongly in $L^2_{\mathrm{loc}}(\Lambda_{s,s+\tau_0})$, and a.e. in $\Lambda_{s,s+\tau_0}$ as $\ell \to \infty$.
Since the parabolic equation \eqref{outer-linear-1} is non-degenerate on each compact subset $K$ of $\Lambda_{s,s+\tau_0}$, standard arguments show that $\psi$ solves \eqref{outer-linear-1}.

\medskip
The next step is to show that $\psi$ can be extended as a solution on $\R^n \times [s,t_0]$. Let $T$ be the maximal existence time of $\psi$ on $[s,t_0]$, and suppose for contradiction that $T<t_0$. Then \eqref{est-energy-2} implies
\[
\|\psi\|_{H^2_{\nu;s,T}} \le C \|f\|_{L^2_{\nu;s,T}} \le C \|f\|_{L^2_{\nu;s,t_0}} \le C
\]
for some constant $C > 0$ independent of $T$. This implies that $\|\psi\|_{H^2(\Lambda_\tau)} \le {C \over |\tau|^\nu} \le {C \over |t_0|^\nu}$ for all $\tau \in [s,T-1]$. In other words,
\[
\sup_{\tau \in [s,T-1]} \|\psi\|_{H^2(\Lambda_\tau)} \le C.
\]
Hence, unless $T=t_0$, the solution $\psi$ extends beyond time $T$, contradicting the maximality of $T$. Therefore, $T=t_0$, and \eqref{est-energy-01} holds.

Estimate \eqref{est-energy-01} also implies the uniqueness of $\psi$. Moreover, under the symmetry assumptions on $f$, the properties \eqref{pp1}--\eqref{fke} of $\psi$ follow from uniqueness. This completes the proof.
\end{proof}

\begin{proof}[Proof of Proposition \ref{outer-linear-energy-global}]
Take a decreasing sequence $\{s_\ell\}_\ell \subset (-\infty, \min\{t_0-1,\frac{3}{2}t_0\})$ such that $s_\ell \to -\infty$ as $\ell \to \infty$.
By Lemmas \ref{outer-linear-energy}--\ref{outer-linear-energy2}, for each $s_\ell$ there exists a unique solution $\psi^{s_\ell}$ of \eqref{outer-linear-1} with $s=s_\ell$.
Also, $\psi^{s_\ell}$ satisfies the uniform estimate \eqref{est-energy-01} with a constant $C > 0$ independent of $s_\ell$.
Passing to a subsequence, we may assume that $\psi^{s_\ell}$ converges to a function $\psi$ weakly in $H^2_{\nu;s,t_0}$, for each $s<\frac{3}{2}t_0$.
This limit is the desired ancient solution of \eqref{outer-linear} satisfying \eqref{est-energy-0}.

Moreover, the linearity of the map $\mct_{\mathrm{outer}}$, together with the properties \eqref{pp1}--\eqref{fke} of $\psi$ under the symmetry assumptions on $f$, follow by passing the uniqueness of $\psi^{s_\ell}$ to the limit. The proof is finished.
\end{proof}

\subsection{Maximum principle and pointwise bounds for solutions to \eqref{outer-linear}}
We derive pointwise estimates for solutions to \eqref{outer-linear} under suitable pointwise assumptions on the inhomogeneous term $f$. A key ingredient is the following weak parabolic maximum principle.
\begin{lemma} \label{maximum-principle}
We recall that $R(t)$ satisfies \eqref{eq:Rt}, and that $W$ is the function defined in \eqref{defW}.
Suppose $\psi$ satisfies $\|\psi\|_{H^1_{0;t_0} } < \infty$ and the inequality
\be \label{eq:mp0}
pu_*^{p-1} \pp_t \psi \ge \Delta \psi + W \psi \quad \text{in } \R^n \times (-\infty,t_0)
\ee
in the weak sense. Take $t_0<0$ sufficiently large in magnitude, if necessary. Then $\psi \ge 0$ a.e. in $\R^n \times (-\infty,t_0]$.
\end{lemma}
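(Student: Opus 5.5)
We test the inequality against the negative part $\psi^-:=\max\{-\psi,0\}$ and run the same energy argument as in Step~1 of Lemma~\ref{outer-linear-energy}. Because there is no initial time, we do not get $\psi^-(\cdot,s)=0$ at some finite $s$. Instead we combine a local-in-time energy inequality with the fact that $\|\psi\|_{H^1_{0;t_0}}<\infty$ keeps the weighted mass bounded as $t\to-\infty$, and then close by a spectral (coercivity) estimate for the quadratic form attached to $W$.

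\textbf{Step 1: differential inequality.} Test \eqref{eq:mp0} with $\psi^-\ge0$, justified by truncation in space as in Lemma~\ref{outer-linear-energy}. Set $m(t):=\int_{\R^n}u_*^{p-1}(\psi^-)^2\,dx$. Using \eqref{eq:pptu*}, we get
\[
\frac p2\,\frac{d}{dt}m(t)\le -\int_{\R^n}\big(|\nabla\psi^-|^2-W(\psi^-)^2\big)\,dx+C\mu^{n-2}m(t).
\]

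\textbf{Step 2: coercivity, the main obstacle.} We need a bound
\[
\int_{\R^n}|\nabla\phi|^2-\int_{\R^n}W\phi^2\ge c_1\int_{\R^n}u_*^{p-1}\phi^2
\]
for all $\phi$ in the energy space, with $c_1>0$ uniform in $t\le t_0$. The point is that $W=p\frac{n+2}{4}(1-\sum_j\eta_j)u_*^{p-1}$ vanishes on the cores $|y_j|\le R/4$ (Lemma~\ref{lemma:eta_j}).

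We would argue by contradiction, as in Step~2 of Lemma~\ref{outer-linear-energy}. Suppose $\phi_\ell$ satisfy $\int u_*^{p-1}\phi_\ell^2=1$ and the form tends to a value $\le 0$. Then the mass either concentrates near some $\xi_j$ at scale $\mu$, or it spreads over the region where $u_*^{p-1}$ is of size $\mu^2$, or it escapes to infinity; the last case is handled via Kelvin invariance and the conformal lift to $\Ss^n$.

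In the concentrating case we rescale. The limit $\phi_\infty$ has $\int|\nabla\phi_\infty|^2\le 0$, because $W$ vanishes on growing balls $|y|\le R/4$ with $R\to\infty$ and $W\lesssim R^{-4}$ in the transition region. Hence $\phi_\infty$ is constant and lies in $\dot W^{1,2}$, so it is zero, which is a contradiction.

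Outside the cores we have $W\le C\mu^{2}(1+|x|)^{-4}$ in the $x$-variables, which is a small multiple of $u_*^{p-1}$ compared with the Hardy/Sobolev gap. This is where the lift to $\Ss^n$ helps: there the form becomes $\int|\nabla_{\Ss^n}\hat\phi|^2+\frac{n(n-2)}4\int\hat\phi^2-\int\widehat W\hat\phi^2$. Away from the concentration points, $\widehat W$ is $O(\mu^{2})$ while the positive zero-order term $\frac{n(n-2)}4$ persists, so coercivity holds there directly.

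To make the contradiction argument rigorous, we would split the mass with cutoffs at scale $\sqrt R\mu$ and control the cross terms by $R^{-1}$.

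\textbf{Step 3: conclusion.} For $|t_0|$ large we have $C\mu^{n-2}\le c_1/2$, so $\frac{d}{dt}m\le -\frac{c_1}{p}m$. Integrating from $\tau$ to $t$ gives
\[
m(t)\le e^{-c_1(t-\tau)/p}\,m(\tau).
\]
The quantity $m(\tau)$ stays bounded as $\tau\to-\infty$. Indeed, $\|\psi\|_{H^1_{0;t_0}}<\infty$ bounds the time averages of $m$ over unit intervals, and by Step~1 these control $m$ pointwise up to a constant. Letting $\tau\to-\infty$ gives $m(t)=0$ for every $t$, so $\psi^-=0$ and $\psi\ge0$ a.e.
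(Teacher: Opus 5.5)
Your Steps 1 and 3 are the paper's argument. You test with the negative part and work with the weighted mass $m(t)=\int u_*^{p-1}(\psi^-)^2$, which up to a constant equals the paper's $\int_{\Ss^n}v_*^{p-1}\chi_-^2\,dv_{\Ss^n}$ with $\chi=\psi/\widetilde U$ and $v_*=u_*/U$. You then apply Gr\"onwall and send the initial time to $-\infty$ using the $H^1_{0;t_0}$ bound. Your pointwise control of $m$ is fine, since the differential inequality makes $m$ nonincreasing.

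\textbf{The gap is in Step 2.} The contradiction argument is only announced: the three-case alternative and the splitting at scale $\sqrt R\mu$ are never carried out. The one concrete bound you offer fails exactly where care is needed. You claim $W\le C\mu^2(1+|x|)^{-4}$ ``outside the cores,'' but this is false on the annuli $\frac14R\mu\le|x-\xi_j|\le\delta_0$. There $W\sim\mu^2|x-\xi_j|^{-4}$, which reaches size $R^{-4}\mu^{-2}$. Correspondingly, the lifted potential is not $O(\mu^2)$ near the images of the $\xi_j$. Also, the lemma imposes no symmetry on $\psi$, so Kelvin invariance could only be applied to the form, not to your sequence $\phi_\ell$.

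\textbf{The missing idea} is that the cut-off makes the potential uniformly small, so no compactness is needed. After the lift, the potential is $\frac{n(n+2)}{4}\big(1-\sum_j\eta_j\big)v_*^{p-1}$, and the paper proves
\[
\Big(1-\textstyle\sum_j\eta_j\Big)v_*^{p-1}\le C\big(R^{-4}\mu^{-2}+\mu^{2}\big)\quad\text{on }\Ss^n\setminus\{N\}.
\]
The first term comes from the annuli, with the cores excluded by Lemma \ref{lemma:eta_j}. The second comes from the far region. The bound is $o(1)$ precisely because $c_0>\frac{1}{2(n-2)}$ in \eqref{eq:Rt}.

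This term is absorbed by the zero-order term $\frac{n(n-2)}{4}\int\chi_-^2$ of the conformal Laplacian. Then Sobolev on $\Ss^n$, together with $\|v_*^{p-1}\|_{L^{n/2}(\Ss^n)}\le C$, gives $\int(|\nabla_{\Ss^n}\chi_-|^2+\chi_-^2)\ge C_2\int v_*^{p-1}\chi_-^2$. This is your coercivity with a uniform constant, in two lines, and it also absorbs the $\mu^{n-2}$ term coming from $\pp_t u_*$.

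Equivalently, you can stay on $\R^n$. Your own rescaled description of $W$ gives $\|W\|_{L^{n/2}(\R^n)}\lesssim R^{-2}+\mu^2$. H\"older and Sobolev then yield $\int W\phi^2\le o(1)\int|\nabla\phi|^2$ and $\int u_*^{p-1}\phi^2\le C\int|\nabla\phi|^2$; this version needs only $R\to\infty$.

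With Step 2 replaced by either estimate, your proof is complete and coincides with the paper's.
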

\begin{proof}
We recall the map $\pi: \R^n \to \Ss^n \setminus \{N\}$ in \eqref{stereo-proj}. For $(z,t)=(\pi(x),t) \in (\Ss^n \setminus \{N\}) \times (-\infty,t_0)$, we set
\[\wtu(x) := \msc_n^{-1}U(x), \quad v_*(z,t) := \frac{\sum_{j=1}^k U_j(x,t)}{U(x)}, \quad \text{and} \quad \chi(z,t) := \frac{\psi(x,t)}{\wtu(x)},\]
where the value of $\msc_n > 0$ can be found in \eqref{eq:bubble}. The condition $\|\psi\|_{H^1_{0;t_0} } < \infty$ is equivalent to
\be \label{eq:mp11}
\sup_{\tau \in (-\infty,t_0-1]} \int_\tau^{\tau +1}\!\!\int_{\Ss^n} (|\nabla_{\Ss^n} \chi|^2 + \chi^2) dv_{\Ss^n} dt < \infty
\ee
and inequality \eqref{eq:mp0} is transformed into
\be \label{eq:mp1}
nv_*^{p-1} \pp_t\chi \ge \Delta_{\Ss^n}\chi - \frac{n(n-2)}{4} \bigg[ 1 - p\bigg(1-\sum_{j=1}^k\eta_j(x,t)\bigg)v_*^{p-1} \bigg]\chi \quad \text{in } \Ss^n \times (-\infty,t_0)
\ee
in the weak sense. Furthermore, there exists a constant $C > 0$ depending only on $n$ and $k$ such that
\be \label{eq:mp3}
\begin{aligned}
\int_{\Ss^n} |v_*(z,t)|^{\frac{(p-1)n}{2}} dv_{\Ss^n} &\le C\int_{\Ss^n} \(\frac{U_1(x,t)}{U(x)}\)^{\frac{2n}{n-2}} dv_{\Ss^n} \\
&= C\int_{\R^n} U_1^{\frac{2n}{n-2}}(x,t) dx \le C \quad \text{for all } t \in (-\infty,t_0].
\end{aligned}
\ee
Combined with the Sobolev inequality on $\Ss^n$, \eqref{eq:mp3} yields
\be \label{eq:mp31}
\int_{\Ss^n} (v_*^{p-1} \chi^2)(\cdot,t) dv_{\Ss^n} \le C \(\int_{\Ss^n} \chi(\cdot,t)^{p+1} dv_{\Ss^n}\)^{\frac{2}{p+1}} \le C \int_{\Ss^n} (|\nabla_{\Ss^n} \chi|^2 + \chi^2)(\cdot,t) dv_{\Ss^n}.
\ee

\medskip
We claim that there exists a constant $C > 0$ depending only on $n$ and $k$ such that
\be \label{eq:mp2}
\bigg(1-\sum_{j=1}^k\eta_j(x,t)\bigg)v_*^{p-1}(z,t) \le C\left[\frac{1}{R^4(t)\mu^2(t)} + \mu^2(t)\right] \quad \text{for } (z,t) \in (\Ss^n \setminus \{N\}) \times (-\infty,t_0).
\ee

Suppose that $|x| \ge 2$. Then, $|x-\xi_j| \ge \frac{1}{2}|x|$ and so the left-hand side of \eqref{eq:mp2} is bounded by 
\[C\sum_{j=1}^k \(\frac{U_j(x,t)}{U(x)}\)^{p-1} = C\mu^2\sum_{j=1}^k \left[\frac{1+|x|^2}{\mu^2+|x-\xi_j|^2}\right]^2 \le C\mu^2.\]

Suppose that $|x| \le 2$. By Lemma \ref{lemma:eta_j}, if $|x-\xi_{j_1}| \le \frac{1}{4} (R\mu)(t)$, then $\eta_{j_2}(x,t)=1$ when $j_1=j_2$, and $\eta_{j_2}(x,t)=0$ when $j_1\ne j_2$. It follows that
\[
1-\sum_{j=1}^k\eta_j(x,t) = 0 \quad \text{in } \bigcup_{j=1}^k \Big\{(x,t)\in \R^n \times (-\infty,t_0): |x-\xi_j|\le \frac{1}{4} (R\mu)(t)\Big\}.
\]
Consequently, the left-hand side of \eqref{eq:mp2} is bounded by 
\[\bigg(1-\sum_{j=1}^k\eta_j(x,t)\bigg) C\mu^2\sum_{j=1}^k \left[\frac{1+|x|^2}{\mu^2+|x-\xi_j|^2}\right]^2 \le C\mu^2 \frac{1}{(R\mu)^4} = \frac{C}{R^4\mu^2}.\]

This proves the assertion \eqref{eq:mp2}.

\medskip 
The condition $c_0>\frac{1}{2(n-2)}$ in \eqref{eq:Rt} ensures that $(R^4\mu^2)(t) \to \infty$ as $t \to -\infty$.
Thus, multiplying \eqref{eq:mp1} by $\chi_- := \max\{-\chi,0\}$, integrating over $\Ss^n$, and applying \eqref{eq:mp2}, \eqref{eq:pptu*}, and \eqref{eq:mp31}, we obtain
\begin{align*}
- \frac{n}{2} \pp_t \int_{\Ss^n} v_*^{p-1} \chi_-^2 dv_{\Ss^n} &\ge \int_{\Ss^n} |\nabla_{\Ss^n} \chi_-|^2 dv_{\Ss^n} + \frac{n(n-2)}{4} \int_{\Ss^n} \chi_-^2 dv_{\Ss^n} \\
&\ - \frac{n(n+2)}{4} \int_{\Ss^n} \bigg(1-\sum_{j=1}^k\eta_j(x,t)\bigg)v_*^{p-1} \chi_-^2 dv_{\Ss^n} - \frac{n}{2} \int_{\Ss^n} \pp_tv_*^{p-1} \chi_-^2 dv_{\Ss^n} \\
&\ge C\int_{\Ss^n} (|\nabla_{\Ss^n} \chi_-|^2 + \chi_-^2) dv_{\Ss^n} - C\mu^{n-2} \int_{\Ss^n} v_*^{p-1} \chi_-^2 dv_{\Ss^n} \\
&\ge C\int_{\Ss^n} (|\nabla_{\Ss^n} \chi_-|^2 + \chi_-^2) dv_{\Ss^n} \ge C_2 \int_{\Ss^n} v_*^{p-1} \chi_-^2 dv_{\Ss^n}
\end{align*}
for some $C_2 > 0$, provided that $|t_0|$ is sufficiently large. From this, we obtain Gr\"onwall's inequality
\be \label{eq:mp4}
\int_{\Ss^n} (v_*^{p-1} \chi_-^2)(\cdot,s_1) dv_{\Ss^n} \le e^{-\frac{2C_2}{n}(s_1-s_0)} \int_{\Ss^n} (v_*^{p-1} \chi_-^2)(\cdot,s_0) dv_{\Ss^n}
\ee
for any $s_0 < s_1 \le t_0$. Owing to \eqref{eq:mp11}, there exists a decreasing sequence of numbers $\{s_\ell\}_{\ell \in \N} \in (-\infty,t_0)$ such that $s_\ell \to -\infty$ as $\ell \to \infty$ and
\[\sup_{\ell \in \N} \int_{\Ss^n} (v_*^{p-1} \chi_-^2)(\cdot,s_\ell) dv_{\Ss^n} \le C \sup_{\ell \in \N} \int_{\Ss^n} (|\nabla_{\Ss^n} \chi|^2 + \chi^2)(\cdot,s_\ell) dv_{\Ss^n} \le C.\]
Putting $s_0 = s_\ell$ into \eqref{eq:mp4} and taking $\ell \to \infty$, we deduce 
\[\int_{\Ss^n} (v_*^{p-1} \chi_-^2)(\cdot,s_1) dv_{\Ss^n} \le Ce^{-\frac{2C_2}{n}s_1} \lim_{\ell \to \infty} e^{\frac{2C_2}{n}s_\ell} = 0 \quad \text{for any } s_1 \in (-\infty,t_0].\]
In conclusion, $\chi_- = 0$, that is, $\chi \ge 0$ a.e. in $\Ss^n \times (-\infty,t_0]$. As a consequence, we establish that $\psi \ge 0$ a.e. in $\R^n \times (-\infty,t_0]$.
\end{proof}	
\begin{lemma}\label{outer-linear-point}
Assume that $\alpha \in (0,n-2)$ and $\beta >0$. Take $t_0<0$ sufficiently large in magnitude, if necessary. Then the solution $\psi$ to \eqref{outer-linear}, which we found in Proposition \ref{outer-linear-energy-global}, satisfies
\be \label{psif}
\|\psi\|_{\widetilde{**},\alpha,\beta} \le C\|f\|_{\tilde{*},\alpha,\beta}
\ee
provided that the right-hand side is finite. Here, $C > 0$ is a constant depending only on $n$, $k$, $\alpha$, and $\beta$.
\end{lemma}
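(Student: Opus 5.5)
The plan is a barrier argument based on the weak maximum principle, Lemma \ref{maximum-principle}. By linearity, normalize $\|f\|_{\tilde{*},\alpha,\beta}=1$. We then build a positive supersolution $\bar\psi$ of the form
\[
\bar\psi(x,t) = C_1\Big(\omega^1_{\alpha,\beta,0}+\omega^2_{n-2,\beta+\frac{\alpha}{n-2}}\Big)\text{-like profile},
\]
which satisfies
\[
-pu_*^{p-1}\pp_t\bar\psi+\Delta\bar\psi+W\bar\psi+u_*^{p-1}|f|\le 0
\]
with $\|\bar\psi\|_{H^1_{0;t_0}}<\infty$. Applying Lemma \ref{maximum-principle} to $\bar\psi\mp\psi$ then gives $|\psi|\le\bar\psi$. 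For this we need $\psi\in H^1_{0;t_0}$, which follows from Proposition \ref{outer-linear-energy-global}, since the $\tilde{*}$ bound implies a finite $L^2_{\nu;t_0}$ norm for some $\nu$.

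The barrier is built region by region. Near each $\xi_j$, in the variable $y_j=\mu^{-1}(x-\xi_j)$, take
\[
\bar\psi_1=\sum_j |t|^{-\beta}\,\Theta(y_j),
\]
where $\Theta$ is a radial solution of $-\Delta\Theta = (1+|y|)^{-\alpha-2}$ with $\Theta\sim(1+|y|)^{-\alpha}$. Such a $\Theta$ exists since $0<\alpha<n-2$, and it is positive. Then $\Delta\bar\psi_1$ exactly dominates the source $u_*^{p-1}|f|\lesssim \omega^1_{\alpha,\beta,2}$ in $|x|\le \msR_0$.

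The remaining terms must be absorbed, and this works differently in each region.
\begin{itemize}
\item Inside $|y_j|\le R/4$ we have $W=0$.
\item In the rest of the inner region, $W\lesssim u_*^{p-1}\lesssim \mu^{-2}(1+|y|)^{-4}$. This is smaller than $\mu^{-2}(1+|y|)^{-2}$ by a factor $R^{-2}$.
\item The time derivative $pu_*^{p-1}\pp_t\bar\psi_1$ carries an extra factor $|t|^{-1}+|\dot\mu/\mu|\lesssim|t|^{-1}$, so it is negligible once $|t_0|$ is large.
\end{itemize}

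Far from the bubbles, the bubble tails $|t|^{-\beta}\mu^{\alpha}|x-\xi_j|^{-\alpha}$ are not by themselves supersolutions for the $\omega^2$ source. There I will add
\[
\bar\psi_2 = |t|^{-\beta-\frac{\alpha}{n-2}}\,\hat g,
\]
where $\hat g$ is the inverse conformal lift of a positive solution on $\Ss^n$ of $-\Delta_{\Ss^n}g+\frac{n(n-2)}{4}g = \mathrm{const}>0$. This makes $\bar\psi_2\sim |t|^{-\beta-\frac{\alpha}{n-2}}(1+|x|^2)^{-\frac{n-2}{2}}$. Because the Laplacian is conformally covariant, $\Delta\bar\psi_2 = -c\,\bar\psi_2\,U^{p-1}$, and this absorbs the source $\omega^2_{n+2,\cdot}$. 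Note that $u_*^{p-1}\sim U^{p-1}\mu^2$ at infinity after the Kelvin symmetry is taken into account, and that $\mu^{\alpha}\sim|t|^{-\alpha/(n-2)}$ makes the weights compatible. In the intermediate region $\delta_0\lesssim|x-\xi_j|$ we also have $\mu^{\alpha}\sim|t|^{-\alpha/(n-2)}$, so the two weights match there.

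The $**'$ part of the norm near $N$ is read off directly from the lifted barrier, since $\hat{\bar\psi}_2\sim |t|^{-\beta-\frac{\alpha}{n-2}}$. The $**$ part on $B(0,\msR_0)$ comes from $\bar\psi_1+\bar\psi_2$. Choosing $C_1$ large then closes the argument.

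The main obstacle is verifying the supersolution inequality in the matching annuli $R\mu\lesssim|x-\xi_j|\lesssim\delta_0$. There $W$ is not zero, and one needs
\[
W\bar\psi\ \le\ \tfrac12|\Delta\bar\psi_1|,
\]
that is, $u_*^{p-1}\lesssim R^{-2}|x-\xi_j|^{-2}$ on the support of $1-\sum_j\eta_j$. I expect this to follow from estimate \eqref{eq:mp2} together with $c_0>\frac{1}{2(n-2)}$. If it does not, I would instead construct $\Theta$ as a supersolution of $\Delta+\frac{C}{R^2}(1+|y|)^{-2}$, which is possible for small perturbations because $\alpha<n-2$.
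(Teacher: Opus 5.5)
Your strategy matches the paper's: a barrier built from bubble-centred radial potentials plus a global term decaying like $|x|^{2-n}$, compared with $\pm\psi$ through Lemma \ref{maximum-principle}. Using the inverse lift of a constant (a multiple of $U$) as the global term is an equivalent substitute for the paper's radial $\mcp$ with $-\Delta\mcp=\mcq$.

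\textbf{The gap.} The bubble part as you define it fails. Since $\Theta$ solves $-\Delta\Theta=(1+|y|)^{-\alpha-2}$ on all of $\R^n$, it decays only like $|y|^{-\alpha}$. Hence $\bar\psi_1\sim|t|^{-\beta-\frac{\alpha}{n-2}}|x|^{-\alpha}$ as $|x|\to\infty$.
\begin{itemize}
\item Because $\alpha<n-2$, this dominates the target $|t|^{-\beta-\frac{\alpha}{n-2}}|x|^{2-n}$. Its lift $\hat{\bar\psi}_1\sim|t|^{-\beta-\frac{\alpha}{n-2}}|x|^{n-2-\alpha}$ blows up at $N$. So $|\psi|\le\bar\psi_1+\bar\psi_2$ does not give the $**'$ bound; your claim that this part is read off from $\hat{\bar\psi}_2$ ignores $\hat{\bar\psi}_1$.
\item Worse, $\int_{\R^n}|\nabla\bar\psi_1|^2\,dx<\infty$ requires $2\alpha+2>n$. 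For $\alpha\le\frac{n-2}{2}$ (for instance every $n\ge 6$ in the range $\alpha<a\le 2$ of Definition \ref{def:num}) you get $\|\bar\psi\|_{H^1_{0;t_0}}=\infty$. Then Lemma \ref{maximum-principle} cannot be applied to $\bar\psi\mp\psi$ at all.
\end{itemize}

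\textbf{The fix.} The missing idea is the truncation used in the paper's proof. Put the bubble source only on $|x-\xi_j|\le\delta_0$, i.e.\ use $\mcq_\alpha=(\mu+r)^{-\alpha-2}\mone_{\{r\le\delta_0\}}$ and $-\Delta\mcp_\alpha=\mcq_\alpha$. The integral $\int_0^{\delta_0}\rho^{n-1}(\mu+\rho)^{-\alpha-2}\,d\rho$ is bounded uniformly in $\mu$; this is exactly where $\alpha<n-2$ enters. Hence $\mcp_\alpha\lesssim r^{2-n}$ for $r\ge\delta_0$. Then $|t|^{-\beta-\frac{\alpha}{n-2}}\mcp_\alpha(|x-\xi_j|,t)$ has the correct inner profile $|t|^{-\beta}(1+|y_j|)^{-\alpha}$, the correct tail, and finite $H^1_{0;t_0}$ norm.

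The global term must then also absorb two things on $B(0,\msR_0)\setminus\bigcup_jB(\xi_j,\delta_0)$, both routine:
\begin{itemize}
\item the $\omega^1$-source there, which is $O(|t|^{-\beta-\frac{\alpha}{n-2}})$;
\item $W\bar\psi_1$ there, which is smaller by a factor $\mu^2$.
\end{itemize}

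\textbf{The step you flagged.} The step you single out as the main obstacle is immediate. On the support of $1-\sum_j\eta_j$ near $\xi_j$ one has $|x-\xi_j|\gtrsim R\mu$. Hence $u_*^{p-1}\lesssim\mu^2|x-\xi_j|^{-4}\lesssim R^{-2}|x-\xi_j|^{-2}$. The real difficulty is the far field, not the matching annuli.
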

\begin{proof}
Consider the radial functions
\[
\mcq(r) := \frac{1}{1+|r|^{n+2}} \quad \text{and} \quad \mcp(r) := \int_r^{\infty} \frac1{s^{n-1}} \int_0^s \mcq(\rho)\rho^{n-1} d\rho ds \quad \text{for } r=|x|\ge0.
\]
Then, $\mcp(r) > 0$ for $r \ge 0$ and $-\Delta \mcp=\mcq$ in $\R^n$. Moreover,
\[
\mcp(x)\sim \frac{1}{1+|x|^{n-2}} \quad \text{for } x \in \R^n.
\]

Let $\delta_0 > 0$ be the number in \eqref{eq:delta0}. Given $\alpha\in(0,n-2)$, we also define
\be \label{eq:qpalpha}
\mcq_\alpha(r,t) := \frac{\mone_{\{r\le \delta_0\}}}{(\mu(t)+r)^{\alpha+2}} \quad \text{and} \quad 
\mcp_\alpha(r,t) := \int_r^{\infty} \frac1{s^{n-1}} \(\int_0^{\min\{s,\delta_0\}} \frac{\rho^{n-1} d\rho}{(\mu(t)+\rho)^{\alpha+2}}\)ds > 0
\ee
for $(r,t) \in [0,\infty) \times (-\infty,t_0]$. Then, $\mcp_\alpha\in C^1_{\mathrm{loc}}(\R^n)$ is the weak solution to $-\Delta \mcp_\alpha=\mcq_\alpha \in L^{\infty}_{\mathrm{loc}}(\R^n)$ in $\R^n$. Furthermore, we infer from \eqref{eq:qpalpha} that
\[
|\mcp_\alpha(r,t)| \le C\left[(\mu(t)+r)^{-\alpha}\mone_{\{r\le \delta_0\}}+r^{2-n}\mone_{\{r\ge \delta_0\}}\right].
\]

\medskip
Let $C_3>0$ be a large constant to be determined below. We now define the barrier function
\[
\bps(x,t) := C_3|t|^{-(\beta+\frac{\alpha}{n-2})} \left[ \sum_{j=1}^k \mcp_\alpha(|x-\xi_j|,t) + \mcp(x) \right] \quad \text{for } (x,t) \in \R^n \times (-\infty,t_0),
\]
which satisfies
\[
-\Delta \bps = C_3|t|^{-(\beta+\frac{\alpha}{n-2})} \left[\sum_{j=1}^k \mcq_\alpha(|x-\xi_j|,t) + \mcq(x)\right].
\]
Using the definition of $W$ in \eqref{defW}, we obtain
\begin{align*}
W(x,t)\bps(x) &\le CC_3|t|^{-(\beta+\frac{\alpha}{n-2})} \left[R^{-2} \sum_{j=1}^k \mcp_\alpha(|x-\xi_j|,t) \frac{\mone_{\{|x-\xi_j|\le \delta_0\}}}{(\mu+|x-\xi_j|)^2} + \frac{\mu^{2}}{(1+|x|)^{4}} \mcp(x)\right] \\
&= o_{|t_0|}(1) C_3|t|^{-(\beta+\frac{\alpha}{n-2})} \left[\sum_{j=1}^k \mcq_\alpha(|x-\xi_j|,t) + \mcq(x)\right]
\end{align*}
provided that $|t_0|$ is sufficiently large. In addition,
\[
p u_*^{p-1}\pp_t \(|t|^{-(\beta+\frac{\alpha}{n-2})}\mcp\) \ge0
\]
and, since
\begin{align*}
&\ \pp_t \left[|t|^{-(\beta+\frac{\alpha}{n-2})}\mcp_\alpha(|x-\xi_j|,t)\right] \\
&=
|t|^{-(\beta+\frac{\alpha}{n-2})} O\(\bigg[
\frac{|\dot{\mu}|+|\dot\xi_j|}{(\mu+|x-\xi_j|)^{\alpha+1}} + \frac{|t|^{-1}}{(\mu+|x-\xi_j|)^{\alpha}}\bigg] \mone_{\{|x-\xi_j|\le \delta_0\}}+\frac{|t|^{-1}}{1+|x|^{n-2}}\mone_{\{|x-\xi_j|\ge \delta_0\}}\)
\end{align*}
and $|\dot{\mu}|+|\dot\xi_j|=o(1)$, we have
\[
p u_*^{p-1} \pp_t \left[\sum_{j=1}^k |t|^{-(\beta+\frac{\alpha}{n-2})}\mcp_\alpha(|x-\xi_j|,t)\right]
= o(1)|t|^{-(\beta+\frac{\alpha}{n-2})} \left[\sum_{j=1}^k \mcq_\alpha(|x-\xi_j|,t) + \mcq(x)\right].
\]
Hence,
\[
p u_*^{p-1}\pp_t\bps -\Delta\bps -W\bps \ge \frac{1}{2} C_3 |t|^{-(\beta+\frac{\alpha}{n-2})} \left[ \sum_{j=1}^k \mcq_\alpha(|x-\xi_j|,t) + \mcq(x) \right]
\]
provided that $|t_0|$ is sufficiently large.

On the other hand, by the definition of the $\|\cdot\|_{*,\alpha,\beta}$-norm,
\[
[u_*^{p-1}|f|](x,t) \le C\|f\|_{*,\alpha,\beta} |t|^{-(\beta+\frac{\alpha}{n-2})} \left[\sum_{j=1}^k \mcq_\alpha(|x-\xi_j|,t) + \mcq(x)\right].
\]
Therefore, choosing the constant $C_3 > 0$ sufficiently large, we deduce
\be \label{eq:bps}
\|f\|_{*,\alpha,\beta} (p u_*^{p-1}\pp_t\bps -\Delta\bps -W\bps) \ge u_*^{p-1}|f| \quad \text{in } \R^n \times (-\infty,t_0).
\ee

\medskip
Combining \eqref{eq:bps} and \eqref{outer-linear}, we arrive at
\[
p u_*^{p-1}\pp_t(\|f\|_{*,\alpha,\beta}\bps\pm\psi) \ge \Delta(\|f\|_{*,\alpha,\beta}\bps\pm\psi) + W(\|f\|_{*,\alpha,\beta}\bps\pm\psi) \quad \text{in } \R^n \times (-\infty,t_0)
\]
in the weak sense. Since $\|\bps\|_{H^1_{0;t_0}}<\infty$, Lemma \ref{maximum-principle} yields that
\[
|\psi| \le \|f\|_{*,\alpha,\beta}\bps \le C\|f\|_{*,\alpha,\beta} \(\omega^1_{\alpha,\beta,0} + \omega^2_{n-2,\beta+\frac{\alpha}{n-2}}\) \quad \text{a.e. in } \R^n\times(-\infty, t_0).
\]
In other words,
\[
\|\psi\|_{**,\alpha,\beta} \le C\|f\|_{*,\alpha,\beta},
\]
which is equivalent to \eqref{psif}.
\end{proof}

\subsection{Schauder estimates for solutions to \eqref{outer-linear}}\label{subsec:linout3}
In this subsection, we complete the proof of Proposition \ref{outer-thm}. In view of Proposition \ref{outer-linear-energy-global} and Lemma \ref{outer-linear-point}, it remains to establish the Schauder estimate \eqref{outer-bound} for solutions to \eqref{outer-linear}.

\begin{proof}[Proof of Proposition \ref{outer-thm}]
The proof is partially motivated by \cite{km} and divided into two steps according to the spatial regions.

\medskip \noindent \textbf{Step 1.}
We first consider the bubbling region $\cup_{j=1}^k \{|x-\xi_j|\le \frac{\delta_0}{6}\}$, where $\delta_0>0$ is given in \eqref{eq:delta0}. By symmetry, it suffices to analyze the case $j=1$.

\medskip
We introduce the rescaled variables
\[\bps(y,t) = \psi (x,t) \quad \text{and} \quad \baf(y,t) = f(x,t), \quad \text{where } x = \mu y + \xi_1.\]
Then, equation \eqref{outer-linear} becomes
\be \label{eq:co1}
p\, \pp_t \bps = \mu^{-2} u_*^{1-p} \Delta \bps + p\,\mu^{-1} \nabla \bps \cdot (\dot \mu y + \dot \xi_1) + u_*^{1-p} W \bps + \baf.
\ee
For any $t < t_0$, we define the parabolic cylinders
\[
\wtq := \{|y| \le 4\} \times [t-1, t] \quad \text{and} \quad Q := \{|y| \le 6\} \times [t-2, t].
\]
Since the coefficients in \eqref{eq:co1} are uniformly bounded in $C^{\sigma}$ and the diffusion coefficient is bounded away from zero and infinity, \eqref{eq:co1} is uniformly parabolic in $Q$. Applying the classical interior Schauder estimates for parabolic equations yields 
\be \label{eq:co2}
\sum_{\ell=0}^2 \|\nabla^\ell \bps\|_{C^{\sigma, \sigma/2}(\wtq)} + \|\pp_t \bps\|_{C^{\sigma, \sigma/2}(\wtq)} \lesssim \|\bps\|_{L^{\infty}(Q)} + \|\baf\|_{C^{\sigma, \sigma/2}(Q)},
\ee
where 
\[
\|\psi\|_{C^{\sigma, \sigma/2}(Q)} := \|\psi\|_{L^{\infty}(Q)} + \sup_{(\ty_1,t_1) \ne (\ty_2,t_2) \in Q} \frac{|\psi(\ty_1,t_1)-\psi(\ty_2,t_2)|}{|\ty_1-\ty_2|^{\sigma} + |t_1-t_2|^{\sigma/2}}.
\]
We observe that
\[
\sup_{(y, s) \in Q} |\baf (y,s)| \lesssim \omega^1_{\alpha,\beta, 2}u_*^{1-p} \|f\|_{*,\alpha,\beta} \lesssim |t|^{-\beta} \|f\|_{*,\alpha,\beta(B(0,2\msR_0)_{t_0})},
\]
while the weighted $L^{\infty}$ estimate \eqref{psif} in Lemma \ref{outer-linear-point} implies
\[
\sup_{|\mu^{-1}(x-\xi_1)| < 6,\, s \in [t-2, t]} |\psi| \lesssim \omega^1_{\alpha,\beta,0}\|f\|_{*,\alpha,\beta}\lesssim |t|^{-\beta}\|f\|_{*,\alpha,\beta(B(0,2\msR_0)_{t_0})}.
\]
Furthermore, the H\"older continuity of $\baf$ satisfies
\[
\sup_{(\ty_1,t_1) \ne (\ty_2,t_2) \in Q} \frac{|\baf(\ty_1,t_1)-\baf(\ty_2,t_2)|}{|\ty_1-\ty_2|^{\sigma} + |t_1-t_2|^{\sigma/2}} \lesssim |t|^{-\beta}\|f\|_{*,\alpha,\beta;\sigma(B(0,2\msR_0)_{t_0})}.
\]
Recalling the relation $\nabla^\ell \psi (x,t) = \mu^{-\ell} \nabla^\ell \bps (y,t)$ and substituting the above bounds into \eqref{eq:co2},
we conclude that, for $(x,t)$ satisfying $|\mu^{-1}(x-\xi_1)|\le 3$ and $t<t_0$, and for each $\ell\in\{0,1,2\}$,
\begin{multline}\label{eq:co4}
|\nabla^\ell \psi (x,t)| + \mu^{\sigma}(t) [\nabla^\ell \psi]_{C^{\sigma}_x}(x,t) + [\nabla^\ell \psi]_{C^{\sigma/2}_t}(x,t) \\
\lesssim |t|^{-\beta}\mu^{-\ell}\|f\|_{*, \alpha, \beta;\sigma(B(0,2\msR_0)_{t_0})}\lesssim \omega^1_{\alpha,\beta,\ell}\|f\|_{*, \alpha, \beta;\sigma(B(0,2\msR_0)_{t_0})}
\end{multline}
and
\begin{multline}\label{eq:co5}
|\pp_t \psi (x,t)| + \mu^{\sigma}(t) [\pp_t \psi]_{C^{\sigma}_x}(x,t) + [\pp_t \psi]_{C^{\sigma/2}_t}(x,t) \\
\lesssim|t|^{-\beta} \|f\|_{*, \alpha, \beta;\sigma(B(0,2\msR_0)_{t_0})}\lesssim \omega^1_{\alpha-2,\beta,0} \|f\|_{*, \alpha, \beta;\sigma(B(0,2\msR_0)_{t_0})}. 
\end{multline}
Here, the temporal H\"older seminorm $[\cdot]_{C^{\sigma/2}_t}$ is defined in \eqref{eq:Holdert}, and
\be \label{eq:Holderx}
[\psi]_{C^{\sigma}_x}(x,t) := \sup\left\{\frac{|\psi(\tx_1,t) - \psi(\tx_2,t)|}{|\tx_1-\tx_2|^{\sigma}}:\, \tx_1, \tx_2 \in B(x,r_2(x,t)),\, \tx_1 \ne \tx_2\right\}, 
\ee
where
\[
r_2(x,t) := \begin{cases}
\displaystyle \frac{\mu(t)}{2} &\text{for } x \in B(\xi_j,\mu(t)),\quad j=1,\dots,k,\\
\displaystyle \frac{|x-\xi_j|}{2} &\text{for } x \in (B(\xi_j,\delta_0) \setminus B(\xi_j,\mu(t))),\quad j=1,\dots,k,\\
\displaystyle \frac{\delta_0}{2} &\text{for } x \in \cap_{j=1}^k B(\xi_j,\delta_0)^c.
\end{cases}
\]

We now turn to the intermediate region $2\mu < |x-\xi_1| < \frac{\delta_0}{6}$ for $t < t_0$.
To this end, let $j_0 \in \N$ be an index such that $2^{j_0} \le \frac{\delta_0}{6} \mu^{-1}(t) < 2^{j_0+1}$. For each $j \in \{1, \dots, j_0\}$, we define the annular parabolic cylinder
\[
Q_{j} := \{2^j < |y| < 2^{j+1}\} \times [t-2^{-2j}, t],
\]
and consider the normalized domains
\[
\wtq_{j,1} := \{1 < |Y| < 2\} \times [-1,0] \quad \text{and} \quad \wtq_{j,2} := \{\tfrac{1}{2} < |Y| < \tfrac{5}{2}\} \times [-2,0].
\]
We introduce the scaling transformation
\[
\Psi(Y,\tau) = \bps (2^j Y, t + 2^{-2j} \tau).
\]
By \eqref{eq:co1}, the scaled function $\Psi$ satisfies
\be \label{eq:co6}
p\, \pp_\tau \Psi = \mu^{-2}2^{-4j}u_*^{1-p} \Delta \Psi + 2^{-2j} p\, \mu^{-1} \nabla \Psi \cdot (\dot \mu Y + 2^{-j}\dot \xi_1) + 2^{-2j} u_*^{1-p} W \Psi + 2^{-2j} \baf.
\ee
For $2^j < |y| < 2^{j+1}$, the diffusion coefficient $\mu^{-2}2^{-4j}u_*^{1-p}$ remains uniformly bounded above and below away from $0$.
Since the coefficients in \eqref{eq:co6} are bounded independently of both $t$ and $j$, the interior Schauder estimates yield
\[
\sum_{\ell=0}^2 \|\nabla^\ell \Psi\|_{C^{\sigma, \sigma/2} (\wtq_{j,1})} + \|\pp_\tau \Psi\|_{C^{\sigma, \sigma/2} (\wtq_{j,1})} \lesssim \|\Psi\|_{L^{\infty} (\wtq_{j,2})} + \|2^{-2j} \baf\|_{C^{\sigma, \sigma/2} (\wtq_{j,2})}.
\]
Applying the scaling relations $\nabla^\ell \psi(x,t) = (\mu 2^j)^{-\ell} \nabla^\ell \Psi(Y,0)$ and $\pp_t \psi(x,t) = 2^{2j} \pp_\tau \Psi(Y,0)$, and evaluating the terms at $y = \mu^{-1}(x-\xi_1)$, we obtain the following weighted decay estimates:
\be \label{eq:co8}
|\nabla^\ell \psi (x,t)| + (\mu |y|)^{\sigma}(t) [\nabla^\ell \psi]_{C^{\sigma}_x}(x,t) + |y|^{-\sigma} [\nabla^\ell \psi]_{C^{\sigma/2}_t}(x,t) \lesssim \frac{|t|^{-\beta}\mu^{-\ell}\|f\|_{*, \alpha, \beta;\sigma(B(0,2\msR_0)_{t_0})}}{|y|^{\alpha + \ell}}
\ee
for $\ell \in \{0, 1, 2\}$ and 
\be \label{eq:co9}
|\pp_t \psi (x,t)| + (\mu |y|)^{\sigma}(t) [\pp_t \psi]_{C^{\sigma}_x}(x,t) + |y|^{-\sigma} [\pp_t \psi]_{C^{\sigma/2}_t}(x,t) \lesssim \frac{|t|^{-\beta}\|f\|_{*, \alpha, \beta;\sigma(B(0,2\msR_0)_{t_0})}}{|y|^{\alpha - 2}}
\ee
for all $2\mu < |x-\xi_1| \le \frac{\delta_0}{6}$ and $t < t_0$.

\medskip \noindent \textbf{Step 2.} We examine the exterior region $\Ss^n \setminus \pi(\cup_{j=1}^kB(\xi_j, \frac{\delta_0}{12}))$.
 
\medskip
We define the function $\tps$ by the relation 
\be \label{stps}
\psi(x,t) = \(\frac{2}{1+|x|^2}\)^{\frac{n-2}{2}} \tps(z,s(t)), \quad \text{with } z=\pi(x) \text{ and } s(t) := -|t|^{\frac{n}{n-2}},
\ee
where $\pi: \R^n \to \Ss^n \setminus \{N\}$ is the map in \eqref{stereo-proj}. It holds that
\[
\pp_s \tps = \mca\left[\Delta_{\Ss^n} \tps - \frac{n(n-2)}{4} \tps\right] + \wtw \tps + \tf \quad \text{in } \Ss^n \times \mci_s.
\]
Here, $\mci_s := (-\infty, s(t_0))$,
\be \label{stf}
\mca(z,s) := \frac{n-2}{np} |t|^{-\frac{2}{n-2}} \(\frac{2}{1+|x|^2}\)^{2} u_*^{1-p}(x,t), \quad \wtw(z,s) := \frac{n^2-4}{4n} |t|^{-\frac{2}{n-2}}\bigg(1-\sum_{j=1}^k\eta_j(x,t)\bigg)
\ee
and
\be \label{stf2}
\tf(z,s) := \frac{n-2}{np} |t|^{-\frac{2}{n-2}} \( \frac{1+|x|^2}{2} \)^{\frac{n-2}{2}} f(x,t)=\frac{n-2}{np} |t|^{-\frac{2}{n-2}}\hat{f}(z,t),
\ee
where $\hat{f}$ is the conformal lift of $f$ defined through \eqref{stereo-proj-1}.
For $z\in \Ss^n \setminus \pi(\cup_{j=1}^k B(\xi_j, \frac{\delta_0}{24}))$, the coefficient $\mca(z,s)$ is bounded above and below by positive constants, and Lipschitz continuous. Consequently, Schauder estimates yield
\[
\sum_{\ell=0}^2 \| \nabla_{\Ss^n}^{\ell} \tps \|_{C^{\sigma, \sigma/2} (\wtom\times \mci_s)} + \| \pp_s \tps \|_{C^{\sigma, \sigma/2} (\wtom\times \mci_s)}\le C \( \|\tps\|_{L^{\infty}(\Omega \times \mci_s)} + \|\tf\|_{C^{\sigma, \sigma/2}(\Omega \times \mci_s)} \)
\]
provided that $\Omega$ is an open subset of $\Ss^n \setminus \pi(\cup_{j=1}^k B(\xi_j, \frac{\delta_0}{24}))$ and $\wtom$ is compactly contained in $\Omega$. Here,
\[
\|\tps\|_{C^{\sigma, \sigma/2} (\Omega\times \mci_s)} := \|\tps\|_{L^{\infty} (\Omega\times \mci_s)} + \sup_{(\tz_1,s_1) \ne (\tz_2,s_2) \in \Omega\times \mci_s} \frac{|\tps(\tz_1,s_1) - \tps(\tz_2,s_2)|}{d_{\Ss^n}(\tz_1,\tz_2)^{\sigma} + |s_1-s_2|^{\sigma/2}}.
\]

Combining the a priori bound \eqref{psif} with the definitions \eqref{stps}--\eqref{stf2}, we obtain, after a direct computation,
\begin{multline*} 
\|\tps\|_{L^{\infty}(\Omega \times \mci_s)} + \|\tf\|_{C^{\sigma, \sigma/2}(\Omega \times \mci_s)} 
\\
\le C |t|^{-(\beta+\frac{\alpha}{n-2})} \times 
\begin{cases}
\|f\|_{*,\alpha, \beta;\sigma(B(0,2\msR_0)_{t_0})} & \text{if } \Omega = \pi(B(0,2\msR_0))\setminus \pi\big(\bigcup_{j=1}^k B(\xi_j, \frac{\delta_0}{24})\big), \\
\|\hat{f}\|_{*',\alpha, \beta;\sigma(\mcb^{2\ep_0}_{t_0})} & \text{if } \Omega = \mcb^{2\ep_0} := B_{\Ss^n}(N,2\ep_0).
\end{cases}
\end{multline*}

If $\Omega = \pi(B(0,2\msR_0))\setminus \pi(\cup_{j=1}^k B(\xi_j, \frac{\delta_0}{24}))$, we choose $\wtom = \pi(B(0,\msR_0)) \setminus \pi(\cup_{j=1}^k B(\xi_j, \frac{\delta_0}{12}))$. 
Then, $\pi^{-1}(\wtom) = \cap_{j=1}^k \{|x-\xi_j| > \frac{\delta_0}{12}\} \cap B(0,\msR_0)$, and for $(x,t) \in \pi^{-1}(\wtom) \times (-\infty, t_0)$, we obtain
\begin{multline}\label{eso1}
(1+|x|)^{\ell}\bigg[\left|\nabla^{\ell} \psi(x,t)\right| +(1+|x|)^{\sigma}\left[\nabla^{\ell} \psi\right]_{C^{\sigma}_x}(x,t)
+ |t|^{-\frac{\sigma}{n-2}} \left[\nabla^{\ell} \psi\right]_{C^{\sigma/2}_t}(x,t)\bigg] \\
\le C \frac{|t|^{-(\beta+\frac{\alpha}{n-2})}}{1+|x|^{n-2}} \|f\|_{*,\alpha, \beta;\sigma(B(0,2\msR_0)_{t_0})}\le C \frac{|t|^{-(\beta+\frac{\alpha}{n-2})}}{1+|x|^{n-2}} \|f\|_{\tilde{*},\alpha, \beta;\sigma}
\end{multline}
and
\begin{multline}\label{eso2}
|\pp_t \psi(x,t)| + (1+|x|)^{\sigma} \left[\pp_t \psi\right]_{C^{\sigma}_x}(x,t) + |t|^{-\frac{\sigma}{n-2}} \left[\pp_t \psi\right]_{C^{\sigma/2}_t}(x,t) \\
\le C \frac{|t|^{-(\beta+\frac{\alpha-2}{n-2})}}{1+|x|^{n-2}} \|f\|_{*,\alpha, \beta;\sigma(B(0,2\msR_0)_{t_0})}\le C \frac{|t|^{-(\beta+\frac{\alpha-2}{n-2})}}{1+|x|^{n-2}} \|f\|_{\tilde{*},\alpha, \beta;\sigma}.
\end{multline}
Here, we employed
\[|\nabla_x^{\ell}\psi(x,t)| \le C\(\frac{2}{1+|x|^2}\)^{\frac{n-2+\ell}{2}}
\sum_{i=0}^\ell \left|\nabla_{\Ss^n}^i\tilde\psi(z,s(t))\right| \quad \text{for } \ell\in \{0,1,2\}.\]

If $\Omega = \mcb^{2\ep_0}$, we choose $\wtom = \mcb^{\ep_0}$. For $(z,t) \in \mcb^{\ep_0}\times (-\infty, t_0]$, we obtain
\begin{multline} \label{eso3}
|\nabla_{\Ss^n}^{\ell}\hat{\psi}|(z,t)+ [\nabla_{\Ss^n}^\ell\hat{\psi}]_{C^{\sigma}_z}(z,t)+|t|^{-\frac{\sigma}{n-2}}[\nabla_{\Ss^n}^\ell\hat{\psi}]_{C^{\sigma/2}_t}(z,t) \\
\le C \sum_{i=0}^2 \| \nabla_{\Ss^n}^{i} \tps \|_{C^{\sigma, \sigma/2} (\mcb^{\ep_0}\times \mci_s)}
\le C |t|^{-(\beta+\frac{\alpha}{n-2})}\|\hat{f}\|_{*',\alpha, \beta;\sigma(\mcb^{2\ep_0}_{t_0})}
\le C |t|^{-(\beta+\frac{\alpha}{n-2})}\|\hat{f}\|_{\tilde{*},\alpha, \beta;\sigma}
\end{multline}
and
\begin{multline} \label{eso4}
|\pp_t \hat{\psi}(z,t)| + \left[\pp_t \hat{\psi}\right]_{C^{\sigma}_z}(z,t) + |t|^{-\frac{\sigma}{n-2}} \left[\pp_t \hat{\psi}\right]_{C^{\sigma/2}_t}(z,t) \le C|t|^{\frac{2}{n-2}}
\|\pp_s \tps\|_{C^{\sigma, \sigma/2} (\mcb^{\ep_0}\times \mci_s)} \\
\le C|t|^{-(\beta+\frac{\alpha-2}{n-2})} \|\hat{f}\|_{*',\alpha, \beta;\sigma(\mcb^{2\ep_0}_{t_0})}\le C |t|^{-(\beta+\frac{\alpha-2}{n-2})}\|\hat{f}\|_{\tilde{*},\alpha, \beta;\sigma}.
\end{multline}
Here,
\be \label{eq:Holderz}
[\hat{\psi}]_{C^{\sigma}_z}(z,t) := \sup\left\{\frac{|\hat{\psi}(\tz_1,t) - \hat{\psi}(\tz_2,t)|}{d_{\Ss^n}(\tz_1,\tz_2)^{\sigma}}:\, \tz_1, \tz_2 \in B_{\Ss^n}(z,\ep_0),\, \tz_1 \ne \tz_2\right\}.
\ee

\medskip
Combining the estimates \eqref{eq:co4}--\eqref{eq:co5}, \eqref{eq:co8}--\eqref{eq:co9}, and \eqref{eso1}--\eqref{eso4}, we deduce the desired estimate \eqref{outer-bound}. This concludes the proof.
\end{proof}
\begin{remark}\label{rmk:psitdecay}
If one assumes only the weaker condition $\|f\|_{*,\alpha,\beta;\sigma}<\infty$, instead of our stronger assumption $\|f\|_{\tilde{*},\alpha,\beta}<\infty$, then one obtains only
\[
|\pp_t\psi(x,t)| \lesssim |t|^{-(\beta+\frac{\alpha-2}{n-2})}|x|^{4-n} \quad \text{for } |x| \text{ sufficiently large, with } t \in (-\infty,t_0) \text{ fixed}.
\]
This decay is not sufficient for the fixed point argument carried out in the next subsection.
Indeed, the condition $\|f\|_{*,\alpha,\beta;\sigma}<\infty$ does not imply that $\|\psi\|_{**,\alpha,\beta;\sigma}<\infty$.
This loss at spatial infinity is the reason for introducing the refined norms $\|\cdot\|_{\tilde{*},\alpha,\beta;\sigma}$ and $\|\cdot\|_{\widetilde{**},\alpha,\beta;\sigma}$.
In view of Remark \ref{re2.11}(2), we ultimately have
\[
\|\psi\|_{**,\alpha,\beta;\sigma} \le C \|\psi\|_{\widetilde{**}, \alpha,\beta;\sigma} \le C \|f\|_{\tilde{*},\alpha, \beta;\sigma}.
\]
\end{remark}

\subsection{Solving the outer problem}
This subsection is devoted to the proof of Proposition \ref{prop:outer}, which establishes the solvability of the outer problem \eqref{outer} together with the estimates \eqref{psi-norm-bound}--\eqref{psi-lip-3}.
Throughout this subsection, we choose the auxiliary parameters, such as $\alpha$, $\beta$, $a$, $b$, $c_0$, and $\sigma$, according to Definition \ref{def:num}.

\medskip
As a preliminary step, we establish bounds for the weighted $L^2$ norm and the refined pointwise norm of the function $\mch_{\mathrm{outer}}$ in Lemmas \ref{lemma:Houterest1} and \ref{lemma:Houterest2}, respectively.

We decompose $\mch_{\mathrm{outer}}$, defined in \eqref{outer}, as
\[\mch_{\mathrm{outer}}=\mch_{\mathrm{outer},1}+\mch_{\mathrm{outer},2},\]
where
\begin{align*}
\mch_{\mathrm{outer},1} &:= E_0 + u_*^{1-p} \sum_{j=1}^k \(\Delta\eta_j \tph_j + 2 \nabla\tph_j \nabla\eta_j - pu_*^{p-1} \tph_j \pp_t\eta_j\), \\
\mch_{\mathrm{outer},2} &:= u_*^{1-p} \bigg(1-\sum_{j=1}^k\eta_j\bigg) N(\Xi),
\quad \text{and} \quad E_0 := u_*^{1-p} \bigg(1-\sum_{j=1}^k\eta_j\bigg)E.
\end{align*}
\begin{lemma}\label{lemma:Houterest1}
We take $\nu \in [0,\frac12]$ for $n \ge 4$ and $\nu \in [0,1-\tfrac{c_0(3-2a)}{2}]$ for $n=3$.
If $\|\psi\|_{\widetilde{**},\alpha,\beta;\sigma}<\infty$ and $\|\phi\|_{\sharp\sharp,a,b;\sigma}<\infty$, then $\|\mch_{\mathrm{outer}}\|_{L^2_{\nu;t_0}}<\infty$.
\end{lemma}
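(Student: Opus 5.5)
To prove Lemma \ref{lemma:Houterest1}, we must show
\[
\sup_{\tau\le t_0-1}|\tau|^{\nu}\Big(\int_\tau^{\tau+1}\!\!\int_{\R^n}u_*^{p-1}\mch_{\mathrm{outer}}^2\,dx\,dt\Big)^{1/2}<\infty .
\]
So for each summand we need a pointwise bound on $|\mch_{\mathrm{outer}}|$ and then an integration of its square against $u_*^{p-1}$. We treat the three pieces $E_0$, the cut-off terms, and $\mch_{\mathrm{outer},2}$ separately. By the polygonal symmetry \eqref{pp2} and the Kelvin invariance \eqref{fke}, it suffices to integrate over $\{|x|\le 1\}$, splitting further into the neighbourhood $\{|x-\xi_1|\le\delta_0\}$ and the exterior region. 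Indeed, the change of variables $x\mapsto x/|x|^2$ preserves $u_*^{p-1}\mch^2\,dx$, since $u_*^{p-1}$ transforms with weight $|x|^{-4}$, $\mch=u_*^{1-p}(\cdots)$ with $(\cdots)$ transforming with weight $|x|^{-(n+2)}$, and $dx$ contributes $|x|^{-2n}$. We also note that $u_*^{p-1}\sim\mu^2(\mu^2+|x-\xi_1|^2)^{-2}$ near $\xi_1$.

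\emph{The term $E_0$.} We have $u_*^{p-1}E_0^2=u_*^{1-p}(1-\sum\eta_j)^2E^2$, supported where $|y|\gtrsim R$ with $y=\mu^{-1}(x-\xi_1)$. We insert Lemma~\ref{size error}: the leading terms are $\mu^{-1}\dot\mu\,U^{p-1}Z_{n+1}$ and $A_k\mu^{n-2}U^{p-1}$, both of size $\mu^{n-2}U^{p-1}$. Converting to the $y$-variables ($dx=\mu^n dy$, $E=\mu^{-(n+2)/2}\times$ the rescaled quantity, $u_*^{1-p}\approx\mu^2U^{1-p}(y)$), the contribution becomes
\[
\mu^{2(n-2)}\int_{|y|\ge R/4}U^{p-1}\,dy\sim \mu^{2(n-2)}R^{-2}\lesssim |t|^{-2}.
\]
The remainder terms $\wte^1,\wte^2$ are treated the same way. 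The only delicate piece is $\mu^{2(n-2)}U^{p-2}$ for $3\le n\le5$: after squaring and weighting by $U^{1-p}$ it gives $\mu^{4(n-2)}\int_{|y|\le\delta_0/\mu}U^{p-3}\,dy$, which grows like a power of $\mu^{-1}$. This growth has to be checked against $|t|^{-2\nu}$, and this is where the dimension-dependent restriction on $\nu$ enters when $n=3$. In the exterior region we use \eqref{est-error-out-0}: $u_*^{p-1}E_0^2\lesssim u_*^{p-1}\mu^{n-2}(1+|x|)^{-2(n-2)}$, which is integrable with size $\mu^{n}\lesssim|t|^{-1}$.

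\emph{The cut-off terms.} The derivatives of $\eta_1$ are supported in $R\mu/4\le|x-\xi_1|\le 5R\mu$ by Lemma~\ref{lemma:eta_j}, with $|\nabla\eta_1|\lesssim(R\mu)^{-1}$, $|\Delta\eta_1|\lesssim(R\mu)^{-2}$ and $|\pp_t\eta_1|\lesssim |\dot\mu|/\mu+|\dot\xi_1|/(R\mu)$. Using \eqref{tildephi-ass-3} together with $\|\phi\|_{\sharp\sharp,a,b;\sigma}<\infty$, we have $|\tph_1|\lesssim\mu^{-(n-2)/2}|t|^{-1}R^{-a}$ and $|\nabla\tph_1|\lesssim\mu^{-n/2}|t|^{-1}R^{-a-1}$ on that annulus. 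Multiplying by $u_*^{1-p}\sim\mu^2R^4$ and squaring against $u_*^{p-1}$ over the annulus, whose $y$-volume is of order $R^n$, we get
\[
\int u_*^{p-1}\big(\cdots\big)^2 \lesssim |t|^{-2}R^{\,n-2a}\,\mu^{n-2}\cdot\mu^{-(n-2)}\cdots,
\]
which after simplification reads $|t|^{-2}R^{\,n-2a}$ times powers of $\mu$ that cancel. With $R\sim|t|^{c_0}$ this is $|t|^{-2+c_0(n-2a)}$; for $n\ge4$ and the choice of $a$ in \eqref{eq:num0} it is $\lesssim|t|^{-1}$, consistent with $\nu\le\frac12$. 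For $n=3$ we obtain exactly the exponent $1-\frac{c_0(3-2a)}{2}$, which explains the stated range of $\nu$.

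\emph{The nonlinear term.} For $\mch_{\mathrm{outer},2}$, on the support of $1-\sum\eta_j$ we have $\Xi=\psi+\sum\eta_j\tph_j$, and $|\Xi|\ll u_*$ there thanks to the $**$ bound on $\psi$ and the decay of $\phi$. Lemma~\ref{yan} then gives
\[
|N(\Xi)|\lesssim u_*^{p-2}|\Xi|^2+u_*^{p-2}|\Xi||\pp_tu_*|+u_*^{p-2}|\Xi||\pp_t\Xi|,
\]
with a $\min$/$\max$ variant when $p<2$, i.e.\ $n\ge6$. Inserting the weighted pointwise bounds from the $\widetilde{**}$-norm (including the time derivative) reduces everything to integrals of the same type as above, each bounded by a power $|t|^{-2\nu'}$ with $\nu'\ge\nu$. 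Near infinity we use the refined norm $\widetilde{**}$, equivalently the Kelvin reduction, to guarantee integrability.

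The main obstacle is the bookkeeping in the low dimensions $n=3$ (and $n=4,5$ for the $U^{p-2}$ term in $\wte^2$). There the integrals $\int U^{p-3}$ and $\int_{|y|\sim R}|y|^{-2a}\,dy$ grow polynomially in $\mu^{-1}$ and in $R$, and their exponents must be matched precisely with the admissible $\nu$ and with the constraint $c_0<\frac1{n-2}$. All other contributions are comfortably bounded by $|t|^{-1}$.
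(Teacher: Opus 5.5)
Your argument follows the paper's proof. You use the same split of $\mch_{\mathrm{outer}}$ into $E_0$, the cut-off terms and $\mch_{\mathrm{outer},2}$. You also make the same decisive computation, $\int_{\R^n}u_*^{1-p}\bigl|\Delta\eta_1\tph_1+2\nabla\eta_1\cdot\nabla\tph_1-pu_*^{p-1}\tph_1\pp_t\eta_1\bigr|^2dx\lesssim|t|^{-2}R^{n-2a}$. This term alone produces the restriction $\nu\le1-\frac{c_0(3-2a)}{2}$ for $n=3$, and it is $\lesssim|t|^{-1}$ for $n\ge4$. Your reduction to $\{|x|\le1\}$ via the Kelvin invariance of $u_*^{p-1}\mch_{\mathrm{outer}}^2\,dx$ is a correct variant. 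It uses that $\psi$ and $\tph_j$ are Kelvin invariant, as they are where the lemma is applied, and it avoids the $|x|>1$ branch of $\eta_j$. The paper instead integrates over $\R^n$, using \eqref{est-error-out-0} and the decay encoded in the $\widetilde{**}$-norm.

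Several steps need repair.

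\emph{The $E_0$ integral.} Since $U^{p-1}\sim|y|^{-4}$, the integral $\int_{|y|\ge R/4}U^{p-1}\,dy$ is infinite for $n\ge4$, and for $n=3$ it is $\sim R^{-1}$, not $R^{-2}$. Lemma \ref{size error} only applies for $|y|\le\delta_0/\mu$ anyway. As in the paper, the correct bound is $\mu^{2n-4}\int_{R<|y|<\delta_0/\mu}(1+|y|^4)^{-1}dy$, i.e.\ $\mu^2R^{-1}$, $\mu^4|\log\mu|$, $\mu^n$ for $n=3$, $n=4$, $n\ge5$. This is not $\lesssim|t|^{-2}$ when $n\ge4$, but it is $\lesssim|t|^{-1}$, which suffices for $\nu\le\frac12$.

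\emph{The $U^{p-2}$ piece.} The $\mu^{2(n-2)}U^{p-2}$ part of $\wte^2$ is harmless. Since $U^{p-3}\sim|y|^{2n-8}$, one gets $\mu^{4(n-2)}\int_{|y|\le\delta_0/\mu}U^{p-3}dy\sim\mu^n$. So it is not the source of the $n=3$ condition; only the cut-off term is, and your two explanations of that condition contradict each other.

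\emph{The nonlinear term.} This part is only asserted. In the paper (Appendix \ref{app3}), \eqref{esnp} is estimated separately on the annuli $\{\frac R4\le|y_j|\le5R\}$ and on the outer region. That is exactly where $\alpha>\frac12$, $a>\frac12$ (for $n=3$) and the smallness of $c'$ enter. Your form of $N(\Xi)$ with only $u_*^{p-2}$-factors also requires $|\Xi|\le\frac12u_*$ on $\operatorname{supp}(1-\sum_j\eta_j)$. That needs bounded norms and $|t_0|$ large, whereas \eqref{esnp} holds unconditionally.

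\emph{The range $\nu>\frac12$ for $n=3$.} For $n=3$ the admissible $\nu$ exceeds $\frac12$ whenever $c_0(3-2a)<1$. For such $\nu$, a bound $\lesssim|t|^{-1}$ on $E_0$ or on $\mch_{\mathrm{outer},2}$ is not enough. You would need the sharper $\mu^2R^{-1}\sim|t|^{-2-c_0}$ for $E_0$, and an explicit faster-than-$|t|^{-1}$ decay of the nonlinear part. Your claim ``$\nu'\ge\nu$'' presupposes this rather than proving it.
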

\begin{proof}
Using \eqref{est-error-out-0}--\eqref{est-error-in-0}, we obtain
\begin{align*}
\int_{\R^n} u_*^{p-1}|E_0|^2 dx &\lesssim \int_{\R^n\setminus B(0,2)} u_*^{p-1}\frac{\mu^{n-2}}{1+|x|^{2(n-2)}} dx + \int_{\{R\mu<|x-\xi_1|<\delta_0\}} U_1^{p-1}\mu^{n-2} dx \\
&\lesssim \mu^n\int_{\R^n} \frac{1}{1+|x|^{2n}} dx + \mu^{2n-4} \int_{\{R<|y|<\delta_0\mu^{-1}\}} \frac{1}{1+|y|^4} dy \lesssim |t|^{-1}.
\end{align*}

Let $y_j = \mu^{-1}(x-\xi_j)$ for $j=1,\dots,n$. A direct computation gives
\be \label{esnec}
\begin{aligned}
&\ \left|\Delta\eta_1\tph_1 + 2\nabla\eta_1\cdot\nabla\tph_1 - pu_*^{p-1}\tph_1\pp_t\eta_1\right| \\
&\lesssim \left[\frac{|\phi(y_1,t)|}{R^2\mu^{2+\frac{n-2}{2}}} + \frac{|\nabla_{y_1}\phi(y_1,t)|}{R\mu^{2+\frac{n-2}{2}}} + \frac{|\phi(y_1,t)||\eta'(|y_1|)|}{\mu^2(1+|y_1|^4)\mu^{\frac{n-2}{2}}} \(\left|\frac{\dot R}{R}\right|+
\left|\frac{\dot{\mu}}{\mu}\right|+\left|\frac{\dot\xi_1}{R\mu}\right|\)\right] \mone_{\{R\le|y_1|\le2R\}} \\
&\lesssim \frac{|t|^{-b}}{R^2\mu^{2+\frac{n-2}{2}}(1+|y_1|^a)} \mone_{\{R\le|y_1|\le2R\}} \|\phi\|_{\sharp\sharp,a,b;\sigma}
\end{aligned}
\ee
and
\[
\int_{\R^n} u_*^{1-p} \sum_{j=1}^k \left|\frac{|t|^{-b} \mone_{\{R \le |y_j| \le 2R\}}}{R^2 \mu^{2+\frac{n-2}{2}} (1+|y_j|^a)}\right|^2 dx \lesssim |t|^{-2b}R^{n-2a} \lesssim |t|^{-2b+c_0(n-2a)}.
\]

In Appendix \ref{app3}, we will show that
\be \label{inho}
\int_{\R^n} u_*^{p-1}|\mch_{\mathrm{outer},2}|^2 \, dx \lesssim |t|^{-1}
\ee
by using $\alpha \in (\alpha_0,a)$ and the smallness of $c'\in(0,c_0)$.

Combining the previous estimates and integrating over $[\tau,\tau+1]$, we obtain
\[
\int_\tau^{\tau+1} \int_{\R^n}u_*^{p-1}|\mch_{\mathrm{outer}}|^2 \,dx\,dt \lesssim \int_\tau^{\tau+1}|t|^{-2\nu}\,dt \lesssim |\tau|^{-2\nu}.
\]
It follows that
\[
\|\mch_{\mathrm{outer}}\|_{L^2(\Lambda_\tau)} \lesssim |\tau|^{-\nu},
\]
which proves the lemma.
\end{proof}

\begin{lemma}\label{lemma:Houterest2}
Assume that $\|\phi\|_{\sharp\sharp,a,b;\sigma} \le C$ and $\|\psi\|_{\widetilde{**},\alpha,\beta;\sigma}\le C|t_0|^{c'(\alpha-a)}$.
Take $t_0 < 0$ sufficiently large in magnitude, if necessary. Then
\[
\|\mch_{\mathrm{outer}}\|_{\tilde{*},\alpha,\beta;\sigma} \le C|t_0|^{c'(\alpha-a)},
\]
where $C>0$ depends only on $n$, $k$, and $\sigma$.
\end{lemma}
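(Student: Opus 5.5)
I would split $\mch_{\mathrm{outer}}=\mch_{\mathrm{outer},1}+\mch_{\mathrm{outer},2}$ as above. I would bound each piece separately in the two components of the $\tilde{*}$-norm: the $*$-norm on $B(0,\msR_0)_{t_0}$, and the $*'$-norm of the conformal lift on $\mcb^{\ep_0}_{t_0}$. By the symmetry \eqref{pp2} it suffices to work near $\xi_1$ inside $B(0,\msR_0)$. The Kelvin invariance of $E$, $\eta_j$, $\tph_j$ and $N(\Xi)$ means the lift near $N$ is the mirror image of the region near the origin. So the $*'$-estimates near the north pole reduce to pointwise bounds on a compact set away from the bubbles. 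There $u_*\sim\mu^{\frac{n-2}{2}}$ and all quantities are smooth in $z$.

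\textbf{Error term $E_0$.} The cut-off $1-\sum_j\eta_j$ vanishes for $|y_1|\le R/4$ by Lemma \ref{lemma:eta_j}. On $R/4\le|y_1|\le\delta_0/\mu$, Lemma \ref{size error} gives $|E|\lesssim\mu^{\frac{n-2}{2}}\mu^{-2}\,\mu^{n-2}U^{p-1}(y)(1+\text{l.o.t.})$. Hence
\[
u_*^{p-1}|E_0|/\omega^1_{\alpha,\beta,2}\lesssim |t|^{\beta}\mu^{n-2}(1+|y|)^{\alpha-2}\mone_{\{|y|\ge R/4\}}\lesssim |t|^{\beta-1}R^{\alpha-a}\mu^{0}\cdots .
\]
With $\beta=\tfrac12+(c_0-c')(a-\alpha)$ and $R\sim|t|^{c_0}$ this is $\lesssim|t|^{-c'(a-\alpha)}\le|t_0|^{c'(\alpha-a)}$. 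The point is that the factor $|y|^{\alpha-2}$ with $\alpha<2$ is dominated at $|y|\sim R$ once $a\le2$. The $\wte^2$ contribution $\mu^{2(n-2)}U^{p-2}$ for $n\le5$ needs the restriction on $a$ in \eqref{eq:num0}; I would check it is of the same order. Away from the bubbles I would use \eqref{est-error-out-0}. For the lift, $\hat E_0$ is smooth near $N$ and of size $\mu^{\frac{n-2}{2}}\mu^2\sim|t|^{-\frac12-\frac{2}{n-2}}$, which beats $|t|^{-\beta-\frac{\alpha-2}{n-2}}$. H\"older seminorms follow by differentiating the explicit formulas, since $\pp_t E$ gains $|t|^{-1}$.

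\textbf{Cut-off terms with $\tph_j$.} These are supported in $R\le|y_j|\le2R$. By \eqref{esnec} they are $\lesssim|t|^{-b}R^{-2-a}\mu^{-\frac{n+2}{2}}$. Multiplying by $u_*^{p-1}\sim\mu^{-2}R^{-4}$ and dividing by $\omega^1_{\alpha,\beta,2}\sim|t|^{-\beta}\mu^{-2}R^{-\alpha-2}$ gives $|t|^{\beta-b}R^{\alpha-a}$. Since $\beta<b=1$, this is small. The Kelvin-mirrored support near infinity is handled by the lift. Here I would use \eqref{fke} for $\tph_j$, so that the lifted function near $N$ is again a rescaled copy of the same bounded quantity.

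\textbf{Nonlinear term and main obstacle.} The main obstacle is $\mch_{\mathrm{outer},2}=u_*^{1-p}(1-\sum\eta_j)N(\Xi)$. On its support $\Xi=\psi$ (plus $\eta_j\tph_j$ on the transition annulus), and I would apply Lemma \ref{yan}. The quadratic part gives $u_*^{p-2}\psi^2$ for $n\ge6$ and $|\psi|^p$ for $n\le5$. The time-derivative parts $u_*^{p-2}\psi\,\pp_tu_*$ and $u_*^{p-2}\psi\,\pp_t\psi$ are controlled by \eqref{eq:pptu*} and by the $\pp_t\psi$ part of the $\widetilde{**}$-norm. Each term is then bounded by $|t_0|^{2c'(\alpha-a)}$ times a positive power of $|t|^{-1}$. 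This requires $\alpha>\alpha_0$ in $n=3$, exactly as in Appendix \ref{app3}. I also need $u_*+\Xi>0$, which follows from the smallness of $\psi/u_*$ in the outer region. The sign-sensitive cases $n=3,4,5$, where $p-1>1$ fails and $|\psi|^p$ must be compared with $u_*^{p-1}\omega$, are where I expect the exponent bookkeeping to be tight. The H\"older seminorms follow by the same case split using the Lipschitz structure of $s\mapsto|s|^{p}$ and $|s|^{p-1}$.
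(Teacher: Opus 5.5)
\noindent\textbf{Overall.} Your decomposition and the Euclidean estimates near the bubbles match the paper's proof. The main difference is how you handle the $*'$-component; there is also one unstated hypothesis, and several size computations need correcting, although none of them changes the conclusion.

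\medskip\noindent\textbf{The $*'$-component via Kelvin reflection.} Since $\pi(x/|x|^2)$ is the reflection of $\pi(x)$ in $\{z_{n+1}=0\}$, the lift of any function satisfying \eqref{fke} is even in $z_{n+1}$. Hence, on $\mcb^{\ep_0}$, the $*'$-norm of $\whmch_{\mathrm{outer}}$ is controlled by its $*$-norm on a small ball around the origin. There $u_*^{p-1}\sim\mu^2$ is smooth, and the weights $\omega^1_{\alpha,\beta,2},\,\omega^1_{\alpha,\beta,2+\sigma}\sim|t|^{-\beta}\mu^{\alpha}$ and $\omega^1_{\alpha-\sigma,\beta,2}\sim|t|^{-\beta}\mu^{\alpha-\sigma}$, after dividing by $\mu^2$, reproduce exactly the $*'$-weights.

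What this buys you is that H\"older regularity at $N$ comes for free from smoothness near the origin. The paper instead needs the explicit formula \eqref{eq:whe0} for $\whe_0$ (the issue raised in Remark \ref{rmk:EHolder}), and the $**'$-part of the hypothesis on $\hat\psi$ to treat $\whmch_{\mathrm{outer},2}$ near $N$.

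The price is that $N(\Xi)$, and hence $\mch_{\mathrm{outer}}$, is Kelvin invariant only if $\psi$ is, and that is not a hypothesis of the lemma. You must either run the fixed point of Proposition \ref{prop:outer} in the closed subspace of functions satisfying \eqref{pp1}--\eqref{fke}, which $\mct_{\mathrm{outer}}$ preserves by Proposition \ref{outer-thm}, or fall back on the $**'$-bound for $\hat\psi$, as the paper does.

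Also, the gluing terms simply vanish on $\mcb^{\ep_0}$. The Kelvin images of the transition annuli lie within $O(R\mu)$ of $\xi_j$, not near infinity, so nothing there needs to be lifted.

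\medskip\noindent\textbf{Corrections to the size computations.}

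(i) \emph{Gluing terms.} The $*$-norm weighs $u_*^{p-1}\mch_{\mathrm{outer}}$, so no factor $u_*^{p-1}$ should be multiplied in. Dividing \eqref{esnec} by $\omega^1_{\alpha,\beta,2}$ gives
$|t|^{\beta-b}\mu^{-\frac{n-2}{2}}R^{\alpha-a}\sim|t|^{(c_0-c')(a-\alpha)}R^{\alpha-a}\sim|t|^{c'(\alpha-a)}$.
You dropped the factor $\mu^{-\frac{n-2}{2}}\sim|t|^{\frac12}$ coming from $\tph_j=\mu^{-\frac{n-2}{2}}\phi$. So this term is not small: it has exactly the size asserted in the lemma. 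The choice $\beta=\frac12+(c_0-c')(a-\alpha)$ is made precisely so that this holds; ``$\beta<b$'' is not the mechanism.

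(ii) \emph{$E_0$ near the bubbles.} At $|y_1|\sim R$, $E_0$ contributes $|t|^{\beta}\mu^{\frac{n-2}{2}}R^{\alpha-2}\sim|t|^{c_0(a-2)+c'(\alpha-a)}$. Your bound on $E$ has $\mu^{\frac{n-2}{2}}$ where $\mu^{-\frac{n-2}{2}}$ belongs. This contribution is again critical when $a=2$. It is these linear terms, not $N(\Xi)$, that saturate the estimate.

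(iii) \emph{The lift $\whe_0$.} It has size $\mu^{\frac{n-2}{2}}\sim|t|^{-\frac12}$, not $\mu^{\frac{n+2}{2}}$; you lifted $E$ instead of $E_0=u_*^{1-p}(1-\sum_j\eta_j)E$. Its $*'$-contribution is therefore $|t|^{(c_0-c')(a-\alpha)-\frac{2-\alpha}{n-2}}$, which is acceptable only because $c_0<\frac1{n-2}$ and $a\le 2$.

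(iv) \emph{Quadratic part of $N(\Xi)$ for $3\le n\le 5$.} Here Lemma \ref{yan} gives a maximum. Since $|\Xi|\ll u_*$ on the support of $1-\sum_j\eta_j$, the quadratic part is dominated by $u_*^{p-2}\Xi^2$, not by $|\Xi|^p$ as you state. The cleanest fix is the Taylor bound
$|N(\Xi)|\lesssim u_*^{p-2}\bigl(\Xi^2+|\Xi||\pp_t\Xi|+|\Xi||\pp_t u_*|\bigr)$,
valid in every dimension once $|\Xi|\le u_*/2$.

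(v) \emph{Terms linear in $\psi$.} Terms such as $u_*^{p-2}\psi\,\pp_t u_*$ carry only one factor $|t_0|^{c'(\alpha-a)}$, not two. They give $\mu^{n-2}R^{-2}|t_0|^{c'(\alpha-a)}$, which is still enough.
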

\begin{proof}
We first handle the term $(1-\sum_{j=1}^k\eta_j)E$ in $\mch_{\mathrm{outer},1}$.
If $|x-\xi_1| \le R\mu$, then $(1-\eta_1)E(x,t)=0$.
If $|x-\xi_1| \ge R\mu$ and $|x| \le \msR_0$, by using \eqref{est-error-in-0} and $|y_1|\ge R$, we obtain
\begin{align*}
|t|^\beta\mu^2(1+|y_1|^{\alpha+2})\left|(1-\eta_1)E(x,t)\right|
&\le C|t|^\beta\mu^2(1+|y_1|^{\alpha+2})\mu^{n-2-\frac{n+2}{2}}\frac1{1+|y_1|^4}\\
&\le C|t|^{(c_0-c')(a-\alpha)}R^{\alpha-2}\le C|t|^{c_0(a-2)+c'(\alpha-a)}.
\end{align*}
It follows that
\[
\sum_{j=1}^k |t|^\beta\mu^2(1+|y_j|^{\alpha+2}) \bigg|\bigg(1-\sum_{j=1}^k\eta_j\bigg)E(x,t)\bigg| \mone_{\{|x|\le \msR_0\}} \le C|t|^{(c_0-c')(a-\alpha)}R^{\alpha-2}\le C|t|^{c_0(a-2)+c'(\alpha-a)}.
\]
Applying \eqref{esnec} and $\beta+\frac12=b+(c_0-c')(a-\alpha)$, we also find
\begin{align*}
&\ \sum_{j=1}^k |t|^\beta\mu^2(1+|y_j|^{\alpha+2}) \left|\Delta\eta_j\tph_j + 2\nabla\tph_j\cdot\nabla\eta_j - pu_*^{p-1}\tph_j\pp_t\eta_j\right| \\
&\le C\sum_{j=1}^k|t|^\beta\mu^2(1+|y_j|^{\alpha+2}) \frac{|t|^{-b}\mone_{\{R\le|y_j|\le2R\}\cap B(0,2\delta_0)}} {R^2\mu^{2+\frac{n-2}{2}}(1+|y_j|^a)}
\le C|t|^{(c_0-c')(a-\alpha)}R^{\alpha-a} \le C|t|^{c'(\alpha-a)}.
\end{align*}

Moreover, a direct computation yields
\[|E_0(x,t)| \sim \mu^{\frac{n-2}{2}}|x|^{-(n-2)} \quad \text{for } |x| \ge \msR_0-2,\]
which implies that
\[|t|^{\beta+\frac{\alpha-2}{n-2}} |\whe_0(z,t)| \le C|t|^{\beta+\frac{\alpha-2}{n-2}-\frac12}\le C|t|^{(c_0-c')(a-\alpha)-\frac{2-\alpha}{n-2}} \quad \text{for } (z,t)\in \mcb^{\ep_0}\times(-\infty,t_0],\]
where $\whe_0$ is the conformal lift of $E_0$, defined through \eqref{stereo-proj-1}, as before.

To derive the H\"older estimates of $\whe_0$, we use its explicit expression. Indeed, by \eqref{err}, \eqref{approx*}, and \eqref{partial-t-ustar},
\begin{align*}
E_0 &= \bigg(1-\sum_{j=1}^k\eta_j\bigg) \(-p\pp_tu_* - \frac{n+2}{4} u_*^{1-p}\sum_{j=1}^kU_j^p + \frac{n+2}{4}u_*\) \\
&= \bigg(1-\sum_{j=1}^k\eta_j\bigg) \left[\sum_{j=1}^k p\mu^{-\frac{n}{2}} \left\{ \dot{\mu} Z_{n+1} \(\frac{x-\xi_j}{\mu}\) + (\nabla U)\(\frac{x-\xi_j}{\mu}\) \cdot \dot{\xi}_j\right\} \right.\\
&\hspace{50mm} \left. - \frac{n+2}{4}\bigg(\sum_{j=1}^kU_j\bigg)^{1-p} \sum_{j=1}^kU_j^p + \frac{n+2}{4} \sum_{j=1}^kU_j\right].
\end{align*}
Using \eqref{stereo-proj-1}, \eqref{defpoints}, and \eqref{dt}, we obtain
\begin{multline}\label{eq:whe0}
\whe_0(z,t) = \frac{(n+2)\msc_n}{4} \(\frac{\mu}{2}\)^{\frac{n-2}{2}} \bigg(1-\sum_{j=1}^k\tilde\eta_j(z,t)\bigg) \Bigg[\frac{2\dot\mu}{d\mu} \sum_{j=1}^k \frac{z_j-d}{(1-dz_j)^{\frac n2}} \\
- \left\{\sum_{j=1}^k \(\frac{1}{1-dz_j}\)^{\frac{n-2}{2}}\right\}^{1-p} \sum_{j=1}^k \(\frac{1}{1-dz_j}\)^{\frac{n+2}{2}} + \sum_{j=1}^k \(\frac{1}{1-dz_j}\)^{\frac{n-2}{2}}\Bigg],
\end{multline}
where $\tilde\eta_j(z,t) := \eta_j(x,t)$ and $x = \pi^{-1}(z)$.
The denominator in the definition of $\whe_0$ is uniformly positive on $\mcb^{\ep_0}_{t_0}$, and so $\whe_0$ is smooth on this set.
In particular, the spatial and temporal H\"older seminorms of $\whe_0$ on $\mcb^{\ep_0}_{t_0}$, defined in \eqref{eq:Holdermcb} and \eqref{eq:Holdert}, respectively, satisfy
\[[\whe_0]_{C^{\sigma}_{\mcb^{\ep_0}}} \lesssim |t|^{-\frac12} \quad \text{and} \quad [\whe_0]_{C_t^{\sigma/2}}\lesssim |t|^{-\frac12-\frac{\sigma}{n-2}}.\]

The above estimates, together with scaling arguments, yield
\[
\|\mch_{\mathrm{outer},1}\|_{\tilde{*},\alpha,\beta;\sigma;t_0} \le C\left[ |t_0|^{(c_0-c')(a-\alpha)-\frac{2-\alpha}{n-2}} + |t_0|^{c'(\alpha-a)} \|\phi\|_{\sharp\sharp,a,b;\sigma} \right].
\]

\medskip
We postpone the derivation of the estimate
\be \label{poho2}
\|\mch_{\mathrm{outer},2}\|_{\tilde{*},\alpha,\beta} \le C|t_0|^{c'(\alpha-a)}
\ee
to Appendix \ref{app4}. The corresponding H\"older estimates are obtained similarly. This completes the proof.
\end{proof}
\begin{remark}\label{rmk:EHolder}
A subtle point in the preceding argument is that the Euclidean H\"older estimate
\[[E]_{C^{\sigma}_x} \sim \mu^{\frac{n+2}{2}}|x|^{-(n+2+\sigma)} \quad \text{for } |x| \text{ sufficiently large}\]
alone does not directly imply the boundedness of $[\whe_0]_{C^{\sigma}_z}$ near the north pole $N$. For the definitions of these seminorms, see \eqref{eq:Holderx} and \eqref{eq:Holderz}.
For this reason, we used the explicit expression of $\whe_0$ in \eqref{eq:whe0} to prove its smoothness near $N$.
The availability of such a direct verification is not accidental, but rather reflects the genuinely geometric nature of the flows constructed here.
\end{remark}

\begin{proof}[Proof of Proposition \ref{prop:outer}]
Let $C_0>0$ be the constant appearing in \eqref{outer-bound}. Given a large constant $M>1$, we define
\[
\mcm := \left\{\mu: \|\mu\|_{\tnu;\sigma} \le M,\, \mu(t) \ge M^{-1}|t|^{-\tnu} \text{ for } t \in (-\infty,t_0)\right\}, \quad \dot{\mcm} := \{\dot{\mu}: \|\dot{\mu}\|_{\tnu+1;\sigma} \le M\},
\]
\[
\mcx_{\mathrm{in}} := \{\phi: \|\phi\|_{\sharp\sharp,a,b;\sigma} \le M\},
\]
\[
\mcx_{\mathrm{out}} := \left\{\psi: \|\psi\|_{\widetilde{**},\alpha,\beta;\sigma} \le C_0M|t_0|^{c'(\alpha-a)} \right\}, \quad
\mcy_{\mathrm{out}} := \left\{f: \|f\|_{\tilde{*},\alpha,\beta;\sigma} \le M|t_0|^{c'(\alpha-a)}\right\},
\]
and set 
\[
P := (\mu,\dot{\mu},\phi) \in \wtmcp := \mcm \times \dot{\mcm} \times \mcx_{\mathrm{in}}.
\]

Let $\mct_{\mathrm{outer}}:\mcy_{\mathrm{out}}\to \mcx_{\mathrm{out}}$ be the map introduced in Proposition \ref{outer-thm}.
Using \eqref{eq:Houter} and Lemma \ref{lemma:Houterest2}, we also define the map $\mcf: \mcx_{\mathrm{out}} \times \wtmcp \to \mcx_{\mathrm{out}}$ as
\[
\mcf(\psi,P) := \psi-\mct_{\mathrm{outer}}(\mch_{\mathrm{outer}}[\psi,P]),
\]
where all quantities $\eta_j$, $u_*$, $E$, $\Xi$, and $\tph_j$ are evaluated at the parameter $P \in \wtmcp$. Equation \eqref{outer} is then rephrased as
\be \label{eq:mcf0}
\mcf(\psi,P)=0.
\ee

Fix $P\in\wtmcp$. By the standard contraction argument, based on the Banach fixed point theorem, we obtain a unique solution $\psi=\psi[P] \in \mcx_{\mathrm{out}}$ to \eqref{eq:mcf0}.
Moreover, the symmetry properties \eqref{pp1}--\eqref{fke} of $\psi$ follow immediately from uniqueness.

The verification of \eqref{psi-lip-1}--\eqref{psi-lip-3} requires some additional computations, which are deferred to Appendix \ref{app5}. This completes the proof.
\end{proof}

\section{Proof of Theorem \ref{thm3}: Multiple-layer case}\label{se8}
In this section, we extend the construction developed in Sections \ref{se2}--\ref{se4} to the multiple-layer setting, yielding uncountably many families of non-rotationally symmetric ancient solutions.
Since the argument follows closely the scheme established earlier, we only highlight the required modifications.

\subsection{Approximate solutions}
Let $h \ge 1$ be the number of layers, and let $k \ge 2$ be the number of bubbles in each layer.
Let also $(\vth_1^*,\dots,\vth_h^*) \in (-1,1)^h$ be an $h$-tuple satisfying $-1 < \vth_1^* < \dots < \vth_h^* < 1$.
For each $l=1,\dots,h$ and $j=1,\dots,k$, we define the concentration points $\xi_{jl}(t)$ as
\be \label{eq:xijl}
\xi_{jl}(t) := d_l(t) \(\sqrt{1-\vth_l^2(t)} \, e^{\frac{2\pi(j-1)}{k} i}, \, \vth_l(t), \, 0'\) \in \R^2 \times \R \times \R^{n-3} \quad \text{for } t \in (-\infty,t_0].
\ee
The scaling parameters $\mu_l(t)$ and the layer position parameters $\vth_l(t)$ are smooth functions and exhibit the following asymptotic behaviors:
\be \label{mdv}
\begin{aligned}
&\ \mu_l(t) \sim |t|^{-\frac{1}{n-2}},\quad \dot{\mu}_l(t) \sim |t|^{-\frac{n-1}{n-2}}, \quad d_l(t) = \sqrt{1-\mu_l^2(t)} \to 1, \\
&\begin{cases} 
\vth_1(t) = \vth_1^*, & \text{if } h=1, \\
\|\vth_l(t) - \vth_l^*\|_{\frac{1+\ga}{n-2};\sigma} + \|\dot{\vth}_l(t)\|_{\frac{1+\ga}{n-2}+1;\sigma} \le C \text{ with } \ga>0 \text{ small} &\text{if } h \ge 2
\end{cases}
\end{aligned}
\ee
as $t \to -\infty$. We seek a solution $u(x,t)$ of \eqref{pb} in the following form:
\be\label{neso}
u(x,t) = u_*[\bm\mu, \bm\vth](x,t) + \Xi[{\bm\mu}, \dot{\bm\mu}, {\bm\vth}, \dot{\bm\vth}](x,t) := \sum_{l=1}^h \sum_{j=1}^k U_{\mu_l(t), \xi_{jl}(t)}(x) + \Xi(x,t),
\ee
where $\Xi$ denotes the correction term. In this configuration, the formal approximate solution $u_*$ is even in $x_2,x_4,\dots,x_n$ and invariant under the Kelvin transform. The correction $\Xi$ will later be constructed to satisfy the same symmetry properties.
Unlike in the corresponding elliptic paper \cite{MM21}, we do not impose symmetry in $x_3$, in order to cover more general cases.

\medskip
We remark that, in the single-layer case $h=1$, the parameter $\vth_1:(-\infty,t_0] \to (-1,1)$ is chosen to be independent of $t$.
The existence proof in this setting is a straightforward adaptation of the argument presented in Sections \ref{se2}--\ref{se4}.
More precisely, since $\mu_1^{\frac{n+2}{2}}E(\mu_1 y+\xi_{11},t)$ (as well as so $\psi(\mu_1 y+\xi_{11},t)$ and $\phi(y,t)$) is symmetric in $y_3$, the inner problem reduces to determining the single parameter $\mu_1$.

\medskip
We turn to the multi-layer case $h \ge 2$. We set
\[
{\bm \mu}(t) := (\mu_1(t),\dots,\mu_h(t)), \quad {\bm d}(t) := (d_1(t),\dots,d_h(t)), \quad {\bm \vth}(t) := (\vth_1(t),\dots,\vth_h(t)),
\]
\[{\bm \vth}^* := (\vth_1^*,\dots,\vth_h^*), \quad \text{and} \quad \xi_{jl}^* := \(\sqrt{1-(\vth_l^*)^2} \, e^{\frac{2\pi(j-1)}{k} i}, \, \vth^*_l, \, 0'\).
\]
Also, for an $\R^h$-valued function ${\bm f}=(f_1,\dots,f_h)$, all vector-valued norms below are understood in the sense that
\[
\|{\bm f}\|_{X} := \max_{1\le l\le h}\|f_l\|_{X},
\]
where $X$ is a suitable Banach space. 

The error $E = E[{\bm\mu}, \dot{\bm\mu}, {\bm\vth}, \dot{\bm\vth}]$ associated with the approximate solution $u_*$ is again defined as in \eqref{err}.
We now estimate the size of $E$ and compute, for each $l=1,\dots,h$, the projections of the rescaled error $\mu_l^{\frac{n+2}{2}} E(\mu_l y+\xi_{1l},t)$ onto the kernel elements $Z_{n+1}$ and $Z_3$, defined in \eqref{eq:Zi}, corresponding respectively to scaling invariance and variation of the layer position.

In the exterior region, the estimate \eqref{est-error-out-0} remains valid. We thus focus on the interior region $|x - \xi_{1l}| < \delta_0$ for each $l\in\{1,\dots,h\}$, where $\delta_0$ is chosen such that
\[
\delta_0 \in \Big(0, \tfrac{1}{2} \min_{(j,l) \ne (j',l')} \left\{|\xi^*_{jl}-\xi^*_{j'l'}|\right\}\Big).
\]

Following the approach of Lemma \ref{size error} and employing the expansion \eqref{uijep}, we obtain the error expansion:
\begin{align}
\begin{medsize}
\displaystyle \mu_l^{\frac{n+2}{2}} E(\mu_l y+\xi_{1l},t)
\end{medsize}
&\begin{medsize}
\displaystyle = pU^{p-1}(y) \left[Z_{n+1}(y) \frac{\dot{\mu}_l}{\mu_l} + (\nabla U)(y) \cdot \frac{\dot{\xi}_{1l}}{\mu_l}\right] + p\frac{n+2}{4}U^{p-1}(y) \sum_{(j',l') \ne (1,l)} U\(y + {\xi_{1l}-\xi_{j'l'} \over \mu_l}\)
\end{medsize} \nonumber \\
&\begin{medsize}
\displaystyle \ - \mu_l^{\frac{n+2}{2}} (\pp_t u_*^p)(\mu_l y+\xi_{1l},t) - pU^{p-1}(y) \left[Z_{n+1}(y) \frac{\dot{\mu}_l}{\mu_l} + (\nabla U)(y) \cdot \frac{\dot{\xi}_{1l}}{\mu_l}\right]
\end{medsize} \nonumber \\
&\begin{medsize}
\displaystyle \ + \mu_l^{\frac{n+2}{2}} \(\Delta u_*+\frac{n+2}{4}u_*^p\)(\mu_l y+\xi_{1l},t) - p\frac{n+2}{4}U^{p-1}(y) \sum_{(j',l') \ne (1,l)} U\(y + {\xi_{1l}-\xi_{j'l'} \over \mu_l}\)
\end{medsize} \nonumber \\ 
&\begin{medsize}
\displaystyle = p \mu_l^{-1} \dot{\mu}_l \, (U^{p-1}Z_{n+1})(y) + \frac{n+2}{4} p \sum_{(j',l')\ne (1,l)} \frac{\msc_n \mu_l^{n-2}}{|\xi_{j'l'}-\xi_{1l}|^{n-2}} U^{p-1}(y)
\end{medsize} \label{error-expansion} \\
&\begin{medsize}
\displaystyle \ + p U^{p-1}(y) \left[Z_1 \frac{(d_l\sqrt{1-\vth_l^2})'}{\mu_l} + Z_3 \frac{(d_l\vth_l)'}{\mu_l}\right]
\end{medsize} \nonumber \\
&\begin{medsize}
\displaystyle \ - \frac{(n+2)^2}{4}\mu_l^{n-1} U^{p-1}(y)
\left[\sum_{(j',l')\ne (1,l)} \frac{\msc_n(\xi_{1l}-\xi_{j'l'})_1}{|\xi_{1l}-\xi_{j'l'}|^{n}} y_1 
+ \sum_{(j',l')\ne (1,l)} \frac{\msc_n(\xi_{1l}-\xi_{j'l'})_3}{|\xi_{1l}-\xi_{j'l'}|^{n}} y_3\right]
\end{medsize} \nonumber \\
&\begin{medsize}
\displaystyle \ + \mu_l^{\frac{n+2}{2}} \wte_1[{\bm \mu}, \dot {\bm \mu}, {\bm \vth}, \dot{\bm\vth}](y,t) + \mu_l^{\frac{n+2}{2}} \wte_2[\bm\mu, \bm\vth](y,t).
\end{medsize} \nonumber
\end{align}
The derivation of \eqref{error-expansion} uses the symmetry cancellation
\[
\sum_{(j',l')\ne (1,l)} \frac{\msc_n (\xi_{1l}-\xi_{j'l'})_\kappa}{|\xi_{1l} - \xi_{j'l'}|^{n}} = 0 \quad \text{for } \kappa \in \{2,4,\dots,n\}.
\] 
The remainder terms satisfy the following estimates:
\begin{align*}
&\begin{medsize}
\displaystyle \mu_l^{n+2 \over 2} |\wte_1| (y,t) = \left|\mu_l^{\frac{n+2}{2}}\(\pp_t u_*^p\)(\mu_l y+\xi_{1l},t)
+ pU^{p-1}(y) \left[Z_{n+1}(y) \frac{\dot{\mu}_l}{\mu_l} + (\nabla U)(y) \cdot \frac{\dot{\xi}_{1l}}{\mu_l}\right]\right| \le C {\mu_l^{n} \over (1+ |y|^2)^2},
\end{medsize} \\
&\begin{medsize}
\displaystyle \mu_l^{n+2 \over 2} |\wte_2| (y,t) = \left|\mu_l^{\frac{n+2}{2}} \(\Delta u_*+\frac{n+2}{4}u_*^p\)(\mu_l y+\xi_{1l},t) - p\frac{n+2}{4} U^{p-1}(y) \sum_{(j',l')\ne (1,l)} U\(y + {\xi_{1l}-\xi_{j'l'} \over \mu_l}\) \right.
\end{medsize} \\
&\begin{medsize}
\displaystyle \hspace{50pt} \ \left. + p\frac{n+2}{4} U^{p-1}(y) \sum_{(j',l')\ne (1,l)} \left[U\(y + {\xi_{1l}-\xi_{j'l'} \over \mu_l}\) - \frac{\msc_n\mu_l^{n-2}}{|\xi_{j'l'}-\xi_{1l}|^{n-2}}-{(n-2)\msc_n\mu_l^{n-1} (\xi_{1l}-\xi_{j'l'}) \cdot y \over |\xi_{1l}-\xi_{j'l'}|^{n}}\right]\right|
\end{medsize} \\
&\begin{medsize}
\displaystyle \hspace{50pt} \le C\left[\mu_l^{n+2} + \mu_l^{n+1} \, U^{p-1}(y) \, (1+ |y|^3) + \mu_l^n(1+|y|^4)^{-1}\mone_{\{n=3\}}\right].
\end{medsize}
\end{align*}
Direct calculations yield 
\be \label{eq:f1l}
\begin{aligned}
\sum_{(j',l') \ne (1,l)} \frac{\msc_n}{|\xi_{1l}-\xi_{j'l'}|^{n-2}} 
&= \sum_{j'=2}^k \frac{\msc_n}{|\xi^*_{1l}-\xi^*_{j'l}|^{n-2}} + \sum_{l'\ne l} \sum_{j'=1}^k \frac{\msc_n}{|\xi^*_{1l}-\xi^*_{j'l'}|^{n-2}} + q(|{\bm d}-1|) + q(|{\bm \vth}-{\bm \vth}^*|) \\
&=: f_{1l}({\bm \vth}^*)+ q(|{\bm d}-{\bm 1}|) + q(|{\bm \vth}-{\bm \vth}^*|)
\end{aligned}
\ee
and
\be \label{eq:f3l}
\begin{aligned}
\sum_{(j',l') \ne (1,l)} \frac{\msc_n (\xi_{1l}-\xi_{j'l'})_3}{|\xi_{1l}-\xi_{j'l'}|^n} &= \sum_{l'\ne l} \sum_{j'=1}^k \frac{\msc_n (\xi^*_{1l}-\xi^*_{j'l'})_3}{|\xi^*_{1l}-\xi^*_{j'l'}|^n} + q(|{\bm d}-1|) + q(|{\bm \vth}-{\bm \vth}^*|) \\
&=: f_{3l}({\bm \vth}^*)+ q(|{\bm d}-1|) + q(|{\bm \vth}-{\bm \vth}^*|).
\end{aligned}
\ee
Here, $q(s)$ denotes a generic smooth function satisfying $q(s) \to 0$ as $|s| \to 0$. By definition, it holds that $f_{1l}({\bm \vth}^*)>0$.

Let $R>0$ be sufficiently large. By an argument analogous to that of Lemma \ref{le2.3}, the estimates above imply that, for each layer $l=1,\dots,h$,
\begin{multline}\label{pre0}
\mu_l^{\frac{n+2}{2}} \int_{B(0,R)} E(\mu_l y+\xi_{1l},t) Z_{n+1}(y) \,dy = pa_1 \mu_l^{-1}\dot{\mu}_l - \frac{n+2}{4} pa_2 f_{1l}({\bm \vth}^*) \mu_l^{n-2} \\
+ \(\mu_l^n |\log\mu_l|+\mu_l^{n-2}R^{-2}+\mu_l^{n-2}|{\bm \vth}-{\bm \vth}^*|\) G_{(n+1)l}[{\bm\mu}, \dot{\bm\mu}, {\bm\vth}, \dot{\bm\vth}](t),
\end{multline}
\begin{multline}\label{pre2}
\mu_l^{\frac{n+2}{2}} \int_{B(0,R)} E(\mu_l y+\xi_{1l},t) Z_3(y) \,dy = pa_3 \frac{(d_l\vth_l)'}{\mu_l} + \frac{(n+2)^2}{4} a_4 f_{3l}({\bm \vth}^*) \mu_l^{n-1} \\
+ \(\mu_l^n+\mu_l^{n-1}R^{-2}+\mu_l^{n-1}|{\bm \vth}-{\bm \vth}^*|\) G_{3l}[{\bm\mu}, \dot{\bm\mu}, {\bm\vth}, \dot{\bm\vth}](t),
\end{multline}
where $a_1$ and $a_2$ are the constants in \eqref{eq:a1a2}, 
\[a_3 := \int_{\R^n}U^{p-1}Z_3^2dy > 0, \quad \text{and} \quad a_4 := -\int_{\R^n}U^{p-1}y_3Z_3 dy > 0.\]
Furthermore, ${\bm G}_3 := (G_{31},\dots, G_{3h})$ and ${\bm G}_{n+1} := (G_{(n+1)1},\dots,$ $G_{(n+1)h})$ denote generic vector functions of $t$
depending on the parameters ${\bm \mu}$, $\dot{{\bm \mu}}$, ${\bm \vth}$, and $\dot{{\bm \vth}}$ satisfying the following properties:
\begin{enumerate}
\item[(i)]
It holds that $\|G_3\|_{0;\sigma} + \|G_{n+1}\|_{0;\sigma} \le C$.
\item[(ii)]
Let $\tnu_1 := \frac{1}{n-2}$ and $\tnu_2=\frac{1+\ga}{n-2}$, where $\ga>0$ is small.
There exists a constant $C > 0$ depending only on $n$, $k$, $h$, and $\sigma$ such that
\begin{align*}
\left\|{\bm G}_i[{\bm \mu}^{(1)}] - {\bm G}_i[{\bm \mu}^{(2)}]\right\|_{0;\sigma} &\le C \big\|{\bm \mu}^{(1)} - {\bm \mu}^{(2)}\big\|_{\tnu_1;\sigma}, \\
\left\|{\bm G}_i[\dot{\bm \bmu}^{(1)}] - {\bm G}_i[\dot{\bm \bmu}^{(2)}]\right\|_{0;\sigma} &\le C \big\|\dot{\bm \bmu}^{(1)} - \dot{\bm \bmu}^{(2)}\big\|_{\tnu_1+1;\sigma}, \\
\left\|{\bm G}_i[{\bm \vth}^{(1)}] - {\bm G}_i[{\bm \vth}^{(2)}]\right\|_{0;\sigma} &\le C \big\|{\bm \vth}^{(1)}-{\bm \vth}^{(2)}\big\|_{\tnu_2;\sigma},\\
\left\|{\bm G}_i[\dot{\bm \vth}^{(1)}] - {\bm G}_i[\dot{\bm \vth}^{(2)}]\right\|_{0;\sigma} &\le C \big\|\dot{\bm \vth}^{(1)}-\dot{\bm \vth}^{(2)}\big\|_{\tnu_2+1;\sigma}
\end{align*}
for $i=3, n+1$; cf. Lemma \ref{le2.3}. Here, ${\bm \vth}^{(1)}(t),\, {\bm \vth}^{(2)}(t) \to {\bm \vth}^*$ as $t\to -\infty$.
\end{enumerate}

\subsection{The inner and outer problems}
In what follows, let $R=R(t)$ and $c_0$ be chosen as in \eqref{eq:Rt}, and let the parameters $a$, $b$, $\sigma$, $\alpha$, $\beta$, and $c'$ be as in Definition \ref{def:num}.
We also set $\tnu_1=\frac{1}{n-2}$ and $\tnu_2=\frac{1+\ga}{n-2}$, where $\ga>0$ is small. Finally, we assume that ${\bm\mu}$, ${\bm d}$, and ${\bm\vth}$ satisfy \eqref{mdv}.

We search for the correction term $\Xi$, which solves \eqref{nl-p}, of the form
\[\Xi(x,t)=\sum_{l=1}^h\sum_{j=1}^k\eta_{jl}(x,t)\tph_{jl}(x,t)+\psi(x,t),\]
where $\eta_{jl}$ denote the cut-off functions defined in \eqref{def-cutoff-j}, and $\tph_{jl}$ are given by \eqref{tildephi-ass-3}, with $\mu$, $\xi_j$, and $\phi$ replaced by $\mu_l$, $\xi_{jl}$, and $\phi_l$, respectively.
We require both $\tph_{1l}$ and $\psi$ to be even in $x_2,x_4,\dots,x_n$, and to satisfy the Kelvin invariance \eqref{fke}.
The inner--outer gluing scheme described in Subsection \ref{scheme} can then be implemented in the present setting. 

\medskip
The precise formulation of the outer problem and its main result are given below.
\begin{prop}\label{prop:outer-mult}
We write ${\bm\phi}:=(\phi_1,\dots,\phi_h)$. Suppose that, for each $l=1,\dots,h$, the function $\tph_{1l}$ satisfies \eqref{fke} and is even in $x_2,x_4,\dots,x_n$.
Take $t_0<0$ sufficiently large in magnitude, if necessary.
Then there exists an ancient solution $\psi = \psi[{\bm\mu}, \dot{\bm\mu}, {\bm\vth}, \dot{\bm\vth}, {\bm\phi}]$ to 
\[p u_*^{p-1} \pp_t \psi = \Delta \psi + p\, {n+2 \over 4} \,\bigg(1- \sum_{l=1}^h\sum_{j=1}^k \eta_{jl}\bigg) u_*^{p-1} \psi + u_*^{p-1}\mch_{\mathrm{outer}} \quad \text{in } \R^n \times (-\infty,t_0),\]
where 
\[\mch_{\mathrm{outer}} := u_*^{1-p} \bigg[\bigg(1-\sum_{l=1}^h\sum_{j=1}^k\eta_{jl}\bigg) (E+N(\Xi))
+ \sum_{l=1}^h\sum_{j=1}^k \(\Delta\eta_{jl} \tph_{jl} + 2 \nabla\tph_{jl} \nabla\eta_{jl} - pu_*^{p-1} \tph_{jl} \pp_t\eta_{jl}\)\bigg].\]
The function $\psi$ is even in $x_2,x_4,\dots,x_n$ and satisfies \eqref{pp2}--\eqref{fke}, \eqref{psi-norm-bound}--\eqref{psi-lip-3} with the corresponding modifications, and
\begin{align*}
\| \pp_{\bm \vth}\psi[\bar{\bm \vth}] \|_{\widetilde{**},\alpha,\beta;\sigma} &\le C |t_0|^{c'(\alpha-a)-\ga} \|\bar{\bm \vth} \|_{\tnu_2;\sigma},\\
\| \pp_{\dot{\bm \vth}}\psi[\dot{\bar{\bm \vth}}] \|_{\widetilde{**},\alpha,\beta;\sigma} &\le C |t_0|^{c'(\alpha-a)-\ga-1} \| \dot{\bar{\bm \vth}} \|_{\tnu_2+1;\sigma}.
\end{align*}
Here, $C > 0$ is a constant depending only on $n$, $k$, $h$, and $\sigma$, the operators $\pp_{\bm \vth}$ and $\pp_{\dot{\bm \vth}}$ denote the respective Fr\'echet derivatives so that
$\pp_{\bm \vth}\psi[{\bm\mu}, \dot{\bm\mu}, {\bm\vth}, \dot{\bm \vth}, \bm\phi][\bar{\bm \vth}] = \pp_{s}\psi[{\bm\mu}, \dot{\bm\mu}, {\bm\vth}+s\bar{\bm \vth}, \dot{\bm \vth}, \bm\phi]|_{s=0}$ and 
$\pp_{\dot{\bm \vth}}\psi[{\bm\mu}, \dot{\bm\mu}, {\bm\vth}, \dot{\bm \vth}, \bm\phi][\dot{\bar{\bm \vth}}] = \pp_{s}\psi[{\bm\mu}, \dot{\bm\mu}, {\bm \vth}, \dot{\bm \vth}+s\dot{\bar{\bm \vth}}, \bm\phi]|_{s=0}$.
\end{prop}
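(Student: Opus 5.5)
The proof follows Proposition \ref{prop:outer} almost verbatim, with the bubble index $j$ replaced by the pair $(j,l)$ and with the layer parameters $(\bm\vth,\dot{\bm\vth})$ added to the fixed-point data. First, the linear theory of Proposition \ref{outer-thm} transfers directly. The energy estimate (Lemmas \ref{outer-linear-energy}--\ref{outer-linear-energy2}), the maximum principle (Lemma \ref{maximum-principle}), the barrier bound (Lemma \ref{outer-linear-point}) and the Schauder step use only three facts: $|\pp_t u_*|\lesssim\mu^{n-2}u_*$, the support properties of the cut-offs (Lemma \ref{lemma:eta_j} for each $\eta_{jl}$), and uniform separation of the $hk$ centers by $2\delta_0$. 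All three hold under \eqref{mdv} once $|t_0|$ is large, since $\xi_{jl}\to\xi_{jl}^*$. In the barrier $\bps$ we now sum $\mcp_\alpha(|x-\xi_{jl}|,t)$ over all $(j,l)$. The bound \eqref{eq:mp2} also still holds: for $|x|\ge2$ it is immediate, and for $|x|\le2$ it follows from $1-\sum\eta_{jl}=0$ on $\cup\{|x-\xi_{jl}|\le\frac14R\mu_l\}$, using $\mu_l\sim\mu_{l'}$. Uniqueness then gives the symmetries: evenness in $x_2,x_4,\dots,x_n$, invariance \eqref{pp2}, and Kelvin invariance. Here we use that $u_*$ and the $\eta_{jl}$ have these symmetries, because $|\xi_{jl}|=d_l$ and $\mu_l^2+d_l^2=1$.

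Next we estimate $\mch_{\mathrm{outer}}$ as in Lemmas \ref{lemma:Houterest1}--\ref{lemma:Houterest2}. The rescaled error expansion \eqref{error-expansion} has leading size $\mu_l^{n-2}U^{p-1}$. There are two new terms: $pU^{p-1}[Z_1(d_l\sqrt{1-\vth_l^2})'+Z_3(d_l\vth_l)']/\mu_l$, and the dipole term of order $\mu_l^{n-1}U^{p-1}|y|$. Both are $O(\mu_l^{n-1}U^{p-1}(1+|y|))$, because $|\dot\vth_l|\lesssim|t|^{-1-\frac{1+\ga}{n-2}}$. In the region $|y|\ge R$ they are therefore dominated by the bound $\mu_l^{n-2}|y|^{-4}$ already used, as $R\mu_l\to0$. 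The far-field lift $\whe_0$ near $N$ again has an explicit smooth formula analogous to \eqref{eq:whe0}, with $z_j$ replaced by $z\cdot\xi_{jl}/d_l$. This yields $\|\mch_{\mathrm{outer}}\|_{\tilde*,\alpha,\beta;\sigma}\le C|t_0|^{c'(\alpha-a)}$. The contraction in $\mcx_{\mathrm{out}}$ for fixed $P=({\bm\mu},\dot{\bm\mu},{\bm\vth},\dot{\bm\vth},{\bm\phi})$ then gives $\psi$ and \eqref{psi-norm-bound}.

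For the Lipschitz bounds, the $\bm\mu$, $\dot{\bm\mu}$ and $\bm\phi$ derivatives are handled exactly as in Appendix \ref{app5}. For $\pp_{\bm\vth}\psi$, we differentiate the fixed-point identity: $\pp_{\bm\vth}\psi=\mct_{\mathrm{outer}}(\pp_{\bm\vth}\mch_{\mathrm{outer}}+\dots)$, where the terms coming from $u_*$ and $W$ are absorbed by smallness. Differentiating in $\vth_l$ moves the centers $\xi_{jl}$, which produces factors $\mu_l^{-1}\nabla U$ in the bubble variables. A perturbation $\bar{\bm\vth}$ with $\|\bar{\bm\vth}\|_{\tnu_2;\sigma}$ finite has size $|t|^{-\frac{1+\ga}{n-2}}\sim\mu^{1+\ga}$, so the net effect is a gain of $\mu^{\ga}$ relative to the $\mu$-variation, which is $\mu^{-1}\bar\mu\sim O(1)$. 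This gives the stated factor $|t_0|^{-\ga}$, up to the power of $t$ absorbed into $|t_0|^{c'(\alpha-a)}$. The $\dot{\bm\vth}$ derivative enters only through $\pp_t u_*$ in $E$ and $N(\Xi)$, and through $\pp_t\eta_{jl}$. It therefore gains an extra $|t|^{-1}$, which gives the factor $|t_0|^{c'(\alpha-a)-\ga-1}$.

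The main obstacle is the $\vth$-derivative estimate in the refined norms, in particular near the cut-off annuli and near $N$. There we must check that $\pp_{\vth_l}\eta_{jl}$, and the resulting terms $\Delta\eta\,\tph$ and $\nabla\eta\cdot\nabla\tph$, only cost $\mu_l^{-1}\cdot R^{-1}$. In the Kelvin-transformed cut-off region this requires using \eqref{def-cutoff-j} carefully.
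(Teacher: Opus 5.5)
Your proposal follows essentially the paper's route. The paper gives no separate proof of Proposition~\ref{prop:outer-mult}; it only indicates that the single-layer scheme of Section~\ref{se4} and Appendix~\ref{app5} carries over with $j$ replaced by $(j,l)$ and $(\bm{\vth},\dot{\bm{\vth}})$ added to the data, and that scheme is exactly what you run: the linear theory of Proposition~\ref{outer-thm}, the bounds on $\mch_{\mathrm{outer}}$ as in Lemmas~\ref{lemma:Houterest1}--\ref{lemma:Houterest2}, the contraction in $\mcx_{\mathrm{out}}$, and the implicit-function/Neumann-series argument for the parameter derivatives.

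The one soft spot is the exponent bookkeeping in the two new derivative bounds, which your heuristics do not literally deliver (and the paper gives no more detail here than you do). First, a perturbation with $\|\bar{\bm{\vth}}\|_{\tnu_2;\sigma}<\infty$ gains $\mu^{\ga}\sim|t|^{-\frac{\ga}{n-2}}$ over the $\bm{\mu}$-variation. This equals the stated $|t_0|^{-\ga}$ only for $n=3$; for $n\ge4$ it takes that form only after giving up part of $|t_0|^{c'(\alpha-a)}$, i.e.\ with a smaller $c'$. Second, on the cut-off annulus $|y|\sim R$ the $\dot{\bm{\vth}}$-part of $E$ contributes $|t|^{c'(\alpha-a)-c_0(n+1-a)}\mu^{\ga}$ to the $\tilde{*}$-norm. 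The extra factor over the $\bm{\vth}$-bound is therefore $|t|^{-c_0(n+1-a)}$, which is a full $|t|^{-1}$ only when $c_0(n+1-a)\ge1$.
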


Unlike in the single-layer case, the inner problem must be solved subject to $2h$ orthogonality conditions, in order to account for interactions in both the $x_{n+1}$- and $x_3$-directions. 
For each $l=1,\dots,h$, let
\[
\mcz_{(n+1)l} := Z_{n+1} - \tfrac{\mu_l}{\sqrt{1-\mu_l^2}} \(\sqrt{1-\vth_l^2}Z_1 + \vth_lZ_3\) \quad \text{and} \quad 
\mcz_{3l} := - \sqrt{1-\mu_l^2}\(\tfrac{\vth_l}{\sqrt{1-\vth_l^2}}Z_1-Z_3\)
\]
in $\R^n \times (-\infty,t_0]$; cf. \eqref{eq:mczn1}.

\begin{prop}\label{prop:nonlin-mult}
Let $\psi$ be the solution of \eqref{outer} whose existence and properties were established in Proposition \ref{prop:outer-mult}.
Take $t_0<0$ sufficiently large in magnitude, if necessary. Then, there exist a function ${\bm\phi}:=(\phi_1,\dots,\phi_h)$
and the coefficient vectors ${\bm c}_{n+1}(t) := ({\bm c}_{(n+1)1}(t),\dots,{\bm c}_{(n+1)h}(t))$ and ${\bm c}_3(t) := ({\bm c}_{31}(t),\dots,{\bm c}_{3h}(t))$ that satisfy the system
\[
{\bf L}[\mu_l,\dot{\mu}_l,\vth_l,\dot{\vth}_l] \phi_l = U^{p-1}\mch_{(\mathrm{inner})l} + {\bm c}_{(n+1)l}(t)U^{p-1} \mcz_{(n+1)l} + {\bm c}_{3l}(t)U^{p-1} \mcz_{3l} \quad \text{in } \R^n \times (-\infty,t_0)
\]
and
\be \label{eq:ortho-mult}
\int_{\R^n} (\phi_{l}U^{p-1}\mcz_{(n+1)l})(y,t) \, dy = \int_{\R^n} (\phi_{l}U^{p-1}\mcz_{3l})(y,t) \, dy = 0 \quad \text{for each } t \in (-\infty,t_0),
\ee
for $l=1,\ldots,h$. Here, ${\bf L}[\mu_l,\dot{\mu}_l,\vth_l,\dot{\vth}_l]$ denotes the operator obtained from ${\bf L}[\mu,\dot{\mu}]$ in \eqref{inner} by replacing $(\mu,\dot{\mu})$ with $(\mu_l,\dot{\mu}_l)$, 
so that $\xi_1=\sqrt{1-\mu^2}{\bf q}_1$ is substituted by $\xi_{1l}$ in \eqref{eq:xijl}.
Meanwhile,
$\mch_{(\mathrm{inner})l} = \mch_{(\mathrm{inner})l}[{\bm\phi}, ({\bm\mu}, \dot{\bm\mu}, {\bm\vth}, \dot{\bm\vth})]$ denotes the function obtained from $\mch_{\mathrm{inner}}[\phi, (\mu,\dot{\mu})]$ in \eqref{mh1} by replacing $\mu$, $\xi_1$, and $\phi$ with $\mu_l$, $\xi_{1l}$, and $\phi_l$, respectively.
Moreover, there exists a constant $C > 0$ depending only on $n$, $k$, $h$, and $\sigma$ such that
\[
\|{\bm \phi}\|_{\sharp\sharp,a,b;\sigma}\le C
\]
and
\begin{align*}
\left\| {\bm \phi}[\bar{\bm \mu}^{(1)}]-{\bm\phi}[\bar{\bm \mu}^{(2)}] \right\|_{\sharp\sharp,a,b;\sigma} &\le C\big\|\bar{\bm \mu}^{(1)}-\bar{\bm \mu}^{(2)}\big\|_{\tnu_1;\sigma}, \\
\left\| {\bm\phi}[\dot{\bar{\bm \mu}}^{(1)}]-{\bm\phi}[\dot{\bar{\bm \mu}}^{(2)}] \right\|_{\sharp\sharp,a,b;\sigma} &\le C\big\|\dot{\bar{\bm \mu}}^{(1)}-\dot{\bar{\bm \mu}}^{(2)}\big\|_{\tnu_1+1;\sigma},\\
\left\| {\bm\phi}[\bar{\bm\vth}^{(1)}]-{\bm\phi}[\bar{\bm\vth}^{(2)}] \right\|_{\sharp\sharp,a,b;\sigma} &\le C\big\|\bar{\bm\vth}^{(1)}-\bar{\bm\vth}^{(2)}\big\|_{\tnu_2;\sigma}, \\
\left\| {\bm\phi}[\dot{\bar{\bm \vth}}^{(1)}]-{\bm\phi}[\dot{\bar{\bm \vth}}^{(2)}] \right\|_{\sharp\sharp,a,b;\sigma} &\le C\big\|\dot{\bar{\bm \vth}}^{(1)}-\dot{\bar{\bm \vth}}^{(2)}\big\|_{\tnu_2+1;\sigma}.
\end{align*}
Besides, $\phi_{l}$ is even with respect to $y_2,y_4,\dots,y_n$ and $\tph_{1l}$ satisfies the Kelvin invariance \eqref{fke}.
\end{prop}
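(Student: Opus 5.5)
The plan is to follow the single-layer scheme of Propositions \ref{prop:lin} and \ref{prop:nonlin}: first a linear theory for each layer with two modulation modes, then a fixed point. Fix $l$ and consider the linear problem
\[
{\bf L}[\mu_l,\dot\mu_l,\vth_l,\dot\vth_l]\phi_l=U^{p-1}\mch+{\bm c}_{(n+1)l}U^{p-1}\mcz_{(n+1)l}+{\bm c}_{3l}U^{p-1}\mcz_{3l},
\]
where $\mch$ is even in $y_2,y_4,\dots,y_n$ and its rescaled pullback is Kelvin invariant. As in Step 1 of Proposition \ref{prop:lin}, we lift to $\Ss^n$ via $\pi$. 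The lifted operator $\bmcl$ keeps the same form, with $\xi_1$ replaced by $\xi_{1l}$, and its limiting operator is $-\Delta_{\Ss^n}-n$. We then solve the auxiliary problem \eqref{hp2} with all $n+1$ modes $z_j$ imposed as orthogonality conditions. The a priori bound \eqref{hpmh}, the Galerkin approximation in Step 3, and the passage $s_\ell\to-\infty$ carry over verbatim, since they never used the position of the center.

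The new ingredient is Step 4, where we identify which coefficients ${\bm d}_j$ survive. Evenness in $y_2,y_4,\dots,y_n$, together with uniqueness, kills ${\bm d}_j$ for $j\in\{2,4,\dots,n\}$, exactly as before. For the remaining modes $z_1,z_3,z_{n+1}$ we compare $\phi$ with its Kelvin reflection $\phi^\sharp$. Since $\xi_{1l}=\sqrt{1-\mu_l^2}\,(\sqrt{1-\vth_l^2},0,\vth_l,0')$, a direct computation gives $Z_i-Z_i^\sharp$ for $i\in\{1,3,n+1\}$ as linear combinations of $Z_1,Z_3,Z_{n+1}$. The resulting $3\times3$ matrix is of the form $2\,w w^T$ with
\[
w=\bigl(\mu_l\sqrt{1-\vth_l^2},\ \mu_l\vth_l,\ \sqrt{1-\mu_l^2}\bigr)\cdot(\text{normalization}),
\]
by analogy with the single-layer formulas. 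Kelvin invariance of $\phi$ then forces a single linear relation among $\tilde{\bm c}_1,\tilde{\bm c}_3,\tilde{\bm c}_{n+1}$. The corresponding two-dimensional space of admissible right-hand sides is spanned by $\mcz_{(n+1)l}$ and $\mcz_{3l}$, and one checks that both are orthogonal to the Kelvin-odd direction.

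Conversely, we extend Remark \ref{rmk:ortho}. The Kelvin-invariance identity shows that $\int\phi U^{p-1}\mcz_{(n+1)l}=0$ is equivalent to orthogonality to a Kelvin-odd combination. Together with $\int\phi U^{p-1}\mcz_{3l}=0$ and parity, this yields full orthogonality to $Z_1,\dots,Z_{n+1}$, so \eqref{eq:ortho-mult} is the right condition. This step, verifying the $3\times3$ Kelvin algebra and that $\mcz_{(n+1)l},\mcz_{3l}$ span exactly the Kelvin-compatible kernel, is where I expect the main obstacle. I would first compute $Z_i^\sharp$ directly from the transformation of $U_{\mu_l,\xi_{1l}}$ and of its parameter derivatives under inversion, using $|\xi_{1l}|^2+\mu_l^2=1$. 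The estimate \eqref{phmh} then follows by Schauder estimates.

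For the nonlinear problem, we bound $\|\mch_{(\mathrm{inner})l}\|_{\sharp,a+2,b;\sigma}\le C$ exactly as in \eqref{le2.16}. The error term is handled by \eqref{error-expansion} together with the bounds on $\wte_1,\wte_2$. The $Z_1,Z_3$ terms there are of size $\mu_l^{-1}|\dot\vth_l|+\mu_l^{n-1}|y|$, which is at most $|t|^{-1}$ after weighting for $a<n-2$. The $\psi$ term uses Proposition \ref{prop:outer-mult}, and $N(\Xi)$ is treated as in Appendix \ref{app2}. A Banach fixed point on $\{\|{\bm\phi}\|_{\sharp\sharp,a,b;\sigma}\le M\}$ gives existence. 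The Lipschitz estimates in ${\bm\mu},\dot{\bm\mu},{\bm\vth},\dot{\bm\vth}$ follow by differentiating the fixed-point equation, using the $\vth$-Lipschitz bounds of $\psi$ and the smooth dependence of $E$, $\mcz_{(n+1)l}$, $\mcz_{3l}$ on the parameters. Symmetry and Kelvin invariance are inherited from uniqueness.
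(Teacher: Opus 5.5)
Your overall route is the paper's. The paper gives no separate proof of this proposition: it reruns Propositions \ref{prop:lin} and \ref{prop:nonlin} layer by layer, and the remark following the statement supplies the Kelvin identity. The gap is the one new computation you commit to, the Kelvin-odd vector $w$. Your $w=(\mu_l\sqrt{1-\vth_l^2},\,\mu_l\vth_l,\,\sqrt{1-\mu_l^2})$ in the $(Z_1,Z_3,Z_{n+1})$ coordinates has $\mu_l$ and $d_l=\sqrt{1-\mu_l^2}$ interchanged. At $\vth_l=0$ it reduces to $(\mu,\sqrt{1-\mu^2})$ in the $(Z_1,Z_{n+1})$ coordinates. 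The single-layer formula you are analogizing, $Z_1-Z_1^\sharp=2\bigl((1-\mu^2)Z_1+\mu\sqrt{1-\mu^2}Z_{n+1}\bigr)$, instead forces $(\sqrt{1-\mu^2},\mu)$. With your $w$, the check you announce fails. The coefficient vector of $\mcz_{(n+1)l}$ is $\bigl(-\tfrac{\mu_l}{d_l}\sqrt{1-\vth_l^2},-\tfrac{\mu_l}{d_l}\vth_l,1\bigr)$, and its inner product with your $w$ is $(d_l^2-\mu_l^2)/d_l\neq0$. So $w^\perp$ would not equal $\mathrm{span}\{\mcz_{(n+1)l},\mcz_{3l}\}$. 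Then neither the identification of the surviving modes in Step 4 nor the converse orthogonality argument goes through as written.

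The fix is to actually do the computation you propose. A direct check gives $(U_{\mu,\xi})^*=U_{\mu/s,\xi/s}$ with $s=\mu^2+|\xi|^2$. The modes are $\mu^{-\frac{n-2}{2}}Z_{n+1}=-\mu\,\pp_\mu U_{\mu,\xi}$ and $\mu^{-\frac{n-2}{2}}Z_i=-\mu\,\pp_{\xi_i}U_{\mu,\xi}$. Hence the Kelvin transform acts on them through the differential of the inversion $(\mu,\xi)\mapsto(\mu,\xi)/s$ at a point of the unit sphere. That differential is the reflection across the tangent plane, whose unit normal is $(\mu_l,\xi_{1l})$. So the only odd mode is the infinitesimal dilation about the origin, and $w=(d_l\sqrt{1-\vth_l^2},\,d_l\vth_l,\,\mu_l)$. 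The tangential modes are even, including $\mcz_{3l}\propto-\vth_lZ_1+\sqrt{1-\vth_l^2}Z_3$. With this $w$, both $\mcz_{(n+1)l}$ and $\mcz_{3l}$ lie in $w^\perp$, and $I-K=2ww^T$ on the span of $Z_1,Z_3,Z_{n+1}$. This is precisely the paper's remark: $\int\phi_lU^{p-1}\bigl(\mu_lZ_{n+1}+d_l\sqrt{1-\vth_l^2}Z_1+d_l\vth_lZ_3\bigr)=0$ for Kelvin-invariant $\phi_l$.

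Relatedly, your converse should not say that the $\mcz_{(n+1)l}$ condition is "equivalent to" orthogonality to an odd combination. $\mcz_{(n+1)l}$ is even, and Kelvin invariance makes orthogonality to the odd mode automatic. The two conditions in \eqref{eq:ortho-mult} then cover $w^\perp$. With that correction, the rest of your plan agrees with the paper: parity for $Z_2,Z_4,\dots,Z_n$, Schauder estimates, the bound on $\mch_{(\mathrm{inner})l}$, the fixed point, and the Lipschitz estimates.
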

\begin{remark}
By Kelvin invariance, we have
\[
\int_{\R^n} \phi_{l} U^{p-1}Z_{n+1} = -\frac{d_l}{\mu_l} \left[\sqrt{1-\vth_l^2} \int_{\R^n} \phi_{l} U^{p-1}Z_1 + \vth_l \int_{\R^n}\phi_{l} U^{p-1}Z_3\right].
\]
Therefore, the orthogonality conditions \eqref{eq:ortho-mult} imply $\int_{\R^n} \phi_{l} U^{p-1}Z_{j}=0$ for $j=1, 3, n+1$.
\end{remark}

We now solve the inner problem
\be \label{inner-mult}
{\bf L}[\mu_l,\dot{\mu}_l] \phi_l = U^{p-1}\mch_{(\mathrm{inner})l}[{\bm \phi}, ({\bm\mu}, \dot{\bm\mu}, {\bm\vth}, \dot{\bm\vth})] \quad \text{in } \R^n \times (-\infty,t_0) \quad \text{for } l=1,\dots,h
\ee
by determining the scaling parameters $\mu_l$, the layer position parameters $\vth_l$, and their respective derivatives $\dot{\mu}_l,\, \dot{\vth}_l$ so that the coefficient vectors ${\bm c}_{n+1}$ and ${\bm c}_3$ vanish identically.
\begin{lemma}\label{lem5.1}
Let $({\bm \phi},({\bm c}_{n+1},{\bm c}_3))$ denote the pair obtained in Proposition \ref{prop:nonlin-mult}.
Take $t_0<0$ sufficiently large in magnitude, if necessary.
The conditions ${\bm c}_{(n+1)l}={\bm c}_{3l}=0$ for $l=1,\dots,h$ and $t\in(-\infty,t_0)$, which ensure that ${\bm\phi}$ solves \eqref{inner-mult}, reduce to the following modulation system:
\be\label{ode2}
\begin{cases}
\displaystyle pa_1 \mu_l^{-1}\dot{\mu}_l - \frac{n+2}{4} pa_2 f_{1l}({\bm \vth}^*) \mu_l^{n-2} = \mu_l^{n-2+\ga} G_{(n+1)l}[{\bm\mu}, \dot{\bm\mu}, {\bm\vth}, \dot{\bm\vth}](t), \\
\displaystyle pa_3 \mu_l^{-1}(d_l \vth_l)' + \frac{(n+2)^2}{4} a_4 f_{3l}({\bm \vth}^*) \mu_l^{n-1} = \mu_l^{n-2+\ga}G_{3l}[{\bm\mu}, \dot{\bm\mu}, {\bm\vth}, \dot{\bm\vth}](t). 
\end{cases}
\ee
for all $t \in (-\infty,t_0)$. Here, $f_{1l}({\bm \vth}^*) > 0$ and $f_{3l}({\bm \vth}^*) \in \R$ are the quantities defined in \eqref{eq:f1l}--\eqref{eq:f3l}, the constants $a_i$, $i=1,2,3,4$, are those appearing in \eqref{pre0}--\eqref{pre2}, and $\ga \in (0,1)$ is fixed sufficiently small. 
The functions ${\bm G}_{n+1}$ and ${\bm G}_3$ denote generic functions of $t$ satisfying the same properties as in \eqref{pre0}--\eqref{pre2}.
\end{lemma}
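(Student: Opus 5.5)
I will follow the single-layer argument of Lemma on \eqref{chomu}, now projecting onto both kernel directions. Set $\mcz_{(n+1)l}$ and $\mcz_{3l}$ as in the statement. Since $Z_1,Z_3,Z_{n+1}$ lie in the kernel of $-\Delta-p\frac{n+2}{4}U^{p-1}$, testing the $l$-th equation of Proposition \ref{prop:nonlin-mult} against $Z_{n+1}$ and $Z_3$ eliminates $\Delta\phi_l$ and the potential term. The orthogonality relations \eqref{eq:ortho-mult} and the Remark after Proposition \ref{prop:nonlin-mult} then kill the $\pp_t\phi_l$ contribution up to terms carrying $\dot\mu_l,\dot\vth_l$. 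The $2\times 2$ Gram matrix $\big(\int U^{p-1}\mcz_{il}Z_m\big)_{i\in\{n+1,3\},\,m\in\{n+1,3\}}$ equals $\mathrm{diag}(a_1,a_3)+O(\mu_l)$ and is therefore invertible. Hence ${\bm c}_{(n+1)l}={\bm c}_{3l}=0$ is equivalent to
\[
\int_{\R^n}\Big[-p\pp_t\phi_l+\tfrac{n-2}{2}\mu_l^{-1}\dot\mu_l\phi_l+\mu_l^{-1}\nabla\phi_l\cdot(\dot\xi_{1l}+\dot\mu_l y)+\mch_{(\mathrm{inner})l}\Big]U^{p-1}Z_m\,dy=0,
\]
for $m=n+1,3$.

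Next I expand each term. For the error part, I use \eqref{pre0} and \eqref{pre2} with $R=R(t)\sim|t|^{c_0}$. The tail of the ball integral outside $B(0,R)$ produces the $R^{-2}$ corrections. I absorb $\mu_l^{n}|\log\mu_l|$, $\mu_l^{n-2}R^{-2}$, and $\mu_l^{n-2}|{\bm\vth}-{\bm\vth}^*|\lesssim\mu_l^{n-2}|t|^{-\frac{1+\ga}{n-2}}$ into $\mu_l^{n-2+\ga}G$, using $R^{-2}\lesssim|t|^{-2c_0}\le\mu_l^{\ga}$ for $\ga$ small. In the $Z_3$ equation the leading orders are $\mu_l^{-1}(d_l\vth_l)'$ and $\mu_l^{n-1}$, and the remainders are $O(\mu_l^{n}+\mu_l^{n-1}R^{-2})$, which are $\le\mu_l^{n-2+\ga}$. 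The $Z_1$ term $\frac{(d_l\sqrt{1-\vth_l^2})'}{\mu_l}$ in \eqref{error-expansion} is orthogonal to $Z_3$ and $Z_{n+1}$ by parity, so it does not enter. The remaining linear terms in $\psi$, in $(u_*^{p-1}-U_{1l}^{p-1})(\pp_t\tph_{1l}-\frac{n+2}{4}\tph_{1l})$, and in the transport term in $\phi_l$ are handled exactly as in \eqref{eq:Gn+11}--\eqref{eq:Gn+13}, using Proposition \ref{prop:outer-mult} and $\|{\bm\phi}\|_{\sharp\sharp,a,b;\sigma}\le C$. The nonlinear term is handled as in \eqref{eq:Gn+14}. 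This gives the bounds $|t_0|^{c'(\alpha-a)}\mu_l^{(n-2)(\beta+\frac12)}$, $\mu_l^{n+2-c_0(4-a)(n-2)}$, $\mu_l^{2(n-2)}$, and the $N(\Xi)$ powers, all $\lesssim\mu_l^{n-2+\ga}$ once $c'$ and $\ga$ are small. The cross-layer interactions contribute only through $\mu_{l'}/\mu_l\sim1$, so these bounds still hold.

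The Lipschitz properties of the generic $G_{(n+1)l}$ and $G_{3l}$ in $({\bm\mu},\dot{\bm\mu},{\bm\vth},\dot{\bm\vth})$ follow from the Fréchet-derivative bounds in Propositions \ref{prop:outer-mult} and \ref{prop:nonlin-mult} and Taylor's formula with integral remainder. For ${\bm\vth}$ the weight $\tnu_2$ appears because $\pp_{\bm\vth}\psi$ gains $|t_0|^{-\ga}$.

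I expect the main obstacle to be the $Z_3$ projection of the error. One must verify that, after the symmetry cancellation in $x_2,x_4,\dots$, the leading $y$-linear interaction yields exactly $\frac{(n+2)^2}{4}a_4 f_{3l}\mu_l^{n-1}$. One must also check that $\dot\xi_{1l}$, which mixes $Z_1$ and $Z_3$ through $(d_l\vth_l)'$ and the $\mu_l$-dependence of $d_l$, leaves an error of at most $\mu_l^{n-2+\ga}$ in the $Z_3$ equation. My first attempt would be to split $\dot\xi_{1l}$ into its $\mathbf e_1$ and $\mathbf e_3$ components and use $d_l'=-\mu_l\dot\mu_l/d_l=O(\mu_l^{n})$.
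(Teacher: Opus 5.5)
Your proposal is correct and is essentially the argument the paper intends. The paper gives no separate proof of this lemma and relies on the single-layer projection argument leading to \eqref{chomu}, together with \eqref{pre0}--\eqref{pre2}: test against $Z_{n+1}$ and $Z_3$, invert the nearly diagonal $2\times 2$ Gram matrix, and absorb all remaining terms into $\mu_l^{n-2+\ga}G$. Two simplifications are available. First, $\int_{\R^n}\pp_t\phi_l\,U^{p-1}Z_m\,dy=0$ exactly, because the $Z_m$ are time-independent. Second, the obstacle you flag is harmless: by parity the $Z_3$-projection of $\nabla U\cdot\dot\xi_{1l}/\mu_l$ is exactly $a_3(d_l\vth_l)'/\mu_l$, so no splitting of $\dot\xi_{1l}$ is needed. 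Moreover $\mu_l^{n-1}\le\mu_l^{n-2+\ga}$, so the explicit $f_{3l}$ term could itself be absorbed into the remainder, and you do not need to check its constant.
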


We next discuss the solvability of the ODE system \eqref{ode2}.
\begin{prop}\label{prop:inner1-mult}
Take $t_0<0$ sufficiently large in magnitude, if necessary. Then there exists a solution $({\bm\mu}, {\bm\vth})$ of the nonlinear ODE system \eqref{ode2} in $(-\infty,t_0)$ having the form
\[
\mu_l(t) = \left[\frac{(n^2-4)a_2f_{1l}({\bm \vth}^*)}{4a_1}|t|\right]^{-\frac1{n-2}} + \rho_l(t) \quad \text{and} \quad
\vth_l(t) = \vth_l^* + \bze_l(t) \quad \text{for } l=1,\dots,h,
\]
where ${\bm \rho} := (\rho_1,\dots,\rho_h)$ and $\bar{\bm \zeta} := (\bze_1,\dots,\bze_h)$ are remainder terms.
Moreover, for some small $\ga>0$, there exists a constant $C>0$ depending only on $n$, $k$, $h$, $\sigma$, and $\ga$ such that
\[
\|{\bm\rho}\|_{\tnu_2;\sigma} + \|\dot{\bm \rho}\|_{\tnu_2+1;\sigma} + \|\bar{\bm \zeta}\|_{\tnu_2;\sigma} + \|\dot{\bar{\bm \zeta}}\|_{\tnu_2+1;\sigma} \le C.
\]
\end{prop}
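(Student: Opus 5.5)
The plan is to decouple the system \eqref{ode2} at leading order by a change of unknowns that makes both equations explicitly integrable. I would then treat the remainders $\mu_l^{n-2+\ga}G_{(n+1)l}$ and $\mu_l^{n-2+\ga}G_{3l}$ as perturbations and close a Banach fixed point argument for $({\bm\rho},\bar{\bm\zeta})$ in the ball
\[
\mathcal{B}_M:=\big\{\|{\bm\rho}\|_{\tnu_2;\sigma}+\|\dot{\bm\rho}\|_{\tnu_2+1;\sigma}+\|\bar{\bm\zeta}\|_{\tnu_2;\sigma}+\|\dot{\bar{\bm\zeta}}\|_{\tnu_2+1;\sigma}\le M\big\}.
\]
The ordering $-1<\vth_1^*<\dots<\vth_h^*<1$ is used only to guarantee that the points $\xi^*_{jl}$ are distinct. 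Hence $f_{1l}({\bm\vth}^*)>0$ and $f_{3l}({\bm\vth}^*)$ are well-defined finite constants, and all generic functions ${\bm G}_i$ obey the bounds and Lipschitz properties listed after \eqref{pre2}.

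\textbf{Scaling equations.} For each $l$ set $w_l:=\mu_l^{-(n-2)}$, so that $\mu_l^{-1}\dot\mu_l=-\frac{1}{n-2}\dot w_l/w_l$. The first line of \eqref{ode2} becomes
\[
\dot w_l=-\kappa_l+w_l^{-\frac{\ga}{n-2}}\,G_{(n+1)l},\qquad \kappa_l:=\frac{(n^2-4)a_2f_{1l}({\bm\vth}^*)}{4a_1}>0 .
\]
Writing $w_l=\kappa_l|t|+\omega_l$, the leading part $\kappa_l|t|$ solves the unperturbed equation exactly, and it produces the profile $[\kappa_l|t|]^{-\frac1{n-2}}$ claimed in the statement. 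For the correction I would take
\[
\omega_l(t):=-\int_t^{t_0}(\cdots)\,ds,
\]
that is, the integral of the remainder with $\omega_l(t_0)=0$. Since the integrand is $O(|s|^{-\frac{\ga}{n-2}})$, this gives $|\omega_l|\lesssim|t|^{1-\frac{\ga}{n-2}}$, hence $\omega_l/w_l=O(|t|^{-\frac{\ga}{n-2}})$. Translating back, $\rho_l=\mu_l-[\kappa_l|t|]^{-\frac1{n-2}}=O(|t|^{-\frac{1+\ga}{n-2}})$, and $\dot\rho_l$ is one power of $|t|$ better. This is the required $\tnu_2$ decay.

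\textbf{Layer equations.} With $\mu_l$ given, $d_l=\sqrt{1-\mu_l^2}$ is determined. The second line of \eqref{ode2} then reads
\[
(d_l\vth_l)'=-c\,f_{3l}({\bm\vth}^*)\,\mu_l^{n}+\mu_l^{n-1+\ga}\,G_{3l}.
\]
Its right-hand side is $O(|t|^{-\frac{n-1+\ga}{n-2}})$, which is integrable at $-\infty$. I would therefore integrate from $-\infty$, imposing $d_l\vth_l\to\vth_l^*$ there. This yields
\[
d_l\vth_l-\vth_l^*=O\big(|t|^{-\frac{1+\ga}{n-2}}\big),
\]
because the explicit $\mu_l^n$ term contributes only $O(|t|^{-\frac{2}{n-2}})$, which is smaller when $\ga<1$. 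Dividing by $d_l=1+O(|t|^{-\frac{2}{n-2}})$ then gives $\bze_l=\vth_l-\vth_l^*$ with the same bound, and $\dot{\bze}_l$ inherits the extra power.

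\textbf{Fixed point.} These two integrations define a map $\mathcal{T}:({\bm\rho},\bar{\bm\zeta})\mapsto(\tilde{\bm\rho},\tilde{\bar{\bm\zeta}})$, obtained by inserting the given parameters into $G_{(n+1)l}$ and $G_{3l}$. The time H\"older seminorms of the integrals and of their derivatives are controlled directly from $\|{\bm G}_i\|_{0;\sigma}\le C$. The plan is to show two things. First, $\mathcal{T}$ maps $\mathcal{B}_M$ into itself. Second, $\mathcal{T}$ is a contraction, using the Lipschitz bounds for ${\bm G}_i$ with respect to ${\bm\mu},\dot{\bm\mu},{\bm\vth},\dot{\bm\vth}$. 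In each weighted estimate the Lipschitz constant is multiplied by a factor $|t_0|^{-\frac{\ga}{n-2}}$, or by a gain coming from the extra power of $\mu_l$, and this is what makes it small.

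\textbf{Main obstacle.} The step I expect to be hardest is the contraction estimate for the $\mu$-equation. Differences in $\mu_l$ enter the Lipschitz bounds through the weaker norm $\tnu_1=\frac{1}{n-2}$, whereas $\rho_l$ is measured in the stronger $\tnu_2$-norm. I would handle this by noting that a difference $\bar\mu$ with $\|\bar\mu\|_{\tnu_2;\sigma}\le1$ satisfies $\|\bar\mu\|_{\tnu_1;\sigma}\le|t_0|^{-\frac{\ga}{n-2}}$. Combined with the factor $w_l^{-\frac{\ga}{n-2}}$, this gives a Lipschitz constant $O(|t_0|^{-c\ga})$ after choosing $|t_0|$ large. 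The delicate point is to verify carefully that the linearized term in the $w$-variable carries no $O(1)$ coefficient in front of $\omega_l/|t|$. If it did, the backward integration from $t_0$ would lose the decay rate.
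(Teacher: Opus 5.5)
Your plan is correct at the paper's level of detail and follows the same architecture as the paper's proof. Both use a Banach fixed point for the remainders, and both fix the scale correction by a normalization at $t_0$: your $\omega_l(t_0)=0$ is exactly the paper's $\rho_l(t_0)=0$, which discards the time-translation mode $C_l|t|^{-\frac{n-1}{n-2}}$. Both obtain the layer correction by integrating $(d_l\vth_l)'$ from $-\infty$ and dividing by $d_l$.

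The one real difference is your Bernoulli variable $w_l=\mu_l^{-(n-2)}$. The paper linearizes about $\mu_{l0}$, gets $\dot\rho_l-\frac{n-1}{n-2}\frac{1}{|t|}\rho_l=\mathcal{N}^\rho_l$ with an explicit quadratic remainder, and inverts with the integrating factor $|t|^{\frac{n-1}{n-2}}$. Your identity $\dot w_l=-\kappa_l+w_l^{-\frac{\ga}{n-2}}G_{(n+1)l}$ is exact, so $\omega_l$ appears in no linear or quadratic term; the only cost is converting $\omega_l$ back into $\rho_l$ and $\dot\rho_l$ at the end. As a result, the ``delicate point'' you flag never arises. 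The $\tnu_1$ versus $\tnu_2$ mismatch is a gain, as you yourself observe, not the main obstacle.

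The step that does need a word is the ${\bm\vth}$-dependence, and the paper also covers it only with ``it is standard''. The listed Lipschitz bounds for ${\bm G}_i$ in ${\bm\vth}$ and $\dot{\bm\vth}$ are in the same $\tnu_2$ and $\tnu_2+1$ norms in which you measure $\bar{\bm\zeta}$. Integrating $\mu_l^{n-1+\ga}\big(G_{3l}[{\bm\vth}^{(1)}]-G_{3l}[{\bm\vth}^{(2)}]\big)$ from $-\infty$ therefore gives only $C|t|^{-\tnu_2}\|{\bm\vth}^{(1)}-{\bm\vth}^{(2)}\|_{\tnu_2;\sigma}$ with $C=O(1)$. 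There is no factor $|t_0|^{-\frac{\ga}{n-2}}$ and no extra power of $\mu_l$, contrary to what you assert for every estimate. The contraction still holds, but the smallness must be read off from the structure of ${\bm G}_i$, not from property (ii) alone. Every contribution absorbed into $\mu_l^{n-2+\ga}{\bm G}_i$ has spare decay once $\ga$ is small. For example, the $|{\bm\vth}-{\bm\vth}^*|$ remainders in \eqref{pre0}--\eqref{pre2} are Lipschitz in ${\bm\vth}$ with an extra factor $\mu_l$. Likewise, $\pp_{\bm\vth}\psi$ carries the factor $|t_0|^{c'(\alpha-a)-\ga}$ by Proposition \ref{prop:outer-mult}.
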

\begin{proof}
We observe that, for each $l=1,\dots,h$, the function
\[
\mu_{l0}(t) := \left[\frac{(n^2-4)a_2f_{1l}({\bm \vth}^*)}{4a_1}|t|\right]^{-\frac1{n-2}}\]
solves
\[
\dot\mu_{l0}-\frac{n+2}{4}\frac{a_2}{a_1}f_{1l}({\bm \vth}^*)\mu_{l0}^{n-1}=0 \quad \text{in } (-\infty,t_0).
\]
If we set
\[\mu_l(t)=\mu_{l0}(t)+\rho_l(t) \quad \text{and} \quad (d_l\vth_l)(t)=\vth_l^*+\zeta_l(t),\] where $\rho_l(t),\, \zeta_l(t) \to 0$ as $t \to -\infty$, then the system \eqref{ode2} reduces to
\[
\begin{cases}
\displaystyle \dot\rho_l-\frac{n-1}{n-2}\frac1{|t|}\rho_l = \mcn_l^\rho[{\bm \rho}, \dot{{\bm \rho}}, {\bm \zeta}, \dot{{\bm \zeta}}](t), \\
\displaystyle \dot\zeta_l = \mcn_l^\zeta[{\bm \rho},\dot{{\bm \rho}}, {\bm \zeta}, \dot{{\bm \zeta}}](t),
\end{cases}
\]
where 
\begin{align*}
\mcn_l^\rho[{\bm \rho}, \dot{{\bm \rho}}, {\bm \zeta}, \dot{{\bm \zeta}}] &:= \frac{n+2}{4} \frac{a_2}{a_1} f_{1l}({\bm \vth}^*) \left[(\mu_{l0}+\rho_l)^{n-1}-\mu_{l0}^{n-1}-(n-1)\mu_{l0}^{n-2}\rho_l\right] \\
&\ + \frac{\mu_{l0}+\rho_l}{pa_1} (\mu_{l0}+\rho_l)^{n-2+\ga} G_{(n+1)l}[{\bm\mu},\dot{\bm\mu},{\bm\vth},\dot{\bm\vth}]
\end{align*}
and
\[
\mcn_l^\zeta[{\bm \rho},\dot{{\bm \rho}}, {\bm \zeta}, \dot{{\bm \zeta}}] := -\frac{(n+2)^2}{4}\frac{a_4}{p a_3} f_{3l}({\bm \vth}^*)(\mu_{l0}+\rho_l)^n + \frac{\mu_{l0}+\rho_l}{pa_3} (\mu_{l0}+\rho_l)^{n-2+\ga} G_{3l}[{\bm\mu},\dot{\bm\mu},{\bm\vth},\dot{\bm\vth}].
\]

Given a function ${\bm\mfh}_{n+1}(t) := (\mfh_{(n+1)1}(t),\dots,\mfh_{(n+1)h}(t))$ such that $\|{\bm\mfh}_{n+1}\|_{\tnu_2+1;\sigma}<\infty$, we set, for each $l=1,\dots,h$,
\[\mcs_{n+1}[\mfh_{(n+1)l}](t) := -|t|^{-\frac{n-1}{n-2}} \int_t^{t_0} |s|^{\frac{n-1}{n-2}} \mfh_{(n+1)l}(s)\,ds.\]
This is a particular solution of
\[\dot\rho_l - \frac{n-1}{n-2} \frac1{|t|} \rho_l = \mfh_{(n+1)l} \quad \text{in } (-\infty,t_0).\]
It also satisfies $\rho_l(t_0)=0$, thereby removing the homogeneous mode $C_l|t|^{-\frac{n-1}{n-2}}$ for some $C_l \in \R$ and fixing a convenient normalization for the fixed-point argument.

Also, given a function ${\bm\mfh}_3(t) := (\mfh_{31}(t),\dots,\mfh_{3h}(t))$ such that $\|{\bm\mfh}_{3}\|_{\tnu_2+1;\sigma}<\infty$, we set
\[\mcs_3[\mfh_{3l}](t) := \int_{-\infty}^t \mfh_{3l}(s)\,ds,\]
which is a particular solution of
\[\dot\zeta_l=\mfh_{3l}.\] 

Let $X_{\mathrm{mod}}$ denote the Banach space of pairs $({\bm\rho},{\bm\zeta})$ such that
\[
\|({\bm \rho},{\bm \zeta})\|_{X_{\mathrm{mod}}}:= \|{\bm\rho}\|_{\tnu_2;\sigma} + \|\dot{\bm\rho}\|_{\tnu_2+1;\sigma} + \|{\bm\zeta}\|_{\tnu_2;\sigma} + \|\dot{\bm\zeta}\|_{\tnu_2+1;\sigma} < \infty.
\]
For a large fixed constant $M>0$, set
\[
\mathcal B_M:=\{({\bm \rho},{\bm \zeta})\in X_{\mathrm{mod}}:\|({\bm \rho},{\bm \zeta})\|_{X_{\mathrm{mod}}}\le M\}.
\]
We define the map $\mathscr A$ by
\[
\mathscr A({\bm \rho},{\bm \zeta}) := (\widetilde{\bm\rho},\widetilde{\bm\zeta}),
\]
where $\widetilde{\bm\rho} := (\trh_1,\dots,\trh_h)$, $\widetilde{\bm\zeta} := (\tze_1,\dots,\tze_h)$, and
\[
\trh_l:=\mcs_{n+1}\big[\mcn_l^\rho\big[{\bm \rho},\dot{{\bm \rho}}, {\bm \zeta}, \dot{{\bm \zeta}}\big]\big] \quad \text{and} \quad
\tze_l:=\mcs_3\big[\mcn_l^\zeta\big[{\bm \rho},\dot{{\bm \rho}}, {\bm \zeta}, \dot{{\bm \zeta}}\big]\big] \quad \text{for } l=1,\dots,h.
\]
Using the estimates of ${\bm G}_{n+1}$ and ${\bm G}_3$ in Lemma \ref{lem5.1}, it is standard to verify that $\mathscr A$ maps $\mathcal B_M$ into itself and is a contraction by choosing $M>0$ large enough and then taking $t_0<0$ sufficiently large in magnitude. 
Applying the Banach fixed point theorem, there exists a fixed point $({\bm \rho},{\bm \zeta})\in\mathcal B_M$. 

The corresponding functions
\[
\mu_l = \mu_{l0}+\rho_l \quad \text{and} \quad
\vth_l = \vth_l^*+\bze_l := \vth_l^* + \Big[\Big(\tfrac{1}{\sqrt{1-\mu_l^2}}-1\Big)\vth^*_l + \tfrac{1}{\sqrt{1-\mu_l^2}}\zeta_l\Big]
\]
give a solution of \eqref{ode2}. Since $(1-\mu_l^2)^{-\frac12}-1 \sim |t|^{-\frac{2}{n-2}}$, the estimates for $\zeta_l$ imply the estimates for $\bze_l$ in the statement of Proposition \ref{prop:inner1-mult}, after increasing $C$ if necessary.
\end{proof}
\begin{remark}
We note a dimension-dependent difference in the decay of the modulation parameters.

Assume that $n \ge 4$. We set $\bar \alpha_n := a-1$ for $4 \le n \le 10$ and $\bar\alpha_n := \frac{2(n+2)}{3(n-2)}$ for $n \ge 11$.
Then, we choose 
\[
\alpha \in (\alpha_0,\bar\alpha_n) \quad \text{and} \quad \frac{1}{(n-2)(a-\alpha)} < c_0 < \begin{cases}
\frac{1}{n-2} &\text{if } 4\le n\le 10, \\
\frac{3}{4(n-4)} &\text{if } n \ge 11.
\end{cases}
\]
With these choices, we can carry out the fixed-point argument to obtain
\be \label{eq:rhovthref}
\rho_l(t) = O\big(|t|^{-\frac{2}{n-2}}\big) \quad \text{and} \quad
|\vth_l(t)-\vth_l^*| = O\big(|t|^{-\frac{2}{n-2}}\big) \quad \text{as } t \to -\infty.
\ee

For $n=3$, the admissible ranges of $a$, $\alpha$, $c_0$, and $c'$ do not permit this stronger argument, so \eqref{eq:rhovthref} remains unavailable.
Obtaining such estimate for $\vth_l(t)-\vth_l^*$ would require sharper expansions of ${\bm\phi}$ and $\psi$ or an additional cancellation in the ${\bm G}_3$-term.
The kernel representation method of \cite[Proposition 4.9]{MM21} also does not appear to extend directly to the present quasilinear flow; see Subsection \ref{sub1.3}(C)(iv).
\end{remark}

\subsection{Completion of the proof of Theorem \ref{thm3}}
By Propositions \ref{prop:nonlin-mult} and \ref{prop:inner1-mult}, the inner problem \eqref{inner-mult} is solvable.
Combining this with the outer construction in Proposition \ref{prop:outer-mult}, we obtain a solution $u$ to \eqref{pb} in the form \eqref{neso}.
Arguing as in Subsection \ref{subsec:comp}, one can verify estimates \eqref{parasym0}--\eqref{parasym} and properties {\rm (ii)}--{\rm (iv)} in Theorem \ref{thm:main}.
Property {\rm (v)} follows from the argument in the proof of Proposition \ref{proppro}(1). 

\medskip
We now discuss property {\rm (vi)}, choosing $|t_0|$ large enough if necessary.

Let
\[
\msf(x,t) := \msc_n\mu^{\frac{n-2}{2}}(t) \sum_{l=1}^k |x-\xi_l(t)|^{2-n} \quad \text{for } (x,t) \in (-\infty,t_0),
\]
and $g_{\msf} := \msf^{\frac{4}{n-2}}g_{\R^n}$. We will prove that $\Ric_g(0)$ is indefinite for every $t\in(-\infty,t_0)$. By the proof of Proposition \ref{proppro}(2), it suffices to show that $\Ric_{g_{\msf}}(0)$ has a nonzero eigenvalue.

For $m=1,\dots,n$, let ${\bf e}_m \in \R^n$ be the $m$-th standard basis vector. Set
\[
J_1 := \sum_{l=1}^h \mu_l^{\frac{n-2}{2}} d_l^{2-n}, \quad
J_2 := \sum_{l=1}^h \mu_l^{\frac{n-2}{2}} \vth_l d_l^{1-n},\quad 
J_3 := \sum_{l=1}^h \mu_l^{\frac{n-2}{2}} d_l^{-n},\quad 
J_4 := \sum_{l=1}^h \mu_l^{\frac{n-2}{2}} \vth_l^2 d_l^{-n}.
\]
By the $k$-fold symmetry in the first two variables $(x_1,x_2)$, we have
\be \label{eq:msf1}
\msf(0)=kJ_1, \quad \nabla \msf(0)=k(n-2)J_2 {\bf e}_3.
\ee

Assume that $n \ge 4$. It holds that
\be \label{eq:msf2}
\frac{D^2_{mm}\msf(0)}{\msf(0)}=-(n-2)\frac{J_3}{J_1} \quad \text{for } m=4,\dots,n.
\ee
By employing \eqref{eq:Ric-conformal-u}, \eqref{eq:msf1}, and \eqref{eq:msf2}, we obtain that for each $m=4,\dots,n$ and $j=1,\dots,n$ with $j \ne m$,
\[
\Ric_{g_{\msf}}({\bf e}_m,{\bf e}_m)(0) = 2(n-2) \left[\frac{J_3}{J_1}-\frac{(J_2)^2}{(J_1)^2}\right] \quad \text{and} \quad 
\Ric_{g_{\msf}}({\bf e}_m,{\bf e}_j)(0)=\Ric_{g_{\msf}}({\bf e}_j,{\bf e}_m)(0)=0.
\]
Thus $\Ric_{g_{\msf}}({\bf e}_m,{\bf e}_m)(0)$ is an eigenvalue of $\Ric_{g_{\msf}}(0)$, with associated eigenvector ${\bf e}_m$.
By the Cauchy--Schwarz inequality, $(J_2)^2\le J_1J_4$. Since $\vth_l(t) \to \vth_l^* \in (-1,1)$ as $t\to-\infty$, we may choose $|t_0|$ sufficiently large so that $|\vth_l(t)|<1$ for all $l=1,\ldots,h$ and $t\le t_0$. Hence $J_4<J_3$, and therefore
\[
(J_2)^2<J_1J_3, \quad \text{equivalently,} \quad \Ric_{g_{\msf}}({\bf e}_m,{\bf e}_m)(0)>0 \quad \text{for } m=4,\dots,n.
\]
Consequently, $\Ric_g(0)=\Ric_{g_{\msf}}(0)(1+o_{|t_0|}(1))$ is indefinite for $n \ge 4$, provided that $|t_0|$ is sufficiently large.

Assume that $n=3$. In this case, no extra coordinate directions ${\bf e}_m$, $m \ge 4$, are available. A direct computation gives
\[
\frac{D^2_{11}\msf(0)}{\msf(0)} = \frac{D^2_{22}\msf(0)}{\msf(0)} = (n-2)\frac{\frac n2(J_3-J_4)-J_3}{J_1} \quad \text{and} \quad
\frac{D^2_{33}\msf(0)}{\msf(0)} = (n-2)\frac{nJ_4-J_3}{J_1}.
\]
It follows from \eqref{eq:Ric-conformal-u} that
\[
\Ric_{g_{\msf}}({\bf e}_1,{\bf e}_1)(0)=\Ric_{g_{\msf}}({\bf e}_2,{\bf e}_2)(0) 
= -2 \left[\frac{\frac12J_3-\frac32J_4}{J_1} + \frac{(J_2)^2}{(J_1)^2}\right] = -\frac{1}{2}\Ric_{g_{\msf}}({\bf e}_3,{\bf e}_3)(0),
\]
and using $\sum_{j=1}^k \cos\frac{2\pi(j-1)}{k} \sin\frac{2\pi(j-1)}{k} = \sum_{j=1}^k \cos\frac{2\pi(j-1)}{k} = \sum_{j=1}^k\sin\frac{2\pi(j-1)}{k} = 0$, we also deduce
\[
\Ric_{g_{\msf}}({\bf e}_j,{\bf e}_l)(0)=0 \quad \text{for each } j, m \in \{1,2,3\} \text{ with } j \ne m.
\]
Thus, for $m=1,2,3$, $\Ric_{g_{\msf}}({\bf e}_m,{\bf e}_m)(0)$ is an eigenvalue of $\Ric_{g_{\msf}}(0)$, with associated eigenvector ${\bf e}_m$.
Since $\max_l|\vth_l^*|< \frac{1}{\sqrt{3}}$, it holds that $J_1(J_3-3J_4)+2(J_2)^2\ne0$, provided that $|t_0|$ is sufficiently large.
As a result, $\Ric_g(0)$ is indefinite.

\subsection{Some representative configurations}
\ 

\medskip
\noindent{(1)} \textbf{Single-layer configuration.}
When $h=1$ and $\vth_1=0$, the multi-layer setting above returns to the original single-layer setting of Theorem \ref{thm:main}, namely, the construction of solutions whose concentration points form a single polygonal ring on the equator of $\Ss^n$.

Let $u$, $\mu$, $\xi_j$, and $\Xi$ be defined as in \eqref{so1.2}. 
For a fixed parameter $\vth \in (-1,1) \setminus \{1\}$, the method described in this section yields a solution to \eqref{pb} of the form
\[
\tu \simeq \sum_{j=1}^k U_{\tmu(t),\txi_j(t)}, \quad \text{where }
\begin{cases}
\displaystyle \tmu(t) \simeq \sqrt{1-\vth^2}\,\mu(t), \\
\displaystyle \txi_j(t) = \sqrt{1-\tmu^2(t)} \(\sqrt{1-\vth^2} \, e^{\frac{2\pi (j-1)}{k} i}, \vth, 0'\) \in \R^2 \times \R \times \R^{n-3}.
\end{cases}
\]
On the other hand, applying the translation and scaling to the solution $u$ (see \eqref{trans4}) yields a transformed solution $\bu$ of \eqref{pb} of the form
\begin{align*}
\bu(x,t)
&=\big(\sqrt{1-\vth^2}\big)^{-\frac{n-2}{2}}
u\(\frac{x-\vth {\bf e}_3}{\sqrt{1-\vth^2}}, t\)\\
&=
\sum_{j=1}^k
U_{\sqrt{1-\vth^2}\mu,\,
\sqrt{1-\vth^2}\xi_j+\vth {\bf e}_3}(x)
+
\big(\sqrt{1-\vth^2}\big)^{-\frac{n-2}{2}}
\Xi\(\frac{x-\vth {\bf e}_3}{\sqrt{1-\vth^2}}, t\),
\end{align*}
where ${\bf e}_3 := (0,0,1,0,\dots,0) \in \R^n$. The scaling parameters and concentration points of the bubbles in $\tu$ and $\bu$ agree to leading order as $t\to-\infty$, and both solutions are Kelvin invariant.
At present, however, we cannot determine whether these two solutions are genuinely distinct or merely two representations of the same solution. 
Resolving this question would require a local uniqueness theory for concentrating solutions in the parabolic setting, which we leave for future work.

\medskip
\noindent{(2)} \textbf{Multi-layer symmetric configuration.}
When the number of layers is even, say $h=2m$ with $m \in \N$, we can choose a $2m$-tuple $(\vth_1^*,\dots,\vth_{2m}^*) \in (-1,1)^{2m}$ symmetrically about zero, so that
\[
-1<\vth_1^*<\cdots<\vth_m^*<0 \quad \text{and} \quad \vth_{2m+1-l}^*=-\vth_l^*>0 \quad \text{for } l=1,\dots,m.
\]
We impose the corresponding symmetry on the parameters:
\[
\mu_{2m+1-l}(t)=\mu_l(t), \quad d_{2m+1-l}(t)=d_l(t)=\sqrt{1-\mu_l^2(t)},\quad \vth_{2m+1-l}(t)=-\vth_l(t)
\]
for every $l=1,\dots,m$ and $t \in (-\infty,t_0]$. Then the solution $u$ in \eqref{neso} of \eqref{pb} is even in $x_3$ and has the leading form
\[
u \simeq \sum_{l=1}^{m} \sum_{j=1}^{k} \(U_{\mu_l,\xi_{jl}} + U_{\mu_l,\xi_{j(2m+1-l)}}\),
\]
where $\vth_l(t) \to \vth_l^* \in (-1,0)$ as $t \to -\infty$ and 
\[
\begin{medsize}
\displaystyle \xi_{jl}(t) = d_l(t) \(\sqrt{1-\vth_l^2(t)} e^{\frac{2\pi (j-1)}{k} i}, \vth_l(t), 0'\), \quad
\xi_{j(2m+1-l)}(t) = d_l(t) \(\sqrt{1-\vth_l^2(t)} e^{\frac{2\pi (j-1)}{k} i}, -\vth_l(t), 0'\).
\end{medsize}
\]

When the number of layers is odd, say $h=2m+1 \ge 3$ with $m \in \N$, we can choose a $(2m+1)$-tuple $(\vth_1^*,\dots,\vth_{2m+1}^*) \in (-1,1)^{2m+1}$ symmetrically about zero, so that
\[
-1<\vth_1^*<\cdots<\vth_m^*<0, \quad \vth_{m+1}^*=0, \quad \text{and} \quad 
\vth_{2m+2-l}^* = -\vth_l^*>0 \quad \text{for } l=1,\dots,m.
\]
We impose the corresponding symmetry on the parameters: On the equatorial layer,
\[
\vth_{m+1}(t)=0, \quad \xi_{j(m+1)}(t) = \sqrt{1-\mu_{m+1}^2(t)} \(e^{\frac{2\pi(j-1)}{k} i}, 0, 0'\)
\]
for every $j=1,\dots,k$ and $t \in (-\infty,t_0]$. On the remaining paired layers, 
\[
\mu_{2m+2-l}(t)=\mu_l(t), \quad d_{2m+2-l}(t)=d_l(t), \quad \vth_{2m+2-l}(t)=-\vth_l(t)
\]
for every $l=1,\dots,m$ and $t \in (-\infty,t_0]$. Then the solution $u$ in \eqref{neso} of \eqref{pb} is even in $x_3$ and has the leading form
\[
u \simeq \sum_{j=1}^{k} \bigg[U_{\mu_{m+1},\xi_{j(m+1)}} + \sum_{l=1}^{m} \(U_{\mu_l,\xi_{jl}} + U_{\mu_l,\xi_{j(2m+2-l))}}\) \bigg].
\]

These ideas correspond to what Medina and Musso \cite[Section 6]{MM21} established for the elliptic Yamabe equation.

\begin{remark}\label{re8.7}
When $n\ge 4$, the concentration phenomena for the elliptic Yamabe equation studied by Medina, Musso, and Wei \cite{MMW19} suggest a natural parabolic extension of the construction developed here.
In particular, one may expect to build ancient solutions whose bubbles concentrate along configurations of linked or mutually orthogonal polygonal rings: For some $k_1, k_2 \in \N$,
\[u \simeq \sum_{j=1}^{k_1} U_{\mu_1(t),\xi_{j1}(t)}+\sum_{j=1}^{k_2} U_{\mu_2(t),\xi_{j2}(t)}.\]
In this configuration, the concentration points are arranged as
\begin{align*}
\xi_{j1}(t) & =d_1(t)\(\cos\tfrac{2\pi(j-1)}{k_1}, \sin\tfrac{2\pi (j-1)}{k_1}, 0, 0, 0'\) \in \R^4 \times \R^{n-4} \quad \text{for } j=1,\dots,k_1,\\
\xi_{j2}(t) &= d_2(t)\(0, 0, \cos\tfrac{2\pi(j-1)}{k_2}, \sin\tfrac{2\pi(j-1)}{k_2}, 0'\) \in \R^4 \times \R^{n-4} \quad \text{for } j=1,\dots,k_2,
\end{align*}
where
\[
\mu_l(t)\simeq c_l |t|^{-\frac{1}{n-2}} \quad \text{and} \quad d_l(t)=\sqrt{1-\mu_l^2(t)} \quad \text{for } l=1,2
\]
for suitable constants $c_l>0$. Formally, such a construction would require modulation equations for the scale parameters $\mu_l$, obtained from the solvability conditions in the $Z_{n+1}$-direction:
\[
\mu_l^{\frac{n+2}{2}} \int_{B(0,R)} E(\mu_l y+\xi_{1l},t)Z_{n+1}(y)\, dy + \text{(higher-order interaction terms)} =0 \quad \text{for } l=1,2.
\]
We also expect the Ricci curvature of the associated conformal metric to be sign-indefinite, at least in dimensions $n\ge 5$.
\end{remark}

\appendix
\section{Technical arguments}\label{app}
\subsection{Proof of Lemma \ref{lemma:eta_j}}\label{app1}
The case $|\mu y+\xi_j| \le 1$ is straightforward, so we assume that $|\mu y+\xi_j| \ge 1$ for the remainder of the proof. 

\medskip
Assume that $|y| \le \frac{R}{4}$. We set $Y = R^{-1}y$. It suffices to prove that
\be \label{eq:eta_j1}
\frac{\left|\mu y+ \xi_j - \xi_j|\mu y+\xi_j|^2\right|}{|\mu y+\xi_j|^2} = \frac{\left|R\mu Y + \xi_j - \xi_j|R\mu Y +\xi_j|^2\right|}{|R\mu Y+\xi_j|^2} \le R\mu \quad \text{if } |Y| \le \frac{1}{4},
\ee
provided that $|t_0|$ is sufficiently large. By \eqref{mu}--\eqref{dt} and \eqref{eq:Rt}, it follows that $|R\mu Y+\xi_j|^2 \to 1$ and
\begin{align*}
(R\mu)^{-1}\left|R\mu Y + \xi_j - \xi_j|R\mu Y +\xi_j|^2\right|
&= \left|Y + \xi_j \left\{R^{-1}\mu-(R\mu)|Y|^2-2 Y\cdot\xi_j\right\}\right| \\
&\to \left|Y - 2\xi_j (Y\cdot\xi_j) \right| \le \frac{3}{4}
\end{align*}
uniformly on $\{|Y| \le \frac{1}{4}\}$ as $t \to -\infty$, which immediately implies \eqref{eq:eta_j1}.

\medskip
Assume next that $|y| \ge 5R$. We set $Y = R^{-1}y$ and $w = R\mu Y+\xi_j$ so that $|w| \ge 1$. We only need to show that
\be \label{eq:eta_j2}
\left|\frac{w}{|w|^2}-\xi_j\right| \ge 2R\mu \quad \text{if } |Y| \ge 5,
\ee
provided that $|t_0|$ is sufficiently large. If $|w| \ge 2$, then
\[\left|\frac{w}{|w|^2}-\xi_j\right| \ge |\xi_j| - \frac{1}{2} \to \frac{1}{2}, \quad \text{while } R\mu \to 0, \quad \text{as } t \to -\infty,\]
so \eqref{eq:eta_j2} holds. Assume that $1 \le |w| \le 2$. Since $1-\mu^2 \le \sqrt{1-\mu^2} = |\xi_j| \le 1$, we have 
\begin{align*}
\left|\frac{w}{|w|^2}-\xi_j\right| &\ge \left|\frac{w}{|w|^2}-\frac{\xi_j}{|\xi_j|^2}\right| - \left|\frac{\xi_j}{|\xi_j|^2}-\xi_j\right| = \frac{|w-\xi_j|}{|w||\xi_j|} - \frac{1-|\xi_j|^2}{|\xi_j|} \\
&= \frac{1}{|\xi_j|} \left[\frac{|Y|}{|w|} R\mu - (1-|\xi_j|^2)\right] \ge \frac{1}{|\xi_j|} \left[\frac{5}{2} R\mu - 2\mu^2\right] \ge 2R\mu
\end{align*}
for $|t_0|$ large, so \eqref{eq:eta_j2} also holds.
\qed

\subsection{Estimate of the nonlinear term $N(\Xi)$}
We establish estimates \eqref{ineno}, \eqref{inho}, and \eqref{poho2}, which involve the nonlinear term $N(\Xi)$ defined in \eqref{N}. Let $y_j = \mu^{-1}(x-\xi_j)$ for $j=1,\dots,n$.

\medskip
To derive them, we shall repeatedly use the following observation: By \eqref{N} and Lemma \ref{yan}, we have
\be \label{esnp}
|N(\Xi)| \lesssim \Big[u_*^{p-2} \Big(\Xi^2+|\Xi\pp_t\Xi|+|\Xi\pp_t u_*|\Big)\Big]\mone_{\{3\le n\le 5\}} +|\Xi|^p+|\Xi|^{p-1}|\pp_t\Xi|+|\Xi|^{p-1}|\pp_t u_*|.
\ee
The last three terms in \eqref{esnp} can be estimated as
\be \label{high}
\begin{aligned}
&\ |\Xi|^p+|\Xi|^{p-1}|\pp_t\Xi|+|\Xi|^{p-1}|\pp_tu_*| \\
&\lesssim \sum_{j=1}^k\bigg[|\eta_j\tph_j|^p +|\eta_j\tph_j|^{p-1}\Big(\eta_j\mu^{-\frac{n-2}{2}}|\pp_t\phi(y_j,t)| +|\pp_t\psi|+\mu^{n-2}u_*\Big)\bigg] \\
&\ +|\psi|^p+|\psi|^{p-1}\bigg(\sum_{j=1}^k \eta_j\mu^{-\frac{n-2}{2}}|\pp_t\phi(y_j,t)| +|\pp_t\psi|+\mu^{n-2}u_*\bigg),
\end{aligned}
\ee
whereas the lower-dimensional contribution in \eqref{esnp} satisfies
\begin{multline}\label{low3}
u_*^{p-2}\Big[\Xi^2+|\Xi\pp_t\Xi|+|\Xi\pp_tu_*|\Big] \lesssim u_*^{p-2}\bigg[|\psi|^2+|\psi\pp_t\psi|+\mu^{n-2}u_*|\psi| \\
+\sum_{j=1}^k\eta_j\Big(\mu^{n-2}|\tph_j|+\tph_j^2 +\mu^{-\frac{n-2}{2}}|\tph_j\pp_t\phi(y_j,t)| +|\tph_j\pp_t\psi| +\mu^{-\frac{n-2}{2}}|\psi\pp_t\phi(y_j,t)|\Big)\bigg].
\end{multline}
We shall also use the following pointwise bounds, which follow from the definitions of the inner and outer norms in Subsection \ref{subsec:norms}:
\begin{align}
|\phi(y,t)|&\le C\|\phi\|_{\sharp\sharp,a,b;\sigma} \frac{|t|^{-b}}{1+|y|^a},\qquad
|\pp_t\phi(y,t)|\le C\|\phi\|_{\sharp\sharp,a,b;\sigma} \frac{|t|^{-b}}{1+|y|^{a-2}}, \label{ppt} \\
|\psi(x,t)|&\le C\|\psi\|_{\widetilde{**},\alpha,\beta;\sigma} \bigg[\sum_{j=1}^k\frac{|t|^{-\beta}}{1+|y_j|^\alpha} \mone_{\{|x|\le \msR_0\}} +\frac{|t|^{-(\beta+\frac{\alpha}{n-2})}}{1+|x|^{n-2}} \mone_{\{|x|\ge \msR_0\}}\bigg], \label{global-psi} \\
|\pp_t\psi(x,t)|&\le C\|\psi\|_{\widetilde{**},\alpha,\beta;\sigma} \bigg[\sum_{j=1}^k\frac{|t|^{-\beta}}{1+|y_j|^{\alpha-2}} \mone_{\{|x|\le \msR_0\}} +\frac{|t|^{-(\beta+\frac{\alpha-2}{n-2})}}{1+|x|^{n-2}} \mone_{\{|x|\ge \msR_0\}}\bigg]. \label{global-psit}
\end{align}
Throughout what follows, we write $A_+:=\max\{A,0\}$. All estimates below are understood to hold for $|t_0|$ sufficiently large and $c'>0$ sufficiently small.

\subsubsection{Proof of Estimate \eqref{ineno}}\label{app2} 
We note that $u_*$ is comparable to $U_1$ on the support $\operatorname{supp}(\eta_{1,2})$ of $\eta_{1,2}$.
Assume that $x \in \operatorname{supp}(\eta_{1,2})$ and write $y=y_1$ so that $x=\mu y+\xi_1$.

\medskip
We estimate the right-hand side of \eqref{high}. The bounds \eqref{ppt}--\eqref{global-psit} give
\begin{align}
&\ |t|^b(1+|y|^{a+2})\eta_{1,2}(\mu y+\xi_1,t) \Big[|\phi|^p + |\phi|^{p-1} \Big(|\pp_t\phi(y,t)|+\mu^{\frac{n-2}{2}}|\pp_t\psi(\mu y+\xi_1,t)|+\mu^{n-2}U\Big) \nonumber\\
&\ + \big|\mu^{\frac{n-2}{2}}\psi(\mu y+\xi_1,t)\big|^p + \big|\mu^{\frac{n-2}{2}}\psi(\mu y+\xi_1,t)\big|^{p-1} \Big(|\pp_t\phi(y,t)|+\mu^{\frac{n-2}{2}}|\pp_t\psi(\mu y+\xi_1,t)|+\mu^{n-2}U\Big)\Big] \nonumber\\
&\lesssim \|\phi\|_{\sharp\sharp,a,b;\sigma}^p |t|^{1-p+c_0(4-a(p-1))_+}\nonumber \\
&\ + \|\phi\|_{\sharp\sharp,a,b;\sigma}^{p-1} \|\psi\|_{**,\alpha,\beta;\sigma(B(0,\msR_0)_{t_0})} |t|^{2-p-\beta} \mu^{\frac{n-2}{2}} |t|^{c_0(a+4-a(p-1)-\alpha)_+}\nonumber \\
&\ + \|\phi\|_{\sharp\sharp,a,b;\sigma}^{p-1} |t|^{2-p} \mu^{n-2} |t|^{c_0(a+2-a(p-1)-(n-2))_+}\nonumber\\
&\ + \|\psi\|_{**,\alpha,\beta;\sigma(B(0,\msR_0)_{t_0})}^{p-1} \|\phi\|_{\sharp\sharp,a,b;\sigma} |t|^{-\beta(p-1)} \mu^{\frac{(n-2)(p-1)}{2}} |t|^{c_0(4-(p-1)\alpha)_+}\nonumber\\
&\ + \|\psi\|_{**,\alpha,\beta;\sigma(B(0,\msR_0)_{t_0})}^{p} |t|^{1-\beta p} \mu^{\frac{(n-2)p}{2}} |t|^{c_0(a+4-\alpha p)_+} \nonumber\\
&\ + \|\psi\|_{**,\alpha,\beta;\sigma(B(0,\msR_0)_{t_0})}^{p-1} |t|^{1-\beta(p-1)} \mu^{\frac{(n-2)(p-1)}{2}+n-2} |t|^{c_0(a+2-(p-1)\alpha-(n-2))_+} \bigg]\nonumber\\
&\le C |t|^{-\frac{4}{n-2}+c_0(4-\frac{4a}{n-2})+c'p(a-\alpha)}\le C.\label{nxi1}
\end{align}
Here, the second and last inequalities follow from the choice of parameters in Definition \ref{def:num}, especially the smallness of $c'\in(0,c_0)$, together with the largeness of $|t_0|$.

We next treat the lower-dimensional terms in \eqref{low3}, which
occur for $3\le n\le5$. We obtain
\begin{align}
&\ |t|^b(1+|y|^{a+2})\mu^{\frac{n+2}{2}} \Big[\eta_{1,2}U_1^{p-2} \Big(|\psi|^2+|\psi\pp_t\psi|+\tph_1^2+|\tph_1\mu^{-\frac{n-2}{2}}\pp_t\phi(y_1,t)|\nonumber\\
&\hspace{100pt} + |\tph_1\pp_t\psi|+|\psi\mu^{-\frac{n-2}{2}}\pp_t\phi(y_1,t)| + (|\tilde{\phi}_1|+|\psi|)\mu^{n-2}U_1\Big)\Big](\mu y+\xi_1,t) \nonumber\\
&\le C\Big[|t|^{b-2\beta}\mu^{n-2}R^{a+n-2-2\alpha} \|\psi\|_{**,\alpha,\beta;\sigma(B(0,\msR_0)_{t_0})}^2 + |t|^{-b}R^{n-2-a}\|\phi\|_{\sharp\sharp,a,b;\sigma}^2 \nonumber\\
&\hspace{100pt} + |t|^{-\beta}\mu^{\frac{n-2}{2}}R^{n-2-\alpha} \|\psi\|_{**,\alpha,\beta;\sigma(B(0,\msR_0)_{t_0})} \|\phi\|_{\sharp\sharp,a,b;\sigma} \nonumber\\
&\hspace{100pt} + \mu^{n-2}R^{-2} \|\phi\|_{\sharp\sharp,a,b;\sigma} + |t|^{-\beta-\frac12} \|\psi\|_{**,\alpha,\beta;\sigma(B(0,\msR_0)_{t_0})}\Big] \nonumber\\
&\le C|t|^{-1+c_0(n-2-a)+2c'(a-\alpha)}\le C \label{nxi2}
\end{align}
again for $c'>0$ sufficiently small.

Combining these two bounds yields \eqref{ineno}.

\subsubsection{Proof of Estimate \eqref{inho}}\label{app3}
We derive the weighted integral estimate of $\mch_{\mathrm{outer},2}$, given in \eqref{inho}.

\medskip
Owing to the factor $(1-\eta_j)\eta_j$ and Lemma \ref{lemma:eta_j}, we need to estimate the terms in \eqref{high} involving the inner corrections $\tph_j$ or $\phi(y_j,t)$ only on the transition annuli $\{\frac{R}{4} \le |y_j| \le 5R\}$. Hence
\begin{align*}
&\ \int_{\R^n} u_*^{1-p}\bigg|\bigg(1-\sum_{j=1}^k\eta_j\bigg) \sum_{j=1}^k \bigg[|\eta_j\tph_j|^p + |\eta_j\tph_j|^{p-1} \Big(\eta_j\mu^{-\frac{n-2}{2}}|\pp_t\phi(y_j,t)|+|\pp_t\psi|+u_*\mu^{n-2}\Big)\\
&\hspace{270pt} + \eta_j|\psi|^{p-1}\mu^{-\frac{n-2}{2}} |\pp_t\phi(y_j,t)|\bigg]\bigg|^2 dx\\
&\lesssim |t|^{-2bp}R^{n+8-2ap} \|\phi\|_{\sharp\sharp,a,b;\sigma}^{2p} + |t|^{-2b(p-1)}\mu^{2n-4}R^{8-n-2a(p-1)} \|\phi\|_{\sharp\sharp,a,b;\sigma}^{2(p-1)}\\
&\ + |t|^{-2b(p-1)-2\beta}\mu^{n-2}R^{n+8-2a(p-1)-2\alpha} \|\phi\|_{\sharp\sharp,a,b;\sigma}^{2(p-1)} \|\psi\|_{\widetilde{**},\alpha,\beta;\sigma}^{2}\\
&\ + |t|^{-2b-2\beta(p-1)}\mu^{4}R^{n+8-2a-2\alpha(p-1)} \|\phi\|_{\sharp\sharp,a,b;\sigma}^{2} \|\psi\|_{\widetilde{**},\alpha,\beta;\sigma}^{2(p-1)} \\
&= o_{|t_0|}(|t|^{-1})
\end{align*}
for $c'>0$ sufficiently small.

The purely outer terms involving $\psi$ in \eqref{high} are controlled by \eqref{global-psi}--\eqref{global-psit}:
\begin{align*}
&\ \int_{\R^n} u_*^{1-p}\bigg(1-\sum_{j=1}^k\eta_j\bigg) \Big[|\psi|^{2p} + \big\{|\psi|^{p-1}(|\pp_t\psi|+\mu^{n-2}u_*)\big\}^2\Big] dx\\
&\lesssim |t|^{-2\beta p}\mu^{2\alpha p-6} \|\psi\|_{\widetilde{**},\alpha,\beta;\sigma}^{2p}\\
&\ + |t|^{-2\beta(p-1)}\|\psi\|_{\widetilde{**},\alpha,\beta;\sigma}^{2(p-1)} \times
\begin{cases}
\mu^{3n-8+2\alpha(p-1)} &\text{if } 8-n-2\alpha(p-1)>0,\\
\mu^{2n}|\log(\mu R)| + \mu^{3n-8+2\alpha(p-1)} &\text{if } 8-n-2\alpha(p-1)=0,\\
\mu^{2n}R^{8-n-2\alpha(p-1)} + \mu^{3n-8+2\alpha(p-1)} &\text{if } 8-n-2\alpha(p-1)<0
\end{cases} \\
&\lesssim |t|^{-1},
\end{align*}
where the last inequality uses the condition $\alpha>\frac12$ when $n=3$.

For $3\le n\le 5$, we also estimate the terms in \eqref{low3}. By controlling the terms involving $\tph_j$ and $\phi(y_j,t)$ on the transition annuli, we obtain
\begin{align*}
&\begin{medsize}
\displaystyle \ \int_{\R^n} u_*^{p-3} \sum_{j=1}^k\mone_{\{\frac{R}{4} \le |y_j| \le 5R\}} \left[\mu^{n-2}|\tph_j|+|\tph_j|^2+\mu^{-\frac{n-2}{2}}|\tph_j\pp_t\phi(y_j,t)| + |\tph_j\pp_t\psi|+\mu^{-\frac{n-2}{2}}|\psi\pp_t\phi(y_j,t)|\right]^2 dx
\end{medsize} \\
&\begin{medsize}
\displaystyle \lesssim |t|^{-2b}\mu^{2n}R^{n+4-2a} \|\phi\|_{\sharp\sharp,a,b;\sigma}^{2} + |t|^{-4b}\mu^{6-n}R^{n+8-4a} \|\phi\|_{\sharp\sharp,a,b;\sigma}^{4}
+ |t|^{-2b-2\beta}\mu^{4}R^{n+8-2a-2\alpha} \|\phi\|_{\sharp\sharp,a,b;\sigma}^{2} \|\psi\|_{\widetilde{**},\alpha,\beta;\sigma}^{2}
\end{medsize} \\
&\begin{medsize}
\displaystyle \lesssim |t|^{-1},
\end{medsize}
\end{align*}
where the last inequality uses the condition $a>\frac12$ when $n=3$. The terms involving
only $\psi$ give
\begin{align*}
&\ \int_{\R^n} \bigg(1-\sum_{j=1}^k\eta_j\bigg) u_*^{p-3} \(\psi^4 + |\psi\pp_t\psi|^2 + \mu^{2(n-2)}u_*^2\psi^2\) dx\\
&\lesssim |t|^{-4\beta}\mu^{4\alpha-n} \|\psi\|_{\widetilde{**},\alpha,\beta;\sigma}^{4} + |t|^{-2\beta}\mu^{3n-6} \int_R^{\frac{\delta_0}{\mu}}r^{n-5-2\alpha}\,dr \, \|\psi\|_{\widetilde{**},\alpha,\beta;\sigma}^{2} + |t|^{-2\beta}\mu^{2n-2+2\alpha} \|\psi\|_{\widetilde{**},\alpha,\beta;\sigma}^{2} \\
&\lesssim |t|^{-1},
\end{align*}
where we again employ $\alpha>\frac12$ when $n=3$. 

The estimate \eqref{inho} follows by combining all the estimates obtained above.

\subsubsection{Proof of Estimate \eqref{poho2}}\label{app4}
We next derive an upper bound for the $\tilde{*}$-norm of $\mch_{\mathrm{outer},2}$, given in \eqref{poho2}, by arguing as above.

\medskip
On the transition annuli $\{\frac{R}{4} \le |y_j| \le 5R\}$, \eqref{ppt}--\eqref{global-psit} yield
\begin{align*}
&\ |t|^{\beta}\mu^2 (1+|y_j|^{\alpha+2}) (1-\eta_j)\eta_j \bigg[|\tph_j|^p + |\tph_j|^{p-1}\(\mu^{-\frac{n-2}{2}}|\pp_t\phi(y_j,t)|+|\pp_t\psi|+u_*\mu^{n-2}\) \\
&\hspace{265pt} + |\psi|^{p-1} \mu^{-\frac{n-2}{2}}|\pp_t\phi(y_j,t)|\bigg]\\
&\le C \bigg[|t|^{\beta-bp}\mu^{-\frac{n-2}{2}}R^{\alpha+4-ap} \|\phi\|_{\sharp\sharp,a,b;\sigma}^{p} + |t|^{-b(p-1)}R^{4-a(p-1)} \|\phi\|_{\sharp\sharp,a,b;\sigma}^{p-1} \|\psi\|_{**,\alpha,\beta;\sigma(B(0,\msR_0)_{t_0})}\\
&\hspace{25pt} + |t|^{-b(p-1)+\beta}\mu^{\frac{n-2}{2}}R^{\alpha+2-a(p-1)-(n-2)} \|\phi\|_{\sharp\sharp,a,b;\sigma}^{p-1}\\
&\hspace{25pt} + |t|^{-b+\beta(2-p)}\mu^{\frac{6-n}{2}}R^{\alpha+4-\alpha(p-1)-a} \|\phi\|_{\sharp\sharp,a,b;\sigma} \|\psi\|_{**,\alpha,\beta;\sigma(B(0,\msR_0)_{t_0})}^{p-1}\bigg].
\end{align*}
Also, it holds that
\begin{align*}
&\ |t|^{\beta}\mu^2 (1+|y_j|^{\alpha+2}) \mone_{B(0,\msR_0)} \bigg(1-\sum_{j=1}^k\eta_j\bigg) \left[|\psi|^p+|\psi|^{p-1}\(|\pp_t\psi|+\mu^{n-2}u_*\)\right]\\
&\le C\left[ |t|^{-\beta(p-1)+\frac{\alpha(1-p)+2}{n-2}} \|\psi\|_{**,\alpha,\beta;\sigma(B(0,\msR_0)_{t_0})}^{p} + |t|^{\beta(2-p)}\mu^{\frac{3(n-2)}{2}+\alpha(p-2)} \|\psi\|_{**,\alpha,\beta;\sigma(B(0,\msR_0)_{t_0})}^{p-1}\right].
\end{align*}
Given $3\le n\le5$, the terms involving $\tph_j$ and $\phi(y_j,t)$ give
\begin{align*} 
&\begin{medsize}
\displaystyle \ |t|^{\beta}\mu^2 (1+|y_j|^{\alpha+2}) u_*^{p-2} (1-\eta_j)\eta_j \bigg[|\tph_j|^2+\mu^{-\frac{n-2}{2}}|\tph_j\pp_t\phi(y_j,t)|+|\tph_j\pp_t\psi| +\mu^{-\frac{n-2}{2}}|\psi\pp_t\phi(y_j,t)|+\mu^{n-2}|\tph_j|\bigg]
\end{medsize} \\
&\begin{medsize}
\displaystyle \le C \bigg[|t|^{\beta-2b}\mu^{-\frac{n-2}{2}}R^{\alpha+n-2-2a} \|\phi\|_{\sharp\sharp,a,b;\sigma}^{2} + |t|^{\beta-b}\mu^{n-2}R^{\alpha+n-4-a} \|\phi\|_{\sharp\sharp,a,b;\sigma}
\end{medsize} \\
&\begin{medsize}
\displaystyle \hspace{245pt} + |t|^{-b}R^{n-2-a} \|\phi\|_{\sharp\sharp,a,b;\sigma} \|\psi\|_{**,\alpha,\beta;\sigma(B(0,\msR_0)_{t_0})}\bigg],
\end{medsize}
\end{align*}
while the terms involving only $\psi$ satisfy
\begin{multline*}
|t|^{\beta}\mu^2 (1+|y_j|^{\alpha+2}) \mone_{B(0,\msR_0)} \bigg(1-\sum_{j=1}^k\eta_j\bigg) u_*^{p-2} \left[|\psi|^2+|\psi\pp_t\psi|+\mu^{n-2}u_*|\psi|\right]\\
\le C\left[|t|^{-\beta+\frac12-\frac{\alpha}{n-2}}
\|\psi\|_{**,\alpha,\beta;\sigma(B(0,\msR_0)_{t_0})}^{2} + \mu^{n-2}R^{-2} \|\psi\|_{**,\alpha,\beta;\sigma(B(0,\msR_0)_{t_0})}\right].
\end{multline*}
Using $\alpha<a$, $c_0\in(\frac{1}{2(n-2)},\frac{1}{n-2})$, and the smallness of $c'\in(0,c_0)$, we infer from the preceding estimates that
\be \label{nonh2}
\|\mch_{\mathrm{outer},2}\|_{*,\alpha,\beta;\sigma(B(0,\msR_0)_{t_0})} \le C|t_0|^{c'(\alpha-a)}.
\ee

It remains to examine the conformal lift $\whmch_{\mathrm{outer},2}$ of $\mch_{\mathrm{outer},2}$ on $\mcb^{\ep_0}_{t_0}$.
Since $\mch_{\mathrm{outer},2} = u_*^{1-p}(1-\sum_{j=1}^k\eta_j)N(\Xi)$, we have that for $|x|\ge \frac{\msR_0}{2}$ and $t \in (-\infty,t_0]$,
\begin{multline*}
|\mch_{\mathrm{outer},2}(x,t)| \le C u_*^{1-p} \bigg(1-\sum_{j=1}^k\eta_j\bigg) \bigg[|\psi|^p+|\psi|^{p-1}\Big(|\pp_t\psi|+\mu^{n-2}u_*\Big)\\
+ \mone_{\{3\le n\le 5\}} \Big(u_*^{p-2}(|\psi|^2+|\psi\pp_t\psi|) + u_*^{p-1}\mu^{n-2}|\psi|\Big)\bigg](x,t) =:\mch^*(x,t).
\end{multline*}
Therefore, for $(z,t)=(\pi(x),t) \in \mcb^{\ep_0}_{t_0}$, we deduce
\begin{align*}
&\ |t|^{\beta+\frac{\alpha-2}{n-2}} |\whmch_{\mathrm{outer},2}(z,t)| \le C |t|^{\beta+\frac{\alpha-2}{n-2}}|\whmch^*(z,t)| \\
&\le C\bigg[|t|^{\big(\beta+\frac{\alpha}{n-2}\big)(1-p)+\frac{2}{n-2}} \|\hat{\psi}\|_{**',\alpha,\beta;\sigma(\mcb^{\ep_0}_{t_0})}^{p}
+ |t|^{-\frac32+\big(\beta+\frac{\alpha}{n-2}\big)(2-p)} \|\hat{\psi}\|_{**',\alpha,\beta;\sigma(\mcb^{\ep_0}_{t_0})}^{p-1} \\
&\hspace{25pt} + \mone_{\{3\le n\le 5\}} \bigg\{|t|^{-\big(\beta+\frac{\alpha}{n-2}\big)+\frac12-\frac{2}{n-2}} \|\hat{\psi}\|_{**',\alpha,\beta;\sigma(\mcb^{\ep_0}_{t_0})}^{2}
+ |t|^{-1-\frac{2}{n-2}} \|\hat{\psi}\|_{**',\alpha,\beta;\sigma(\mcb^{\ep_0}_{t_0})}\bigg\}\bigg] \\
&\le C|t|^{c'(\alpha-a)}.
\end{align*}

Combining this with \eqref{nonh2} proves \eqref{poho2}.

\subsection{Justification of \eqref{psi-lip-1}--\eqref{psi-lip-3}}\label{app5}
We verify \eqref{psi-lip-1}--\eqref{psi-lip-3}, which were used in the proof of Proposition \ref{prop:outer}. We retain the notation introduced there.

\medskip
It is straightforward to check that $\mcf: \mcx_{\mathrm{out}} \times \wtmcp \to \mcx_{\mathrm{out}}$ is of class $C^1$.

We compute the derivative $D_\psi\mcf$ of $\mcf$ with respect to $\psi$. Fix $P \in \wtmcp$ and $\psi=\psi[P] \in \mcx_{\mathrm{out}}$. For $\omega\in \mcx_{\mathrm{out}}$, we have
\[
D_\psi\mcf(\psi,P)[\omega] = \omega - \mct_{\mathrm{outer}}\(D_\psi\mch_{\mathrm{outer}}[\psi,P][\omega] \).
\]
The dependence of $\mch_{\mathrm{outer}}$ on $\psi$ comes through the nonlinear term $N(\Xi)$, i.e.,
\[
D_\psi\mch_{\mathrm{outer}}[\psi,P][\omega] = \bigg(1-\sum_{j=1}^k\eta_j\bigg) D_\psi N(\Xi)[\omega].
\]
The estimates used to construct $\psi$ by the contraction argument imply the existence of a number $\theta \in (0,1)$ such that
\be \label{eq:TDpsiHout}
\|\mct_{\mathrm{outer}} (D_\psi\mch_{\mathrm{outer}}[\psi,P][\omega]) \|_{\widetilde{**},\alpha,\beta;\sigma} \le \theta \|\omega\|_{\widetilde{**},\alpha,\beta;\sigma}
\ee
provided that $|t_0|$ is sufficiently large. Thus, the Neumann series representation of $\(D_\psi\mcf(\psi,P)\)^{-1}$ and the estimate \eqref{eq:TDpsiHout} give
\begin{align*}
\left\|\(D_\psi\mcf(\psi,P)\)^{-1}[\omega]\right\|_{\widetilde{**},\alpha,\beta;\sigma} &= \left\| \(\text{Id}_{\mcx_{\mathrm{out}}} - \mct_{\mathrm{outer}}(D_\psi\mch_{\mathrm{outer}}[\psi,P])\)^{-1}[\omega]\right\|_{\widetilde{**},\alpha,\beta;\sigma}\\
&\le \sum_{i=0}^{\infty} \left\|\left[\mct_{\mathrm{outer}}(D_\psi\mch_{\mathrm{outer}}[\psi,P])\right]^i[\omega]\right\|_{\widetilde{**},\alpha,\beta;\sigma} \le \frac{1}{1-\theta}\|\omega\|_{\widetilde{**},\alpha,\beta;\sigma}.
\end{align*}
By the implicit function theorem, there exist a neighborhood $\mcu_P\subset\wtmcp$ of $P$ and a unique $C^1$ map $\psi:\mcu_P \to \mcx_{\mathrm{out}}$ such that $\mcf(\psi[Q],Q)=0$ for all $Q\in\mcu_P$.
In other words, the maps
\[
\mu\mapsto \psi[\mu,\dot{\mu},\phi], \quad
\dot{\mu}\mapsto \psi[\mu,\dot{\mu},\phi], \quad \text{and} \quad
\phi\mapsto \psi[\mu,\dot{\mu},\phi]
\]
are well-defined and $C^1$. We denote
\[
\mcw_1 := \pp_\mu\psi[\mu,\dot{\mu},\phi][\bmu], \quad
\mcw_2 := \pp_{\dot{\mu}}\psi[\mu,\dot{\mu},\phi][\dot{\bmu}], \quad \text{and} \quad
\mcw_3 := \pp_\phi\psi[\mu,\dot{\mu},\phi][\bph].
\]

Differentiating \eqref{outer} in $\mu$, $\dot{\mu}$, or $\phi$ yields
\[pu_*^{p-1}\pp_t\mcw_i = \Delta\mcw_i + p\frac{n+2}{4}\bigg(1-\sum_{j=1}^k\eta_j\bigg)u_*^{p-1}\mcw_i +
u_*^{p-1}\mch^i \quad \text{in } \R^n\times(-\infty,t_0)\]
for $i=1,2,3$, where
\[\mch^1 := \mch_1^1+\mch_2^1, \qquad \mch^2 := \mch_1^2+\mch_2^2, \qquad \mch^3 := \mch_1^3+\mch_2^3.\]
The terms $\mch_1^i$ are defined by the relations
\begin{align*}
u_*^{p-1}\mch_1^1 &= \pp_\mu \bigg[\bigg(1-\sum_{j=1}^k\eta_j\bigg)E + \sum_{j=1}^k \(\Delta\eta_j\tph_j+2\nabla\tph_j\nabla\eta_j-pu_*^{p-1}\tph_j\pp_t\eta_j\) \bigg][\bmu] \\
&\ + \pp_\mu \bigg[p\frac{n+2}{4} \bigg(1-\sum_{j=1}^k\eta_j\bigg) u_*^{p-1}\bigg][\bmu]\psi - \pp_\mu(pu_*^{p-1})[\bmu]\pp_t\psi, \\
u_*^{p-1}\mch_1^2 &= \pp_{\dot{\mu}} \bigg(\bigg(1-\sum_{j=1}^k\eta_j\bigg)E\bigg)[\dot\bmu] + \pp_{\dot{\mu}} \bigg[\sum_{j=1}^k(-pu_*^{p-1}\tph_j\pp_t\eta_j)\bigg][\dot\bmu], \\
u_*^{p-1}\mch_1^3 &= \sum_{j=1}^k \(\Delta\eta_j\tilde\bph_j + 2\nabla\tilde\bph_j\nabla\eta_j - pu_*^{p-1}\tilde\bph_j\pp_t\eta_j\),
\end{align*}
where $\tilde\bph_j(x,t) := \mu^{-\frac{n-2}{2}}\bph(\mu^{-1}(x-\xi_j),t)$. The terms $\mch_2^i$ are defined by the relations
\begin{align*}
\begin{medsize}
\displaystyle u_*^{p-1}\mch_2^1
\end{medsize}
&\begin{medsize}
\displaystyle = \pp_\mu \bigg[\bigg(1-\sum_{j=1}^k\eta_j\bigg)N(\Xi)\bigg][\bmu]
\end{medsize} \\
&\begin{medsize}
\displaystyle = -\sum_{j=1}^k\pp_\mu \eta_j[\bmu]N(\Xi) + \bigg(1-\sum_{j=1}^k\eta_j\bigg) \Bigg[\frac{n+2}{4} p\left\{(u_*+\Xi)^{p-1}-u_*^{p-1}-(p-1)u_*^{p-2}\Xi\right\} \pp_{\mu} u_*[\bmu]
\end{medsize} \\
&\begin{medsize}
\displaystyle \hspace{50pt} + p\left\{(u_*+\Xi)^{p-1}-u_*^{p-1}\right\} \( \sum_{j=1}^k\pp_{\mu}(\eta_j\tph_j)[\bmu]+\mcw_1 \)
\end{medsize} \\
&\begin{medsize}
\displaystyle \hspace{50pt} - p(p-1) \bigg[ (u_*+\Xi)^{p-2} \bigg\{\pp_{\mu}u_*[\bmu] + \sum_{j=1}^k\pp_{\mu}(\eta_j\tph_j)[\bmu] + \mcw_1 \bigg\} - u_*^{p-2}\pp_{\mu}u_*[\bmu] \bigg] \pp_t(u_*+\Xi)
\end{medsize} \\
&\begin{medsize}
\displaystyle \hspace{50pt} - p\left\{(u_*+\Xi)^{p-1}-u_*^{p-1}\right\} \bigg\{ \pp_{\mu} \bigg(\pp_t \bigg(u_*+\sum_{j=1}^k(\eta_j\tph_j)\bigg)\bigg)[\bmu] + \pp_t\mcw_1 \bigg\}\Bigg],
\end{medsize} \\
\begin{medsize}
\displaystyle u_*^{p-1}\mch_2^2
\end{medsize}
&\begin{medsize}
\displaystyle = \bigg(1-\sum_{j=1}^k\eta_j\bigg) \pp_{\dot{\mu}}N(\Xi)[\dot\bmu]
\end{medsize} \\
&\begin{medsize}
\displaystyle = \bigg(1-\sum_{j=1}^k\eta_j\bigg) \bigg[\frac{n+2}{4} p\left\{(u_*+\Xi)^{p-1}-u_*^{p-1}\right\}\mcw_2 - p(p-1)(u_*+\Xi)^{p-2}\mcw_2\pp_t(u_*+\Xi)
\end{medsize} \\
&\begin{medsize}
\displaystyle \hspace{70pt} - p\left\{(u_*+\Xi)^{p-1} - u_*^{p-1}\right\} \bigg\{\pp_{\dot{\mu}} \bigg(\pp_t\bigg(u_*+\sum_{j=1}^k(\eta_j\tph_j)\bigg)\bigg)[\dot{\bmu}] + \pp_t\mcw_2\bigg\}\bigg],
\end{medsize} \\
\begin{medsize}
\displaystyle u_*^{p-1}\mch_2^3 
\end{medsize}
&\begin{medsize}
\displaystyle = \bigg(1-\sum_{j=1}^k\eta_j\bigg)\pp_\phi N(\Xi)[\bph]
\end{medsize} \\
&\begin{medsize}
\displaystyle = \bigg(1-\sum_{j=1}^k\eta_j\bigg) \bigg[\frac{n+2}{4}p \left\{(u_*+\Xi)^{p-1}-u_*^{p-1}\right\} \bigg(\sum_{j=1}^k\eta_j\tilde{\bph}_j+\mcw_3\bigg)
\end{medsize} \\
&\begin{medsize}
\displaystyle \quad - p(p-1)(u_*+\Xi)^{p-2} \bigg(\sum_{j=1}^k\eta_j\tilde{\bph}_j+\mcw_3\bigg)\pp_t(u_*+\Xi)
- p\left\{(u_*+\Xi)^{p-1} - u_*^{p-1}\right\} \pp_t \bigg(\sum_{j=1}^k\eta_j\tilde{\bph}_j+\mcw_3\bigg)\bigg].
\end{medsize}
\end{align*}

We now derive pointwise estimates for the terms $\mch_1^i$, where $i=1,2,3$: For $(x,t)\in B(0,\msR_0)_{t_0}$, we have
\begin{align*}
\bigg|\pp_\mu\bigg(\bigg(1-\sum_{j=1}^k\eta_j\bigg)E\bigg)[\bmu]\bigg|
&\lesssim \sum_{j=1}^k \bigg|\frac{|x-\xi_j|}{R\mu^2}\bmu E\bigg| \mone_{\{\frac{1}{4}\mu R\le |x-\xi_j| \le 5\mu R\}} + \bigg|\bigg(1-\sum_{j=1}^k\eta_j\bigg)E\frac{\bmu}{\mu}\bigg|\\
&\lesssim |t|^{c_0(a-2)+c'(\alpha-a)} \|\bmu\|_{\tnu;\sigma} \omega^1_{\alpha,\beta,2}
\end{align*}
and
\begin{align*}
&\ \bigg|\pp_\mu \bigg[p\frac{n+2}{4} \bigg(1-\sum_{j=1}^k\eta_j\bigg)u_*^{p-1}\bigg][\bmu]\psi - \pp_\mu(pu_*^{p-1})[\bmu]\pp_t\psi\bigg|\\
&\lesssim \sum_{j=1}^k \frac{\bmu}{\mu} \frac{\mu^{-2}}{1+|y_j|^4} \frac{|t|^{-\beta}}{1+|y_j|^{\alpha-2}} \mone_{\{|x|\le \msR_0\}} \|\psi\|_{\widetilde{**},\alpha,\beta;\sigma}
\lesssim |t_0|^{c'(\alpha-a)} \|\bmu\|_{\tnu;\sigma} \omega^1_{\alpha,\beta,2}.
\end{align*}
For the variation with respect to $\dot{\mu}$, it holds that
\begin{align*}
\bigg|\pp_{\dot{\mu}} \bigg(\bigg(1-\sum_{j=1}^k\eta_j\bigg)E\bigg)[\dot\bmu]\bigg|
&\lesssim \bigg(1-\sum_{j=1}^k\eta_j\bigg) u_*^{p-1} \sum_{j=1}^k \mu^{-\frac n2} \left|\dot\bmu Z_{n+1}\Big(\frac{x-\xi_j}{\mu}\Big) + \nabla U\Big(\frac{x-\xi_j}{\mu}\Big) \cdot \frac{\mu\dot\bmu}{d(t)}{\bf q}_j\right|\\
&\lesssim |t|^{\beta+\frac{\alpha}{n-2}-\frac p2-1} \|\dot\bmu\|_{\tnu+1;\sigma} \omega^1_{\alpha,\beta,2}.
\end{align*}
Arguing as in \eqref{esnec}, we obtain
\begin{align*}
&\bigg|\pp_\mu \sum_{j=1}^k \(\Delta\eta_j\tph_j + 2\nabla\tph_j\nabla\eta_j - pu_*^{p-1}\tph_j\pp_t\eta_j\)[\bmu]\bigg|
\lesssim |t|^{c'(\alpha-a)} \|\bmu\|_{\tnu;\sigma} \|\phi\|_{\sharp\sharp,a,b;\sigma} \omega^1_{\alpha,\beta,2},\\
&\bigg|\pp_{\dot{\mu}} \bigg[\sum_{j=1}^k (-pu_*^{p-1}\tph_j\pp_t\eta_j)\bigg][\dot\bmu]\bigg| 
\lesssim |t|^{c'(\alpha-a)-1} \|\dot\bmu\|_{\tnu+1;\sigma} \|\phi\|_{\sharp\sharp,a,b;\sigma} \omega^1_{\alpha,\beta,2},\\
&\sum_{j=1}^k \left|\Delta\eta_j\tilde\bph_j + 2\nabla\tilde\bph_j\nabla\eta_j - pu_*^{p-1}\tilde\bph_j\pp_t\eta_j\right|
\lesssim |t|^{c'(\alpha-a)} \|\bph\|_{\sharp\sharp,a,b;\sigma} \omega^1_{\alpha,\beta,2}.
\end{align*}
For $(\pi(x),t)=(z,t)\in\mcb^{\ep_0}_{t_0}$, we further get
\begin{align*}
|t|^{\beta+\frac{\alpha-2}{n-2}} \left|\whmch_1^1(z,t)\right|
&\le C\left[|t|^{\beta+\frac{\alpha-2}{n-2}-\frac12} + |t|^{-\frac2{n-2}}
\|\hat\psi\|_{**',\alpha,\beta;\sigma(\mcb^{\ep_0}_{t_0})}\right] \|\bmu\|_{\tnu;\sigma},\\
|t|^{\beta+\frac{\alpha-2}{n-2}} \left|\whmch_1^2(z,t)\right|
&\le C|t|^{\beta+\frac{\alpha-2}{n-2}-1} \|\dot\bmu\|_{\tnu+1;\sigma}, \quad \text{and} \quad |t|^{\beta+\frac{\alpha-2}{n-2}} \whmch_1^3(z,t)=0.
\end{align*}

On the other hand, by arguing as in \eqref{poho2}, we can derive the following pointwise estimates for the terms $\mch_2^i$, where $i=1,2,3$:
\begin{align*}
\|\mch_2^1\|_{\tilde{*},\alpha,\beta} &\le C\left[|t_0|^{c'(\alpha-a)}\|\bmu\|_{\tnu;\sigma} + o_{|t_0|}(1) \|\mcw_1\|_{\widetilde{**},\alpha,\beta;\sigma}\right],\\
\|\mch_2^2\|_{\tilde{*},\alpha,\beta} &\le C\left[|t_0|^{c'(\alpha-a)-1}\|\dot\bmu\|_{\tnu+1;\sigma} + o_{|t_0|}(1) \|\mcw_2\|_{\widetilde{**},\alpha,\beta;\sigma}\right],\\
\|\mch_2^3\|_{\tilde{*},\alpha,\beta} &\le C\left[|t_0|^{c'(\alpha-a)}\|\bph\|_{\sharp\sharp,a,b;\sigma} + o_{|t_0|}(1) \|\mcw_3\|_{\widetilde{**},\alpha,\beta;\sigma}\right]
\end{align*}
provided that $|t_0|$ is sufficiently large.

Combining these pointwise estimates with the corresponding H\"older estimates, we conclude that $\mcw_i$ satisfies \eqref{psi-lip-1}--\eqref{psi-lip-3}, respectively, for $i=1,2,3$.


\begin{thebibliography}{99}
\bibitem{Ang}
S. Angenent,
Shrinking doughnuts,
in Nonlinear diffusion equations and their equilibrium states, 3 (Gregynog, 1989), 21--38, Birkh\"auser Boston, Boston, MA, 1992.

\bibitem{ADS20}
S. Angenent, P. Daskalopoulos, and N. Sesum,
Uniqueness of two-convex closed ancient solutions to the mean curvature flow,
Ann. of Math. \textbf{192} (2020), 353--436.

\bibitem{BKN12}
I. Bakas, S. Kong, and L. Ni,
Ancient solutions of Ricci flow on spheres and generalized Hopf fibrations,
J. Reine Angew. Math. \textbf{663} (2012), 209--248.

\bibitem{BHHSZ26}
Y. Bi, T. Hao, S. He, Y. Shi, and J. Zhu,
A proof for the Riemannian positive mass theorem up to dimension 19,
preprint, arXiv:2603.02769.

\bibitem{brendle05}
S. Brendle,
Convergence of the Yamabe flow for arbitrary initial energy,
J. Differential Geom. \textbf{69} (2005), 217--278.

\bibitem{brendle07}
\bysame,
Convergence of the Yamabe flow in dimension 6 and higher,
Invent. Math. \textbf{170} (2007), 541--576.

\bibitem{BDNS23}
S. Brendle, P. Daskalopoulos, K. Naff, and N. Sesum,
Uniqueness of compact ancient solutions to the higher-dimensional Ricci flow,
J. Reine Angew. Math. \textbf{795} (2023), 85--138.

\bibitem{BDS21}
S. Brendle, P. Daskalopoulos, and N. Sesum,
Uniqueness of compact ancient solutions to three-dimensional Ricci flow,
Invent. Math. \textbf{226} (2021), 579--651.

\bibitem{BK17}
S. Brendle and N. Kapouleas,
Gluing Eguchi--Hanson metrics and a question of Page,
Comm. Pure Appl. Math. \textbf{70} (2017), 1366--1401.

\bibitem{BW26}
S. Brendle and Y. Wang,
A dimension descent scheme for the positive mass theorem in arbitrary dimension,
preprint, arXiv:2604.08473.

\bibitem{chow92}
B. Chow,
The Yamabe flow on locally conformally flat manifolds with positive Ricci curvature,
Comm. Pure Appl. Math. \textbf{45} (1992), 1003--1014.

\bibitem{Co17}
C. Collot,
Nonradial type II blow up for the energy-supercritical semilinear heat equation,
Anal. PDE \textbf{10} (2017), 127--252.

\bibitem{CDM20}
C. Cort\'azar, M. del Pino, and M. Musso,
Green's function and infinite-time bubbling in the critical nonlinear heat equation,
J. Eur. Math. Soc. \textbf{22} (2020), 283--344.

\bibitem{ddks}
P. Daskalopoulos, M. del Pino, J. King, and N. Sesum,
New type I ancient compact solutions of the Yamabe flow,
Math. Res. Lett. \textbf{24} (2017), 1667--1691.

\bibitem{dds}
P. Daskalopoulos, M. del Pino, and N. Sesum,
Type II ancient compact solutions to the Yamabe flow,
J. Reine Angew. Math. \textbf{738} (2018), 1--71.

\bibitem{DHS10}
P. Daskalopoulos, R. Hamilton, and N. Sesum,
Classification of compact ancient solutions to the curve shortening flow,
J. Differential Geom. \textbf{84} (2010), 455--464.

\bibitem{DHS12}
\bysame,
Classification of ancient compact solutions to the Ricci flow on surfaces,
J. Differential Geom. \textbf{91} (2012), 171--214.

\bibitem{DDPW19}
J. D\'avila, M. del Pino, C. Pesce, and J. Wei,
Blow-up for the $3$-dimensional axially symmetric harmonic map flow into $\Ss^2$,
Discrete Contin. Dyn. Syst. \textbf{39} (2019), 6913--6943.

\bibitem{DDW20}
J. D\'avila, M. del Pino, and J. Wei,
Singularity formation for the two-dimensional harmonic map flow into $\Ss^2$,
Invent. Math. \textbf{219} (2020), 345--466.

\bibitem{dmpp}
M. del Pino, M. Musso, F. Pacard, and A. Pistoia,
Large energy entire solutions for the Yamabe equation,
J. Differential Equations \textbf{251} (2011), 2568--2597.

\bibitem{DMW20}
M. del Pino, M. Musso, and J. Wei,
Infinite-time blow-up for the 3-dimensional energy-criticial heat equation,
Anal. PDE \textbf{13} (2020), 215--274.

\bibitem{DMW21}
\bysame,
Geometry driven type II higher dimensional blow-up for the critical heat equation,
J. Funct. Anal. \textbf{280} (2021), Paper No. 108788, 49 pp.

\bibitem{dmwzz}
M. del Pino, M. Musso, J. Wei, Q. Zhang, and Y. Zhou,
Type II finite time blow-up for the three dimensional energy critical heat equation,
preprint, arXiv:2002.05765.

\bibitem{DMWZhe20}
M. del Pino, M. Musso, J. Wei, and Y. Zheng,
Sign-changing blowing-up solutions for the critical nonlinear heat equation,
Ann. Sc. Norm. Super. Pisa Cl. Sci. \textbf{21} (2020), 569--641.

\bibitem{DMWZ20}
M. del Pino, M. Musso, J. Wei, and Y. Zhou,
Type II finite time blow-up for the energy critical heat equation in $\R^4$,
Discrete Contin. Dyn. Syst. \textbf{40} (2020), 3327--3355.

\bibitem{ha89}
R. Hamilton,
Lectures on geometric flows,
unpublished manuscript (1989).

\bibitem{HH}
R. Haslhofer and O. Hershkovits,
Ancient solutions of the mean curvature flow,
Comm. Anal. Geom. \textbf{24} (2016), 593--604.

\bibitem{KME}
K. Kim and F. Merle,
On classification of global dynamics for energy-critical equivariant harmonic map heat flows and radial nonlinear heat equation,
Comm. Pure Appl. Math. \textbf{78} (2025), 1783--1842.

\bibitem{KME2}
\bysame,
Rigidity results in multi-bubble dynamics for non-radial energy-critical heat equation,
preprint, arXiv:2601.12517.

\bibitem{km}
S. Kim and M. Musso,
Infinite-time blowing-up solutions to small perturbations of the Yamabe flow,
Adv. Math. \textbf{443} (2024), Paper No. 109611, 77 pp.

\bibitem{King93}
J. R. King,
Exact polynomial solutions to some nonlinear diffusion equations,
Phys. D \textbf{64} (1993), 39--65.

\bibitem{LS26}
Y. Li and L. Sun,
Planar doubling nodal solutions to the Yamabe equation with maximal rank,
preprint, arXiv:2604.02978.

\bibitem{MM21}
M. Medina and M. Musso,
Doubling nodal solutions to the Yamabe equation in $\R^n$ with maximal rank,
J. Math. Pures Appl. \textbf{152} (2021), 145--188.

\bibitem{MMW19}
M. Medina, M. Musso, and J. Wei,
Desingularization of Clifford torus and nonradial solutions to the Yamabe problem with maximal rank,
J. Funct. Anal. \textbf{276} (2019), 2470--2523.

\bibitem{MW15}
M. Musso and J. Wei,
Nondegeneracy of nodal solutions to the critical Yamabe problem,
Comm. Math. Phys. \textbf{340} (2015), 1049--1107.

\bibitem{Pe02}
G. Perelman, 
The entropy formula for the Ricci flow and its geometric applications,
preprint, arXiv:math/0211159.

\bibitem{Ro}
P. Rosenau,
Fast and superfast diffusion processes,
Phys. Rev. Lett. \textbf{74} (1995), 1056--1059.

\bibitem{SS03}
H. Schwetlick and M. Struwe,
Convergence of the Yamabe flow for large energies,
J. Reine Angew. Math. \textbf{562} (2003), 59--100.

\bibitem{SWY}
L. Sun, J. Wei, and W. Yang,
On Brezis' first open problem: A complete solution,
preprint, arXiv:2503.06904.

\bibitem{wzz}
J. Wei, Q. Zhang, and Y. Zhou,
On Fila-King conjecture in dimension four,
J. Differential Equations \textbf{398} (2024), 38--140.

\bibitem{Whi}
B. White,
The nature of singularities in mean curvature flow of mean convex sets,
J. Amer. Math. Soc. \textbf{16} (2003), 123--138.

\bibitem{ye94}
R. Ye,
Global existence and convergence of Yamabe flow,
J. Differential Geom. \textbf{39} (1994), 35--50.
\end{thebibliography}
\end{document}